\tikzstyle{stuff_fill}=[rectangle,fill=white,minimum size=1em]
\pgfplotsset{compat = newest}
\numberwithin{equation}{section}
\newcommand{\R}{{\mathds R}}
\newcommand{\N}{{\mathds N}}
\newcommand{\Z}{{\mathds Z}}
\def\gr{{\rm gr}}
\newcommand{\eps}{\varepsilon}
\def \hatphi {{\hat{\phi}}}
\def \mult {{\cdot}}
\newcommand{\AlignRight}[1]{\ifmeasuring@#1\else\omit\hfill$\displaystyle#1$\fi\ignorespaces}
\def\acts{\curvearrowright}
\theoremstyle{plain}
\newtheorem{theorem}{Theorem}[section]
\newtheorem{conjecture}[theorem]{Conjecture}
\newtheorem{corollary}[theorem]{Corollary}
\newtheorem{lemma}[theorem]{Lemma}
\newtheorem{proposition}[theorem]{Proposition}
\newtheorem{question}[theorem]{Question}
\newtheorem*{question*}{Question}
\newtheorem{problem}[theorem]{Problem}
\theoremstyle{definition}
\newtheorem{remark}[theorem]{Remark}
\newtheorem*{acknowledgements*}{Acknowledgements}
\newtheorem{definition}[theorem]{Definition}
\newtheorem*{notation*}{Notation}
\newtheorem*{convention*}{Convention}
\title{$L^p$ measure equivalence of nilpotent groups}
\author{Thiebout Delabie}
\email{thiebout.delabie@gmail.com }
\author{Claudio Llosa Isenrich}
\address{Faculty of Mathematics, Karlsruhe Institute of Technology, Englerstr. 2, 76131 Karlsruhe, Germany}
\email{claudio.llosa@kit.edu}
\author{Romain Tessera}
\address{Universit\'e Paris Cit\'e, Sorbonne Universit\'e, Institut de Math\'ematiques de Jussieu-Paris Rive Gauche, 75013 Paris, France}
\email{romatessera@gmail.com}
\keywords{Nilpotent groups, measure equivalence, orbit equivalence, quasi-isometries, integrability}
\subjclass[2020]{20F65; 37A20; 28D15; 20F18; 22E25; 51F30}
\begin{document}

\begin{abstract}
We classify compactly generated locally compact groups of polynomial growth up to $L^p$ measure equivalence (ME) for all $p\leq 1$.

To achieve this, we combine rigidity results (previously proved for discrete groups by Bowen and Austin) with new constructions of explicit orbit equivalences between simply connected nilpotent Lie groups. In particular, we prove that for every pair of simply connected nilpotent Lie groups there is an $L^p$ orbit equivalence for some $p>0$, where we can choose $p>1$ if and only if the groups have isomorphic asymptotic cones. We also prove analogous results for lattices in simply connected nilpotent Lie groups. This yields a strong converse of Austin's Theorem that two nilpotent groups which are $L^1$ ME have isomorphic Carnot graded groups. 

We also address the much harder problem of extending this classification to $L^p$ ME for $p>1$: we obtain the first rigidity results, providing examples of nilpotent groups with isomorphic Carnot graded groups (hence $L^1$ OE) which are not $L^p$ ME for some finite (explicit) $p$. For this we introduce a new technique, which consists of combining induction of cohomology and scaling limits via the use of a theorem of Cantrell.

Finally, in the appendix, we  extend theorems of Bowen, Austin and Cantrell on $L^1$ ME to locally compact groups.
\end{abstract}

\setcounter{tocdepth}{1}
\maketitle
 
\tableofcontents
\section{Introduction}

In this work we conduct a systematic study of $L^p$ measure equivalence, and the slightly stronger relation of $L^p$ orbit equivalence, between compactly generated locally compact groups with polynomial growth. Being $L^p$ measure (resp. orbit) equivalent becomes more restrictive as 
$p$ grows, and defines an equivalence relation only for $p=0$, or for $p\geq 1$ (see \cite{DKLMT-22}). The case $p=0$ corresponds to measure (resp. orbit) equivalence, which by Ornstein-Weiss's theorem \cite{OrnWei-80} is reduced to a single equivalence class for countable amenable groups (see \cite{KoiKyeRau-21} for a generalisation to amenable unimodular locally compact groups). At the other end of the spectrum, being quasi-isometric is equivalent to uniform measure equivalence, a notion which lies between $L^\infty$ measure equivalence and $L^{\infty}$ orbit equivalence: this was first observed by Shalom, based on a result of Gromov \cite{Sha-04,Sau-06}. In the same vein, $L^{\infty}$ orbit equivalence between two amenable finitely generated groups is equivalent to the existence of a bijective quasi-isometry between them.  Versions of these statements in the context of  amenable unimodular compactly generated locally compact groups were later proved in \cite{BR-18,KoiKyeRau-21}. 

It emerges from this discussion that studying $L^p$ measure equivalence among amenable groups extends and in a sense quantifies their study up to quasi-isometry. We therefore start this introduction by recalling some known facts about the quasi-isometry classification of groups with polynomial growth. 

The reader only interested in our main results can jump directly to Section \ref{secIntro:flexib}.

\begin{notation*}
    To shorten notation, we will write ME for measure equivalence and OE for orbit equivalence throughout the paper.
\end{notation*}

\subsection{The quasi-isometric classification}
A celebrated theorem of Gromov states that a finitely generated group with at most polynomial growth is virtually nilpotent \cite{Gro-81}. Conversely, a result independently established by Bass and Guivarc'h says that a nilpotent finitely generated group has volume growth exactly polynomial of degree a certain integer $d$ which can be characterized algebraically \cite{Bass-72, Guivarch73}. Recall that any torsion-free  finitely generated  nilpotent group canonically embeds as a uniform lattice in a simply connected nilpotent Lie group, called its Malcev completion \cite{Malcev-51}. Since  finitely generated nilpotent groups are virtually torsion-free,  this fact implies that, as far as the large scale geometry is concerned, the study of finitely generated groups with polynomial growth reduces to that of simply connected nilpotent Lie groups. 
This was generalised to the locally compact setting by Losert \cite{Losert-20}: he proves that any compactly generated locally compact group with (at most) polynomial growth $G$ admits a continuous proper morphism with cocompact image to a group of the form $N\rtimes Q$, where $N$ is a simply connected nilpotent Lie group, and $Q$ is a compact group. The group $N$ is uniquely determined by $G$, and for this reason we will call it the Losert-Malcev completion of $G$.

To summarize, Losert's result implies that any compactly generated locally compact group with polynomial growth is quasi-isometric (hence $L^{\infty}$ ME) to a simply connected nilpotent Lie group. A key advantage of this reduction is that the algebraic structure of a simply connected nilpotent Lie group is completely encoded in its Lie algebra, which is very often  easier to deal with.  Note also that there are uncountably many isomorphism classes of simply connected nilpotent groups, even in nilpotency class 2 \cite[Remark 2.14]{Rag-72}, implying that not every simply connected nilpotent group contains a lattice. 

We briefly pause here to observe that this allows us to reduce our study of $L^p$ ME to the subclass of simply connected nilpotent Lie groups. 
Indeed, we recall that by \cite[Section A.1.3]{BadFurSau-13} and \cite[Proposition 2.26]{DKLMT-22}, if $G$ and $G'$ are $L^{\infty}$ ME (for instance if they are quasi-isometric), and if $G$ and $H$ are $L^p$ ME for some $p>0$, then $G'$ and $H$ are also $L^p$ ME (a similar statement holds for OE).

The following conjecture would imply that the Losert-Malcev completion exactly captures the large-scale geometry (see \cite[Conjecture 19.114]{Cor-18} and \cite{FarMos-00}).
\begin{conjecture}\label{conj:QI}
  Two simply connected nilpotent Lie groups are quasi-isometric if and only if they are isomorphic.
\end{conjecture}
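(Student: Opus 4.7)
The plan is to attack Conjecture~\ref{conj:QI} by combining Pansu's differentiation theorem with a quasi-isometry invariant sensitive to the non-Carnot part of the Lie bracket.

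\emph{Step 1: Reduction to the Carnot-graded case.} Given a quasi-isometry $f\colon N\to N'$, invoke Pansu's differentiation theorem to extract, after scaling and passing to an asymptotic cone, a bi-Lipschitz homeomorphism between the associated Carnot groups $\gr(N)$ and $\gr(N')$ equipped with their Carnot--Carath\'eodory metrics. Since any such map is almost everywhere Pansu-differentiable with graded-Lie-isomorphism differential, one obtains $\gr(N)\cong\gr(N')$ as graded Lie groups. The remaining task is therefore to show that two simply connected nilpotent Lie groups with isomorphic Carnot grading are quasi-isometric only if they are already isomorphic.

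\emph{Step 2: Detecting the non-Carnot part of the bracket.} Fix a graded splitting of each Lie algebra so that the bracket of $\mathfrak{n}$ decomposes as $[\cdot,\cdot]_{\gr}+R$, where $R$ has strictly positive grading degree and vanishes exactly when $N$ is Carnot. The aim is to encode $R$ in a quasi-isometry invariant. Natural candidates are classes in reduced $\ell^p$-cohomology, or higher Massey products on de Rham cohomology, whose interaction begins to feel $R$ once a sufficiently high cup-product degree is reached. Following the philosophy of the present paper, one would build such classes via induction of cohomology and then appeal to Cantrell's scaling-limit theorem to show that any QI must preserve the resulting pairing, forcing the non-Carnot parts to match.

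\emph{Main obstacle.} The real difficulty lies entirely in Step 2. Nilpotent Lie groups sharing a Carnot grading agree on virtually every currently known quasi-isometry invariant of asymptotic nature---volume growth, isoperimetric profile, Dehn functions, asymptotic cone, and equivariant $L^p$-compression among them---so one must build an invariant that genuinely detects higher-order corrections to the Lie bracket rather than just its associated graded. This is why the conjecture remains open beyond a handful of low-complexity cases (Heisenberg groups via Pansu, and sporadic examples from subsequent work of Xie, Sauer--Xie, and others). Any complete proof must make the $\ell^p$-cohomological strategy above both computable and rigid enough to recover $R$ from its QI class, and producing such an invariant is expected to be very subtle.
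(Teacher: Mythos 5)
This is a conjecture (attributed to Cornulier and to Farb--Mosher), and the paper does not prove it: it is stated as open, and indeed remains open. Your ``proposal'' is therefore not a proof, and you correctly acknowledge as much.

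To assess what you have written: Step~1 is correct and is exactly Pansu's theorem (the paper's \Cref{thm:Pansu}), which reduces the conjecture to distinguishing, up to quasi-isometry, non-isomorphic groups with isomorphic associated Carnot. Step~2 is where the genuine open problem lives, and what you write there is a survey of strategies rather than an argument. A few specific points. First, reduced $\ell^p$-cohomology and the Shalom--Sauer induction machinery are known to be QI-invariant, but they have so far only detected differences in the real cohomology \emph{algebra} (cup products), and there are pairs $G$, $\gr(G)$ with isomorphic cohomology algebras; so one cannot expect these tools alone to finish the job. Second, the ``Cantrell scaling-limit'' idea you invoke is the mechanism the paper uses to obstruct $L^p$ ME (see \Cref{thm:extension} and \Cref{sec:obstruction}), not quasi-isometry: it requires an integrable ME coupling and induction of $2$-cocycles across it, so it does not directly yield a QI-invariant, and adapting it to QI would itself be a new result. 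Third, your claim that nilpotent groups with the same Carnot agree on Dehn functions is not correct in general --- distinguishing such groups by Dehn functions is precisely one of the few known mechanisms (the paper cites \cite{LIPT-23,GMLIP-23} for this) --- which actually strengthens rather than weakens the case for geometric invariants, but also shows you should be careful with blanket statements of that kind. In short: the reduction is right, the rest is an accurate description of why the problem is hard, and no proof is given, by you or by the paper.
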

In other words, two compactly generated locally compact groups with polynomial growth are quasi-isometric if and only if their Losert-Malcev completions are isomorphic.

The first evidence towards this conjecture is due to Pansu. In order to state it precisely we need to recall the notions of a Carnot graded Lie algebra (resp.\ a Carnot graded nilpotent Lie group).

\subsection{From Pansu to Austin}

The \emph{lower central series} of a group $G$ (resp. a Lie algebra $\mathfrak{g}$) is the sequence of characteristic subgroups (resp. sub Lie algebras) inductively defined by $\gamma_1(G)=G$ and $\gamma_{i+1}(G)=[G,\gamma_i(G)]$ for $i\geq 1$ (resp. $\gamma_1(\mathfrak{g})=\mathfrak{g}$ and $\gamma_{i+1}(\mathfrak{g})=[\mathfrak{g},\gamma_i(\mathfrak{g})]$ for $i\geq 1$). The group $G$ (resp. Lie algebra $\mathfrak{g})$ is called \emph{$k$-nilpotent} if $\gamma_k(G)\neq \left\{1\right\}$ and $\gamma_{k+1}(G)=\left\{1\right\}$ (resp. $\gamma_k(\mathfrak{g})\neq \left\{0\right\}$ and $\gamma_{k+1}(\mathfrak{g})=\left\{0\right\}$). 

 The lower central series gives rise to a filtration of $\mathfrak{g}$ in the sense that $[\gamma_{i} \mathfrak{g},\gamma_{j} \mathfrak{g}]\subset \gamma_{i+j} \mathfrak{g}$. 
 A Lie algebra is called \emph{Carnot gradable} if this filtration comes from a grading, i.e.\ a decomposition $\mathfrak{g}=\bigoplus_i m_i$ satisfying $\gamma_{j} \mathfrak{g}=\bigoplus_{i\geqslant j} m_i$ and $[m_i,m_j]\subset m_{i+j}$; such a grading is called a Carnot grading.
It is always possible to associate a Carnot graded Lie algebra $\mathsf{gr}(\mathfrak{g})$ to any nilpotent Lie algebra $\mathfrak{g}$ by letting 
$\mathsf{gr}(\mathfrak{g})=\bigoplus_{i\geqslant 1} m_i$ for $m_i=\gamma_{i} \mathfrak{g}/\gamma_{i+1} \mathfrak{g}$ and defining the Lie bracket in the obvious way to make $m_i$ a grading.
We denote $\mathsf{gr}(G)$ the simply connected nilpotent Lie group whose Lie algebra is $\mathsf{gr}(\mathfrak{g})$ and we denote $[\cdot,\cdot]_{\gr}$ the Lie bracket on $\gr(\mathfrak{g})$. The pair $(\mathsf{gr}(G), m_1)$ is then called a Carnot-graded group\footnote{Some authors say stratified group.}. 
Observe that $\mathsf{gr}(G)$ has the same dimension, step, and growth exponent as $G$. 

In the sequel, we call the Carnot gradable simply connected Lie group associated to the Losert-Malcev completion of a compactly generated locally compact group with polynomial growth its \emph{associated Carnot group}.

We can now state Pansu's Theorem.

\begin{theorem}[{\cite{PanCBN,Breu-14}} and {\cite{PansuCCqi}}] 
Let $G$ be a simply connected nilpotent Lie group, equipped with a left-invariant word metric $d$ associated to some compact generating subset. 
Then $(G,d/n)$ converges in the Gromov-Hausdorff topology to $\mathsf{gr}(G)$ equipped with a left-invariant sub-Finsler metric $d_c$ as $n\to \infty$.
Moreover, if two simply connected nilpotent Lie groups $G$ and $G'$ have bilipschitz asymptotic cones (e.g.\ if they are quasi-isometric), then $\mathsf{gr}(G)$ and $\mathsf{gr}(G')$ are isomorphic.
\label{thm:Pansu}
\end{theorem}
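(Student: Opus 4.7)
The plan is to prove the two assertions separately. For the convergence statement, identify $G$ with $\mathfrak{g}$ via the exponential map and fix a vector space decomposition $\mathfrak{g} = \bigoplus_i V_i$ lifting the Carnot grading $\mathsf{gr}(\mathfrak{g}) = \bigoplus_i m_i$. Introduce the dilations $\delta_n\colon \mathfrak{g}\to\mathfrak{g}$ that scale $V_i$ by $n^i$. The Baker--Campbell--Hausdorff formula then shows that, under conjugation by $\delta_n^{-1}$, the group law of $G$ converges as $n\to\infty$ to the Carnot group law on $\mathsf{gr}(G)$: brackets of weight strictly greater than the grading degree pick up negative powers of $n$ and vanish in the limit, while the ``top--weight'' brackets survive and precisely reproduce $[\cdot,\cdot]_{\mathsf{gr}}$. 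For the metric side, a compact symmetric generating set $\Omega\subset G$ determines a left-invariant Finsler-like length structure on $G$ that is bilipschitz to $d$; after rescaling by $\delta_n^{-1}$ the admissible infinitesimal directions concentrate on the first Carnot layer $m_1$, and an Arzel\`a--Ascoli argument on rescaled geodesics extracts a sub-Finsler Carnot--Carath\'eodory metric $d_c$ on $\mathsf{gr}(G)$ as the pointed Gromov--Hausdorff limit of $(G,d/n)$.

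For the second statement, if $G$ and $G'$ have bilipschitz asymptotic cones then by the first part one obtains a bilipschitz homeomorphism $f\colon(\mathsf{gr}(G),d_c)\to(\mathsf{gr}(G'),d'_c)$. The key tool is Pansu's differentiation theorem: at almost every $g\in\mathsf{gr}(G)$ the rescaled maps $\delta_t^{-1}\circ L_{f(g)^{-1}}\circ f\circ L_g\circ \delta_t$ converge as $t\to 0$ to a Pansu differential $D_gf\colon\mathsf{gr}(G)\to\mathsf{gr}(G')$ which is a graded Lie group homomorphism commuting with the dilations. Because $f$ is bilipschitz, $D_gf$ is bijective for some (in fact almost every) $g$, and any bijective graded homomorphism between Carnot graded Lie groups is automatically an isomorphism, yielding $\mathsf{gr}(G)\cong\mathsf{gr}(G')$.

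The main obstacle is Pansu's differentiation theorem itself, which serves as the non-abelian analogue of Rademacher's theorem and demands substantial analysis on Carnot groups: one needs a Vitali-type covering lemma for Carnot--Carath\'eodory balls, a Lebesgue differentiation statement for horizontal partial derivatives, and uniform estimates ensuring convergence of the rescaled conjugates. A secondary technicality in the first part is that the lift $V_i$ of $m_i$ is non-canonical, so one must verify that the limit metric $d_c$ depends only on the projection of $\Omega$ to $m_1$ and not on the particular lift chosen; this amounts to checking that the BCH-correction terms absorbed into the higher layers are of negligible size after the $\delta_n^{-1}$-rescaling.
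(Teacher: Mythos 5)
The paper states this result as a citation from \cite{PanCBN,Breu-14,PansuCCqi} and does not prove it, so there is no in-paper argument to compare against. Your sketch correctly captures the structure of Pansu's original proof (and its extension to the non-lattice Lie group case by Breuillard): the Gromov--Hausdorff convergence via dilations $\delta_n$ and the BCH-formula bookkeeping of bracket weights, and the rigidity part via Pansu's Rademacher-type differentiation theorem for Lipschitz maps between Carnot groups.

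One point worth tightening: you assert that bilipschitzness of $f$ makes $D_g f$ bijective at almost every $g$. The clean justification is to observe that $f^{-1}$ is also Lipschitz and hence Pansu-differentiable almost everywhere, and by a Fubini/full-measure argument one can find a point $g$ where $D_g f$ and $D_{f(g)} f^{-1}$ both exist and are inverse graded homomorphisms; this gives bijectivity directly rather than arguing injectivity and surjectivity separately. Also, the applicability of Pansu differentiation requires the two Carnot groups to have the same homogeneous (Hausdorff) dimension, which here follows because bilipschitz equivalence of the cones forces equality of growth degrees, but it is worth saying explicitly. Finally, to go from an isomorphism of the Carnot--graded groups of $\gr(G)$ and $\gr(G')$ back to an isomorphism of $\gr(G)$ and $\gr(G')$ themselves one uses that $\gr(\gr(G)) = \gr(G)$, i.e.\ Carnot groups are their own associated graded; your sketch does not state this, but it is implicit. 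None of these are gaps so much as places where a full write-up would need a line or two more.
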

This reduces the conjecture to the task of distinguishing up to quasi-isometry between groups with the same associated Carnot.
We will address the related issue in $L^p$ ME in Sections \ref{secintro:pYves} and \ref{secintro:obstruction}. For now, let us come back to quantitative measure equivalence, with the following result, which is due to Austin for finitely generated groups \cite{Aus-16} (see \Cref{sec:AustinBowenLC} for the general case). 
 
\begin{theorem}[Austin]\label{thm:Austin}
Let $G$ and $H$ be two compactly generated locally compact groups with polynomial growth. If they are $L^1$ ME, then they have bilipschitz asymptotic cones. 
\end{theorem}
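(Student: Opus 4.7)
The natural strategy is to first reduce to the case of simply connected nilpotent Lie groups, and then follow Austin's original scheme adapted to the locally compact setting. By Losert's theorem recalled above, each of $G$ and $H$ admits a continuous proper morphism with cocompact image to some $N\rtimes Q$ (respectively $N'\rtimes Q'$) with $Q,Q'$ compact and $N,N'$ simply connected nilpotent Lie groups, so $G$ is uniformly (hence $L^\infty$) ME to $N$ and $H$ to $N'$. Combining this with the composition principle of \cite[Section A.1.3]{BadFurSau-13} and \cite[Proposition 2.26]{DKLMT-22}, and using that asymptotic cones are a quasi-isometry invariant, the statement reduces to showing that if two simply connected nilpotent Lie groups $N$ and $N'$ are $L^1$ ME, then their asymptotic cones---which by \Cref{thm:Pansu} are the Carnot graded groups $(\gr(N),d_c)$ and $(\gr(N'),d_c')$ equipped with their Pansu sub-Finsler metrics---are bilipschitz equivalent.

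From an $L^1$ ME coupling $(\Omega,m)$ between $N$ and $N'$ I would extract, using standard cross-section techniques, a measurable cocycle $\alpha\colon N\times\Omega\to N'$ satisfying an $L^1$ estimate of the form
$\int_{\Omega}|\alpha(g,\omega)|_{N'}\,dm(\omega)\leq C(1+|g|_N)$
for every $g\in N$, together with an analogous cocycle in the reverse direction obtained by swapping the roles of the two factors. The key idea, following Austin, is to pass to a Pansu-type scaling limit: rescale both metrics by $1/n$ and apply the Gromov--Hausdorff convergences $(N,d/n)\to (\gr(N),d_c)$ and $(N',d'/n)\to(\gr(N'),d_c')$ furnished by \Cref{thm:Pansu}.

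The analytic heart of the argument is a Birkhoff-type pointwise ergodic theorem: the $L^1$ control on $\alpha$, combined with its near-subadditivity inherited from the cocycle identity, should yield almost sure convergence of $|\alpha(g_n,\omega)|_{N'}/n$ to a deterministic limit along sequences $g_n\in N$ whose renormalisations converge to a fixed point of $\gr(N)$. Choosing a countable dense family of such directions and a single full-measure $\omega\in\Omega$ on which convergence holds simultaneously produces a 1-Lipschitz map $\Phi\colon(\gr(N),d_c)\to(\gr(N'),d_c')$. Running the same argument with the reverse cocycle yields a 1-Lipschitz map $\Psi$ the other way, and the coupling identity forces $\Psi\circ\Phi$ and $\Phi\circ\Psi$ to coincide, up to bounded error in the rescaled metric, with the identities, upgrading the pair to bilipschitz inverses.

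The main obstacle I anticipate is the locally compact adaptation. Austin's original argument is formulated for discrete nilpotent groups and uses discrete ergodic theory, whereas here one must build the cocycle measurably for a continuous non-discrete action, handle the compact-by-nilpotent structure produced by Losert's theorem (in particular, absorb the compact kernels into the coupling without losing $L^1$ control), and run the ergodic-theoretic convergence for continuous $N$-actions without assuming the existence of a lattice in $N$, since not every simply connected nilpotent Lie group admits one. Carrying out this LC extension in detail is exactly what the appendix of the paper is dedicated to.
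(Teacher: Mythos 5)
Your overall plan matches the paper's: reduce to simply connected nilpotent Lie groups via Losert's theorem together with the composition principle for integrable ME couplings, and then adapt Austin's finitely generated argument to the locally compact setting (this adaptation is carried out in \Cref{sec:AustinBowenLC}). Your scaling-limit/ergodic-theoretic outline of Austin's original argument is also roughly in the right spirit.

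However, you misidentify where non-discreteness actually bites, and this leaves a genuine gap. Extracting a measurable $L^1$ cocycle from the coupling is not where cross-sections are needed --- that part is formal, coming directly from the choice of fundamental domains. The real obstruction, and the content of \Cref{prop:intersecting-fundamental-domains}, is Austin's opening move: he needs fundamental domains $X\subset\Omega$ for $H$ and $Y\subset\Omega$ for $G$ whose intersection $X\cap Y$ has positive measure, with $\mu_X$ and $\mu_Y$ agreeing on $X\cap Y$ up to scaling, and with the cocycles $\alpha(\cdot,x)\colon D_x\to E_x$ genuinely exchanging the Haar measures of $G$ and $H$. In the discrete case all of this is automatic because both $\mu_X$ and $\mu_Y$ are restrictions of counting measure on $\Omega$; in the non-discrete case $X$ and $Y$ are $\eta$-negligible, so the statement has to be manufactured via cross-sections (and it forces the standing assumption that $G$ and $H$ are both non-discrete). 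Your sketch also leans on ``the coupling identity forces $\Psi\circ\Phi$ and $\Phi\circ\Psi$ to coincide\ldots with the identities,'' but this is precisely the step in Austin's proof (\cite[Proposition 4.4]{Aus-16}) that consumes the tailored fundamental domains and Bowen's growth comparison --- it does not follow from the cocycle identity alone. Without identifying these two points you cannot actually carry out the LC adaptation you defer to.
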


Combined with Pansu's theorem, Austin's result implies that two locally compact compactly generated groups of polynomial growth which are $L^1$ ME have isomorphic associated Carnot.

Here, we prove that the converse holds in a strong sense.

\subsection{A converse of Austin's theorem}\label{secIntro:flexib}
Our main result is the following theorem.
\begin{theorem}\label{mainthm:Carnot} Two simply connected nilpotent Lie groups, or two virtually nilpotent finitely generated groups, with isomorphic associated Carnot are $L^p$ OE for some $p>1$.
 \end{theorem}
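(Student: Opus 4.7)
The plan is to first reduce to the case of simply connected nilpotent Lie groups: using the transitivity of $L^p$ OE for $p\geq 1$ from \cite{DKLMT-22}, combined with the fact that any finitely generated virtually nilpotent group is, after passing to a finite-index torsion-free subgroup, a cocompact lattice in its Malcev completion and therefore uniformly (i.e.\ $L^\infty$) ME to it, it suffices to show that every simply connected nilpotent Lie group $G$ is $L^p$ OE to $N:=\gr(G)$ for some $p>1$.

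\textbf{Explicit coupling.} Identify $G$ and $N$ with $\R^n$ via Mal'cev (exponential) coordinates. Both Haar measures then coincide with Lebesgue measure, and the two group laws on $\R^n$ share the same top-weight Carnot part, differing only by polynomial corrections of strictly lower Carnot weight. I would use this to construct an explicit orbit equivalence coupling between $G$ and $N$ on this common underlying space, with the cocycle $c\colon G \to N$ comparing the two actions being, up to bounded correction terms, the identity in Mal'cev coordinates.

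\textbf{Integrability, and the main obstacle.} The $L^p$-norm of $c$ is then controlled by the discrepancy between the word metric on $G$ and the sub-Finsler metric on $N$, both transported to $\R^n$. Pansu's Theorem \ref{thm:Pansu}, together with its quantitative polynomial refinement due to Breuillard, shows this discrepancy is of the form $O(r^{1-\alpha})$ on the ball of radius $r$, for some $\alpha=\alpha(G)>0$ depending only on the nilpotency step. This strict sublinearity pushes the integrability of $c$ past $L^1$, so that $c\in L^p$ for some $p>1$. The principal obstacles are therefore (i) promoting the set-theoretic Mal'cev identification to a genuine measure-theoretic orbit equivalence in the locally compact setting, where $G$ need not admit any cocompact lattice, which forces a careful fundamental-domain / cross-section construction; and (ii) converting the sublinear Pansu error into an $L^p$ cocycle bound with $p$ \emph{strictly above} $1$, rather than merely $p=1$ as would follow from purely linear Pansu-type control — this quantitative improvement is precisely the content lying beyond Austin's $L^1$ rigidity theorem.
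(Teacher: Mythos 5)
Your high-level picture is reasonable, but the proposal stops exactly where the real work begins, and one of the two ``obstacles'' you flag is mis-estimated. The paper's construction of the coupling is not a fundamental-domain/cross-section argument but rather an explicit F\o lner tiling sequence: one builds tiles $T_k$ in $G$ (via dyadically scaled rectangular boxes in Mal'cev coordinates adapted to the lower central series), lets $G$ act on $X = \prod_k F_k$ with the cofinite equivalence relation, and the isomorphism of $\mathbb{R}$-vector spaces $\mathfrak{g}\to\gr(\mathfrak{g})$ gives a matching tiling of $\gr(G)$ and hence a canonical bijection $\phi:X_{\gr(G)}\to X_G$. Nothing in your proposal produces the probability spaces, the actions, or the cocycle; you only identify the two groups as measure spaces, which is not an OE coupling.

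More seriously, the quantitative step is not as you describe. You claim the cocycle discrepancy is controlled by the Pansu/Cornulier sublinear error $O(r^{1-\alpha})$ (with $1-\alpha = e_D$), which would give $p < 1/e_D$. But the cocycle compares $s\ast \phi(g)$ with $\phi(s\ast_{\gr} g)$ where $g$ is a tiled product of length $k$, and the Cornulier errors $v_\ell$ (each of size $\lesssim 2^{\ell e_D}$) must be \emph{commuted past the large tile elements} of size $\sim 2^\ell$ to collect them. Those commutators $[v_\ell, x_\ell]$ are controlled only by the Guivarc'h-type estimate of Section~\ref{sec:length-of-commutators}, giving word length $\lesssim 2^{(1-\frac{1-e_D}{s_G})\ell}$, which is strictly \emph{larger} than $2^{\ell e_D}$. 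This yields the weaker (but still $>1$) integrability bound $p < \frac{s_G}{s_G-(1-e_D)}$; the bound $1/e_D$ you implicitly expect is only obtained in the paper in the special case where the difference between $\mathfrak{g}$ and $\gr(\mathfrak{g})$ is central (Section~\ref{sec:OE-same-Carnot-central}), because then the error terms $v_\ell$ commute for free. Finally, in the finitely generated case your reduction is incomplete: passing from $\Gamma<G$ to a lattice in $\gr(G)$ requires compatible choices of bracket-closed lattices (via \cite{EasHut-23,TointonTessera}) and uses that lattices in a Carnot group are bilipschitz equivalent because Carnot groups admit non-Haar-preserving one-parameter automorphism groups (\Cref{prop:BE-lattices-in-Carnot}); a bare ``cocompact lattice hence $L^\infty$ ME'' statement does not supply this.
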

In particular, any two compactly generated locally compact groups with polynomial growth and isomorphic associated Carnot are $L^p$-ME for some $p>1$. 
Theorem \ref{mainthm:Carnot} as well as the following results are proved by constructing OE couplings between explicit ergodic pmp actions of the groups (see Section \ref{secintro:Folnertilings} for more details). In particular, this construction yields an explicit value for $p$, the sharpness of which will be discussed shortly (see Section \ref{secintro:pYves}).

This yields a complete classification of compactly generated locally compact groups with polynomial growth up to $L^1$ ME.
\begin{corollary}\label{cor:converse-Austin}
Let $\Gamma$ and $\Lambda$ be either two  simply connected nilpotent Lie groups, or two virtually nilpotent finitely generated groups. The following are equivalent.
\begin{enumerate}
    \item $\Gamma$ and $\Lambda$ are $L^1$ OE;
      \item $\Gamma$ and $\Lambda$ are $L^p$ OE for some $p>1$;
        \item $\Gamma$ and $\Lambda$ are $L^1$ ME;
         \item $\Gamma$ and $\Lambda$ are $L^p$ ME for some $p>1$;
          \item $\Gamma$ and $\Lambda$ have bilipschitz equivalent asymptotic cones;
          \item $\Gamma$ and $\Lambda$ have isomorphic associated Carnot-graded groups.
\end{enumerate}
\end{corollary}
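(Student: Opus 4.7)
The plan is to organise the six conditions into a cycle of implications $(6) \Rightarrow (2) \Rightarrow (1) \Rightarrow (3) \Rightarrow (5) \Rightarrow (6)$, with the side arrows $(2) \Rightarrow (4) \Rightarrow (3)$ thrown in for free. Each arrow should be either immediate from the definitions or a direct appeal to one of the results collected so far in the excerpt, so that essentially all of the real work is concentrated in the single step $(6) \Rightarrow (2)$, which is exactly Theorem \ref{mainthm:Carnot}.

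First I would dispatch the formal implications. On a probability space $L^p$ integrability for $p>1$ implies $L^1$ integrability, so any $L^p$ OE (resp.\ ME) coupling is automatically an $L^1$ OE (resp.\ ME) coupling; this gives $(2) \Rightarrow (1)$ and $(4) \Rightarrow (3)$. Moreover, by definition every orbit equivalence is a measure equivalence with the same integrability exponent, which yields $(1) \Rightarrow (3)$ and $(2) \Rightarrow (4)$. Together these establish the entire right half of the diagram at no cost.

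Next I would invoke the three substantial inputs to close the loop. The implication $(6) \Rightarrow (2)$ is the content of Theorem \ref{mainthm:Carnot}, which produces an explicit $L^p$ orbit equivalence for some $p>1$ as soon as the associated Carnot groups are isomorphic, and which handles the simply connected Lie and virtually nilpotent cases uniformly. The implication $(3) \Rightarrow (5)$ is Austin's rigidity theorem (Theorem \ref{thm:Austin}), extended to the compactly generated locally compact setting in the appendix referenced by Section \ref{sec:AustinBowenLC}. Finally, $(5) \Rightarrow (6)$ is the second half of Pansu's Theorem \ref{thm:Pansu}; in the discrete case one passes first to the Losert-Malcev completion, which is quasi-isometric to the given group and therefore shares its asymptotic cone and its associated Carnot.

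The main obstacle is plainly $(6) \Rightarrow (2)$, i.e.\ Theorem \ref{mainthm:Carnot} itself, whose proof occupies the bulk of the paper through explicit F\o lner-tiling and orbit-equivalence constructions; by contrast the corollary merely assembles cited theorems into a cycle. The only subtlety I would want to double-check while writing it out is that the locally compact version of Austin's theorem from the appendix really produces a bilipschitz equivalence of asymptotic cones of the precise form required to feed into Pansu's theorem, so that the pivot $(3) \Rightarrow (5) \Rightarrow (6)$ is clean in both the discrete and the locally compact settings simultaneously.
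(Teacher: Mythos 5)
Your proof is correct and takes essentially the same route the paper intends: $(6)\Rightarrow(2)$ is Theorem~\ref{mainthm:Carnot}, the implications among $(1),(2),(3),(4)$ are formal (using that $L^p$ integrability implies $L^1$ integrability on a probability space, and that orbit equivalence of ergodic essentially free pmp actions gives measure equivalence with the same cocycle integrability), $(3)\Rightarrow(5)$ is Austin's theorem (Theorem~\ref{thm:Austin}, extended to the lcsc setting in the appendix), and $(5)\Rightarrow(6)$ is Pansu's theorem, passing through the Losert--Malcev completion in the discrete case. The paper does not write out a separate proof of the corollary but presents it exactly as you have assembled it, with the real content concentrated in Theorem~\ref{mainthm:Carnot}.
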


We now focus on $L^p$ ME for $p<1$. In what follows, $L^{<p}$ ME means that there exists a single ME coupling which is $L^q$-integrable for all $q<p$.
\begin{theorem}\label{mainthm:growth}
Let $G$ and $G'$ be two simply connected nilpotent Lie groups, or two finitely generated virtually nilpotent groups of degree of growth respectively $m$ and $n$, with $m\leq n$. Then $G$ and $G'$  are $L^{<m/n}$ OE.
\end{theorem}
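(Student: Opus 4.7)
The plan is to construct an explicit orbit equivalence between $G$ and $G'$ by matching balls of equal Haar volume at a geometric sequence of scales, then to estimate both cocycles using the polynomial boundary control of balls in nilpotent groups. The finitely generated case reduces to the Lie-group case via Losert's theorem together with the invariance of $L^p$ OE under $L^\infty$ ME recalled in Section 1.2, so we may assume $G$ and $G'$ are simply connected nilpotent Lie groups of growth degrees $m\leq n$.

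Fix a geometric scale sequence $r_k = 2^k$ and set $s_k = r_k^{m/n}$, so that the balls $B_G(r_k)$ and $B_{G'}(s_k)$ have matched volumes $\sim r_k^m \sim s_k^n$. Following the F\o lner-tiling strategy of Section \ref{secintro:Folnertilings} (used to prove Theorem \ref{mainthm:Carnot}), I would build free ergodic pmp actions of $G$ and $G'$ on a common probability space $(X,\mu)$ together with an orbit equivalence $\phi$ equipped with a hierarchy of refining measurable tilings: at scale $k$, the $G$-orbits are partitioned into tiles of shape $\asymp B_G(r_k)$ and the $G'$-orbits into tiles of shape $\asymp B_{G'}(s_k)$, with $\phi$ mapping scale-$k$ tiles bijectively and measure-preservingly onto scale-$k$ tiles.

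For the cocycle $\alpha(g,\cdot):X\to G'$, let $k(x,g)$ be the smallest scale at which $x$ and $gx$ sit in the same $G$-tile. By polynomial growth of degree $m$, the $\|g\|_G$-neighborhood of the boundary of a tile of radius $r_k$ has relative measure $\lesssim \|g\|_G / r_k$, while on the complement $\alpha(g,x)\in B_{G'}(s_k)\cdot B_{G'}(s_k)^{-1}$, so $\|\alpha(g,x)\|_{G'}\lesssim s_k$. Partitioning $X$ by the value of $k(x,g)$ gives
\[
\int_X \|\alpha(g,x)\|_{G'}^{p}\, d\mu(x) \;\lesssim\; \|g\|_G \sum_{k\geq 0} \frac{s_k^p}{r_k} \;=\; \|g\|_G \sum_{k\geq 0} 2^{k(pm/n-1)},
\]
which converges for $p<n/m$. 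Symmetrically, for the reverse cocycle $\beta(g',\cdot):X\to G$,
\[
\int_X \|\beta(g',x)\|_{G}^{p}\, d\mu(x) \;\lesssim\; \|g'\|_{G'} \sum_{k\geq 0} \frac{r_k^p}{s_k} \;=\; \|g'\|_{G'} \sum_{k\geq 0} 2^{k(p - m/n)},
\]
which converges for $p<m/n$. Since $m\leq n$, the binding constraint comes from $\beta$ and is $p<m/n$, yielding the desired $L^{<m/n}$ OE.

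The main obstacle is executing the hierarchical tiling measurably and in a genuinely orbit-equivalent fashion, with enough geometric regularity at every scale that the polynomial boundary estimate $|\partial_\ell B_G(r)|\lesssim \ell\cdot r^{m-1}$ applies uniformly across scales. This is a quantitative refinement of the Ornstein--Weiss quasi-tiling machinery, and the uniform boundary control rests on the scale-invariant Carnot--Carath\'eodory geometry of the asymptotic cone provided by Theorem \ref{thm:Pansu}.
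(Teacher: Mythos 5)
Your core computational scheme for the Lie-group case --- match scales so that volumes balance, stratify by the separation scale $k(x,g)$, bound the cocycle at scale $k$ by the diameter of the level-$k$ tile, and sum a geometric series --- is exactly the skeleton of the paper's proof via Proposition~\ref{prop:quant-orbit-equivalence} and Proposition~\ref{prop:coupling-arbitrary-nilpotent-groups}, and your arithmetic ($\alpha$ is $L^p$ for $p<n/m$, $\beta$ for $p<m/n$, binding constraint $p<m/n$) is the same as theirs. However, your proposal has two genuine gaps.

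First, the reduction of the finitely generated case. You invoke ``the invariance of $L^p$ OE under $L^\infty$ ME.'' No such invariance is available: what the paper records (Section~1.2) is that $L^\infty$ ME transports $L^p$ \emph{ME}, and that $L^\infty$ \emph{OE} transports $L^p$ OE. Quasi-isometry via Losert/Malcev gives $L^\infty$ ME, not $L^\infty$ OE, so you cannot replace a discrete group by its Malcev completion and keep an OE conclusion. The paper instead constructs the F\o lner tiling sequence directly for a torsion-free finitely generated nilpotent group $\Gamma$, using a lattice-compatible normal form (Proposition~\ref{prop:normal-formdiscrete}), and then reaches the general virtually nilpotent case by passing to a torsion-free finite-index subgroup and using Whyte's bilipschitz equivalence $\Gamma\sim_{\mathrm{bilip}}\Gamma'\times\Z/i\Z$, which \emph{does} give $L^\infty$ OE and can therefore be composed with $L^{<1}$ OE couplings.

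Second, the existence of the hierarchical tiling. Your proposal assumes a nested, exact, measurable tiling at every scale $2^k$, with uniform boundary control $\lesssim \|g\|/r_k$, and attributes this to ``a quantitative refinement of the Ornstein--Weiss quasi-tiling machinery'' together with the Carnot--Carath\'eodory geometry of the cone (Theorem~\ref{thm:Pansu}). Neither input yields what you need: Ornstein--Weiss quasi-tilings produce only \emph{approximate} (nearly disjoint, nearly covering) tilings, whereas the F\o lner tiling sequence framework (Definition~\ref{def:folner-tiling}) requires an exact nested decomposition $T_{k+1}=\bigsqcup_{g\in F_{k+1}}T_k\cdot g$ so that the product space $\prod_kF_k$ with its $G$-action is well defined; and Pansu's theorem concerns the asymptotic cone, not the boundary behaviour of tiles in $G$ itself. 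The paper's actual input here is an explicit algebraic construction: rectangular tiles in Malcev coordinates scaled by the distortion exponents of the lower central series (Lemma~\ref{lem:normal-form}), and a nontrivial induction on nilpotency class establishing $\epsilon_k=O(2^{-k})$ (Lemma~\ref{lem:nilpotent-explicit-Folner-tiling}), handling separately the mass near the boundary in the abelianised quotient and the mass displaced in the (distorted) last term of the lower central series. Without this construction, or something equivalent, the estimate you write down for $\alpha$ and $\beta$ does not have anything to stand on. Once that lemma is granted, the coarsening step (Lemma~\ref{lem:adapted-Folner-tiling}) matches the cardinalities of $F_k$ and $F_k'$ at the rates $2^{nk}$ and $2^{mk}$ and the rest of your computation goes through.
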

This result vastly generalizes \cite[Theorems 1.9 and 1.11]{DKLMT-22}, which treats the cases of $\Z^d$ and the discrete Heisenberg group. Focusing on the case $m=n$, one gets the following characterization. 
 
\begin{corollary}
Let $G$ and $G'$ be either two finitely generated virtually nilpotent groups or two simply connected nilpotent Lie groups. The following are equivalent.
\begin{enumerate}
\item $G$ and $G'$ are $L^p$ OE for all $p<1$;
    \item $G$ and $G'$are $L^{<1}$ OE;
    \item $G$ and $G'$ are $L^p$ ME for all $p<1$;
    \item $G$ and $G'$are $L^{<1}$ ME;
      \item $G$ and $G'$ have same degree of growth.
 \end{enumerate}   
\end{corollary}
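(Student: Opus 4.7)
The plan is to close the cycle $(5)\Rightarrow(2)\Rightarrow(1)\Rightarrow(3)\Rightarrow(5)$, with the side implications $(2)\Rightarrow(4)\Rightarrow(3)$, thereby establishing that all five conditions are equivalent. Throughout, let $m\leq n$ denote the degrees of polynomial growth of $G$ and $G'$.

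The implication $(5)\Rightarrow(2)$ is a direct application of Theorem \ref{mainthm:growth}: when $m=n$ the ratio $m/n$ equals $1$, and the theorem produces a single OE coupling which is $L^q$-integrable for every $q<1$. The implications $(2)\Rightarrow(1)$, $(2)\Rightarrow(4)$, $(1)\Rightarrow(3)$, and $(4)\Rightarrow(3)$ are immediate and formal: a single $L^{<1}$ coupling witnesses $L^p$ OE/ME for each $p<1$, and every OE coupling is an ME coupling with the same integrability profile.

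The only substantive direction is $(3)\Rightarrow(5)$, which I would prove by contraposition. Assuming $m<n$, one invokes the sharp growth obstruction matching Theorem \ref{mainthm:growth}: two compactly generated locally compact groups of polynomial growth degrees $m<n$ cannot be $L^p$ measure equivalent for any $p>m/n$. This is the quantitative counterpart of Austin's Theorem \ref{thm:Austin} and is exactly the obstruction established in \cite[Theorems 1.9, 1.11]{DKLMT-22} for $\Z^m$ versus $\Z^n$ and the Heisenberg group. The same ball-counting argument via Jensen and Markov inequalities applied to the cocycle of the coupling extends to the present setting: polynomial volume growth with well-defined exponent is preserved by any $L^p$ coupling with $p$ above the critical ratio. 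Since $m/n<1$, any choice of $p\in(m/n,1)$ yields the desired $p<1$ for which no $L^p$ ME coupling exists, contradicting $(3)$.

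The main technical obstacle lies entirely in the obstruction step $(3)\Rightarrow(5)$, namely verifying that the ball-counting/cocycle-integrability argument of \cite{DKLMT-22} survives the passage from $\Z^d$ and Heisenberg to general compactly generated locally compact groups of polynomial growth; this is conceptually straightforward using the Losert--Malcev completion but requires some care with Haar measures and fundamental domains. The flexibility half of the corollary, by contrast, is automatic from Theorem \ref{mainthm:growth}, and the trivial implications among $(1)$--$(4)$ are pure bookkeeping.
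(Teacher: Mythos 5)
Your proof is correct and follows the approach the paper intends: the flexibility direction $(5)\Rightarrow(2)$ is Theorem \ref{mainthm:growth} with $m=n$, the implications among $(1)$--$(4)$ are formal bookkeeping (a single $L^{<1}$ coupling is $L^p$ for every $p<1$, and an OE coupling between ergodic free p.m.p.\ actions gives an ME coupling with the same integrability), and the only substantive step $(3)\Rightarrow(5)$ is the growth obstruction, proved by contraposition from the appendix result (Theorem \ref{thm:BowenAsym}), which for $m<n$ forbids any $L^p$ ME with $p>m/n$. One small attribution slip worth flagging: you describe the obstruction as ``the quantitative counterpart of Austin's Theorem,'' but it is really a quantitative (and asymmetric) refinement of Bowen's theorem on growth invariance under integrable ME --- Austin's theorem concerns the Carnot graded group / asymptotic cone, which is a strictly finer invariant than the growth degree and is not what is needed here. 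Also, the general form of the ball-counting obstruction in the discrete case is \cite[Theorem 3.2]{DKLMT-22}, not the more specific Theorems 1.9 and 1.11 you cite (those handle only $\Z^d$ and Heisenberg), though this doesn't affect the logic of the argument.
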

This implies in particular that any compactly generated locally compact group with polynomial growth of degree $d$ is $L^{<1}$ ME to $\Z^d$ (or $\R^d$).
\subsection{$L^p$ ME classification of groups of polynomial growth for $p\leq 1$}
The problem of classifying groups up to $L^p$ measure equivalence is a natural refinement of the classification up to measure equivalence. Its origins can be traced back to the work of Bader, Sauer and Furman \cite{BadFurSau-13} on $L^1$ ME rigidity of real hyperbolic lattices. As already pointed out, the classification of compactly generated locally compact Polish groups with polynomial growth up to $L^p$ ME reduces to that of simply connected nilpotent Lie groups. 

The conclusion of Theorem \ref{mainthm:growth} turns out to be sharp in the sense that if $n<m$, then the groups are not $L^{n/m}$ ME. 
Indeed, it is proved in \cite{DKLMT-22} that if $G$ and $H$ are discrete, then they are not $L^p$ ME for any $p>n/m$; this generalises to locally compact groups (see Theorem \ref{thm:BowenAsym} in our appendix). Subsequently, Correia \cite{Correia-24} found a clever trick to prove that $G$ and $H$ are in fact not $L^{n/m}$ ME; once again \cite{Correia-24} only treats finitely generated groups, but there is a generalisation to locally compact groups, which will be part a forthcoming work of Correia and Paucar \cite{CorPau-25}. As a direct consequence of this discussion and the results of the previous sections, we thus obtain the following complete classification for $p\leq 1$ (and some partial information for $p\geq 1$). 

\begin{theorem}\label{thm:classification-Lp}
    Let $G$ and $H$ be compactly generated locally compact groups with polynomial growth of degrees $n$ and $m$ such that $n\leq m$. Then the interval $I$ of values of $p$ such that $G$ and $H$ are $L^p$-ME is as follows:
    \begin{itemize}
        \item if $G$ and $H$ have non-isomorphic Carnot, then $I=[0,n/m)$;
        \item if $G$ and $H$ have isomorphic Carnot, then $I=[0,p_0)$ or $[0,p_0]$ for some (possibly infinite) $p_0>1$.
    \end{itemize}
  
\end{theorem}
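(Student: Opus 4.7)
The plan is to reduce to simply connected nilpotent Lie groups via Losert--Malcev completions, observe that $I$ is downward closed, and then combine the flexibility results of Theorems \ref{mainthm:Carnot} and \ref{mainthm:growth} with the rigidity statements of Theorems \ref{thm:Austin} and \ref{thm:BowenAsym}, plus the Correia--Paucar extension \cite{CorPau-25} to handle the boundary integrability $p = n/m$.

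First I would observe that each of $G$ and $H$ is $L^{\infty}$ ME to its Losert--Malcev completion, which is a simply connected nilpotent Lie group with the same growth degree and associated Carnot. Using the composition property recalled from \cite[Section A.1.3]{BadFurSau-13} and \cite[Proposition 2.26]{DKLMT-22}, the set $I$ and the growth/Carnot invariants are unchanged, so one may assume $G$ and $H$ are themselves simply connected nilpotent Lie groups. The set $I$ is downward closed because fundamental domains in a measure equivalence coupling have finite measure and $L^p$ norms on a finite measure space are monotone in $p$; thus $I$ automatically has the form $[0, p_0)$ or $[0, p_0]$.

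In the non-isomorphic Carnot case, Theorem \ref{mainthm:growth} provides an $L^{<n/m}$ OE coupling, giving $[0, n/m) \subseteq I$. For the upper bound I split on $n$ versus $m$: if $n < m$, then the asymmetric Bowen bound Theorem \ref{thm:BowenAsym} rules out $L^p$ ME for every $p > n/m$, and its boundary sharpening \cite{CorPau-25} rules out $L^{n/m}$ ME; if $n = m$, then $n/m = 1$, Theorem \ref{thm:BowenAsym} excludes $p > 1$, and Theorem \ref{thm:Austin} together with Pansu's Theorem \ref{thm:Pansu} forbids $L^1$ ME because the associated Carnots differ. In either subcase $I = [0, n/m)$.

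In the isomorphic Carnot case, $n = m$ since isomorphic Carnots share their growth degree, and Theorem \ref{mainthm:Carnot} provides an $L^{p}$ OE coupling for some $p > 1$. Hence $p_0 \geq p > 1$, and downward closure forces $I = [0, p_0)$ or $[0, p_0]$ with $p_0 > 1$ (possibly infinite), as claimed. The main obstacle is not conceptual but logistical: excluding the endpoint $p = n/m$ when $n < m$ requires the forthcoming locally compact extension \cite{CorPau-25} of Correia's trick, without which one would only obtain $I \subseteq [0, n/m]$ in that subcase.
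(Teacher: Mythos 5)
Your proof is correct and follows essentially the same route the paper takes: the paper presents \Cref{thm:classification-Lp} as "a direct consequence" of the reduction to simply connected nilpotent Lie groups via Losert, \Cref{mainthm:growth} and \Cref{mainthm:Carnot} for the lower bounds, \Cref{thm:BowenAsym} (with the Correia--Paucar boundary refinement \cite{CorPau-25}) and \Cref{thm:Austin} for the upper bounds, and downward closure of $I$ on a finite-measure coupling space. You have merely spelled out the implicit combination the paper alludes to, including the correct case split $n<m$ versus $n=m$ inside the non-isomorphic-Carnot branch.
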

We will discuss what we know about $p_0$ in more detail in Sections \ref{secintro:pYves} (lower bound) and \ref{secintro:Folnertilings} (upper bound). For now we only mention that determining the precise value of $p_0$ is an interesting problem. It is currently out of reach with our methods, except when the groups have isomorphic Losert-Malcev completions, in which case $I=[0,\infty]$. A proof that $I$ is infinite and closed precisely when the groups have isomorphic Losert-Malcev completions would settle \Cref{conj:QI}.
More generally, one can ask if $I$ is closed or half-open for groups with the same Carnot when $p_0<\infty$; we currently don't have a single example for which we can answer this question.

\subsection{SBE and $L^p$ ME: two ways of interpolating between quasi-isometries and bilipschitz asymptotic cones}\label{secintro:pYves}

Finding quasi-isometry invariants that distinguish simply connected nilpotent Lie groups with isomorphic associated Carnot graded groups turns out to be a challenging problem.
The main known invariants are the real cohomology algebra by the aforementioned works of Shalom \cite{Sha-04} and Sauer \cite{Sau-06} and their Dehn functions \cite{LIPT-23,GMLIP-23}. 
These results show in particular that non-quasi-isometric nilpotent groups might have bilipschitz asymptotic cones. It is natural to try to interpolate between these two situations. 
Recall that quasi-isometries obviously induce  bilipschitz maps between the asymptotic cones: this comes from the fact that the additive constant vanishes in the rescaling procedure. 
Cornulier observed that the same holds if we allow this additive error to grow sublinearly with the distance to the origin, defining the notion of sublinear bilipschitz equivalence \cite{Cor-19}. 
\begin{definition}
    A map $f: (X,d_X) \to (Y,d_Y)$ between metric spaces is called a \emph{($v$-)sublinear bilipschitz equivalence} (short: ($v$-)SBE), if there are a non-decreasing, \emph{sublinear} map $v:\mathbb{R}_{>0}\to \mathbb{R}_{>0}$ (i.e. $\lim_{t\to \infty} v(t)/t=0$), base points $x_0\in X$, $y_0\in Y$, and a constant $M\geq 1$ such that the following hold:
    \begin{itemize}
        \item[(i)] For all $r\geq 0$ and all $x,~x'\in B(x_0,r)$
        \[
            \frac{1}{M}\cdot d_X(x,x')-v(r) \leq d_Y(f(x),f(x'))\leq M \cdot d_X(x,x') + v(r).
        \]
        \item[(ii)] For all $y\in B(y_0,r)$ there exists $x\in X$ such that $d(f(x),y)\leq v(r)$.
    \end{itemize}
\end{definition}
In \cite{Cor-19}, Cornulier shows the existence of an explicit number $e_G\in [0,1)$, defined in terms of the Lie algebra of a simply connected nilpotent $G$, such that $G$ and $\gr(G)$ are $r^{e_G}$-SBE. Conversely, the authors proved in joint works with Pallier and Garc\'ia-Mej\'ia \cite{LIPT-23, GMLIP-23} that there are large classes of simply connected nilpotent groups where the set of $t$ such that there is an $r^t$-SBE between $G$ and $\gr(G)$ is bounded below by some positive number, providing quantitative obstructions to the existence of a quasi-isometry. 

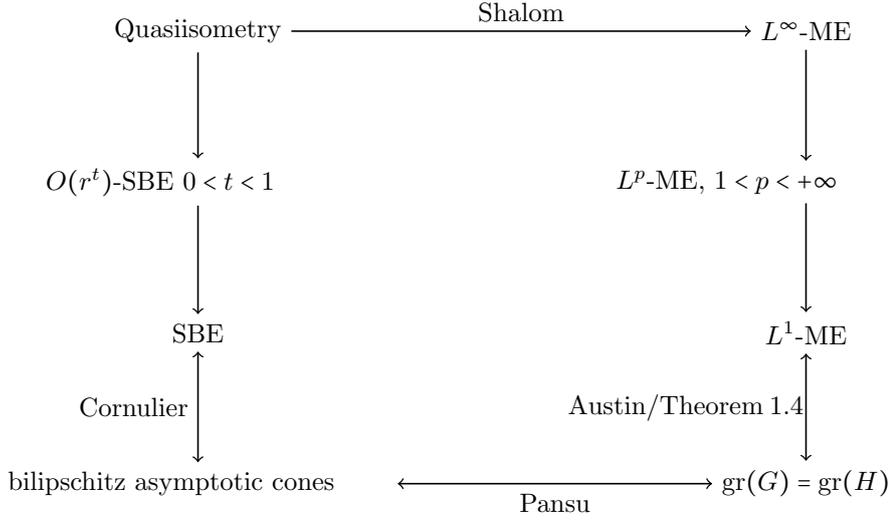
\begin{figure}
\begin{center}
\begin {tikzpicture}[-latex ,auto ,node distance =2 cm and 8cm ,on grid ,
semithick , state/.style ={ rectangle ,
minimum width =1 cm}]

\node[state] (QI) {Quasiisometry 
};
\node[state] (UME) [right=of QI] {$L^\infty$-ME 
};
\node[state] (E) [below=of QI] {\begin{minipage}{4cm} $O(r^t)$-SBE  
$0<t<1$ 
\end{minipage}
};
\node[state] (LME) [below=of UME] {\begin{minipage}{5cm} $L^p$-ME, 
$1 < p < +\infty$
\end{minipage}};
\node[state] (IME) [below=of LME] {$L^1$-ME 
};
\node[state] (SBE) [below left=of LME] {SBE
};
\node[state] (GR) [below=of IME] {$\operatorname{gr}(G)= \operatorname{gr}(H)$};
\node[state] (CE) [below=of SBE] {\begin{minipage}{5cm}
bilipschitz asymptotic cones \end{minipage}};

\path (QI) edge [->,bend left =0] node[above]{Shalom
} (UME);
\path (QI) edge [->,bend left =0] (E);
\path (E) edge [->,bend left =0] (SBE);
\path (UME) edge [->,bend left =0] (LME);
\path (LME) edge [->,bend left =0] (IME);
\path (IME) edge [<->,bend left =0] node[left]{\begin{minipage}{3cm} 
Austin/\Cref{mainthm:Carnot} \end{minipage}} (GR);
\path (CE) edge [<->,bend left =0] node[left]{Cornulier
} (SBE);
\path (GR) edge [<->,bend left =0] node[below]{Pansu
} node[below]{} (CE);
\end{tikzpicture}
\end{center}
    \caption{Relations between simply connected nilpotent $G$ and $H$.}
    \label{fig:releq}
\end{figure}

Austin's result suggests an alternative, ergodic theoretic way of interpolating between being quasi-isometric and having bilipschitz asymptotic cones for nilpotent groups (see \Cref{fig:releq} for an illustration of the relations between the two approaches). Define $p_{ME}(G,H)$ and $p_{OE}(G,H)$ to be the supremum over all $p\geq 0$ for which two given nilpotent groups $G$ and $H$ are $L^p$ ME, resp.\  $L^p$ OE. Note that by Theorem \ref{mainthm:growth}, $p_{OE}(G,H)>0$ and by Theorem \ref{mainthm:Carnot}, $p_{OE}(G,H)>1$ if $G$ and $H$ have isomorphic Carnot. Obviously, $p_{ME}(G,H)\geq p_{OE}(G,H)$, and we suspect these quantities should be equal. 
This bring us to the following problem.

\begin{problem}
    Compute or estimate $p_{ME}(G,H)$ for all pairs of non-isomorphic simply connected nilpotent Lie groups $G$ and $H$.
    Do we always have $p_{ME}(G,H)<\infty$? 
\end{problem}
\Cref{cor:converse-Austin} provides a full answer when $\gr(G)$ and $\gr(H)$ are not isomorphic.
Clearly, the real challenge concerns the case when $\gr(G)$ and $\gr(H)$ are isomorphic. Indeed, a positive answer to the question above would not only prove the quasi-isometry conjecture, but actually refine it in a quite interesting way. 
Our first contribution to this problem is the following quantitative version of Theorem \ref{mainthm:Carnot}, of which \Cref{mainthm:Carnot} for simply connected nilpotent groups is a direct consequence.

\begin{theorem}\label{thmintro:quantitativeMain}
 Let $G$ be a simply connected nilpotent Lie group of step $s_G$. Then $p_{OE}(G,\gr(G))\geq \frac{s_G}{s_G-(1-e_G)}$.
 If, moreover, the difference between $\mathfrak{g}$ and $\gr(\mathfrak{g})$ lies in the $s_G$-th term of the lower central series, then this bound can be improved to $p_{OE}(G,\gr(G))\geq \frac{1}{e_G}$.
\end{theorem}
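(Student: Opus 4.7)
The plan is to upgrade Cornulier's $r^{e_G}$-sublinear bilipschitz equivalence $f\colon G\to\gr(G)$ into an $L^p$-integrable orbit equivalence between ergodic pmp actions of $G$ and $\gr(G)$, using the Folner tiling construction developed earlier in the paper. Identifying $G$ and $\gr(G)$ with their common underlying manifold via exponential coordinates, we may take $f$ to be the identity of manifolds, so that its sublinear defect $v(r)=O(r^{e_G})$ quantifies, at scale $r$, the discrepancy between the two group laws in the sub-Finsler metric on $\gr(G)$.

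Fix a large scale $R$ and a Folner set $F_R\subset\gr(G)$ of sub-Finsler radius $R$. The Folner tiling construction produces an orbit equivalence $\Phi$ between ergodic pmp actions $G\curvearrowright X$ and $\gr(G)\curvearrowright Y$ that matches tiles via $f$ and is given by the Mal'cev identification inside each tile. For a generator $g\in G$, the cocycle $\alpha(g,\cdot)\colon X\to\gr(G)$ agrees with the corresponding generator of $\gr(G)$ on the interior of each tile, and is corrected by a term of sub-Finsler norm at most $v(R)=O(R^{e_G})$ on the $O(1)$-thick boundary layer where moving by $g$ crosses into an adjacent tile. Since this boundary has relative measure $\asymp 1/R$,
\[
  \int_X \left\lvert \alpha(g,x) \right\rvert^p_{\gr(G)}\,dx \;\lesssim\; \frac{1}{R}\cdot R^{e_G p} + O(1),
\]
which stays bounded as $R\to\infty$ whenever $p<1/e_G$. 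An appropriate limit of such tilings then yields an orbit equivalence that is $L^p$-integrable for all such $p$. This is exactly the improved bound $1/e_G$ in the top-layer case: when the difference between $\mathfrak{g}$ and $\gr(\mathfrak{g})$ lies in $\gamma_{s_G}(\mathfrak{g})$, the defect is confined to the top layer $m_{s_G}$ and no lower-layer corrections enter the cocycle.

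In general, the discrepancy between the group laws can introduce cocycle contributions in any intermediate layer $m_i$ for $2\leq i\leq s_G$. Each layer must be estimated separately using the anisotropic Carnot scaling $\lvert h\rvert_{\gr(G)}\asymp\max_i\|h_i\|_{m_i}^{1/i}$ together with the appropriate Mal'cev shape of the tile, and the various contributions must be balanced against one another; carrying out this bookkeeping yields the weaker bound $p_{OE}(G,\gr(G))\geq s_G/(s_G-(1-e_G))$.

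The main obstacle is precisely this layer-wise analysis. One has to extract a refined, layer-by-layer decomposition of Cornulier's SBE that is consistent with its global $r^{e_G}$ bound, and to shape the Folner tiles so that the defect in each layer localises correctly. In the top-layer-only case this technical step disappears since the defect has nowhere else to go, which is the structural reason for the sharper exponent $1/e_G$.
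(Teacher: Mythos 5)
Your overall framework matches the paper's: both build an orbit equivalence from the Følner tiling construction, use Cornulier's sublinear bound to control the discrepancy between the two group laws, and then perform an integrability computation summing over scales $R = 2^k$. Your central-case heuristic is essentially correct (the contribution from scale $2^k$ is $\lesssim 2^{-k}\cdot 2^{ke_G p}$, and summing over $k$ forces $p < 1/e_G$).

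However, there is a genuine gap in your treatment of the general case, and it is not where you locate it. You claim the cocycle, on the boundary layer at scale $R$, is bounded by Cornulier's SBE defect $v(R) = O(R^{e_G})$ — but this is only true when the error terms are central. In the paper's tiling, an element of $T_{k,\gr(G)}$ has the form $g = f_0 \ast_{\gr} f_1 \ast_{\gr} \cdots \ast_{\gr} f_k$, and Cornulier's estimate (Lemma 6.2) produces error terms $v_0,\dots,v_k$ with $|v_\ell| \lesssim 2^{\ell e_G}$ such that
\[
s \ast \phi(g) = v_0 \ast \phi_0(f_0^{(s)}) \ast v_1 \ast \phi_1(f_1^{(s)}) \ast \cdots \ast v_k \ast \phi_k(f_k^{(s)}).
\]
To compare this with $\phi(s\ast_{\gr} g) = \phi_0(f_0^{(s)})\ast\cdots\ast\phi_k(f_k^{(s)})$, one must gather the $v_\ell$ to the front, and each move costs a commutator $[v_\ell, x_\ell]$ with $x_\ell$ a partial product of word length $\lesssim 2^\ell$. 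This is the missing ingredient: the paper's Proposition 5.5 / Corollary 5.6 shows that in a step-$s_G$ group, an element $y$ with $|y| \lesssim |x|^{1-\varepsilon}$ satisfies $|[x,y]| \lesssim |x|^{1-\varepsilon/s_G}$, so each commutator contributes $\lesssim 2^{\ell(1 - (1-e_G)/s_G)}$, and this dominates the $2^{\ell e_G}$ terms. Summing gives the cocycle bound $O(2^{k(1-(1-e_G)/s_G)})$, hence the exponent $s_G/(s_G-(1-e_G))$. In the central case the $v_\ell$ lie in $\gamma_{s_G}(G) \leq Z(G)$, the commutators vanish identically, and one recovers $O(2^{ke_G})$ — this, rather than the absence of ``lower-layer corrections'' per se, is the precise structural reason for the sharper exponent.

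Consequently, your proposal to ``extract a refined, layer-by-layer decomposition of Cornulier's SBE'' is the wrong direction: the exponent $s_G/(s_G-(1-e_G))$ does not arise from decomposing the SBE by filtration layers, but from a word-length estimate for commutators (distortion of nested elements in a step-$s_G$ group) applied to the rearrangement of the product. Without this commutator bookkeeping your single-scale heuristic predicts $1/e_G$ in both cases, which overstates the general bound.
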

The bound  $\frac{1}{e_G}$ is best possible considering the techniques employed in this paper, and we suspect that it should hold in general. Conjecturing the equality $p_{OE}(G,\gr(G))=\frac{1}{e_G}$ to hold in complete generality is tempting, but probably too optimistic given our current level of understanding. Just proving it in certain situations already seems challenging. 

We illustrate the bound $p_{OE}(G,\gr(g))\geq \frac{1}{e_G}$ with a concrete family of examples, which will also allow us to prove its asymptotic optimality in a special case (see \Cref{secintro:obstruction}). Recall that the model filiform group $L_m$ of dimension $m$ (and nilpotency class $m-1$) is the simply connected nilpotent Lie group whose Lie algebra is defined by the presentation
\begin{equation}\label{eqn:filiform-Lie-algebra}
    \mathfrak{l}_m=\left\langle x_1,\dots, x_m \left\mid 
    \begin{array}{ll} 
    \left[x_1,x_i\right]= x_{i+1},~ &2\leq i \leq m-1,\\ \left[x_i,x_j\right]=1,~ &\mbox {else} \end{array} \right. \right\rangle.
\end{equation}
Observe that $L_3$ is the real 3-Heisenberg group.

The centre of $L_m$, for $m\geq 3$, is isomorphic to $\mathbb{R}$, allowing us to take the central product $G_{m,n}:=L_m \times_Z L_n$. Here we define the \emph{central product} $K\times_{\theta} L$ of arbitrary groups $K$, $L$ equipped with an isomorphism $\theta: Z(K)\to Z(L)$ of their centres as the quotient $(K\times L)/\left\{(z,\theta(z))\mid z\in Z(K)\right\}$. We write $K\times _Z L$ if the central product does not depend on $\theta$ up to isomorphism. Observe that $L_3\times _Z L_3$ is the real 5-Heisenberg group. 

If $m>n\geq 3$, then a simple Lie algebra calculation shows that $\gr(G_{m,n})\cong L_{m}\times L_{n-1}\not \cong G_{m,n}$. Moreover, by \cite[Proposition 6.13]{Cor-19}, we have $e_{G_{m,n}}=\frac{n-1}{m-1}$. This implies:
\begin{proposition}\label{prop:G-m-n-OE}
    For all $m>n\geq 3$ and all $p<\frac{m-1}{n-1}$, the groups $G_{m,n}$ and $L_{m}\times L_{n-1}$ are $L^p$ OE. In other words $p_{OE}(G_{m,n},L_{m-1}\times L_n)\geq \frac{m-1}{n-1}=\frac{1}{e_{G_{m,n}}}$.
\end{proposition}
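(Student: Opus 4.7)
The plan is to apply the improved bound in \Cref{thmintro:quantitativeMain}: if $\mathfrak{g}$ and $\gr(\mathfrak{g})$ differ only through brackets whose values lie in $\gamma_{s_G}(\mathfrak{g})$, then $p_{OE}(G,\gr(G))\geq 1/e_G$. Combined with the computation $e_{G_{m,n}}=(n-1)/(m-1)$ from \cite[Proposition 6.13]{Cor-19}, and with $\gr(G_{m,n})\cong L_m\times L_{n-1}$, this yields the desired inequality $p_{OE}(G_{m,n},L_m\times L_{n-1})\geq (m-1)/(n-1)$. Everything thus reduces to verifying the top-term hypothesis for $G_{m,n}$.

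First, I would record the nilpotency step of $G_{m,n}$: because $m>n\geq 3$, the $L_m$ factor has strictly larger step than the $L_n$ factor, and identifying the one-dimensional centres $Z(L_m)=\mathrm{span}(x_m)$ and $Z(L_n)=\mathrm{span}(y_n)$ does not disturb the top bracket of $L_m$. Hence $s_G=m-1$ and $\gamma_{m-1}(\mathfrak{g}_{m,n})=\mathrm{span}(x_m)=\mathrm{span}(y_n)$ is one-dimensional. A direct inspection of the lower central series from \eqref{eqn:filiform-Lie-algebra} shows that the generator $x_i$ (resp.\ $y_j$) has filtration degree $i-1$ (resp.\ $j-1$), with the single exception of $y_n=x_m$, whose filtration degree jumps from $n-1$ (expected from the $L_n$ factor) to $m-1$ (forced by the $L_m$ factor).

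Next, I would run through the defining brackets of $\mathfrak{l}_m$ and $\mathfrak{l}_n$ to locate any discrepancy between $[\cdot,\cdot]$ and $[\cdot,\cdot]_{\gr}$. The brackets $[x_1,x_i]=x_{i+1}$ for $2\leq i\leq m-1$ and $[y_1,y_j]=y_{j+1}$ for $2\leq j\leq n-2$ produce elements in exactly the predicted filtration degree, so they transfer unchanged to $\gr(\mathfrak{g}_{m,n})$. The sole exception is $[y_1,y_{n-1}]=y_n=x_m$: the expected output level is $\gamma_{n-1}/\gamma_n$, but $x_m\in\gamma_{m-1}\subseteq\gamma_n$ (using $m>n$), so its image in that quotient vanishes and $[y_1,y_{n-1}]_{\gr}=0$. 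Consequently the entire discrepancy between the two brackets is supported on $\mathrm{span}(x_m)=\gamma_{m-1}=\gamma_{s_G}$, which is precisely the hypothesis of the improved bound in \Cref{thmintro:quantitativeMain}.

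The main obstacle is the bookkeeping in the third paragraph: one must ensure that no other bracket produces an anomaly landing strictly below $\gamma_{s_G}$, since even a single such bracket would block the improved bound and force us back to the weaker estimate $s_G/(s_G-(1-e_G))$. Fortunately, the rigidity of the filiform presentation and the fact that the two factors $L_m$ and $L_n$ interact only through their one-dimensional centres make this a tractable, finite check. All other ingredients — the value $e_{G_{m,n}}=(n-1)/(m-1)$ and the iso\-morphism $\gr(G_{m,n})\cong L_m\times L_{n-1}$ — are either imported from Cornulier's work or obtained from the same graded computation above.
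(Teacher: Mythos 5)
Your proof is correct and takes essentially the same route the paper intends: apply the second (improved) bound of \Cref{thmintro:quantitativeMain}, import $e_{G_{m,n}}=(n-1)/(m-1)$ from \cite[Proposition 6.13]{Cor-19}, and verify $\gr(G_{m,n})\cong L_m\times L_{n-1}$ together with the top-term hypothesis. The paper leaves the Lie-algebra bookkeeping as ``a simple calculation,'' whereas you spell it out — identifying the unique anomalous bracket $[y_1,y_{n-1}]=y_n=x_m\in\gamma_{m-1}$, confirming $s_G=m-1$, and checking that $G_{m,n}/\gamma_{s_G}(G_{m,n})\cong L_{m-1}\times L_{n-1}\cong \gr(G_{m,n})/\gamma_{s_G}(\gr(G_{m,n}))$ — which is exactly what is needed.
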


\subsection{Constructions of explicit OE-couplings}\label{secintro:Folnertilings}

To prove the existence of $L^p$ OE couplings between nilpotent groups (\Cref{mainthm:Carnot,mainthm:growth,thmintro:quantitativeMain}), we build explicit orbit equivalences from F\o lner tiling sequences. F\o lner tiling sequences were introduced under different names by Danilenko \cite{Dan-16} and by Cecchi and Cortez \cite{CecCor-19} and then, independently, under the name F\o lner tiling sequences by Delabie, Koivisto, Le Ma\^itre and Tessera in \cite{DKLMT-22}. The latter authors used them to construct explicit orbit equivalences between finitely generated amenable groups which satisfy good integrability conditions, including some very specific examples of nilpotent groups, as we mentioned above. In \Cref{sec:orbit-equivalence} we will introduce them in the more general context of unimodular locally compact second countable amenable groups and check that the required results from \cite{DKLMT-22} generalise to this situation.

Given an amenable group $G$, a F\o lner tiling sequence provides a sequence of subsets $(F_{k})$ of $G$ such that the tiles $T_k:= F_0 \cdot \dots \cdot F_{k}$ form a F\o lner sequence for $G$ and such that elements of $T_{k}$ decompose uniquely as $x_0\cdot \cdots \cdot x_k$ with $x_i\in F_i$. Left multiplication then induces a left action on $X_G:=\prod_{k\geq 0} F_k$, which preserves a standard Borel probability measure induced by equipping the $F_i$ with the normalised counting measure (if they are finite) or the normalised Haar measure (if they are precompact). The action induces the cofinite equivalence relation on $X_G$. This means that if we have a second amenable group $G'$ equipped with a F\o lner tiling sequence $(F_i')$ and there are bijections $F_i\to F_i'$, then the induced map $\phi: X_G\to X_{G'}$ defines an orbit equivalence between $G$ and $G'$. Determining the integrability of this coupling amounts to determining the integrability of the function $x\mapsto d_{S_{G'}}(\phi(s\mult_G x), \phi(x))$, where $d_{S_{G'}}$ is the word metric on $G'$ with respect to a compact generating set $S_{G'}$ for $G'$, $s\in S_G$ takes values in a compact generating set $S_G$ for $G$, and $x\in X_G$.

To apply this to nilpotent groups (simply connected Lie or finitely generated), we construct an explicit F\o lner tiling sequence for every such group in \Cref{sec:explicit-tilings-for-nilpotent-groups}. For simply connected nilpotent Lie groups, we obtain it by first decomposing elements in normal form with respect to a suitable basis of the Lie algebra that respects the lower central series, and then defining rectangular tiles in the coordinates of this normal form, scaled by the distorsion of the corresponding terms of the lower central series; in the finitely generated case we pass through the Malcev completion. Given a pair of nilpotent groups (simply connected Lie or finitely generated), we can then adapt the sizes of tiles so that the above recipe induces an orbit equivalence between them. The integrability conditions asserted in \Cref{mainthm:growth} then follow from a straight-forward argument, similar to the one employed for special cases in \cite{DKLMT-22}.

Showing that for the case of two nilpotent groups with the same Carnot there is an $L^p$ OE with $p>1$ is more subtle and requires additional work. To see this, we start by observing that given a simply connected nilpotent group $G$, a basis for its Lie algebra $\mathfrak{g}$ as above induces a vector space isomorphism $\mathfrak{g}\to \gr (\mathfrak{g})$. The difference in the group laws between $G$ and $\gr(G)$ with respect to this identification can be determined in terms of the Baker Campbell Hausdorff (BCH) formula. Cornulier \cite{Cor-19} proved that, up to a multiplicative constant that depends on the number of factors, this difference is bounded above by the word length of the factors composed with a sublinear function $r\mapsto r^{t}$, where for a suitable basis we can choose $t=e_G$. We prove \Cref{mainthm:Carnot,thmintro:quantitativeMain} for the simply connected Lie group case in \Cref{sec:length-of-commutators,sec:OE-same-Carnot,sec:OE-same-Carnot-central} by choosing compatible normal forms (and thus F\o lner tilings) for $G$ and $\gr(G)$ coming from the identification of their Lie algebras and then carefully exploiting Cornulier's sublinear bound. Finally, in \Cref{sec:finitely-generated-same-Carnot} we treat the finitely generated case of \Cref{mainthm:Carnot,thmintro:quantitativeMain}: after passing to a suitable torsion-free finite index subgroup and a compatible basis of the Lie algebra $\mathfrak{g}$ of the corresponding Malcev completion the arguments are similar as in the simply connected case.

\subsection{Obstruction to $L^p$ ME: inducing distorted central extensions}\label{secintro:obstruction}
So far none of the tools (based on induction of cohomology) developed by Shalom and Sauer extend to $L^p$ ME. This may seem surprising, because induction techniques are a priori available in this setting. However, the difficulty is to prove that induction induces an injective map in cohomology. This crucial part of the argument in Shalom or Sauer's works is missing under the weaker assumption of $L^p$ ME, and the whole strategy falls apart.
Here, we introduce a new method which allows to induce $2$-cocycles that define central extensions of maximal nilpotency class. 

\begin{theorem}\label{thm:extension}
Let $s\in\N$, let $G$ and $H$ be simply connected, $(s-1)$-step nilpotent Lie groups that are $L^{p}$-measure equivalent for some $p>s$ and let $\tilde{H}$ be a central extension of $H$ by $\R$ such that $\tilde{H}$ has nilpotency class $s$.
Then $G$ has a central extension $\tilde{G}$ by $\R$ of nilpotency class $s$.
\end{theorem}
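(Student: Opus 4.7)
The plan is to transport, via the $L^p$ ME coupling, the $2$-cocycle $\omega\in Z^2(H,\R)$ classifying $\tilde H$ to a real-valued $2$-cocycle on $G$ whose associated central extension is still $s$-step nilpotent. The integrability threshold $p>s$ is dictated by a polynomial bound on $\omega$: in Mal'cev coordinates on $H$ adapted to the lower central series, the Baker--Campbell--Hausdorff formula shows that the extra central coordinate of $\tilde H$ is a polynomial of Carnot-homogeneous degree $s$, so
\[
|\omega(h_1,h_2)|\lesssim \bigl(1+\|h_1\|_H+\|h_2\|_H\bigr)^s.
\]

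First, I would fix an $L^p$ ME coupling $(\Omega,m)$ with Zimmer cocycle $\alpha\colon G\times X_H\to H$ satisfying $\|\alpha(g,\cdot)\|_H\in L^p(X_H)$ for every $g\in G$, and induce $\omega$ by the standard formula
\[
\tilde\omega(g_1,g_2)(x):=\omega\bigl(\alpha(g_1,g_2\cdot x),\,\alpha(g_2,x)\bigr).
\]
Combining the polynomial bound with two applications of H\"older's inequality (with exponents summing to $s<p$) gives $\tilde\omega(g_1,g_2)\in L^1(X_H,\R)$, and the cocycle identities for $\omega$ and $\alpha$ yield $\tilde\omega\in Z^2(G,L^1(X_H,\R))$. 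At this stage one has only a coefficient-valued cocycle: its naive integral over $X_H$ can well vanish due to cancellations, so no genuine $\R$-valued cocycle can be read off directly.

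The core of the argument is a scaling limit, powered by Cantrell's theorem. For an $L^p$-ME coupling between simply connected nilpotent Lie groups with $p$ large enough, this theorem asserts that the rescaled Zimmer cocycle $g\mapsto \delta_{H,1/n}(\alpha(\delta_{G,n}g,x))$ converges almost surely and in $L^p$ to a measurable group isomorphism $\Phi\colon\gr(G)\to\gr(H)$. In parallel, the rescaled extension cocycles $\omega_n(h_1,h_2):=n^{-s}\omega(\delta_{H,n}h_1,\delta_{H,n}h_2)$ converge to a cocycle $\omega_\infty\in Z^2(\gr(H),\R)$ defining $\gr(\tilde H)$; the latter is still $s$-step nilpotent, because Carnot dilations respect the Mal'cev filtration and the $s$-th iterated bracket of $\tilde H$ lives in the top stratum (the $\R$-direction), which survives the limit. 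Plugging $\omega_n$ together with the rescaled $\alpha$ into the induction formula and passing to the limit---the step that requires the strict inequality $p>s$ via uniform integrability across scales---should produce a genuine $\R$-valued cocycle $\bar\omega:=\Phi^*\omega_\infty\in Z^2(\gr(G),\R)$ defining an $s$-step central extension of $\gr(G)$.

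Finally, I would transfer this extension from $\gr(G)$ back to $G$ itself. Because $G$ is $(s-1)$-step nilpotent, the iterated-bracket maps $V^{\otimes(s-1)}\to\gamma_{s-1}(\mathfrak g)$ and $V^{\otimes s}\to\gamma_s(\mathfrak g)=\{0\}$ (where $V=\mathfrak g/\gamma_2\mathfrak g$) coincide with their $\gr(\mathfrak g)$-counterparts under the canonical vector-space identification, and the defining condition for an $s$-step central extension---non-vanishing of the cocycle on $\mathfrak g\times\gamma_{s-1}(\mathfrak g)$---only involves this top-degree information. Consequently the top-degree component of $[\bar\omega]\in H^2(\gr\mathfrak g,\R)$ lifts canonically to a class in $H^2(\mathfrak g,\R)$ with the same non-vanishing property, and the associated central extension $\tilde G$ is $s$-step nilpotent. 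The hard part is the scaling-limit step: proving that the limit of $\tilde\omega_n$ is non-zero and actually equals $\Phi^*\omega_\infty$ (rather than collapsing to a coboundary) requires combining Cantrell's almost-everywhere convergence with uniform integrability estimates and with control of BCH remainders, and this is precisely where the slack $p>s$ beyond $p\ge s$ is essential.
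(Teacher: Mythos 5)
Your high-level plan agrees with the paper's strategy in its essentials: induce the extension cocycle of $\tilde H$ across the $L^p$ coupling, exploit a degree-$s$ polynomial bound together with Cantrell's scaling-limit theorem, and conclude that the resulting extension of $G$ has nilpotency class $s$. The places where you diverge from the actual proof, and where I see genuine issues, are the following.

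First, the worry that the integrated cocycle might collapse ``due to cancellations'' is a red herring. The paper simply sets $c'(g_1,g_2)=\int_{X_H}c(\alpha(g_1,x),\alpha(g_2,g_2^{-1}\cdot x))\,d\mu(x)$ and verifies directly (using the cocycle identities for $c$ and $\alpha$ and $\mu$-invariance of the $G$-action) that this is an honest $\R$-valued $2$-cocycle on $G$; no detour through an $L^1(X_H,\R)$-valued cocycle is needed. The real difficulty, which your plan does not address head-on, is to show that the extension $\tilde G$ it defines has class exactly $s$ rather than $s-1$.

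Second, the polynomial bound you state, $|\omega(h_1,h_2)|\lesssim(1+\|h_1\|_H+\|h_2\|_H)^s$, is too coarse to run the scaling limit. What the paper proves (its Proposition on the cocycle bound) is the bilinear refinement $|c(h_1,h_2)|\lesssim |h_1|^{s-1}|h_2|+|h_1||h_2|^{s-1}$. This refinement is exactly what makes the ``continuity at scale $R$'' lemma work: if $u,\bar u$ and $v,\bar v$ are $\eps R$-close and have length $\lesssim R$, the cocycle relation rewrites $c(u,v^{-1})-c(\bar u,\bar v^{-1})$ in terms of $c$ evaluated at pairs with one short argument, and the bilinear bound then yields $\eps R^s$ rather than merely $R^s$. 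Without that factor of $\eps$, the convergence of the rescaled induced cocycle cannot be controlled. You should derive this sharper bound from BCH before invoking it.

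Third, and most seriously, the ``transfer back from $\gr(G)$ to $G$'' step is where a gap sits. Lifting a $2$-cocycle class from $H^2(\gr\mathfrak g,\R)$ to $H^2(\mathfrak g,\R)$ is not canonical: a map that satisfies the $2$-cocycle identity with respect to the graded bracket need not satisfy it with respect to the bracket of $\mathfrak g$, and your appeal to coincidence of top-degree iterated brackets does not by itself produce a cocycle on $\mathfrak g$. Indeed, the groups $G_{m,n}$ in the paper are precisely examples where $\gr(G)$ admits an $s$-step central extension but $G$ does not, so such a transfer cannot be a formality. The paper circumvents this entirely: it never constructs an extension of $\gr(G)$. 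Instead it builds $\tilde G$ directly as the extension of $G$ by $c'$ and argues by contradiction: if $\tilde G$ had class $s-1$, then by Lemma~\ref{lem:commute} the antisymmetrisation $c'(u_{1,n},[u_{2,n},\ldots,u_{s,n}])-c'([u_{2,n},\ldots,u_{s,n}],u_{1,n})$ would vanish identically for lifts of commuting elements, whereas the corresponding $c$-antisymmetrisation computes the $s$-fold commutator $[h_1^n,\ldots,h_s^n]=[h_1,\ldots,h_s]^{n^s}$ in $\tilde H$, which grows like $n^s$; Lemma~\ref{lem:convcocycle} (your scaling-limit comparison, powered by Cantrell) forces the difference of these two quantities to be $o(n^s)$, a contradiction. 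You could repair your plan either by adopting this contradiction argument, or by proving the nontrivial intermediate claim that $\gr(\tilde G)$ is isomorphic to the central extension of $\gr(G)$ by the scaling limit of $c'$ (which is what the introduction of the paper sketches, but the actual proof replaces by the cleaner commutator argument).
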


To our knowledge, \Cref{thm:extension} provides the first obstruction to the existence of a $L^p$ ME between two simply connected nilpotent groups with isomorphic associated Carnot. As a consequence, we obtain the first known {\it finite} upper bounds on $p_{ME}(G,\gr(G))$ for an infinite family of (non-Carnot) simply connected nilpotent Lie groups.

\begin{corollary}\label{cor:NotLp}
	For $m>n\geq 3$ the groups $G_{m,n}$ and $L_m\times L_{n-1}$ have isomorphic associated Carnot groups, but are not $L^p$ ME for any $p>m$. In particular, $p_{ME}(G_{m,n},\gr(G_{m,n}))\leq m$.
\end{corollary}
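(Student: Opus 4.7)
The plan is to deduce the corollary from \Cref{thm:extension} applied in its contrapositive form. The groups $G_{m,n}$ and $L_m\times L_{n-1}$ are both simply connected nilpotent of step $m-1$. Since $L_{m+1}$ has one-dimensional center with quotient $L_m$, the product $\tilde H:=L_{m+1}\times L_{n-1}$ is a central extension of $L_m\times L_{n-1}$ by $\R$ of nilpotency class exactly $m$ (using $n-2<m$). Hence, if $G_{m,n}$ and $L_m\times L_{n-1}$ were $L^p$ ME for some $p>m$, \Cref{thm:extension} with $s=m$ would produce a central extension of $G_{m,n}$ by $\R$ of class $m$. The main task is therefore to rule out any such extension.

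At the Lie algebra level, such extensions are classified by $H^2(\mathfrak{g}_{m,n},\R)$. In the natural basis $x_1,\dots,x_{m-1},y_1,\dots,y_{n-1},z$, where $z$ is the identified center (so $[x_1,x_{m-1}]=[y_1,y_{n-1}]=z$ and $[x_i,y_j]=0$), a quick lower central series computation gives $\gamma_{m-1}(\mathfrak{g}_{m,n})=\langle z\rangle$. For any $2$-cocycle $\omega$, an inductive expansion of a right-normed iterated bracket of length $m$ in $\tilde{\mathfrak g}_{m,n}$, exploiting that the accumulated $\R z$-corrections are central and hence killed at the next bracket, reduces it to $\omega(X_1,A_2)\,z$ with $A_2\in\gamma_{m-1}(\mathfrak{g}_{m,n})=\langle z\rangle$. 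Consequently the extension has class $m$ if and only if $\omega(\cdot,z)\not\equiv 0$; and since a coboundary $\delta f(X,Y)=f([X,Y])$ vanishes whenever one argument is $z$ (centrality), this criterion depends only on the cohomology class.

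The crucial last step, which I expect to be the most delicate and where the specific central-product structure of $G_{m,n}$ enters, is to show $\omega(\cdot,z)\equiv 0$ for every cocycle $\omega$. The key observation is that $z$ admits two independent commutator expressions $z=[x_1,x_{m-1}]=[y_1,y_{n-1}]$ whose respective factors commute. For any $v\in\{x_1,\dots,x_{m-1}\}$, the cocycle identity applied to the triple $(v,y_1,y_{n-1})$ reads
\[
\omega([v,y_1],y_{n-1})+\omega([y_1,y_{n-1}],v)+\omega([y_{n-1},v],y_1)=0,
\]
whose outer two terms vanish since $[v,y_j]=0$, forcing $\omega(z,v)=0$. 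The symmetric argument applied to $v\in\{y_1,\dots,y_{n-1}\}$ via $(v,x_1,x_{m-1})$ gives the same vanishing, and $\omega(z,z)=0$ by antisymmetry. This rules out any class-$m$ central extension of $G_{m,n}$, yielding the desired contradiction and the bound $p_{ME}(G_{m,n},\gr(G_{m,n}))\leq m$.
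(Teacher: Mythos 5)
Your overall strategy matches the paper's: apply \Cref{thm:extension} in contrapositive with $s=m$, $H=L_m\times L_{n-1}$, $\tilde H = L_{m+1}\times L_{n-1}$. Where you diverge is in how you establish that $G_{m,n}$ has no central extension of class $m$: the paper simply cites \cite[Lemma 7.10]{LIPT-23}, whereas you reprove this from scratch with a short Lie-algebra cohomology computation. Your argument is correct. Passing to the Lie algebra, one indeed reduces to showing that for every Chevalley--Eilenberg $2$-cocycle $\omega$ on $\mathfrak{g}_{m,n}$ one has $\omega(z,\cdot)\equiv 0$: since $\gamma_{m-1}(\mathfrak{g}_{m,n})=\langle z\rangle$ and $\gamma_m=0$, and since a right-normed bracket $[\tilde X_1,\ldots,\tilde X_m]$ in the extension equals $\omega(X_1,[X_2,\ldots,X_m])\,c$ with $[X_2,\ldots,X_m]\in\langle z\rangle$, the extension has class $m$ iff $\omega(\cdot,z)\not\equiv 0$. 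Your use of the Jacobi/cocycle identity on the triples $(v,y_1,y_{n-1})$ and $(v,x_1,x_{m-1})$ is the right move: because $v$ commutes with both factors of the other commutator expression for $z$, the identity collapses to $\omega(z,v)=0$, and this exactly exploits the central-product structure (two independent commutator witnesses for $z$, with disjoint support). That is a clean and genuinely self-contained substitute for the citation; the trade-off is that the cited lemma in \cite{LIPT-23} is more general, while your computation is tailored to $\mathfrak{g}_{m,n}$. One small point worth making explicit for completeness: the central extension $\tilde G$ produced by \Cref{thm:extension} should be taken to be a Lie group central extension (equivalently, governed by a continuous $2$-cocycle), so that the passage to $H^2(\mathfrak{g}_{m,n},\R)$ via van Est is legitimate; this is implicit in the paper as well, so it is a shared convention rather than a gap in your argument.
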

\begin{proof}
    The group $L_{m+1}\times L_{n-1}$ is a central extension of $L_m\times L_{n-1}=\gr(G_{m,n})$ by $\R$ of nilpotency class $m$, while it follows from \cite[Lemma 7.10]{LIPT-23} that $G_{m,n}$ does not have any central extension of class $m$. Thus, the assertion follows from \Cref{thm:extension}.
\end{proof}

Finally, by combining \Cref{prop:G-m-n-OE} and \Cref{cor:NotLp} we can show that for the family $H_m:=G_{m+1,3}$, with $m\geq 3$, the lower bounds on $p_{ME}(H_m,\gr(H_m))$ provided by \Cref{thmintro:quantitativeMain} are asymptotically optimal as the nilpotency class $m$ goes to infinity.
\begin{corollary}\label{cor:asymptotic-optimality}
    For all $m\geq 3$ the $m$-nilpotent group $H_m:= G_{m+1,3}$ satisfies
    \[
        e_{H_m}=\frac{m}{2}\leq p_{ME}(H_m,\gr(H_m))\leq m+1.
    \]
    In particular, $p_{ME}(H_m,\gr(H_m))\simeq m$.
\end{corollary}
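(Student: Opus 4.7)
The plan is to combine the lower bound from Proposition \ref{prop:G-m-n-OE} (equivalently, Theorem \ref{thmintro:quantitativeMain}) with the upper bound from Corollary \ref{cor:NotLp}, both specialised to the pair $(H_m,\gr(H_m))$. This pair corresponds to the parameters $(m',n') = (m+1,3)$ in the family $G_{m',n'}$, which satisfies the admissibility hypothesis $m' > n' \geq 3$ for every $m \geq 3$.

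First I would record the relevant data. From the paragraph preceding Proposition \ref{prop:G-m-n-OE}, $\gr(G_{m',n'}) \cong L_{m'} \times L_{n'-1}$, so $\gr(H_m) \cong L_{m+1}\times L_2$, and Cornulier's formula gives $e_{H_m} = \frac{n'-1}{m'-1} = \frac{2}{m}$, whose reciprocal is $m/2$. For the lower bound, Proposition \ref{prop:G-m-n-OE} applied with $(m',n') = (m+1,3)$ asserts that $H_m$ and $\gr(H_m)$ are $L^p$ OE for every $p < m/2$, whence
\[
p_{ME}(H_m,\gr(H_m)) \;\geq\; p_{OE}(H_m,\gr(H_m)) \;\geq\; \frac{m}{2}.
\]
For the upper bound, Corollary \ref{cor:NotLp} with the same parameters yields directly $p_{ME}(H_m,\gr(H_m)) \leq m' = m+1$. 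The asymptotic equivalence $p_{ME}(H_m,\gr(H_m)) \simeq m$ is then immediate, since both bounds are linear in $m$ with positive leading coefficient of order one.

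The main obstacle here is essentially bookkeeping: one must verify the admissibility condition $m+1 > 3 \geq 3$ for every $m \geq 3$, and that the Carnot graded group on the two sides coincides (which it does, since $\gr$ is intrinsic to the Lie algebra). All the substantive difficulty has already been absorbed into the two inputs: Proposition \ref{prop:G-m-n-OE} ultimately rests on the explicit F\o lner tiling orbit equivalences of Theorem \ref{thmintro:quantitativeMain}, while Corollary \ref{cor:NotLp} depends on the new induction argument of Theorem \ref{thm:extension} together with the absence of a class-$(m+1)$ central extension of $G_{m+1,3}$, extracted from \cite[Lemma 7.10]{LIPT-23}.
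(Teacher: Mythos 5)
Your proof is correct and matches the paper's intended argument exactly: the lower bound is Proposition \ref{prop:G-m-n-OE} applied to $G_{m+1,3}$ (giving $\frac{m'-1}{n'-1}=\frac{m}{2}=\frac{1}{e_{H_m}}$), and the upper bound is Corollary \ref{cor:NotLp} (giving $m'=m+1$). You also correctly identify that the displayed quantity should be $\frac{1}{e_{H_m}}=\frac{m}{2}$ rather than $e_{H_m}$, which is $\frac{2}{m}$ by Cornulier's formula; this appears to be a typo in the statement of the corollary.
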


This result should be compared to \cite[Theorem V]{GMLIP-23}, which proves the asymptotic optimality of Cornulier's constant with regards to the existence of an $r^t$-SBE for the same family of groups.

Let us end this paragraph with a word on the techniques used in the proof of Theorem \ref{thm:extension}. 
The first observation is that a cohomology class $c$ representing a central extension $\tilde{H}$ of an $(s-1)$-step nilpotent Lie group $H$ can be represented by a $2$-cocycle of at most polynomial growth of degree $s$ in the word metric. Hence $L^s$-integrability is enough to induce such a $2$-cocycle $c'$ on $G$. Assuming that the central extension $\tilde{H}$ has degree $s$, we need to prove that the same holds for the central extension $\tilde{G}$ of $G$ associated to $c'$. The strategy consists of proving that $\gr({\tilde{G}})$ and $\gr({\tilde{H}})$ are isomorphic. Indeed, this implies that $\gr({\tilde{G}})$, and therefore $\tilde{G}$ have nilpotency class $s$. In order to do so, we use a result of Cantrell \cite{Cantrell}, which nicely gathers Pansu's and Austin's theorems in one statement. Roughly speaking, Cantrell's theorem says that if $\alpha$ is the cocycle associated to a $L^1$-ME coupling from $G$ to $H$, then $\alpha$ converges (in probability) to an isomorphism $\varphi:\gr({G})\to\gr({H})$ after rescaling (see Proposition \ref{prop:cantrell}). 

Assuming that the coupling is $L^p$ with $p>s$, we can then exploit Cantrell's theorem to deduce that the induced $2$-cocycle $c'$ converges (after suitably rescaling it) to a $2$-cocycle on $\gr({H})$ corresponding to a central extension of $\gr({H})$ which is isomorphic to $\gr({\tilde{G}})$ (see Lemma \ref{lem:convcocycle} for this key step).

\subsection{Structure} In \Cref{sec:background-measure-orbit-equivalence} we summarise the necessary background material on measure and orbit equivalences, and their integrability. In \Cref{sec:orbit-equivalence,sec:explicit-tilings-for-nilpotent-groups,sec:length-of-commutators,sec:OE-same-Carnot,sec:OE-same-Carnot-central,sec:finitely-generated-same-Carnot} we introduce F\o lner tiling sequences and then use them to construct explicit orbit equivalences between nilpotent groups with good integrability properties. This proves \Cref{mainthm:Carnot,mainthm:growth,thmintro:quantitativeMain} and their consequences. We refer to \Cref{secintro:Folnertilings} for a summary of the main ideas going into the proof of these results, including details on the structure of these sections. In \Cref{sec:obstruction} we constrain the existence of an $L^p$ ME between two nilpotent groups in terms of their central extensions and, as a consequence, prove \Cref{thm:extension}. In \Cref{sec:questions} we list some questions that remain open following our work. Finally, in \Cref{sec:AustinBowenLC} we prove that results by Bowen, Austin and Cantrell generalise from discrete to locally compact groups.

\subsection{Notation and conventions}
We fix some notation and conventions that we will use throughout this work. Given two functions $f$ and $g$ that take values in the positive real numbers, we write $f\lesssim g$, if there exists a constant $C\geq 1$ such that $f(a)\leq Cg(b)$ for all $a$ and $b$ in the domain; the domain can for instance be elements of a given group and the constant $C$ could depend on this group. We write $f\simeq g$ if $f\lesssim g$ and $g\lesssim f$.

For elements $x,~y\in G$ of a group $G$ we will use the convention $[x,y]:= xyx^{-1}y^{-1}$. We will further use the notation $[x_1,\cdots,x_k]:=[x_1,[\dots, [x_{k-1},x_k]\dots]]$ for the $k$-fold commutator of elements $x_1,\dots, x_k\in G$.

We will use the convention that $\frac{1}{0}=+\infty$.

For a group $G$ we denote its group law and group action by $\cdot$ or $\ast$. Since in some situations we will have different groups acting on the same space, we will sometimes also write $\cdot_G$ or $\ast_G$ to avoid any confusion.

\subsection*{Acknowledgements} We thank Corentin Correia and Gabriel Pallier for helpful comments and suggestions on an earlier version of this work. The second author gratefully acknowledges support by the DFG projects 281869850 and 541703614.

\section{Measure and orbit equivalence}\label{sec:background-measure-orbit-equivalence}
In this section we recall basic notions around measure and orbit equivalences of groups. Both are natural notions of equivalences between groups in the context of their actions on measure spaces. Measure equivalence plays a similar role in measured group theory as quasi-isometries do in geometric group theory, where groups act by isometries on metric spaces. Orbit equivalence is a stronger notion than measure equivalence, where we preserve additional information about the orbits of the action. Here we will provide the basic definitions required in our work. We refer to \cite{Gab-10, Fur-11} and the references therein for further details, with a particular focus on the countable case and to \cite{KoiKyeRau-21} for further details on the locally compact case.

Often measure and orbit equivalence are considered in the context of countable groups, but they can be defined for locally compact second countable (short: lcsc) groups, which we shall do here. We recall that an lcsc group is called \emph{unimodular} if its Haar measure is biinvariant. Examples of unimodular lcsc groups include all countable groups, nilpotent Lie groups and semisimple Lie groups. We call an action of a unimodular lcsc group $G$ on a measure space $(\Omega,\mu)$ by isomorphisms of measure spaces \emph{essentially free} if for all $g\in G$ and almost every $x\in X$ the stabiliser $\left\{g\in G\mid  g\cdot x = x\right\}$ is trivial. We further call a subset $Y\subset \Omega$ a \emph{fundamental domain} for the $G$-action on $\Omega$ if, up to sets of measure zero, $\Omega$ decomposes as disjoint union $\bigsqcup_{g\in G}g\cdot Y$. 

\begin{definition}
    Let $G$, $H$ be unimodular lcsc groups equipped with Haar measures $\lambda_G$ and $\lambda_H$. A \emph{measure equivalence coupling between $G$ and $H$} is a measure space $(\Omega,\mu)$ and finite measure spaces $(X_{G},\mu_G)$, $(X_H,\mu_H)$ together with isomorphisms $\iota \colon (G,\lambda_G)\times (X_G,\mu_G)\to (\Omega,\mu)$ and $j \colon (H,\lambda_H)\times (X_H,\mu_H)\to (\Omega,\mu)$ of measure spaces, such that the actions $G\curvearrowright (\Omega,\mu)$ and $H\curvearrowright (\Omega,\mu)$ defined by $g\cdot \iota(g',x_G)=\iota(g\cdot g',x_G)$, $h\cdot j(h',x_H)=j(h\cdot h, x_H)$ commute. We call $G$ and $H$ \emph{measure equivalent} and write $G\sim_{ME} H$, if there is a measure equivalence coupling between them.
\end{definition}

\begin{remark}
    The existence of a measure equivalence coupling between $G$ and $H$ implies the existence of commuting actions of $G$ and $H$ on an infinite measure space $(\Omega,\mu)$ so that both actions admit measurable fundamental domains of finite measure. In particular, the actions of $G$ and $H$ on $\Omega$ are essentially free. For countable groups this condition is equivalent to the existence of a measure equivalence coupling between them, see \cite{Fur-11}. In the uncountable case it is usually not possible to choose a measurable fundamental domain for an action on an arbitrary measure space $(\Omega,\mu)$. This motivates the above somewhat more technical definition of measure equivalence, which essentially amounts to making the existence of such a fundamental domain part of the assumptions.
\end{remark}

Some classes of groups are very rigid under measure equivalence. An important result in this direction due to Furman is that every countable group which is measure equivalent to a lattice in a semi-simple Lie group $G$ of real rank at least $2$ is itself commensurable to a lattice in $G$, see \cite{Fur-99} and also \cite[Example 2.2]{Fur-11}. However, in other classes measure equivalence is a much less rigid concept. For instance, a result of Ornstein and Weiss \cite{OrnWei-80} says that all infinite countable amenable groups form a single measure equivalence class. This has been extended to the class of non-compact unimodular amenable lcsc groups by Koivisto, Kyed and Raum \cite{KoiKyeRau-21}.

In this context it is natural to ask for a finer quantitative notion of measure equivalence that can for instance distinguish between different amenable groups. This can be achieved by considering an additional integrability condition on the associated cocycles. More precisely, given a measure equivalence from $G$ to $H$, we can define a map $\alpha 
 \colon G\times X_H \to  H$ almost everywhere by $\alpha(g,x)\mult_H \left(g\mult_G x\right)=: T^g(x)\in X_H$, that is, $\alpha(g,x)\in H$ is the unique element which maps $g\mult_G x\in \Omega$ to $X_H$, where we identify $X_G$ and $X_H$ with $\left\{1\right\}\times X_G\subset G\times X_G$ and $\left\{1\right\}\times X_H\subset H\times X_H$. Analogously we define $\beta \colon H\times X_G \to G$ by $\beta(h,x)\cdot_G(h\cdot_H x)=:S^h(x)\in X_G$ for $h\in H$ and $x\in X_G$. One can check that the maps $\alpha$ and $\beta$ define measurable cocycles, meaning that they are measurable and for (almost) all $g_1,g_2\in G$ and $x\in X_G$ (resp. $h_1, h_2 \in H$ and $x\in X_H$) the identities
 \begin{equation}\label{eqn:cocycle-identities}
    \alpha(g_1\mult_G g_2,x) = \alpha(g_1, T^{g_2} (x))\mult_H \alpha(g_2,x) \mbox{ (resp. } \beta(h_1\mult_H h_2,x) = \beta(h_1, S^{h_2} (x)) \mult_G \beta(h_2,x) \mbox{)}
 \end{equation}
 hold. We will call $\alpha$ and $\beta$ the \emph{cocycles associated} with the measure equivalence coupling between $G$ and $H$. Moreover, the maps $(g,x)\mapsto T^{g}(x)$ and $(h,x)\mapsto S^h(x)$ define group actions of $G$ on $X_H$ and of $H$ on $X_G$.

 \begin{definition}
     Let $\phi, \psi \colon \mathbb{R}_{\geq 0}\to \mathbb{R}_{\geq 0}$ be non-decreasing functions and let $S_G\subset G$, $S_H\subset H$ be compact generating sets for $G$ and $H$. We will say that a measure equivalence coupling from $G$ to $H$ is \emph{$(\phi, L^0)$-integrable} if for every $g\in G$ there is a $c_g>0$ such that
     \[
        \int_{X_G} \phi\left(\frac{d_{S_H}(\alpha(g,x),1)}{c_g}\right) d\mu_G(x) < +\infty.
     \]
     We will say that the measure equivalence coupling from $G$ to $H$ is \emph{$(\phi,\psi)$-integrable} if, in addition, for every $h\in H$ there is a $c_h>0$ such that
     \[
        \int_{X_H} \psi\left(\frac{d_{S_G}(\alpha(h,x),1)}{c_h}\right) d\mu_H(x) < +\infty.        
     \]

     We say that a measure equivalence coupling from $G$ to $H$ is \emph{$(L^p,L^q)$-integrable} for $p,q \in \mathbb{R}_{>0}$, if we can choose $\phi(x)=x^p$ and $\psi(x)=x^q$, and $L^p$ integrable if it is $(L^p,L^p)$-integrable.
 \end{definition}

The above definitions raise the problem of actually constructing measure equivalences between groups that satisfy non-trivial integrability conditions. In this work we will consider this problem in the context of nilpotent groups, which form a specific class of amenable groups, using an approach introduced by Delabie, Koivisto, Le Ma\^itre and Tessera  \cite{DKLMT-22}. It relies on producing explicit orbit equivalences satisfying a non-trivial integrability condition.

\begin{definition}
    Let $G, H$ be unimodular lcsc groups that admit measure preserving actions $G\curvearrowright (X_G, \mu_G)$, $H\curvearrowright (X_H,\mu_H)$ on standard probability measure spaces $(X_G,\mu_G)$ and $(X_H,\mu_H)$. We call the actions \emph{orbit equivalent} if there is an isomorphism $f: (X_G,\mu_G)\to (X_H,\mu_H)$ of measure spaces such that for almost all $x\in X_G$ we have $f(G\cdot x)=H\cdot f(x)$.

    Similar as for measure equivalence couplings, we define measurable cocycles $\alpha \colon G\times X_G \to H$ and $\beta \colon H\times X_H \to G$ associated with an orbit equivalence coupling by $\alpha(g,x)\mult_H f(x) = f(g\mult_G x)$, resp. $\beta(h,x)\mult_G f^{-1}(x) = f^{-1}(h\mult _H x)$. They satisfy cocycle identities anologous to the ones in \Cref{eqn:cocycle-identities}, and we can define the $(\phi,L^0)$-, $(\phi,\psi)$-, $(L^p,L^q)$-, and $L^p$ integrability of an orbit equivalence.
\end{definition}

Recall that a measure preserving action of a group $G$ on a probability measure space $(X,\mu)$ is called \emph{ergodic} if for every $G$-invariant set $A\subseteq X_G$ we have $\mu(A)\in \left\{0,1\right\}$. Our interest in orbit equivalences stems from the following result, see \cite[Lemma 3.11]{KoiKyeRau-21}.
\begin{lemma}
    Let $G$ and $H$ be unimodular lcsc groups that admit orbit equivalent, ergodic, essentially free, measure preserving actions on standard Borel probability spaces. Then $G$ and $H$ are measure equivalent.
\end{lemma}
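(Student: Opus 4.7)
The strategy is to package the data of the orbit equivalence into a measure space with two commuting actions. Let $f : (X_G,\mu_G) \to (X_H,\mu_H)$ be the given orbit equivalence and let $\alpha : G \times X_G \to H$ be its associated cocycle, defined by $\alpha(g,x) \mult_H f(x) = f(g \mult_G x)$. I take as coupling space
\[
    \Omega := H \times X_G, \qquad \mu := \lambda_H \otimes \mu_G,
\]
where $\lambda_H$ is a Haar measure on $H$. I equip $\Omega$ with two commuting actions: the $H$-action by left translation on the first factor, $h' \mult (h,x) := (h'h, x)$, and the $G$-action twisted by the cocycle,
\[
    g \mult (h, x) := \bigl( h \mult_H \alpha(g,x)^{-1},\; g \mult_G x \bigr).
\]
The cocycle identity for $\alpha$ upgrades this formula to a genuine left action, and a short computation verifies that the two actions commute. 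Both preserve $\mu$: the $H$-action by left-invariance of $\lambda_H$, and the $G$-action by right-invariance of $\lambda_H$ (this is where unimodularity of $H$ enters) combined with the measure-preservation of $G \acts (X_G,\mu_G)$.

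It remains to exhibit the two trivialisations required by the definition of an ME coupling. For the $H$-side, using that $f$ transports $\mu_G$ to $\mu_H$, I set
\[
    j : (H,\lambda_H)\times(X_H,\mu_H) \to (\Omega,\mu), \qquad j(h, y) := (h, f^{-1}(y));
\]
it is a measure space isomorphism and tautologically intertwines left multiplication on the first factor with the $H$-action on $\Omega$. For the $G$-side, the natural candidate is the orbit map starting from $(e_H, x)$:
\[
    \iota : (G,\lambda_G)\times(X_G,\mu_G) \to (\Omega,\mu), \qquad \iota(g, x) := g \mult (e_H, x) = \bigl(\alpha(g,x)^{-1},\; g \mult_G x\bigr).
\]
By construction, $\iota$ intertwines left multiplication on the $G$-factor with the $G$-action on $\Omega$.

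The main obstacle will be showing that $\iota$ is an isomorphism of measure spaces, since this is the only step that simultaneously uses essential freeness and the unimodularity of $G$. Bijectivity (modulo null sets) follows from essential freeness together with the orbit equivalence: given $(h,y) \in \Omega$, the element $x := f^{-1}(h^{-1} \mult_H f(y))$ lies in the $G$-orbit of $y$ because $f$ carries $G$-orbits to $H$-orbits, and essential freeness then determines a unique $g \in G$ with $g \mult_G x = y$; this $(g,x)$ is the sole preimage. For measure preservation, after the measure-preserving substitution $y := g \mult_G x$ on the source, a Fubini argument reduces the claim to showing that for $\mu_G$-almost every $y$ the bijection
\[
    \phi_y : G \to H, \qquad \phi_y(g) := \alpha(g, g^{-1} \mult_G y)^{-1}
\]
pushes $\lambda_G$ forward to $\lambda_H$. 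In the countable case this is automatic (counting measure plus a bijection), but in the lcsc setting it must be extracted from the measured-groupoid structure carried by the orbit equivalence relation, using cross-sections together with the unimodularity of both $G$ and $H$ to transfer Haar measure through the cocycle. This is the technical heart of the proof, and corresponds precisely to the argument of \cite{KoiKyeRau-21}, which I would invoke (or adapt) to conclude. Ergodicity of the original actions is not needed for existence of the coupling; it is kept in the hypotheses because ergodic couplings are what is actually used in the applications.
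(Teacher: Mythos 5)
Your proposal is sound and in fact gives more than the paper itself, which simply cites \cite[Lemma 3.11]{KoiKyeRau-21} without proof. The coupling space $\Omega=H\times X_G$ with the cocycle-twisted $G$-action, the two trivialisations $j$ and $\iota$, and the Fubini reduction of measure-preservation of $\iota$ to the statement that, for a.e.\ $y$, the orbit map $\phi_y\colon G\to H$ pushes $\lambda_G$ to $\lambda_H$, is the standard conversion of an orbit equivalence into a measure equivalence coupling, and you correctly isolate this last transport-of-Haar-measure step as the single point where the lcsc case genuinely departs from the countable one (in the countable case $\phi_y$ is a bijection between counting measures; in the non-discrete case one needs the cross-section machinery of \cite{KoiKyeRau-21}). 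Deferring that step to the reference is exactly what the paper does, so the two treatments agree in substance; yours just writes out the scaffolding that the paper leaves implicit.

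One small slip in the bijectivity check: chasing $\iota(g,x)=(h,y)$ through $\iota(g,x)=(\alpha(g,x)^{-1},g\cdot_G x)$ forces $\alpha(g,x)=h^{-1}$, and then the cocycle relation $\alpha(g,x)\cdot_H f(x)=f(g\cdot_G x)=f(y)$ gives $f(x)=h\cdot_H f(y)$, so the preimage base point should be $x=f^{-1}(h\cdot_H f(y))$ rather than $f^{-1}(h^{-1}\cdot_H f(y))$. With this sign corrected, the unique $g$ (from essential freeness of the $G$-action) with $g\cdot_G x=y$ does satisfy $\phi_y(g)=\alpha(g,x)^{-1}=h$, using essential freeness of the $H$-action to read off $\alpha$ from its effect on $f(x)$, and the argument closes as you describe.
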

There is a converse of this result under the weaker assumption that the actions are stably orbit equivalent, see \cite[Theorem A]{KoiKyeRau-21} for a precise statement in the lcsc case.

While a priori we do not seem to have gained much by passing to orbit equivalences, in \Cref{sec:orbit-equivalence} we will discuss how a method introduced by Delabie, Koivisto, Le Ma\^itre and Tessera \cite{DKLMT-22} for producing explicit orbit equivalences that satisfy good integrability conditions between countable amenable groups from so-called F\o lner tiling sequences can be generalised to obtain explicit integrable orbit equivalences between unimodular amenable lcsc groups.

\section{Explicit orbit equivalences from F\o lner tilings}\label{sec:orbit-equivalence}
In this section we will explain a method for constructing explicit orbit equivalences between groups from so-called F\o lner tiling sequences. For countable discrete groups they were studied by Danilenko \cite{Dan-16} and by Cecchi and Cortez \cite{CecCor-19} under other names. The name F\o lner tiling sequences, which we will use here, was coined by Delabie, Koivisto, Le Ma\^itre and Tessera \cite{DKLMT-22}, who used them to determine explicit integrability conditions for orbit equivalences between finitely generated groups. Since we will build orbit equivalences between simply connected nilpotent Lie groups that may not contain any finitely generated lattices, we will require them in the more general context of compactly presented groups. 

We will therefore explain how the results of \cite[Section 6.1]{DKLMT-22} extend to lcsc groups, closely following their exposition. The main change is that we replace finite sets by precompact sets of positive measure and cardinality by a Haar measure in their definitions. We then check that everything works accordingly. This provides a natural generalisation of the definitions and results in \cite[Section 6.1]{DKLMT-22} and in particular covers the case of countable discrete groups discussed there.

\begin{definition}\label{def:folner-tiling}
    Let $G$ be an amenable lcsc group equipped with a Haar measure $\mu$ and let $\left(F_k\right)$ be a sequence of subsets of $G$ such that $F_0$ is precompact, open almost everywhere and of positive measure, and $F_k$ is finite for $k\geq 1$. The associated sequence of \emph{tiles} $(T_k)$ is defined inductively by $T_0=F_0$ and $T_{k+1}=  T_k \cdot F_{k+1}$. We call $\left(F_k\right)$ a \emph{F\o lner tiling sequence} for $G$ if the following conditions hold:
    \begin{enumerate}
        \item for all $n\in \mathbb{N}$ the tile $T_{k+1}$ decomposes as a disjoint union $T_{k+1}=\bigsqcup_{g\in F_{k+1}} T_k \cdot g$;
        \item $\left(T_k\right)$ is a F\o lner sequence for $G$, that is, for all $\gamma\in G$ we have $\lim_{k\to \infty} \frac{\mu(\gamma \cdot T_k\setminus T_k)}{\mu(T_k)} =0$.
    \end{enumerate}
\end{definition}

For every F\o lner tiling sequence we can define a Borel probability space $(X=\prod_{k\in \mathbb{N}} F_k, \mu=\prod_{k\in \mathbb{N}} \mu_k)$, where $F_0$ is equipped with the normalisation $\mu_0$ of the restriction $\mu|_{F_0}$ of the Haar measure and the $F_k$ for $k\geq 1$ are equipped with the normalised counting measure $\mu_k$. Note that if $G$ is finitely generated, then $\mu|_{F_0}$ is just the normalised counting measure on $F_0$, while if $G$ is a Lie group, then $(F_0,\mu|_{F_0})$ is isomorphic to the standard Borel probability space $([0,1],\mu_{Borel})$.

To every element $x=(x_k)\in X$ we associate a sequence $(g_k(x))\in G$ of elements of $G$ defined by $g_k(x)=x_0\cdot x_1 \dots x_k\in T_k$. Condition (1) in \Cref{def:folner-tiling} guarantees that every element of $T_k$ can be uniquely written as a product $x_0\cdot x_1 \cdots x_k$ with $x_i\in F_i$. Thus, there is a bijection $\prod_{0\leq k\leq n} F_k \to T_n$, $(x_0,\cdots, x_n)\mapsto x_0 \cdots x_n$. Given an element $x=(x_k)\in X$ we will sometimes also write $f_i(x):=x_i\in F_i$ in situations where only writing $x_i$ may lead to confusion.

Since by Condition (2) the sequence $(T_k)$ is a F\o lner sequence, for every $\gamma \in G$ and almost every $x\in X$ there is an $n\in \mathbb{N}$ such that $\gamma \cdot g_n(x)\in T_n$. There is thus a unique decomposition $\gamma \cdot g_n(x) = x_0'\cdots x_n'$ with $x_i'\in F_i$. We define an action of $G$ on $X$ by $\gamma \cdot x = (x_0', \dots, x_n', x_{n+1}, x_{n+2},\dots)$; since $\gamma\cdot g_{n+1}(x)=\gamma \cdot g_n(x)\cdot x_{n+1}\in T_{n+1}$, this is independent of the choice of $n$. This means that for every $\gamma\in G$ and almost every $x\in X$ we have that for all sufficiently large $n\in \mathbb{N}$ the identity $g_n(\gamma\cdot x) = \gamma \cdot g_n(x)$ holds.

The \emph{cofinite equivalence relation} $E_{cof}$ on $X$ is defined by $x\sim y$ for $x=(x_k),y=(y_k)\in X$ if $x$ and $y$ differ only in finitely many terms.
\begin{proposition}
    Let $G$ be an amenable lcsc group and let $(F_k)$ be a F\o lner tiling sequence for $G$. Then $G$ admits an ergodic, measure preserving action on the standard Borel probability space $(X=\prod_{k\in \mathbb{N}} F_k, \mu)$, which, up to a set of measure zero, induces the cofinite equivalence relation $E_{cof}$ on $X$.
\end{proposition}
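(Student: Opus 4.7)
The plan is to verify three things in turn: that $G$ acts by measure-preserving Borel automorphisms on $(X,\mu)$, that the $G$-orbits coincide pointwise with the $E_{cof}$-classes on a conull set, and that the action is ergodic.

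I would start with the orbit structure, which is the cleanest step. By construction, $\gamma\cdot x$ differs from $x$ only in the first $n+1$ coordinates for any $n$ with $\gamma\cdot g_n(x)\in T_n$, so every $G$-orbit lies in an $E_{cof}$-class. For the reverse inclusion, suppose $x,y\in X$ agree from position $n+1$ onwards and set $\gamma:=g_n(y)\cdot g_n(x)^{-1}\in G$. Since $g_k(x)=g_n(x)\cdot x_{n+1}\cdots x_k$, and similarly for $y$, one has $\gamma\cdot g_k(x)=g_k(y)\in T_k$ for every $k\geq n$; the uniqueness in condition (1) of \Cref{def:folner-tiling} then forces $\gamma\cdot x=y$. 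Thus the orbit relation equals $E_{cof}$ on a conull set.

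Next I would tackle measure-preservation. The key lemma is that, for each $n$, the product measure $\mu_0\otimes\cdots\otimes\mu_n$ on $\prod_{0\leq k\leq n}F_k$ pushes forward under the bijection $(x_0,\ldots,x_n)\mapsto x_0\cdots x_n$ to the normalization of the Haar measure $\mu|_{T_n}$. This goes by induction using condition (1) of \Cref{def:folner-tiling}: the disjoint translates $T_{n-1}\cdot f$ for $f\in F_n$ partition $T_n$, each has the same Haar measure as $T_{n-1}$ by right invariance of Haar, and the normalized counting measure on $F_n$ assigns them equal weights. Fix $\gamma\in G$; on the set $A_n(\gamma):=\{x\in X:\gamma\cdot g_n(x)\in T_n\}$, the action of $\gamma$ corresponds, via the above identification, to left translation by $\gamma$ from $T_n\cap\gamma^{-1}T_n$ to $T_n\cap\gamma T_n$ on the first $n+1$ coordinates, tensored with the identity on the tail. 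Left invariance of Haar measure then shows that $\gamma$ preserves $\mu$ on $A_n(\gamma)$, and condition (2) of \Cref{def:folner-tiling} guarantees that $\bigcup_n A_n(\gamma)$ is conull, yielding global measure-preservation.

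Finally, for ergodicity I appeal to the previous step: any $G$-invariant measurable set $A\subseteq X$ is $E_{cof}$-invariant, so for every $n$ its indicator depends only on $(f_n,f_{n+1},\ldots)$, i.e.\ $A$ lies in the tail $\sigma$-algebra $\bigcap_n\sigma(f_n,f_{n+1},\ldots)$. Since the coordinates are independent under the product measure $\mu$, Kolmogorov's zero-one law gives $\mu(A)\in\{0,1\}$. The main obstacle I anticipate is the bookkeeping in the measure-preservation step, specifically pinning down the product-measure/Haar-measure identification so that left invariance of Haar can be invoked; once this is in place the rest of the argument is essentially formal.
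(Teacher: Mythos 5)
Your proof is correct and follows the same three-part structure as the paper's: orbit relation equals $E_{cof}$ (identical argument up to swapping $x$ and $y$), measure preservation, and ergodicity via Kolmogorov's zero-one law (identical). The one place where you and the paper diverge is the measure-preservation step, and your version is cleaner. The paper argues locally: it picks a small cylinder $B$ around $x$ on which the $\gamma$-action only moves the $F_0$-coordinate by a fixed translation, invokes invariance of the normalized Haar restriction on $F_0$, and then approximates arbitrary measurable sets by such cylinders. You instead first prove the global identification that the product measure $\mu_0\otimes\cdots\otimes\mu_n$ pushes forward under $(x_0,\ldots,x_n)\mapsto x_0\cdots x_n$ to normalized Haar on $T_n$, and then read off measure preservation from left-invariance of Haar on the increasing family $A_n(\gamma)$; this isolates the essential bookkeeping into a single lemma and avoids the approximation-by-balls step. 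Your version also has the virtue of making explicit that the identification lemma needs right-invariance of Haar (i.e.\ unimodularity), a hypothesis the paper leaves implicit: in its local argument the map on $F_0$ is $y_0\mapsto\gamma y_0 c$ for a fixed $c$, which likewise needs right-invariance, but this is buried in the phrase ``$G$-invariant.'' Both arguments are sound under the standing unimodularity assumption of the paper.

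Two very small points worth tightening: when you say ``the restriction of $\Theta$... is still an isomorphism'' — sorry, wrong section — when you say ``$\gamma$ preserves $\mu$ on $A_n(\gamma)$,'' it is worth noting explicitly that $\gamma$ carries $A_n(\gamma)$ bijectively onto $A_n(\gamma^{-1})$ and that $X=\bigsqcup_n\bigl(A_n(\gamma)\setminus A_{n-1}(\gamma)\bigr)$ up to null sets, so that the countable additivity yields global measure preservation; and the normal-form bijection $\prod_{k\leq n}F_k\to T_n$ should be flagged as a Borel isomorphism (it is a continuous bijection of standard Borel spaces, hence Borel by Lusin--Souslin).
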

\begin{proof}
    We first prove that the action induces the cofinite equivalence relation on $X$. It follows from the definition of the $G$-action on $X$ that for every $\gamma \in G$ we have that for almost every $x\in X$ the elements $\gamma \cdot x$ and $x$ differ in only finitely many entries.  Conversely, given elements $x,y\in X$ with $x_k=y_k$ for all $k\geq n+1\in \mathbb{N}$, we define $\gamma:= g_n(x)\cdot g_n(y)^{-1}$. Then $\gamma \cdot g_n(y) = g_n(x)$ and thus $\gamma \cdot y = x$. 

    To see that the action of $G$ is measure preserving, observe that the action of $G$ on itself by left translation is continuous. Thus, given $\gamma\in G$ and $x\in {\rm int}(F_0)\times \prod_{i\geq 1} F_i$ with $\gamma \cdot g_n(x)=x_0'\cdots x_n'\in T_n$, there is an open ball $B\subset F_0\times \prod_{i\geq 1} \left\{x_i\right\}$ around $x$ such that for all $y\in B$ we have $\gamma \cdot y = y_0'x_1'\cdots x_n'x_{n+1}\cdots $ for some $y_0'\in F_0$. Since the measure on $F_0$ is the normalized restriction of the Haar measure on $G$ (thus $G$-invariant), this implies that $\mu(\gamma \cdot B)= \mu(B)$. Since every measurable set in $X$ can be approximated by covers by countably many balls of this form, this shows that the $G$-action on $X$ is measure preserving. 

    Finally, the ergodicity of the action follows from a standard argument, combining that the action induces the cofinite equivalence relation on $X$ and the Kolmogorov zero-one law. 
\end{proof}

\begin{corollary}\label{cor:existence-of-orbit-equivalence}
    Let $G$ and $G'$ be amenable lcsc groups with F\o lner tiling sequences $(F_k)$ and $(F_k')$ such that $F_k$ and $F_k'$ have the same cardinality for all $k\geq 0$.\footnote{ Note that here $F_0$ may have finite or infinite cardinality, while the cardinalities of the $F_k$ for $k\geq 1$ are finite.} Then there exists an isomorphism $f=\prod_k f_k: (X=\prod_k F_k,\mu) \to (X'=\prod_k F_k',\mu')$ of standard probability spaces and any such isomorphism induces an orbit equivalence of the actions $G\curvearrowright (X,\mu)$ and $G'\curvearrowright (X',\mu')$.
\end{corollary}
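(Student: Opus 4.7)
The argument naturally splits in two parts: constructing a product-form measure space isomorphism $f=\prod_k f_k$, and then showing that any such $f$ is automatically an orbit equivalence.

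For the existence of $f$, I would build each factor $f_k\colon F_k\to F_k'$ separately. For $k\geq 1$ both sets are finite of the same cardinality, so any bijection works and automatically preserves the normalized counting measures $\mu_k$ and $\mu_k'$. For $k=0$ the hypothesis that $|F_0|=|F_0'|$ forces either both sets to be finite (in which case we proceed as above) or both to be uncountable; in the latter case $(F_0,\mu_0)$ and $(F_0',\mu_0')$ are atomless standard Borel probability spaces, hence measurably isomorphic to $([0,1],\mathrm{Leb})$ and so to each other. Taking the countable product $f=\prod_k f_k$ yields a measure-preserving bijection of standard Borel probability spaces, by the universal property of product measures.

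For the orbit equivalence statement, the key input is the preceding proposition, which asserts that, up to a $\mu$-null set, the orbits of $G\curvearrowright X$ coincide with the classes of the cofinite equivalence relation $E_{\mathrm{cof}}$ on $X$ (and analogously for $G'\curvearrowright X'$). Any product-form bijection $f=\prod_k f_k$ manifestly preserves the cofinite equivalence relation in both directions, since $x=(x_k)$ and $y=(y_k)$ agree in all but finitely many coordinates if and only if $(f_k(x_k))$ and $(f_k(y_k))$ do. Hence for almost every $x\in X$,
\[
    f(G\cdot x)=f([x]_{E_{\mathrm{cof}}})=[f(x)]_{E_{\mathrm{cof}}}=G'\cdot f(x),
\]
which is exactly the orbit equivalence condition.

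There is no real obstacle here: the statement is essentially a packaging of the preceding proposition together with the elementary fact that product bijections preserve the cofinite equivalence relation. The only subtle point is the measure-theoretic isomorphism $(F_0,\mu_0)\cong (F_0',\mu_0')$ in the uncountable case, which is handled by the standard classification of atomless standard Borel probability spaces.
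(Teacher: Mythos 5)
Your proposal is correct and follows essentially the same route as the paper: existence of each $f_k$ from the fact that standard Borel probability spaces of the same cardinality are isomorphic, and then orbit equivalence from the observation that product-form bijections preserve the cofinite equivalence relation, which by the preceding proposition agrees a.e.\ with the $G$- and $G'$-orbits.
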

\begin{proof}
    Since standard Borel probability spaces of the same cardinality are isomorphic, our assumptions guarantee that there are isomorphisms $f_k\colon (F_k,\mu_k)\to (F_k',\mu_k')$ for all $k\geq 0$. Clearly every such isomorphism $f=\prod_k f_k$ maps equivalence classes of the cofinite equivalence relation on $X$ to equivalence classes of the cofinite equivalence relation on $X'$. Thus, $f=\prod_k f_k$ induces an orbit equivalence of the actions $G\curvearrowright (X,\mu)$ and $G'\curvearrowright (X',\mu')$.
\end{proof}

Our goal is to quantify orbit equivalences between nilpotent groups. For this we introduce the following quantified version of F\o lner tiling sequences.

\begin{definition}\label{def:quant-Folner}
    Let $G$ be an amenable lcsc group equipped with a Haar measure $\mu$ and let $(F_k)$ be a F\o lner tiling sequence for $G$. Assume that $S_G$ is a compact generating set for $G$. Let further $(\epsilon_k)$ be a non-increasing sequence in $\mathbb{R}_{>0}$ that converges to $0$, and let $(R_k)$ be a sequence in $\mathbb{R}_{\geq 0}$. 
    
    We say that $(F_k)$ is a \emph{$(\epsilon_k,R_k)$-F\o lner tiling sequence} for $G$ with respect to $S_G$ if the following hold:
    \begin{enumerate}
        \item each tile $T_k$ has diameter at most $R_k$ with respect to $d_{S_G}$; and
        \item every $s\in S_G$ satisfies $\mu(s\cdot T_k\setminus T_k)\leq \epsilon_k \mu(T_k)$.
    \end{enumerate}
\end{definition}

To determine sequences $(\epsilon_k)$ and $(R_k)$ for a given F\o lner tiling sequence $(F_k)$ it will be useful to consider the (possibly infinite) measurable distance function 
\[
    \rho : X\times X\to \mathbb{N}
\]
defined by
\[
    \rho(x,y):= \inf\left\{n\in \mathbb{N}\mid x_k=y_k \forall ~ k\geq n\right\}.
\]

Then for every $\gamma \in G$ and almost every $x\in X$ we have $\rho(\gamma\cdot x,x)\leq k$ if and only if $\gamma \cdot g_k(x)\in T_k$. This implies that

\begin{equation}\label{eqn:Folner-condition}
    \mu\left(\left\{x\mid \rho(\gamma\cdot x,x)>k\right\}\right) = \frac{\mu(\gamma \cdot T_k\setminus T_k)}{\mu(T_k)}=\frac{\mu(\gamma\cdot T_k \triangle T_k)}{2\mu(T_k)}.
\end{equation}
We also observe here that to see that \Cref{def:folner-tiling} (2) holds, it suffices to check that it holds in a uniform way on a compact generating set $S_G$ for $G$. In particular, if we check that a sequence $(F_k)$ satisfies \Cref{def:folner-tiling} (1) and \Cref{def:quant-Folner} (2), then this shows that $(F_k)$ is a F\o lner tiling sequence. 

\begin{lemma}\label{lem:quant-orbit-equivalence}
    Let $G$ be an amenable lcsc group, let $S_{G}$ be a compact generating set and let $(F_k)$ be a $(\epsilon_k,R_k)$-F\o lner tiling sequence. Then the following hold:
    \begin{enumerate}
        \item for all $s\in S_G$ and all $k\geq 0$ we have $\mu\left(\left\{x\in X\mid \rho(s\cdot x,x)>k\right\}\right)\leq \epsilon_k$; and
        \item for every $\gamma\in G$ with $d_{S_G}(\gamma,1)>2R_k$ and almost every $x\in X$ we have $\rho(\gamma\cdot x,x)>k$.
    \end{enumerate}
\end{lemma}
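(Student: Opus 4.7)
The plan is to deduce both assertions from the identity
\[
\mu\bigl(\{x \in X : \rho(\gamma \cdot x, x) > k\}\bigr) \;=\; \frac{\mu(\gamma \cdot T_k \setminus T_k)}{\mu(T_k)}
\]
recorded just before the lemma (equation (\ref{eqn:Folner-condition})), combined with the quantitative hypotheses packaged into Definition \ref{def:quant-Folner}. For part (1), I would simply specialise the identity to $\gamma = s \in S_G$ and invoke item (2) of Definition \ref{def:quant-Folner}, which gives $\mu(s \cdot T_k \setminus T_k) \le \epsilon_k \mu(T_k)$; dividing by $\mu(T_k) > 0$ produces the stated bound.

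For part (2), my strategy is to reduce to the purely geometric claim that $\gamma \cdot T_k \cap T_k = \emptyset$ whenever $d_{S_G}(\gamma, 1) > 2 R_k$. Granted this, since $g_k(x) \in T_k$ for every $x \in X$ by the tile decomposition of $T_k$, disjointness forces $\gamma \cdot g_k(x) \notin T_k$, and the equivalence ``$\rho(\gamma \cdot x, x) \le k \Leftrightarrow \gamma \cdot g_k(x) \in T_k$'' recorded before the lemma then yields $\rho(\gamma \cdot x, x) > k$ for almost every $x$. To establish the disjointness I would argue by contradiction: if $\gamma t_1 = t_2$ for some $t_1, t_2 \in T_k$, then $\gamma = t_2 t_1^{-1}$, and the triangle inequality for the word length combined with the diameter bound on $T_k$ gives $d_{S_G}(\gamma, 1) \le 2 R_k$, contradicting the hypothesis.

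I do not anticipate any real obstacle, since both statements amount to unwrapping the definitions once the identity (\ref{eqn:Folner-condition}) is in hand. The only modest bookkeeping point is the passage from the diameter bound on $T_k$ (a bound on distances \emph{between} points of $T_k$) to a bound on the word length of the displacement $\gamma = t_2 t_1^{-1}$: this goes through the identity element and costs a factor $2$, which is precisely what the factor $2R_k$ in the hypothesis of (2) accommodates.
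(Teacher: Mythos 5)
Your proposal is correct and follows exactly the same two-step route as the paper's (equally terse) proof: part (1) is equation~(\ref{eqn:Folner-condition}) plus item~(2) of Definition~\ref{def:quant-Folner}, and part (2) reduces to the geometric disjointness $\gamma \cdot T_k \cap T_k = \emptyset$, which the paper likewise presents as an ``observe that'' without further elaboration. One small remark on the detail you flagged as bookkeeping: the step $|\gamma| = |t_2 t_1^{-1}| \le |t_2| + |t_1| \le 2R_k$ implicitly assumes the identity lies within distance $R_k$ of $T_k$ (so that $|t_i|\le R_k$), which is not part of the abstract definition of an $(\epsilon_k,R_k)$-tiling (only the \emph{diameter} of $T_k$ is bounded), even though it does hold, up to constants, for the explicit nilpotent tilings in Section~\ref{sec:explicit-tilings-for-nilpotent-groups}. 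A cleaner route that needs no such assumption, and explains why the hypothesis $2R_k$ is even generous, is to use that $\gamma$ acts on $T_k$ by left multiplication while the paper's word metric is defined via left multiplication by generators (hence right-invariant): then $d_{S_G}(\gamma,1)=|t_2 t_1^{-1}|=d_{S_G}(t_1,t_2)\le R_k$ directly from the diameter bound.
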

\begin{proof}
    Assertion (1) is an immediate consequence of \Cref{eqn:Folner-condition}. For (2) observe that the fact that the diameter of $T_k$ is at most $R_k$ together with $d_{S_G}(\gamma,1)>2R_k$ implies that $\gamma\cdot T_k \cap T_k=\emptyset$.
\end{proof}

Our main application of F\o lner tiling sequences will be to pairs $G, G'$ of simply connected nilpotent Lie groups and to finitely generated nilpotent Lie groups. We will show that for suitable real numbers $p,q\in \mathbb{R}_{>0}$ and all $\epsilon>0$ there is an $(\phi_{\epsilon},\psi_{\epsilon})$-orbit equivalence coupling for
\[
    \phi_{\epsilon}(x)=\frac{x^{p}}{\log(x)^{1+\epsilon}}, ~~ \psi_{\epsilon}(x)=\frac{x^q}{\log(x)^{1+\epsilon}}.
\]
This will imply the existence of an $(L^r,L^s)$-orbit equivalence coupling between $G$ and $G'$ for every $r<p$ and $s<q$. We end this section with an lcsc version of \cite[Proposition 6.9]{DKLMT-22}, which will be useful in the case of nilpotent groups of the same growth type, where we will show that we can choose $p=q=1$.
\begin{proposition}\label{prop:quant-orbit-equivalence}
    Let $G$ and $G'$ be amenable lcsc groups with compact generating sets $S_G$ and $S_{G'}$, let $(F_k)$ be a $(\epsilon_k,R_k)$-F\o lner tiling sequence for $G$ and $(F_k')$ a $(\epsilon'_k,R'_k)$-F\o lner tiling sequence for $G'$ such that $F_k$ and $F_k'$ have the same cardinality for all $k\geq 0$. Assume that there is a non-decreasing function $\phi \colon \mathbb{R}_{\geq 0} \to \mathbb{R}_{\geq 0}$ such that the sequence $\epsilon_{k-1}\cdot \phi(2R_k')$ is summable. 

    Then any orbit equivalence coupling $f:(X,\mu)\to (X',\mu')$ as in \Cref{cor:existence-of-orbit-equivalence} is \emph{$(\phi,L^0)$-integrable}, that is, for all $s\in S_G$ we have
    \[
        \int_X \phi\left(d_{S_{G'}}(f(s\cdot x), f(x))\right) d\mu(x) < \infty.
    \]

    If, moreover, there is a non-decreasing function $\phi'\colon \mathbb{R}_{\geq 0} \to \mathbb{R}_{\geq 0}$ such that $\epsilon_{k-1}'\cdot \phi'(2R_k)$ is summable, then $f$ is $(\phi,\phi')$-integrable.
\end{proposition}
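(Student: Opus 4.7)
The plan is to exploit that the isomorphism $f=\prod_k f_k$ acts coordinate-wise on the product space $X=\prod_k F_k$, and therefore preserves the integer-valued distance $\rho$ associated with the cofinite equivalence relation: for almost every pair $(y,z)\in X\times X$ one has $\rho(f(y),f(z))=\rho(y,z)$, since each $f_k$ is a bijection. Combined with essential freeness of the $G'$-action on $X'$, this lets us interpret $d_{S_{G'}}(f(s\cdot x),f(x))$ as $d_{S_{G'}}(\gamma'(x),1)$, where $\gamma'(x)\in G'$ is the unique element satisfying $f(s\cdot x)=\gamma'(x)\cdot f(x)$.

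The first key step is to convert a bound on $\rho$ into a bound on $d_{S_{G'}}$. If $\rho(s\cdot x,x)\le k$, then $\rho(f(s\cdot x),f(x))\le k$ as well, so $g_k(f(s\cdot x))$ and $g_k(f(x))$ both lie in the tile $T'_k$, while the entries of $f(s\cdot x)$ and $f(x)$ coincide from index $k$ onwards. Taking $\gamma'(x)=g_k(f(s\cdot x))\cdot g_k(f(x))^{-1}$ and invoking the diameter assumption in \Cref{def:quant-Folner} together with left-invariance of $d_{S_{G'}}$ yields $d_{S_{G'}}(\gamma'(x),1)\le 2R'_k$, consistent with the contrapositive of \Cref{lem:quant-orbit-equivalence}(2).

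Next, I would stratify $X$ by the level sets $A_k:=\{x\in X:\rho(s\cdot x,x)\le k\}$, setting $A_{-1}=\emptyset$. By \Cref{lem:quant-orbit-equivalence}(1) applied to $(F_k)$ one has $\mu(X\setminus A_k)\le\epsilon_k$, hence $\mu(A_k\setminus A_{k-1})\le\epsilon_{k-1}$ for $k\ge 1$. Since $\phi$ is non-decreasing, the previous paragraph gives $\phi\bigl(d_{S_{G'}}(f(s\cdot x),f(x))\bigr)\le\phi(2R'_k)$ on $A_k\setminus A_{k-1}$, and summing over the strata produces
\[
\int_X \phi\bigl(d_{S_{G'}}(f(s\cdot x),f(x))\bigr)\,d\mu(x)\;\le\;\phi(2R'_0)+\sum_{k\ge 1}\epsilon_{k-1}\,\phi(2R'_k),
\]
which is finite by hypothesis. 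This is uniform over $s\in S_G$ since \Cref{lem:quant-orbit-equivalence}(1) is uniform there, proving the $(\phi,L^0)$-integrability.

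For the symmetric statement, I would apply the same argument to the inverse orbit equivalence $f^{-1}:(X',\mu')\to(X,\mu)$, which is again a product of coordinate-wise bijections, exchanging the roles of the data attached to $G$ and $G'$; the assumption that $\epsilon'_{k-1}\phi'(2R_k)$ is summable then yields the analogous bound on $X'$, upgrading the conclusion to $(\phi,\phi')$-integrability. The argument is a direct adaptation of \cite[Proposition 6.9]{DKLMT-22}; the only points to verify in the lcsc setting are that $F_0$ being uncountable does not affect the coordinate-wise preservation of the cofinite relation and that the function $x\mapsto d_{S_{G'}}(\gamma'(x),1)$ is measurable. Neither presents a genuine obstacle, so no step of the proof is expected to be difficult.
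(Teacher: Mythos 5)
Your proof is correct and takes essentially the same approach as the paper: use the coordinate-wise nature of $f$ to deduce $\{\rho(s\cdot x,x)\le k\}\subseteq\{d_{S_{G'}}(f(s\cdot x),f(x))\le 2R'_k\}$, then stratify $X$ and sum, arriving at the bound $\phi(2R'_0)+\sum_{k\ge1}\epsilon_{k-1}\phi(2R'_k)$. The only cosmetic difference is that you stratify by the level sets of $\rho$ whereas the paper stratifies by the level sets of $d_{S_{G'}}$; both give the identical estimate.
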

\begin{proof}
    For all $s\in S_G$ and all $k\in \mathbb{N}$ we have
    \[
        \mu\left(\left\{x\in X \mid d_{S_{G'}}(f(s\cdot x),f(x))>2R_k'\right\}\right)\leq \mu\left(\left\{x\in X\mid \rho(s\cdot x, x)>k\right\}\right)\leq \epsilon_k,
    \]
    where for the first inequality we use \Cref{lem:quant-orbit-equivalence} (2) and the fact that $\rho(\gamma \cdot x,x)>k$ if and only if $\rho'(f(\gamma\cdot x),f(x))>k$, and for the second inequality we use \Cref{lem:quant-orbit-equivalence}.

    Since $\phi$ is non-decreasing, we deduce
    \begin{align*}
        &\int_X \phi\left(d_{S_{G'}}(f(s\cdot x), f(x))\right) d\mu(x)\\
        &\leq \phi(2R_0')\cdot \mu(X)+\sum_{k=1}^{\infty} \phi(2R_k')\cdot \mu\left(\left\{
x\in X\mid 2R_{k-1}'< d_{S_{G'}}(f(s\cdot x), f(x))\leq 2R_k' \right\}\right)\\
        &\leq \phi(2R_0') + \sum_{k=1}^{\infty} \epsilon_{k-1}\cdot \phi(2R_k') < \infty,
    \end{align*}
    where the last inequality follows from our assumption. This proves the first part. The ``moreover'' part follows analogously.
\end{proof}

\section{F\o lner tilings in simply connected nilpotent Lie groups}
\label{sec:explicit-tilings-for-nilpotent-groups}
Our main application of the results of \Cref{sec:orbit-equivalence} will be to simply connected nilpotent Lie groups. We will now construct a F\o lner tiling sequence for every such group. This will rely on the existence of suitable normal forms. Some of our arguments generalise arguments developed for finitely generated abelian groups and the integral 3-Heisenberg group in \cite[Section 6.2]{DKLMT-22}.

Let $G$ be a simply connected nilpotent Lie group of nilpotency class $n$ and let $\mathfrak{g}$ be its Lie algebra. Following \cite{Osi-01}, we will call a basis $\left\{X_{ij}\mid 1\leq i \leq s_G,~ 1\leq j \leq m_i\right\}$ canonical if $\left\{X_{i1}, \cdots, X_{im_i}\right\}\subset \gamma_i(\mathfrak{g})\setminus \gamma_{i+1}(\mathfrak{g})$ defines a basis for the $i$-th quotient $\gamma_i(\mathfrak{g})/\gamma_{i+1}(\mathfrak{g})$ of the lower central series $(\gamma_i(\mathfrak{g}))$ of $\mathfrak{g}$. We denote by $x_{ij}^{\lambda_{ij}}:= exp(\lambda_{ij}X_{ij})$ the image of $\lambda_{ij}X_{ij}$ under the exponential map $exp \colon \mathfrak{g}\to G$ for $\lambda_{ij}\in \mathbb{R}$. The fact that the latter is an isomorphism together with the definition of a canonical basis implies the following result, see \cite[Theorem 4.3]{Osi-01}. 

\begin{proposition}\label{prop:normal-form}
    Let $G$ be a simply connected nilpotent Lie group of nilpotency class $s_G$ and let $\left\{X_{ij}\mid 1\leq i \leq s_G,~ 1\leq j \leq m_i\right\}$ be a canonical basis. Then every element $g\in G$ admits a decomposition of the form
    \[
        g= \prod_{i=1}^{s_G}\prod_{j=1}^{m_i} x_{ij}^{\lambda_{ij}(g)},
    \]
    where the $\lambda_{ij}(g)\in \mathbb{R}$ are uniquely determined by $g$ and $|g|_G\simeq \sum_{i,j} |\lambda_{ij}(g)|^{\frac{1}{i}}$
\end{proposition}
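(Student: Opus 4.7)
The plan is first to establish existence and uniqueness of the coordinates $\lambda_{ij}(g)$ by induction on the nilpotency class $s_G$, and then to prove the word-length equivalence by giving matching upper and lower bounds.

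For existence and uniqueness I would induct on $s_G$. The base case $s_G=1$ is abelian: $\exp\colon\mathfrak{g}\to G$ is a group isomorphism, the canonical basis is any basis of $\mathfrak{g}$, and the normal form is the standard coordinate expression. For the inductive step, set $Z:=\exp(\gamma_{s_G}(\mathfrak{g}))$, which is a central, closed, simply connected subgroup of $G$ isomorphic to $\mathbb{R}^{m_{s_G}}$ and topologically generated by $x_{s_G,1},\dots,x_{s_G,m_{s_G}}$. The quotient $\bar G:=G/Z$ is a simply connected $(s_G-1)$-step nilpotent Lie group, and the projections $\bar X_{ij}$ of the $X_{ij}$ with $i\le s_G-1$ form a canonical basis of its Lie algebra. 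By induction, the image of $g$ in $\bar G$ has a unique decomposition $\prod_{i\le s_G-1,\,j}\bar x_{ij}^{\lambda_{ij}(g)}$; lifting this product to $G$ and right-multiplying by the unique element $\prod_j x_{s_G,j}^{\lambda_{s_G,j}(g)}\in Z$ that matches $g$ produces the desired normal form, and uniqueness is inherited from $\bar G$ and from $Z$.

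For the upper bound $|g|_G\lesssim \sum_{i,j}|\lambda_{ij}(g)|^{1/i}$, it suffices by submultiplicativity of $|\cdot|_G$ to show $|x_{ij}^{\lambda}|_G\lesssim |\lambda|^{1/i}$ uniformly in $\lambda$. Since $X_{ij}\in\gamma_i(\mathfrak{g})$, it can be written modulo $\gamma_{i+1}(\mathfrak{g})$ as a linear combination of $i$-fold Lie brackets of basis elements of $\gamma_1(\mathfrak{g})$. The Guivarc'h--Bass commutator trick then realizes $x_{ij}^{N^i}$, up to a correction lying in $\gamma_{i+1}(G)$, as an $i$-fold iterated commutator of elements of the form $x_{1,k_\ell}^{N}$, hence as a word of length $\lesssim N$ in a fixed compact generating set. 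The residual error lies in a deeper term of the lower central series, whose distortion is even larger, so recursing downward on the grading absorbs it at no extra cost.

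The lower bound is the main obstacle. The key input is the $i$-distortion of $\gamma_i(G)$ in $G$: for every $h\in\gamma_i(G)$, the image $\bar h$ in the abelian group $\gamma_i(G)/\gamma_{i+1}(G)\cong\mathbb{R}^{m_i}$ satisfies $\|\bar h\|_{\mathrm{eucl}}\lesssim |h|_G^i$. Concretely, expanding a length-$n$ word in $S_G$ via the Baker--Campbell--Hausdorff formula produces, in the $i$-th graded piece, only terms of nested-bracket depth at least $i$, each of Euclidean norm $\lesssim n^i$. Projecting the normal form of $g$ onto $\gamma_i(G)/\gamma_{i+1}(G)$ kills every factor $x_{i'j'}$ with $i'\neq i$ and leaves precisely $\sum_j \lambda_{ij}(g)\bar X_{ij}$, so $\max_j |\lambda_{ij}(g)|\lesssim |g|_G^i$; summing over $i,j$ yields the required bound. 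The delicate point is controlling the BCH expansion uniformly across the grading, which is essentially the content of Guivarc'h's polynomial growth theorem and can either be invoked directly or re-derived by a self-contained induction on $s_G$ that tracks how brackets combine between graded pieces.
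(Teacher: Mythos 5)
The paper does not give a self-contained proof of this proposition: it cites \cite[Theorem 4.3]{Osi-01} for existence and uniqueness of the coordinates and refers to Guivarc'h for the length equivalence. You instead sketch a from-scratch argument, which is fine. Your induction for existence and uniqueness via the quotient $G/\exp(\gamma_{s_G}(\mathfrak{g}))$ is correct, and the upper bound (subadditivity of $|\cdot|_G$ --- you write ``submultiplicativity'' --- plus the commutator trick and a downward recursion through the grading to absorb the residual error) is also correct in outline.

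The lower bound, however, has a gap. The operation ``projecting the normal form of $g$ onto $\gamma_i(G)/\gamma_{i+1}(G)$'' is not a well-defined map on $G$: that group is a subquotient, not a quotient, of $G$, and there is no group retraction onto it. If instead you mean the linear projection of $\exp^{-1}(g)$ onto the vector-space summand $A_i\subset\mathfrak{g}$, that is well-defined but does not produce $\sum_j\lambda_{ij}(g)X_{ij}$: already for the $3$-Heisenberg group, $g=\exp(aX_1+bX_2+cX_3)$ has $\lambda_3(g)=c-\tfrac{ab}{2}$ while the $A_3$-projection of $\exp^{-1}(g)$ is $c$. What the argument actually needs is an upward induction on $i$: set $b_i:=\bigl(\prod_{i'<i,\,j'}x_{i'j'}^{\lambda_{i'j'}(g)}\bigr)^{-1}\cdot g\in\gamma_i(G)$, whose image in $\gamma_i(G)/\gamma_{i+1}(G)$ is indeed $\sum_j\lambda_{ij}(g)\bar X_{ij}$, and apply the distortion estimate to $b_i$. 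But the distortion estimate only yields $|\lambda_{ij}(g)|\lesssim|g|_G^i$ once you know $|b_i|_G\lesssim|g|_G$, and that, via your upper bound and the triangle inequality, requires knowing $|\lambda_{i'j'}(g)|^{1/i'}\lesssim|g|_G$ for all $i'<i$ first --- i.e.\ the inductive hypothesis, with base case $i=1$ given by passing to the abelianization. Without this peeling and bootstrapping step, the jump from the distortion bound to $\max_j|\lambda_{ij}(g)|\lesssim|g|_G^i$ is not justified, so as written the lower bound is incomplete.
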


Let $G$ be a simply connected nilpotent Lie group of nilpotency class $n$, and let $\Gamma$ be a lattice in $G$. We define $\overline{\gamma}_k(\Gamma)=\Gamma\cap \gamma_k(G).$ This defines a nested sequence of normal subgroups of $\Gamma$, such that $\gamma_k(\Gamma)<\overline{\gamma}_k(\Gamma)$. The interest of considering $\overline{\gamma}_k(\Gamma)$ instead of $\gamma_k(\Gamma)$ is to get rid of possible torsion for the sequence of relative quotients. Indeed, one checks that $\overline{\gamma}_k(\Gamma)/\overline{\gamma}_{k+1}(\Gamma)$ is a lattice in $\gamma_k(G)/\gamma_{k+1}(G)\simeq \R^{m_k}$. By a straight-forward induction on the nilpotency class of $G$, it suffices to check that $\overline{\gamma}_{s_G}(\Gamma)$ embeds as a lattice in $\gamma_{s_G}(G)$. Note that since it is a discrete subgroup, it is enough to see that it is cocompact, and therefore that $\gamma_{s_G}(\Gamma)$ is cocompact in $\gamma_{s_G}(G)$, which is well-known (see e.g. \cite{Rag-72}).

We now choose a canonical basis $\left\{X_{ij}\mid 1\leq i \leq {s_G},~ 1\leq i \leq m_i\right\}$ such that for a given $1\leq i \leq s_G$,  $(exp(X_{ij}))_{1\leq j \leq m_i}$ is a basis of the lattice  $\overline{\gamma}_k(\Gamma)/\overline{\gamma}_{k+1}(\Gamma)$. We will call such a choice of canonical basis \emph{compatible} with the lattice $\Gamma$.
This allows us to state a discrete version of Proposition \ref{prop:normal-form}. Its proof is identical and we also refer to \cite{Osi-01} for details.

\begin{proposition}\label{prop:normal-formdiscrete}
 Let $\Gamma$ be a torsion-free finitely generated nilpotent Lie group of nilpotency class $s_G$ and let $\left\{X_{ij}\mid 1\leq i \leq s_G,~ 1\leq j \leq m_i\right\}$ be a compatible canonical basis for the associated simply connected Lie group $G$.
Then every element $g\in \Gamma$ admits a decomposition of the form
    \[
        g= \prod_{i=1}^{s_G}\prod_{j=1}^{m_i} x_{ij}^{\lambda_{ij}(g)},
    \]
    where the $\lambda_{ij}(g)\in \Z$ are uniquely determined by $g$ and $|g|_\Gamma\simeq \sum_{i,j} |\lambda_{ij}(g)|^{\frac{1}{i}}$
\end{proposition}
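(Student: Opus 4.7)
The plan is to mirror the argument for Proposition \ref{prop:normal-form} but exploit the compatibility of the basis with $\Gamma$ to ensure integrality of the exponents. I would proceed by induction on the nilpotency class $s_G$, using the chain $\overline{\gamma}_i(\Gamma)$ introduced just before the statement. For the base case $s_G=1$, the group $\Gamma$ is torsion-free finitely generated abelian, hence free abelian with $\mathbb{Z}$-basis $\exp(X_{1j})$, and both the normal form and the length estimate $|g|_\Gamma \simeq \sum_j |\lambda_{1j}(g)|$ are standard.

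For the inductive step I would use the short exact sequence
\[
1 \longrightarrow \overline{\gamma}_{s_G}(\Gamma) \longrightarrow \Gamma \longrightarrow \Gamma/\overline{\gamma}_{s_G}(\Gamma) \longrightarrow 1,
\]
in which the quotient is torsion-free and $(s_G-1)$-step nilpotent with Malcev completion $G/\gamma_{s_G}(G)$, and $\overline{\gamma}_{s_G}(\Gamma)$ is free abelian with $\mathbb{Z}$-basis $\exp(X_{s_G,j})$. The first thing to verify is that the images of $X_{ij}$ with $i<s_G$ form a compatible canonical basis for $G/\gamma_{s_G}(G)$; this is exactly why the proposition is formulated with $\overline{\gamma}_\bullet$ rather than $\gamma_\bullet$, since the former ensures that each successive quotient $\overline{\gamma}_i(\Gamma)/\overline{\gamma}_{i+1}(\Gamma)$ is a torsion-free lattice of full rank in $\gamma_i(G)/\gamma_{i+1}(G)$. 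Granted this, induction yields a unique integer normal form $\bar g = \prod_{i<s_G}\prod_j \bar x_{ij}^{\lambda_{ij}(g)}$ for the image of $g$. Setting $g_0:=\prod_{i<s_G}\prod_j x_{ij}^{\lambda_{ij}(g)}\in\Gamma$, the element $g_0^{-1}g$ lies in $\overline{\gamma}_{s_G}(\Gamma)$ and therefore admits a unique decomposition $\prod_j x_{s_G,j}^{\lambda_{s_G,j}(g)}$ with $\lambda_{s_G,j}(g)\in\mathbb{Z}$. Concatenation gives the desired normal form, and uniqueness propagates from the quotient together with the uniqueness inside the free abelian group $\overline{\gamma}_{s_G}(\Gamma)$.

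For the length estimate I would take the quickest route: since $\Gamma$ is a cocompact lattice in $G$, the inclusion $(\Gamma,d_{S_\Gamma})\hookrightarrow (G,d_{S_G})$ is a quasi-isometry for any finite generating set $S_\Gamma$ of $\Gamma$ and any compact generating set $S_G$ of $G$, hence $|g|_\Gamma \simeq |g|_G$ for all $g\in\Gamma$. Proposition \ref{prop:normal-form} applied in $G$ then yields $|g|_G \simeq \sum_{i,j}|\lambda_{ij}(g)|^{1/i}$, and the estimate follows. The only genuine obstacle I foresee is the descent of compatibility of the canonical basis through the quotient map; everything else is a direct transcription of the proof of Proposition \ref{prop:normal-form} with $\mathbb{R}$ replaced by $\mathbb{Z}$ at each step of the induction.
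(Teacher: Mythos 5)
Your argument is correct. The paper itself gives no separate proof here: it simply asserts ``Its proof is identical'' to the Lie group case and defers to \cite{Osi-01}, so you are filling in a gap rather than diverging from the text. Your inductive skeleton via the chain $\overline{\gamma}_\bullet(\Gamma)$ is exactly the argument that Osin-type proofs run, and the one place you flag as a potential obstacle --- that the compatibility of the canonical basis descends through $\Gamma \to \Gamma/\overline{\gamma}_{s_G}(\Gamma)$ --- does in fact go through: since $\overline{\gamma}_i(\Gamma/\overline{\gamma}_{s_G}(\Gamma))$ is precisely the image of $\overline{\gamma}_i(\Gamma)$, the successive quotients $\overline{\gamma}_i/\overline{\gamma}_{i+1}$ for $i<s_G$ are unchanged, so the projected $\exp(X_{ij})$ remain integral bases. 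Your length estimate is a legitimate shortcut compared to re-running the Guivarc'h computation in the discrete setting: using that a cocompact lattice inclusion $\Gamma \hookrightarrow G$ is a quasi-isometry reduces the estimate to \Cref{prop:normal-form}, and the additive constants in the QI are absorbed because word lengths of nontrivial elements of $\Gamma$ are bounded away from zero. This buys economy; the trade-off is that it is not self-contained (it leans on the already-proved continuous case), whereas the ``identical proof'' the paper alludes to would give the discrete estimate directly.
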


As a consequence we can prove the following normal form result, which will provide us with the ingredients for a F\o lner tiling sequence.

\begin{lemma}\label{lem:normal-form}
    Let $G$ be a simply connected nilpotent Lie group and let $x_{ij}$ be as in \Cref{prop:normal-form}. Then every $g\in G$ has a unique decomposition of the form
    \[
        g= \prod_{\ell=0}^k \left(\prod_{i=1}^{s_G}\prod_{j=1}^{m_i} x_{i,j}^{\delta_{i,j,\ell}2^{\ell\cdot i}}\right),
    \]
    where $\delta_{i,j,0}\in \left(-2^{i-1},2^{i-1}\right]$, $\delta_{i,j,\ell}\in \left\{-2^i,\dots,-1\right\}$ if $\ell\geq 1$ is odd and $\delta_{i,j,\ell}\in \left\{0,\cdots,2^i-1\right\}$ if $\ell\geq 1$ is even, and $k$ is a minimal even integer for which there is such a decomposition. Moreover, for every integer $k'\geq k$ there is also a unique decomposition of the above form and the $\delta_{i,j,\ell}$ for $\ell\leq k'$ do not depend on $k'$.
    
If, moreover, $G$ admits a lattice $\Gamma$, and $x_{ij}$ is as in \Cref{prop:normal-formdiscrete}, then such a unique decomposition exists in $\Gamma$, with $\delta_{i,j,0}\in \left\{-2^{i-1}+1,\dots,2^{i-1}\right\}$.    
\end{lemma}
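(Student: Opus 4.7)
The plan is to reduce the statement, via the Malcev normal forms of \Cref{prop:normal-form} and \Cref{prop:normal-formdiscrete}, to an iterated signed base-$2^i$ expansion of the coordinates $\lambda_{ij}(g)$, carried out layer by layer along the lower central series.

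I would first handle the abelian case $s_G=1$, where $G=\R^{m_1}$ (resp.\ $\Z^{m_1}$). The claim reduces coordinate by coordinate to the following statement: every $\lambda\in\R$ (resp.\ $\Z$) admits a unique expansion $\lambda=\delta_0+\sum_{\ell=1}^k\delta_\ell 2^\ell$ with $\delta_0\in(-1,1]$ (resp.\ $\{0,1\}$), $\delta_\ell\in\{-2,-1\}$ for odd $\ell$, $\delta_\ell\in\{0,1\}$ for even $\ell\geq 2$, and $k$ minimal even. A direct count verifies that, for each fixed even $k$, the $2^k$ choices of $(\delta_1,\dots,\delta_k)$ index $2^k$ pairwise disjoint half-open intervals of length $2$ (parametrised by $\delta_0$) whose union is a single half-open interval of length $2^{k+1}$; this yields existence, uniqueness, and minimality of $k$. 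The same argument, with the prescribed wider ranges $(-2^{i-1},2^{i-1}]$, $\{-2^i,\dots,-1\}$, and $\{0,\dots,2^i-1\}$, handles the base-$2^i$ expansion needed at layer $i$.

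For general $G$, one computes using the Baker--Campbell--Hausdorff formula that the Malcev coordinates of $F_0F_1\cdots F_k$ are polynomial functions of the $\delta$'s with a triangular structure with respect to the layers of the lower central series: modulo $\gamma_{i+1}(G)$, the $i$-th layer coordinate of the product equals
\[
\sum_{\ell=0}^k\delta_{i,j,\ell}\,2^{i\ell}+C_{i,j}\bigl((\delta_{i',j',\ell})_{i'<i}\bigr),
\]
where $C_{i,j}$ is a polynomial correction arising from iterated commutators of weight $i$ among the lower-layer generators. Uniqueness then follows by induction on $i$: once the $\delta_{i',j',\ell}$ with $i'<i$ have been determined, $C_{i,j}$ is fixed, and the $i$-th layer $\delta$'s are uniquely recovered by applying the abelian signed base-$2^i$ expansion to $\lambda_{i,j}(g)-C_{i,j}$. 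Existence is obtained by running the same procedure in the forward direction and choosing $k$ large enough (and even) that each corrected coordinate lies within the level-$k$ range. This is possible because the corrections $C_{i,j}$ grow only polynomially in the lower-layer $\delta$'s, while the level-$k$ range in layer $i$ has size $\simeq 2^{i(k+1)}$. Taking $k$ minimal yields the minimal even $k$ of the statement.

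The compatibility assertion across $k'\geq k$ follows from uniqueness once one exhibits a valid extension of the level-$k$ decomposition to level $k'$. By iteration it suffices to extend from $k$ to $k+2$: in the first layer I set $\delta_{1,j,k+1}=-2$, $\delta_{1,j,k+2}=1$, noting that $-2\cdot 2^{k+1}+1\cdot 2^{k+2}=0$; the BCH commutator corrections these introduce in higher layers have magnitude $\lesssim 2^{i(k+1)}$ at layer $i$, and are therefore absorbable by a suitable choice of $\delta_{i,j,k+1}$, $\delta_{i,j,k+2}$ in the allowed ranges via the abelian base-$2^i$ expansion, iterated upwards through the layers. The lattice case is identical after replacing \Cref{prop:normal-form} by \Cref{prop:normal-formdiscrete} and working with a compatible canonical basis. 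The main technical obstacle is verifying precisely that the BCH corrections are always within the absorbable range; this ultimately reduces to the fact that iterated commutators of weight $i$ among factors of size $\simeq 2^k$ have size $\simeq 2^{ik}$, one scale below the $i$-th layer scale at level $k$, which leaves exactly enough room for the abelian expansion to do the work.
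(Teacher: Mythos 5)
Your proof takes essentially the same approach as the paper's: both reduce, layer by layer along the lower central series, to a signed base-$2^i$ digit expansion in each graded piece. The paper phrases this as an induction on the nilpotency class (quotienting by $\gamma_{s_G}(G)$, applying the inductive hypothesis to $G/\gamma_{s_G}(G)$, and then treating the central piece $\gamma_{s_G}(G)\cong\R^{m_{s_G}}$ with the abelian expansion), while you phrase it as a forward induction on the layer $i$ using the triangular structure of the Malcev coordinates under BCH; these are the same argument in dual presentations. Your more explicit verification of the abelian signed-digit expansion is a welcome addition (the paper packages this into explicit interval endpoints $(a_{ik},b_{ik}]$), and the ``triangular polynomial correction'' framing is clean.

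The one place where your write-up goes beyond the paper is also where it has a genuine gap: the ``moreover'' compatibility across levels $k'\geq k$. You assert that after forcing $\delta_{1,j,k+1}=-2$, $\delta_{1,j,k+2}=1$ in the first layer, the BCH corrections in higher layers ``have magnitude $\lesssim 2^{i(k+1)}$ and are therefore absorbable'' by the allowed digits at levels $k+1,k+2$. But the issue is not one of magnitude, it is one of arithmetic: for $\ell\geq 1$ the digits $\delta_{i,j,\ell}$ are integers, so the sum $\delta_{i,j,k+1}2^{(k+1)i}+\delta_{i,j,k+2}2^{(k+2)i}$ lives on a lattice (a multiple of $2^{(k+1)i}$), whereas the BCH correction $C_{i,j}$ arising from the lower layers is a polynomial in the structure constants of $\mathfrak g$ and need not be commensurable with that lattice. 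Concretely, in the Heisenberg algebra with $[X_{1,1},X_{1,2}]=\alpha X_{2,1}$, solving $F_{k+1}F_{k+2}=1$ forces $\delta_{2,1,k+1}+4\delta_{2,1,k+2}=-4\alpha$ up to an additive integer, which has no integer solution in the allowed ranges once $\alpha$ is not a suitable rational. So ``absorbable via the abelian expansion'' is not automatic, and the argument as written does not establish the extension claim. (To be fair, the paper's own proof is terse here and does not spell out the ``moreover'' part either, so this is a subtle point rather than evidence that you missed something the paper made explicit; but as a self-contained proposal, this step would need a real argument tying the correction terms to the allowed digit lattice, or a normalization of the canonical basis.)
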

\begin{proof}
We will only prove the statement for $G$ as the proof for a lattice $\Gamma$ is identical (using \Cref{prop:normal-formdiscrete} instead of \Cref{prop:normal-form}). The proof is by induction on the nilpotency class $s_G$ of $G$.  It is straight-forward to see that the result holds for $s_G=1$, as in this case $G\cong \mathbb{R}^{m_1}$ is abelian.

    Assume that by induction the result holds for all simply connected nilpotent groups of nilpotency class $s_G-1$ and let $G$ be $s_G$-nilpotent. Then there is an even integer $k\geq 0$ for which there are unique $\delta_{i,j,\ell}$ satisfying the above conditions such that 
    \[
        h:= g^{-1}\cdot \prod_{\ell=0}^k\left( \prod_{i=1}^{s_G-1}\prod_{j=1}^{m_i}x_{ij}^{\delta_{i,j,\ell}2^{\ell\cdot i}}\right)\in \gamma_{s_G}(G)=\oplus_{j=1} ^{m_{s_G}} \mathbb{R}x_{s_G j}.
    \]
    In particular, there is a unique decomposition $h=\prod_{j=1}^{m_{s_G}}x_{ij}^{\lambda_{s_G j}(h)}$. After possibly making $k$ larger, we may further assume that $\lambda_{s_G j}(h)\in \left(a_{s_G k},b_{s_G k}\right]$, where
    \[
        a_{ik}=1-2^{i-1}-\frac{2^{(k+1)i}-1}{4^i-1},~~ b_{ik}=2^{(k+1)i} +1 -2^{i-1}- \frac{2^{(k+1)i}-1}{4^i-1}
    \]
    for $1\leq j \leq m_{s_G}$. Hence, there is a unique decomposition of $\lambda_{s_G j}(h)$ of the form
    \[
        \lambda_{s_G j}(h)= \sum_{\ell=0}^k \delta_{s_G,j,\ell}2^{\ell \cdot s_G},
    \]
    where $\delta_{s_G,j,\ell}$ satisfies the above conditions. We deduce that $h$ uniquely decomposes as 
    \[
        h= \prod_{\ell=0}^k\prod_{j=1}^{m_{s_G}} x_{s_Gj}^{\delta_{s_G,j,\ell}2^{\ell\cdot s_G}}.
    \]
    Hence, the fact that the element $h$ is central in $G$, combined with the uniqueness of the decomposition of $g\cdot h$ as $\prod_{\ell=0}^k\left( \prod_{i=1}^{s_G-1}\prod_{j=1}^{m_i}x_{ij}^{\delta_{i,j,\ell}2^{\ell\cdot i}}\right)\in \gamma_{s_G}(G)=\oplus_{j=1} ^{m_{s_G}} \mathbb{R}x_{s_G j}$, implies the existence of a unique decomposition of $g$ as in the assertion for our chosen value of $k$. This completes the induction step.    
\end{proof}

The normal form defined in \Cref{lem:normal-form} provides us with the following candidate for a F\o lner tiling sequence for a simply connected nilpotent group $G$:
\begin{itemize}
    \item $F_0= \left\{\prod_{i=1}^{s_G}\prod_{j=1}^{m_i} x_{ij}^{\delta_{i,j,0}}\mid \delta_{i,j,0}\in \left(-2^{i-1},2^{i-1}\right],~ 1\leq i \leq s,~ 1\leq j\leq m_i\right\}$,
    \item $F_{\ell}=\left\{\prod_{i=1}^{s_G}\prod_{j=1}^{m_i} x_{ij}^{\delta_{i,j,\ell}2^{\ell\cdot i}}\mid \delta_{i,j,\ell}\in \left\{-2^i,\dots, -1\right\},~ 1\leq i \leq s,~ 1\leq j\leq m_i\right\}$ if $\ell\geq 1$ is odd,
    \item $F_{\ell}=\left\{\prod_{i=1}^{s_G}\prod_{j=1}^{m_i} x_{ij}^{\delta_{i,j,\ell}2^{\ell\cdot i}}\mid \delta_{i,j,\ell}\in \left\{0,\dots, 2^i-1\right\},~ 1\leq i \leq s,~ 1\leq j\leq m_i\right\}$ if $\ell \geq 1$ is even.
\end{itemize}

\Cref{lem:normal-form} guarantees that the sequence $(F_k)$ satisfies \Cref{def:folner-tiling} (1) for the corresponding tiles $T_0=F_0$ and $T_{k+1}=\bigsqcup_{g\in F_{k+1}} T_k\cdot g$ and it remains to be checked that the $T_k$ define a F\o lner sequence for $G$. 

Before doing so we observe that if $r_G$ denotes the polynomial growth degree of $G$, then $|F_k|=2^{r_G}$ for $k\geq 1$ and thus $\mu(T_k)=2^{r_G\cdot k}\cdot \mu(F_0)$. Moreover, for every $\ell$ the elements in $F_{\ell}$ have word length $\leq C\sum_{i=1}^{s_G}\sum_{j=1}^{m_i} 2 ^{\ell}$ with respect to the word metric defined by a compact generating set $S_G\subseteq F_0$, where $C$ only depends on $G$ and the choice of $S_G$. Thus, the tile $T_k$ has diameter $\leq \sum_{\ell=0}^k \sum_{i=1}^{s_G}\sum_{j=1}^{m_i} 2 ^{\ell}\leq C \cdot h(G) \cdot 2^{k+1}$, where $h(G)$ is the Hirsch length of $G$.

\begin{lemma}\label{lem:nilpotent-explicit-Folner-tiling}
    Let $G$ be a simply connected nilpotent Lie group of nilpotency class $s_G$ (resp.\ a lattice $\Gamma$) and let $\left\{x_{ij}\mid 1\leq i\leq s_G,~ 1\leq j \leq m_i\right\}$ be as in \Cref{prop:normal-form} (resp.\ as in \Cref{prop:normal-formdiscrete}). Then the sequence $(F_k)$ defined above is an $(\epsilon_k,R_k)$-F\o lner tiling sequence for $G$ (resp.\ $\Gamma$) with $\epsilon_k=O(2^{-k})$, $R_k=O(2^k)$ and, for $k\geq 1$, $|F_k|= 2 ^{r_G}$, where $G$ has polynomial growth of degree $r_G$. 
\end{lemma}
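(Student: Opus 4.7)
The plan is to verify the three ingredients of an $(\epsilon_k,R_k)$-F\o lner tiling sequence separately; by the discussion following \Cref{eqn:Folner-condition}, it suffices to check the disjoint tiling clause of \Cref{def:folner-tiling}(1) together with the two conditions of \Cref{def:quant-Folner}.

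\textbf{Tiling, cardinality, and diameter.} The disjoint decomposition $T_{k+1}=\bigsqcup_{g\in F_{k+1}}T_k\cdot g$ is precisely the uniqueness clause of the normal form in \Cref{lem:normal-form} at level $k+1$. For $k\geq 1$ we count $|F_k|=\prod_{i=1}^{s_G}(2^i)^{m_i}=2^{\sum_i im_i}=2^{r_G}$ by the Bass--Guivarc'h formula for the polynomial growth degree, so $\mu(T_k)=2^{r_Gk}\mu(F_0)$. For the diameter, \Cref{prop:normal-form} gives $|x|_G\lesssim \sum_{i,j}|\delta_{i,j,\ell}\,2^{\ell i}|^{1/i}\lesssim 2^\ell$ for each $x\in F_\ell$ (using $|\delta_{i,j,\ell}|\leq 2^i$), whence the diameter of $T_k=F_0\cdots F_k$ is $\lesssim\sum_{\ell=0}^{k}2^\ell=O(2^k)$.

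\textbf{F\o lner bound.} This is the heart of the proof. Identifying $G$ with $\R^{r_G}$ via the exponential map and the canonical basis, Haar measure becomes a constant multiple of Lebesgue measure, and $T_k$ is, up to bounded linear distortion, a box in which coordinate $(i,j)$ ranges over an interval of length $\simeq 2^{(k+1)i}$. The key estimate is that left multiplication by a fixed $s\in S_G$ shifts the weight-$i$ normal-form coordinates in a controlled way:
\begin{equation*}
|\lambda_{ij}(s\cdot g)-\lambda_{ij}(g)|\lesssim 2^{k(i-1)}\quad\text{for all }g\in T_k.
\end{equation*}
I would derive this from the Baker--Campbell--Hausdorff formula: $\log(s\cdot g)-\log(g)-\log s$ is a finite sum of iterated Lie brackets, each involving at least one factor coming from $\log s$ (of bounded norm) and factors from $\log g$ whose weights sum to at most $i-1$ in the weight-$i$ component. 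Combined with the bound $|\lambda_{i'j'}(g)|\lesssim 2^{ki'}$ for $g\in T_k$, each such monomial contributes $O(2^{k(i-1)})$.

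\textbf{Boundary layer estimate and lattice case.} From the coordinate shift bound it follows that $s\cdot T_k\triangle T_k$ lies, in coordinates, in a union over $(i,j)$ of slabs of thickness $O(2^{k(i-1)})$ adjacent to the two $(i,j)$-faces of the box. Each such slab has Lebesgue measure
\begin{equation*}
O(2^{k(i-1)})\cdot\prod_{(i',j')\neq(i,j)}O(2^{(k+1)i'})\lesssim 2^{-k-i}\mu(T_k),
\end{equation*}
and summing over the finitely many $(i,j)$ yields $\mu(s\cdot T_k\setminus T_k)\lesssim 2^{-k}\mu(T_k)$, giving $\epsilon_k=O(2^{-k})$. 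For a lattice $\Gamma<G$ the identical argument runs verbatim, replacing Haar by counting measure and invoking \Cref{prop:normal-formdiscrete} in place of \Cref{prop:normal-form}; the BCH-based shift estimate is insensitive to discreteness since it only uses the ambient Lie group structure. The delicate point of the whole proof is the BCH estimate itself: one must verify that the shift in the weight-$i$ coordinate is genuinely of order $2^{k(i-1)}$ rather than $2^{ki}$, as that extra factor of $2^{-k}$ in every coordinate direction is what makes the F\o lner ratios decay geometrically.
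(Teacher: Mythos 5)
Your proof takes a genuinely different route from the paper's. The paper argues by \emph{induction on the nilpotency class} $s_G$: after the abelian base case, it passes to the quotient $\overline{G}=G/\gamma_{s_G}(G)$, splits the boundary layer $s\cdot T_k\setminus T_k$ into the part whose projection already escapes $\overline{T}_k$ (handled by the inductive hypothesis) and the part whose projection stays in $\overline{T}_k$ but whose central $\gamma_{s_G}(G)$-coordinate escapes (handled by a distortion estimate $d_{S_{\gamma_{s_G}(G)}}(z,z')\lesssim 2^{s_Gk_0}$ and a geometric summation over the level $k_0$ where the tiled forms first diverge). You instead run a single direct slab estimate via a coordinate-level BCH shift bound. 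Both yield $\epsilon_k=O(2^{-k})$, and your count $|F_k|=2^{\sum_i im_i}=2^{r_G}$ and diameter bound $R_k=O(2^k)$ coincide with the pre-lemma discussion in the paper.

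The direct approach is cleaner conceptually but you are glossing over one genuine point. You derive the shift bound $|\lambda_{ij}(s\cdot g)-\lambda_{ij}(g)|\lesssim 2^{k(i-1)}$ from BCH for $\log(sg)-\log g-\log s$, which is an estimate in \emph{exponential (first-kind Malcev) coordinates}, while the tile $T_k$ is a genuine box only in the \emph{second-kind Malcev coordinates} $(\lambda_{ij})$ coming from the ordered product decomposition $g=\prod_i\prod_j x_{ij}^{\lambda_{ij}(g)}$ of \Cref{prop:normal-form}. Your phrase ``up to bounded linear distortion'' is incorrect: the change of variables between the two coordinate systems is a triangular \emph{polynomial} diffeomorphism, not linear, so $T_k$ is not a box in exponential coordinates and the slab picture does not transfer for free. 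What saves the argument is that this coordinate change is weight-homogeneous (the weight-$i$ second-kind coordinate equals the weight-$i$ first-kind coordinate plus a polynomial of graded degree $i$ in lower-weight coordinates), and a telescoping argument on these monomials shows that the $O(2^{k(i-1)})$ shift bound indeed transfers to second-kind coordinates. Alternatively, one can avoid exponential coordinates entirely by writing the group law directly in second-kind coordinates, $\lambda_{ij}(sg)=\lambda_{ij}(s)+\lambda_{ij}(g)+P_{ij}(\lambda_{<i}(s),\lambda_{<i}(g))$ with $P_{ij}$ graded of weight $i$ and each monomial containing at least one $s$-coordinate, which gives the bound directly. Either way, this step needs to be spelled out; as written, the proof has a gap precisely at the point you yourself flag as the crux. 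The paper's inductive proof sidesteps this issue by never needing a fine coordinate-level shift estimate across all weights simultaneously.
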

\begin{proof}
 Once again, we only write the proof for $G$. We fix a compact generating set $S_G\subseteq F_0$ for $G$.
    The sequence $(F_k)$ satisfies \Cref{def:folner-tiling} (1) by construction. As mentioned before, if we show that there is a non-increasing sequence $(\epsilon_k)$ which converges to $0$ such that for every $s\in S_G$ we have $\frac{\mu(s \cdot T_k\setminus T_k)}{\mu(T_k)}\leq \epsilon_k$, then this also implies that $(T_k)$ defines a sequence of F\o lner sets for $G$. Moreover, the above discussion shows that we can choose $R_k=2 \cdot C\cdot h(G)\cdot 2 ^k$ to obtain an upper bound on the diameters of the $T_k$. 
    
    So the only thing we need to check is \Cref{def:quant-Folner} (2), that is, we need to prove that there is a non-increasing sequence $(\epsilon_k)$ with $\epsilon_k=O(2^{-k})$ such that for all $s\in S_G$ we have
    \[
        \mu(s\cdot T_k\setminus T_k)\leq \epsilon_k \mu (T_k).
    \]
    
    We will prove this by induction on the nilpotency class $n$ after observing that the F\o lner tiling sequence $(F_k)$ and the sequence of tiles $(T_k)$ naturally define a F\o lner tiling sequence $(\overline{F}_k)$ and a sequence of tiles $(\overline{T}_k)$ for the quotient $G/\gamma_{s_G}(G)$ of $G$ by the $s_G$-th term of the lower central series, where in $\overline{F}_k$ and $\overline{T}_k$ we omit the generators $x_{s_G j}$. 

    We start with the base case $s_G=1$. In this case the group is abelian. Since $S_G\subseteq F_0$ it suffices to bound above $\mu(F_0\cdot T_k\setminus T_k)$. Moreover, by construction $T_k=(a_{1k},b_{1k}]^{m_1}\subset \mathbb{R}^{m_1}$ and for every $s\in S_G$ and $x=f_0(x)\cdots f_k(x)\in T_k$ we have
    \[
        s\cdot f_0(x)\cdots f_k(x) \in \left(a_{1k}-2,b_{1k}+2\right]^{m_1}.
    \]
    In particular, since the Haar measure $\mu$ on $G$ is the standard Lebesgue measure on $\mathbb{R}^{m_1}$ and $b_{1k}-a_{1k}=2^{k+1}$, a standard calculation shows that there is a constant $C$ that only depends on $m_1$ such that 
    \[
        \mu (S_G\cdot T_k\setminus T_k)\leq \mu\left(\left(a_{1k}-2,b_{1k}+2\right]^{m_1} \setminus (a_{1k},b_{1k}]^{m_1}\right)\leq C\cdot 2 ^{(m_1-1)k}=\frac{C}{2^{m_1}}\cdot 2^{-k}\cdot \mu(T_k).
    \]

    We will now assume that the result holds for groups of nilpotency class at most $s_G-1$. We denote by $\overline{\mu}$ any Haar measure on $\overline{G}=G/\gamma_{s_G}(G)$ and by $S_{\overline{G}}$ the generating set induced by projecting $S_G$. Let $\overline{C}$ be the constant such that $\overline{\mu}(S_{\overline{G}} \cdot \overline{T}_k\setminus \overline{T}_k)\leq \overline{C}2^{-k}\overline{\mu}(\overline{T}_k)$ for all $k\geq 0$. 
    
    We will distinguish two possible cases for elements $s\cdot x$ with $s\in S_G$, $x\in T_k$ and $s\cdot x\notin T_k$. 
    
    The first are elements such that their projection $\overline{s}\cdot \overline{x}$ to $\overline{G}$ is not in $\overline{T}_k$. There are at most 
    \[
                \frac{\mu(T_k)}{\overline{\mu}(\overline{T_k})}\cdot \overline{\mu}\left( S_{\overline{G}} \cdot \overline{T}_k\setminus \overline{T}_k\right)=        \frac{\mu(T_k)}{\overline{\mu}(\overline{T_k})}\cdot \overline{\mu}\left( \overline{T}_k\setminus S_{\overline{G}}^{-1} \cdot\overline{T}_k\right)
    \]
    such elements. 
    Indeed, observe that the measure defined by first integrating along  
    $\gamma_{s_G}(G)$ (which yields a $\gamma_{s_G}(G)$-invariant function on $G$, or equivalently a function on $\overline{G}$) and then along $\overline{G}$ is invariant, hence defines a Haar measure of $G$. 
    On the other hand, by construction, the fibers of $T_k\to \overline{T}_k$ are all translates of a single subset of $\gamma_{s_G}(G)$, whose measure is $\frac{\mu(T_k)}{\overline{\mu}(\overline{T_k})}$.
    Now, consider a subset $A\subset T_k$. First integrating along $\gamma_{s_G}(G)$ gives rise to a positive function on $\overline{G}$ supported on the projection $\overline{A}$, and with values at most the size of a fiber of $T_k\to \overline{T}_k$, implying the asserted upper bound. Altogether, we deduce that
    \[
        \frac{\mu(T_k)}{\overline{\mu}(\overline{T_k})}\cdot \overline{\mu}\left(S_{\overline{G}} \cdot \overline{T}_k\setminus \overline{T}_k\right) \leq \overline{C}\cdot 2^{-k}\mu(T_k).
    \]

    The second are elements such that $\overline{s}\cdot \overline{x}\in \overline{T}_k$, but $s\cdot x\notin T_k$. Then we can write $x=z\cdot \overline{f}_0(\overline{x}) \cdots \overline{f}_0(\overline{x})$ and $s\cdot x =z'\cdot \overline{f}_0(\overline{s}\cdot \overline{x}) \cdots \overline{f}_k(\overline{s}\cdot \overline{x})$, where by slight abuse of notation we denote by $\overline{f}_i(\overline{x})$, respectively $\overline{f}_i(\overline{s}\cdot \overline{x})$, the canonical lifts of the $i$-th term of the tiled form for $\overline{x}$, respectively $\overline{s}\cdot \overline{x}$, to $G$. If $\overline{f}_{\ell}(\overline{x})=\overline{f}_{\ell}(\overline{s}\cdot \overline{x})$ for all $\ell> k_0$ for some fixed $k_0\in \left\{0,\cdots, k\right\}$, then
    \begin{align*}
        1&= d_{S_G}\left(z\cdot \overline{f}_0(\overline{x}) \cdots \overline{f}_{k_0}(\overline{x}),z'\cdot \overline{f}_0(\overline{s}\cdot \overline{x}) \cdots \overline{f}_{k_0}(\overline{s}\cdot \overline{x})\right)\\
        &=d_{S_G}\left(\left(\overline{f}_0(\overline{x}) \cdots \overline{f}_{k_0}(\overline{x})\right)\cdot \left(\overline{f}_0(\overline{s}\cdot \overline{x}) \cdots \overline{f}_{k_0}(\overline{s}\cdot \overline{x})\right)^{-1},z'\cdot z^{-1}\right)\\
        &\geq d_{S_G}(z'z^{-1},1)-d_{S_G}\left(1,\left(\overline{f}_0(\overline{x}) \cdots \overline{f}_{k_0}(\overline{x})\right)\cdot \left(\overline{f}_0(\overline{s}\cdot \overline{x}) \cdots \overline{f}_{k_0}(\overline{s}\cdot \overline{x})\right)^{-1}\right),
    \end{align*}
    where for the second equality we use that $z,~z'\in \gamma_{s_G}(G)\leq Z(G)$ are central. This implies that   
    \begin{align*}
        d_{S_G}(z,z')&\leq 1+d_{S_G}\left(\overline{f}_0(\overline{x}) \cdots \overline{f}_{k_0}(\overline{x}), \overline{f}_0(\overline{s}\cdot \overline{x}) \cdots \overline{f}_{k_0}(\overline{s_G}\cdot \overline{x})\right)\\
        & \leq 1+R_{k_0}\\
        & \leq  C' 2^{k_0},
    \end{align*}
for some constant $C'$ that only depends on $\overline{G}$. Since $\gamma_{s_G}(G)\leq G$ has distortion of polynomial degree at most $s_G$, we deduce that there is a constant $C''$ that only depends on $G$ such that
    \[
        d_{S_{\gamma_{s_G}(G)}}(z,z')\leq C'' 2^{s_G\cdot k_0},
    \] where $S_{\gamma_{s_G}(G)}$ is the generating set 
    \[
        S_{\gamma_{s_G}(G)}=\left\{x_{s_G 1}^{\lambda_{s_G 1}},\cdots , x_{{s_G}m_{s_G}}^{\lambda_{s_G m_{s_G}}}\mid \lambda_{s_G j}\in\left(-2^{s_G-1},2^{s_G-1}\right], 1\leq j\leq m_{s_G}\right\}
    \]
    for $\gamma_{s_G}(G)$. This means that $z'$ has to take values within a $C''2^{s_G\cdot k_0}$ neighbourhood of the set $\left(a_{s_G k},b_{s_G k}\right]^{m_{s_G}}\subset \gamma_{s_G}(G)\cong \mathbb{R}^{m_{s_G}}$, or said differently in a set of measure
    \[
        \leq \mu_{\mathbb{R}^{m_{s_G}}}\left( \left(a_{s_G k}-C''\cdot 2^{s_G \cdot k_0},b_{s_G k}+ C''\cdot 2^{s_G \cdot k_0}\right]^{m_{s_G}}\setminus \left(a_{s_G k},b_{s_G k}\right]^{m_{s_G}}\right)\leq C_3\cdot 2^{s_G \cdot k_0}\cdot 2^{(m_{s_G}-1)\cdot k\cdot s_G}
    \]
    for a constant $C_3$ that only depends on $m_{s_G}$, where we use that $b_{s_G k}-a_{s_G k}=2^{s_G \cdot k}$. We deduce that the total Haar measure of elements of the second kind is bounded above by
    \begin{align*}
        &C_4 \sum_{\ell=1}^k \frac{\overline{\mu}(\overline{T}_k)}{\overline{\mu}(\overline{T}_{\ell})}\cdot \overline{\mu}(S_{\overline{G}}\cdot \overline{T}_{\ell-1} \setminus \overline{T}_{\ell-1})\cdot C_3 \cdot 2^{s_G \cdot \ell}\cdot 2^{(m_{s_G}-1)\cdot k\cdot s_G}\\
        &\leq C_4 2^{-r_{\overline{G}}}\sum_{\ell=1}^{k} \frac{\overline{\mu}(\overline{T}_k)}{\overline{\mu}(\overline{T}_{\ell-1})}\cdot \overline{\mu}(S_{\overline{G}}\cdot \overline{T}_{\ell-1} \setminus \overline{T}_{\ell-1})\cdot C_3 \cdot 2^{s_G\cdot \ell}\cdot 2^{(m_{s_G}-1)\cdot k\cdot s_G}\\
        &\leq C_4 2^{-r_{\overline{G}}} \sum_{\ell=1}^{k} \mu(T_k) 2^{-s_G m_{s_G} k} \cdot \overline{C} \cdot 2^{-\ell+1}\cdot C_3 \cdot 2^{s_G\cdot \ell}\cdot 2^{(m_{s_G}-1)\cdot k\cdot s_G}\\
        &\leq C_5 2^{-k}\mu(T_k)\sum_{\ell=1}^k 2^{(\ell-k)(s_G-1)} \\
        &\leq C_6 2^{-k}\mu(T_k),
    \end{align*}
    where to obtain the first line we use that if $\overline{f}_{\ell}(\overline{x})\neq \overline{f}_{\ell}(\overline{s}\cdot \overline{x})$, then $\overline{s}\cdot \overline{f}_0(\overline{x})\cdots \overline{f}_{\ell}(\overline{x})\notin \overline{T}_{\ell-1}$, and to obtain the last line we use that $s_G\geq 2$ and thus the sum is a convergent geometric series; the constants $C_4$, $C_5$ and $C_6$ only depend on $G$ ($C_5$ and $C_6$ absorb previous constants). Note that we have again used in the above inequalities that the Haar measure of the extension $1\to \gamma_{s_G}(G)\to G \to \overline{G}\to 1$ can be described by first integrating along $\gamma_{s_G}(G)$ and then along $\overline{G}$.

    Combining the two cases, we obtain that there is a constant $C_7$ that only depends on $G$ such that
    \[
        \mu(S_G\cdot T_k\setminus T_k)\leq C_7 2^{-k}\cdot \mu(T_k).
    \]
    Thus, choosing $\epsilon_k=C_7 2^{-k}$ completes the proof.
\end{proof}

As a consequence we obtain:

\begin{proposition}\label{prop:coupling-same-growth-Lie}
    Let $G$, $H$ be simply connected nilpotent Lie groups of the same polynomial growth. Then they are $L^p$-orbit equivalent for every $p<1$. If, moreover, $G$ and $H$ admit lattices $\Gamma$ and $\Lambda$, then these are $L^p$-orbit equivalent for every $p<1$.
\end{proposition}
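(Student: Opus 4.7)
The plan is to apply the F\o lner tiling machinery developed in Sections~\ref{sec:orbit-equivalence} and~\ref{sec:explicit-tilings-for-nilpotent-groups} directly. First I would invoke \Cref{lem:nilpotent-explicit-Folner-tiling} to produce $(\epsilon_k,R_k)$-F\o lner tiling sequences $(F_k)$ and $(F_k')$ for $G$ and $H$ (respectively $\Gamma$ and $\Lambda$ in the lattice case) with $\epsilon_k=O(2^{-k})$ and $R_k, R_k'=O(2^k)$. The crucial input coming from the hypothesis on growth is that $|F_k|=|F_k'|=2^{r}$ for all $k\geq 1$, where $r$ is the common polynomial growth degree of $G$ and $H$. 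For $k=0$, in the simply connected Lie case $F_0$ and $F_0'$ are both standard Borel probability spaces of continuum cardinality (being products of intervals equipped with normalised Lebesgue measure), hence isomorphic; in the lattice case one checks directly from the definition that $|F_0|=|F_0'|=2^r$ as well. In particular \Cref{cor:existence-of-orbit-equivalence} applies and yields an explicit orbit equivalence $f\colon (X,\mu)\to (X',\mu')$ via any choice of isomorphisms $f_k\colon F_k\to F_k'$.

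Next I would quantify the integrability of $f$ using \Cref{prop:quant-orbit-equivalence}. Fix $\varepsilon>0$ and set
\[
\phi(x)=\frac{x}{\log(2+x)^{1+\varepsilon}}.
\]
Then with $\epsilon_{k-1}=O(2^{-k})$ and $2R_k'=O(2^k)$ one has
\[
\epsilon_{k-1}\cdot\phi(2R_k')\;=\;O\!\left(\frac{1}{k^{1+\varepsilon}}\right),
\]
which is summable. The symmetric estimate with $G$ and $H$ swapped gives summability for $\epsilon_{k-1}'\cdot\phi(2R_k)$ as well. \Cref{prop:quant-orbit-equivalence} then produces a $(\phi,\phi)$-integrable orbit equivalence coupling between $G$ and $H$ (respectively $\Gamma$ and $\Lambda$).

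Finally, to deduce $L^p$ integrability for all $p<1$, I would observe that $x^p\lesssim \phi(x)$ for $x\geq 1$ whenever $p<1$, since $x^p/\phi(x)=\log(2+x)^{1+\varepsilon}/x^{1-p}\to 0$ as $x\to\infty$. Hence the $(\phi,\phi)$-integrability yields $(L^p,L^p)$-integrability for every $p<1$, and an $L^p$ orbit equivalence in the sense of the paper.

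The only mild subtlety I anticipate is verifying the cardinality match of the $F_0$ in a clean way in the simply connected case, since a priori the tiles $F_0$ and $F_0'$ carry different Haar-induced measures of possibly different total mass; but after normalisation both become standard atomless Borel probability spaces, which are all mutually isomorphic, so one such isomorphism can be chosen. All the other steps are immediate substitutions into the already-established propositions, so no additional obstacle is expected.
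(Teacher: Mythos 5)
Your argument is correct and follows essentially the same route as the paper's proof: invoke \Cref{lem:nilpotent-explicit-Folner-tiling} to get $(\epsilon_k,R_k)$-F\o lner tiling sequences with matching cardinalities, apply \Cref{cor:existence-of-orbit-equivalence} and \Cref{prop:quant-orbit-equivalence}, and test summability against $x/\log(x)^{1+\varepsilon}$ (the paper uses $\psi_{p,\epsilon}(x)=x^p/\log(x)^{1+\epsilon}$ and specialises to $p=1$, which is the same test function up to a harmless shift inside the logarithm). Your handling of the $k=0$ tile via normalisation to standard atomless Borel probability spaces in the Lie case, and the cardinality count $|F_0|=2^r$ in the lattice case, matches the paper's treatment.
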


\begin{proof} 
    As before, we shall only treat the case of $G$ and $H$.
    By \Cref{lem:nilpotent-explicit-Folner-tiling} there are $(\epsilon_k,R_k)$-F\o lner tiling sequences $(F_{k,G})$, resp. $(F_{k,H})$ for $G$, resp. $H$, and a constant $C>1$ such that $\epsilon_k\leq C\cdot 2^{-k}$, $R_k\leq C\cdot 2^k$, and $F_{k,G}$ and $F_{k,H}$ have the same cardinality for all $k\geq 0$; here for $k=0$ we use that $G$ and $H$ are simply connected nilpotent Lie groups and thus $F_{0,G}$ and $F_{0,H}$ are uncountable. Thus, \Cref{prop:quant-orbit-equivalence} implies that the orbit equivalence coupling from \Cref{cor:existence-of-orbit-equivalence} is $(\phi,\phi)$-integrable for every non-decreasing function $\phi$ such that the sequence $\epsilon_{k-1}\cdot \phi(2R_k)$ is summable.

    For $\psi_{p,\epsilon}(x)= \frac{x^p}{\log(x)^{1+\epsilon}}$ we have
    \begin{align*}
        \sum_{k=1}^{\infty} \epsilon_{k-1}\cdot \psi_{p,\epsilon}(2R_k) 
        &\leq 2^{p+1}\cdot C^{p}\cdot \sum_{k=1}^{\infty} 2^{-k}\cdot \frac{(2^k)^p}{\left(\log(C)+ \log(2^k)\right)^{1+\epsilon}} \\
        & = \sum_{k=1}^{\infty} 2^{p+1}\cdot C^{p}\frac{2^{k\cdot(p-1)}}{(\log(C)+k)^{1+\epsilon}},
    \end{align*}
    which converges for all $p\leq 1$ and $\epsilon>0$. Since $x^p=o(\psi_{1,\epsilon})$ for every $p<1$, we deduce that the orbit equivalence coupling is $L^p$-integrable for every $p<1$. Since the $G$- and $H$-actions defining the coupling are essentially free, the assertion follows.
\end{proof}

By choosing adapted (coarser) F\o lner tiling sequences, similar as in the proof of \cite[Corollary 6.11]{DKLMT-22}, we can prove \Cref{mainthm:growth} for simply connected nilpotent Lie groups and for torsion-free finitely generated nilpotent groups.

\begin{lemma}\label{lem:adapted-Folner-tiling}
    Let $G$ be an infinite simply connected nilpotent Lie group, or a torsion-free finitely generated nilpotent group, of polynomial growth degree $r_G$ and let $m\in \mathbb{N}_{>0}$. Then $G$ admits a $(\epsilon_k,R_k)$-F\o lner tiling sequence $(F_k')$ with $|F_k'|=2^{m\cdot r_G}$ for $k\geq 1$, $\epsilon_k=O(2^{-m\cdot k})$, and $R_k=O(2^{m\cdot k})$.  
\end{lemma}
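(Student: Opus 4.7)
The natural approach is to obtain $(F_k')$ by \emph{coarsening} the $(\epsilon_k,R_k)$-F\o lner tiling sequence $(F_k)$ provided by Lemma \ref{lem:nilpotent-explicit-Folner-tiling}: we group $m$ consecutive layers of the original tiling into one layer of the new one. Concretely, set
\[
    F_0' := F_0 \cdot F_1 \cdots F_{m-1} = T_{m-1}, \qquad F_k' := F_{km} \cdot F_{km+1} \cdots F_{(k+1)m - 1} \ \text{ for } k \geq 1.
\]
A straightforward induction, together with associativity of the product, shows that the new tiles satisfy $T_k' = T_{(k+1)m - 1}$ for every $k \geq 0$.

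The tiling property (condition (1) of Definition \ref{def:folner-tiling}) for $(F_k')$ is inherited directly from that of $(F_k)$: if $g \in T_k' = T_{(k+1)m-1}$ has the unique decomposition $g = f_0 \cdots f_{(k+1)m-1}$ with $f_j \in F_j$, then grouping the factors into blocks of $m$ gives a decomposition $g = x_0' \cdots x_k'$ with $x_j' \in F_j'$, and uniqueness at each level is immediate from uniqueness in the finer tiling. In particular, since by Lemma \ref{lem:nilpotent-explicit-Folner-tiling} we have $|F_j| = 2^{r_G}$ for $j \geq 1$, one computes
\[
    |F_k'| \;=\; \prod_{j=km}^{(k+1)m-1} |F_j| \;=\; (2^{r_G})^m \;=\; 2^{m r_G}, \qquad k \geq 1,
\]
as required.

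It remains to read off the parameters $R_k'$ and $\epsilon_k'$ from those of $(F_k)$. Since $T_k' = T_{(k+1)m-1}$, the diameter bound $R_j \leq C \cdot 2^j$ from Lemma \ref{lem:nilpotent-explicit-Folner-tiling} gives
\[
    R_k' \;\leq\; C \cdot 2^{(k+1)m-1} \;=\; O(2^{mk}).
\]
For the F\o lner condition, fix a compact generating set $S_G \subseteq F_0 \subseteq F_0'$. Using the same identification $T_k' = T_{(k+1)m-1}$ and the bound $\epsilon_j \leq C\cdot 2^{-j}$ from Lemma \ref{lem:nilpotent-explicit-Folner-tiling}, we get for all $s \in S_G$
\[
    \mu(s \cdot T_k' \setminus T_k') \;\leq\; \epsilon_{(k+1)m-1}\,\mu(T_k') \;\leq\; C \cdot 2^{-(k+1)m + 1}\,\mu(T_k') \;=\; O(2^{-mk})\,\mu(T_k'),
\]
so we may set $\epsilon_k' = O(2^{-mk})$. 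The Lemma then follows, using as before that a uniform F\o lner estimate on a compact generating set suffices to deduce that $(T_k')$ is a genuine F\o lner sequence; the argument for a torsion-free finitely generated nilpotent $\Gamma$ is identical, invoking the discrete version of Lemma \ref{lem:nilpotent-explicit-Folner-tiling}. No step presents a real obstacle here since the key estimates are inherited from the finer tiling; the only thing to check carefully is the tiling/uniqueness property, which is clear by grouping.
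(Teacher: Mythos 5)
Your proof is correct and follows essentially the same coarsening strategy as the paper: regroup $m$ consecutive factors of the finer tiling from Lemma \ref{lem:nilpotent-explicit-Folner-tiling} and read off the parameters from $T_k' = T_{\text{(index)}}$. The only cosmetic difference is the indexing — the paper keeps $F_0' = F_0$ and groups the remaining factors so that $T_k' = T_{mk}$, while you absorb $F_1,\dots,F_{m-1}$ into $F_0'$ so that $T_k' = T_{(k+1)m-1}$ — but both give the same $O(2^{-mk})$ and $O(2^{mk})$ bounds and the same $|F_k'| = 2^{mr_G}$ for $k\geq 1$.
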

\begin{proof}
    We explain the case of a simply connected nilpotent Lie group, the other case is again analogous. Let $(F_k)$ be the F\o lner tiling for $G$ considered in \Cref{lem:nilpotent-explicit-Folner-tiling}. We define $F_0'=F_0$ and for $k\geq 1$ we define $F_k'=F_{m(k-1)+1}\cdot F_{m(k-1)+2} \cdots F_{mk}$. Then $|F_k'|=2^{m\cdot r_G}$. Moreover, $T_{k}'= T_{mk}$, and thus $\epsilon_k=O(2^{-m\cdot k})$ and $R_k=O(2^{m\cdot k})$.
\end{proof}

\begin{proposition}\label{prop:coupling-arbitrary-nilpotent-groups}
    Let $G$ and $H$ be infinite simply connected nilpotent Lie groups, or torsion-free finitely generated nilpotent groups, of polynomial growth degrees $r_G$ and $r_H$. Then there is a orbit equivalence coupling from $G$ to $H$, which is $(L^p,L^q)$-integrable for all $p<\frac{r_H}{r_G}$ and $q<\frac{r_G}{r_H}$. Moreover if $G$ and $H$ admit lattices $\Gamma$ and $\Lambda$, then there is a orbit equivalence coupling from $\Gamma$ to $\Lambda$, which is $(L^p,L^q)$-integrable for all $p<\frac{r_H}{r_G}$ and $q<\frac{r_G}{r_H}$.
\end{proposition}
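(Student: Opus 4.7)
The plan is to extend the argument of \Cref{prop:coupling-same-growth-Lie} by applying \Cref{lem:adapted-Folner-tiling} asymmetrically to $G$ and $H$, so as to produce F\o lner tiling sequences whose cardinalities match level by level while respecting the growth imbalance. The resulting map from \Cref{cor:existence-of-orbit-equivalence} will be an orbit equivalence, and its two-sided integrability will follow from \Cref{prop:quant-orbit-equivalence} applied with test functions of the same logarithmically corrected power shape $x^p/\log(x)^{1+\epsilon}$ used in the proof of \Cref{prop:coupling-same-growth-Lie}.

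Concretely, I would apply \Cref{lem:adapted-Folner-tiling} to $G$ with $m=r_H$, producing an $(\epsilon_k^G, R_k^G)$-F\o lner tiling sequence $(F_k^G)$ with $|F_k^G|=2^{r_G r_H}$ for $k\geq 1$, $\epsilon_k^G=O(2^{-r_H k})$, and $R_k^G=O(2^{r_H k})$; symmetrically, applying it to $H$ with $m=r_G$ yields $(F_k^H)$ with $|F_k^H|=2^{r_G r_H}$, $\epsilon_k^H=O(2^{-r_G k})$, $R_k^H=O(2^{r_G k})$. At level $k=0$, the simply connected Lie group case poses no problem, since $F_0^G$ and $F_0^H$ are both uncountable standard Borel probability spaces, hence isomorphic. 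In the finitely generated case the mismatch $|F_0^G|=2^{r_G}\neq 2^{r_H}=|F_0^H|$ is handled by an absorption trick: redefine $F_0^G$ to be the tile $T^G_{r_H-1}$ (the concatenation of the first $r_H$ original tiles from \Cref{lem:nilpotent-explicit-Folner-tiling}) and $F_0^H:=T^H_{r_G-1}$, so that both now have cardinality $2^{r_G r_H}$. This modification only alters the constants absorbed into the $O(\cdot)$ estimates on $\epsilon_k^\bullet$ and $R_k^\bullet$.

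With \Cref{cor:existence-of-orbit-equivalence} yielding an orbit equivalence $f\colon X_G\to X_H$, I would then verify integrability via \Cref{prop:quant-orbit-equivalence} with the test functions $\phi_{p,\epsilon}(x)=x^p/\log(x)^{1+\epsilon}$ and $\psi_{q,\epsilon}(x)=x^q/\log(x)^{1+\epsilon}$ for $\epsilon>0$. The relevant series reduces to
\[
\sum_{k=1}^\infty \epsilon_{k-1}^G \cdot \phi_{p,\epsilon}(2R_k^H) \;\lesssim\; \sum_{k=1}^\infty \frac{2^{k(r_G\, p - r_H)}}{k^{1+\epsilon}},
\]
which converges for every $p\leq r_H/r_G$ (strict inequality gives a geometric series, equality collapses to $\sum 1/k^{1+\epsilon}$); symmetrically, $\sum \epsilon_{k-1}^H\cdot \psi_{q,\epsilon}(2R_k^G)$ converges for every $q\leq r_G/r_H$. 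Since $x^p=o(\phi_{p',\epsilon}(x))$ whenever $p<p'$, the coupling is $(L^p,L^q)$-integrable for all $p<r_H/r_G$ and $q<r_G/r_H$, as claimed.

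The only mildly delicate point is the level-zero cardinality matching in the finitely generated (in particular, lattice) case; once the absorption trick above is in place, the remainder of the proof is a direct quantitative variant of the computation in \Cref{prop:coupling-same-growth-Lie}, with the only essential new input being the asymmetric choice of parameters $m=r_H$ for $G$ and $m=r_G$ for $H$ in \Cref{lem:adapted-Folner-tiling}.
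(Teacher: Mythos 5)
Your proposal is correct and follows essentially the same route as the paper's proof: apply \Cref{lem:adapted-Folner-tiling} to $G$ with $m=r_H$ and to $H$ with $m=r_G$, match cardinalities, and estimate via \Cref{prop:quant-orbit-equivalence} with test functions $x^p/\log(x)^{1+\epsilon}$. One small remark in your favour: the paper deals with the simply connected case explicitly and declares the discrete case ``analogous,'' but in fact \Cref{lem:adapted-Folner-tiling} leaves $F_0'=F_0$ unchanged, so in the discrete case $|F_{0,G}|=2^{r_G}\neq 2^{r_H}=|F_{0,H}|$ when $r_G\neq r_H$, and the hypothesis of \Cref{cor:existence-of-orbit-equivalence} genuinely fails at level $0$. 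Your absorption trick---replacing $F_0^G$ by $T^G_{r_H-1}$ and $F_0^H$ by $T^H_{r_G-1}$, so both have cardinality $2^{r_G r_H}$---is the correct fix, and it changes $\epsilon_k$ and $R_k$ only by constants, so the summability computation is unaffected.
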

\begin{proof} 
    Once again, we shall only treat the case when $G$ and $H$ are simply connected nilpotent, the discrete case is analogous.
    We apply \Cref{lem:adapted-Folner-tiling} to $G$ with $m=r_H$ and to $H$ with $m=r_G$ to obtain $(\epsilon_{k,G},R_{k,G})$- and $(\epsilon_{k,H},R_{k,H})$-F\o lner tilings sequences $(F_{k,G})$ and $(F_{k,H})$ for $G$ and $H$ with $|F_{k,G}|=|F_{k,H}|=2^{r_G\cdot r_H}$, $\epsilon_{k,G}=O(2^{-r_H\cdot k})$, $\epsilon_{k,H}=O(2^{-r_G\cdot k})$, $R_{k,G}=O(2^{r_H \cdot k})$ and $R_{k,H}=O(2^{r_G\cdot k})$. As above, let $\psi_{p,\epsilon}(x)=\frac{x^p}{\log(x)^{1+\epsilon}}$. Then the same arguments as in the proof of \Cref{prop:coupling-same-growth-Lie} show that the sequence $\epsilon_{k-1,G}\cdot \psi_{p,\epsilon}(2\cdot R_{k,H})$ is summable for all $p\leq \frac{r_H}{r_G}$ and $\epsilon>0$, while $\epsilon_{k-1,H}\cdot \psi_{q,\epsilon}(2\cdot R_{k,G})$ is summable for all $q\leq\frac{r_G}{r_H}$ and $\epsilon>0$. Thus, we can again argue using \Cref{prop:quant-orbit-equivalence} as in the proof of \Cref{prop:coupling-same-growth-Lie} to derive the assertion.
\end{proof}

We end this section with a proof of \Cref{mainthm:growth}. It will rely on the following easy lemma. 

\begin{lemma}\label{lem:AbfiniteIndex}
    Let $F$ be a finite group and $n\in \N$. Then $\Z^n$ is bilipschitz equivalent (hence $L^{\infty}$ OE) to $\Z^n\times F$.
\end{lemma}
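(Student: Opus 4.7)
The plan is to exhibit an explicit bijection between $\Z^n$ and $\Z^n \times F$ which is bilipschitz for the word metrics, and then invoke the general fact (recalled in the introduction, due to Shalom based on Gromov) that bilipschitz equivalence between finitely generated amenable groups implies $L^\infty$ OE.

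Let $k = |F|$ and enumerate $F = \{f_0, f_1, \dots, f_{k-1}\}$. The key observation is that the sublattice $k\Z \times \Z^{n-1} \leq \Z^n$ has index exactly $k$, with cosets represented by $j\cdot e_1$ for $j = 0, \dots, k-1$. This gives a disjoint decomposition
\[
    \Z^n = \bigsqcup_{j=0}^{k-1} \bigl(j e_1 + (k\Z \times \Z^{n-1})\bigr).
\]
I would then define $\phi: \Z^n \to \Z^n \times F$ by
\[
    \phi(j e_1 + (km, v_2, \dots, v_n)) = \bigl((m, v_2, \dots, v_n),\, f_j\bigr), \quad 0 \leq j \leq k-1, \ m \in \Z, \ v_i \in \Z,
\]
which is manifestly a bijection.

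Next I would verify that $\phi$ is bilipschitz with respect to the standard word metrics, equipping $\Z^n$ with the generators $\{\pm e_1, \dots, \pm e_n\}$ and $\Z^n \times F$ with these together with some symmetric generating set of $F$. Checking the action of each generator of $\Z^n$ (respectively $\Z^n \times F$) on coordinates shows that $\phi$ and $\phi^{-1}$ each move their argument by a distance bounded uniformly in terms of $k$ and $\operatorname{diam}(F)$; in particular both are Lipschitz with constant at most $k + \operatorname{diam}(F)$. This is the main (and essentially only) computation, and it is routine.

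Finally, I would invoke the equivalence between bilipschitz equivalence and $L^\infty$ OE for amenable finitely generated groups, as recalled in the introduction, to conclude. There is no substantive obstacle here: the statement is essentially a check that the finite factor $F$ can be absorbed into a sublattice of $\Z^n$ of the same index without distorting distances by more than a bounded amount.
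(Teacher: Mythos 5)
Your proof is correct, but it takes a more elementary route than the paper's. The paper picks an arbitrary sublattice $\Lambda\leq \Z^n$ of index $|F|$, notes that $\Lambda\cong \Z^n$, and then invokes Whyte's theorem to get the bilipschitz equivalence $\Z^n \sim_{BL} \Lambda\times F$, which yields $\Z^n\sim_{BL} \Z^n\times F$ in two lines. You instead make a concrete choice of sublattice, $\Lambda = k\Z\times\Z^{n-1}$ with $k=|F|$, and write down the bijection $\phi$ by hand, so that the whole argument reduces to the observation that integer division by $k$ is a (not injective but) Lipschitz map $\Z\to\Z$ whose ``inverse'' $m\mapsto km$ is Lipschitz with constant $k$, and the remainder-to-$F$ piece only costs an additive $\operatorname{diam}(F)$. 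The trade-off is the usual one: the paper is shorter because it black-boxes Whyte, while your version is fully self-contained and gives explicit Lipschitz constants (roughly $1+\operatorname{diam}(F)$ one way and $k$ the other). Both rely on the same underlying structural fact --- that a finite-index sublattice of $\Z^n$ is again a copy of $\Z^n$ --- and both then appeal to the Shalom/Gromov equivalence between bilipschitz equivalence and $L^\infty$ OE for finitely generated amenable groups, so the overall logic is parallel; yours just unwinds the Whyte step into an explicit computation.
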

\begin{proof}
Let $\Lambda$ be a subgroup of $\Z^n$ of index $|F|$. Note that $\Lambda$ is still isomorphic to $\Z^n$. On the other hand, by \cite{Whyte}, $\Z^n$ is bilipschitz equivalent to $\Lambda\times F$, so the lemma is proved.
\end{proof}

\begin{proof}[Proof of \Cref{mainthm:growth}]
The Lie group case and the torsion-free finitely generated case is a direct consequence of the more precise \Cref{prop:coupling-arbitrary-nilpotent-groups}.  
Now assume that $\Gamma$ and $\Lambda$ are infinite finitely generated virtually nilpotent groups and let $\Gamma'$ and $\Lambda'$ be torsion-free finite index subgroups, of indices respectively $i$ and $j$. By \cite{Whyte}, $\Gamma$ and $\Lambda$ are respectively bilipschitz equivalent (hence $L^{\infty}$ OE) to $\Gamma'\times \Z/i\Z$ and $\Lambda'\times \Z/j\Z$. By the torsion-free case, we know that $\Gamma'$ and $\Lambda'$ are both $L^{<1}$ OE to respectively $\Z^{r_\Gamma}$ and $\Z^{r_\Lambda}$. In turn, this implies that $\Gamma$ and $\Lambda$
are both $L^{<1}$ OE to respectively $\Z^{r_\Gamma}\times  \Z/i\Z$ and $\Z^{r_\Lambda}\times  \Z/j\Z$. But by \Cref{lem:AbfiniteIndex} those are $L^{\infty}$ OE to $\Z^{r_\Gamma}$ and $\Z^{r_\Lambda}$. To conclude, we simply compose these OE-couplings and apply \cite[Proposition 2.26]{DKLMT-22}, which ensures that the resulting coupling from $\Gamma$ to $\Lambda$ is $(L^p,L^q)$-integrable  for all $p<\frac{r_\Lambda}{r_\Gamma}$ and $q<\frac{r_\Gamma}{r_\Lambda}$. This completes the proof.
\end{proof}

\section{Word length estimates in simply connected Lie groups}\label{sec:length-of-commutators}

To prove Theorem \ref{mainthm:Carnot} we will require some more precise estimates of the word lengths of coycles than in the previous sections. In this section we will prove some auxiliary results on the word length of commutators in simply connected nilpotent groups, which we will require to obtain these estimates.

We let $G$ be a simply connected nilpotent Lie group of step $s_G$, $\mathfrak{g}$ be its Lie algebra and as before we denote by $\gamma_i(\mathfrak{g})$ the $i$-th term of its lower central series. Let $A_i$ be a vector complement of $\gamma_{i+1}(\mathfrak{g})$ in $\gamma_i(\mathfrak{g})$.
We shall assume that $\mathfrak{g}$ is $s_G$-step nilpotent.
We let $\|\cdot\|$ be a norm on $\mathfrak{g}$. As usual we identify $G$ with its Lie algebra via the exponential map.
With this convention, we shall write $x\ast y$ for the group product.

We fix a compact symmetric generating set $S_G$ of $G$ and denote by $|x|$ the word length of $x\in G=\mathfrak{g}$ with respect to this generating set.
We recall \cite[Proof of Th\'eor\`eme II.1]{Guivarch73} that there exists $C\geq 1$ such that for all $x=\sum_{i=1}^{s_G} x_i\neq 0$, where each $x_i\in A_i$, we have
\begin{equation}\label{eq:length}
|x|\simeq 1+\sum_{i=1}^{s_G} \|x_i\|^{1/i}.
\end{equation}
Since $\|\cdot\|^{1/i}$ is subadditive for every $i\geq 1$
we deduce that
\[|x+y|\lesssim |x|+|y|.\]
We also  deduce from (\ref{eq:length}) that  for each $x\in \gamma_i(\mathfrak{g})$,  $|x|\lesssim (1+\|x\|)^{1/i}$.

On the other hand, bilinearity of the bracket ensures that for all $x,y\in \mathfrak{g}$,
\[\|[x,y]\|\lesssim \|y\|\|x\|.\]

\begin{lemma}\label{lem:commij}
Let $i,j\geq 1$. For every $u\in A_i$ and $v\in A_j$, 
we have $|[u,v]|^{i+j}\lesssim|u|^i|v|^{j}$
\end{lemma}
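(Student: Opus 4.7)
The plan is to combine the three ingredients from this section: the word-length formula \eqref{eq:length}, the bound $|x|\lesssim (1+\|x\|)^{1/i}$ for $x\in\gamma_i(\mathfrak{g})$, and the sub-multiplicativity $\|[x,y]\|\lesssim \|x\|\|y\|$ coming from bilinearity of the bracket.

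First I would note that since $u\in A_i\subset\gamma_i(\mathfrak{g})$ and $v\in A_j\subset\gamma_j(\mathfrak{g})$, the commutator $[u,v]$ lies in $\gamma_{i+j}(\mathfrak{g})$. Applying the word-length bound for elements of $\gamma_{i+j}(\mathfrak{g})$ together with the norm estimate for brackets yields
\[
|[u,v]|\;\lesssim\;\bigl(1+\|[u,v]\|\bigr)^{1/(i+j)}\;\lesssim\;\bigl(1+\|u\|\,\|v\|\bigr)^{1/(i+j)},
\]
so that $|[u,v]|^{i+j}\lesssim 1+\|u\|\,\|v\|$.

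Next I would use \eqref{eq:length} in the reverse direction: since $u\in A_i$, only the $A_i$-component of $u$ is non-zero, so $|u|\simeq 1+\|u\|^{1/i}$, and hence $|u|^i\gtrsim 1+\|u\|$. Similarly $|v|^j\gtrsim 1+\|v\|$. Multiplying these two inequalities,
\[
|u|^i\,|v|^j\;\gtrsim\;(1+\|u\|)(1+\|v\|)\;=\;1+\|u\|+\|v\|+\|u\|\,\|v\|\;\geq\;1+\|u\|\,\|v\|.
\]

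Combining the two displayed estimates gives $|[u,v]|^{i+j}\lesssim |u|^i\,|v|^j$, as desired. No single step is the main obstacle here; the only mild subtlety is to make sure the additive $1$'s in the word-length formula are handled uniformly, which is why we pass through $(1+\|u\|)(1+\|v\|)$ rather than trying to compare $\|u\|\|v\|$ to $|u|^i|v|^j$ directly.
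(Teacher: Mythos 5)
Your proof is correct and takes essentially the same route as the paper's: lift the bound to Guivarc'h norms, use $[u,v]\in\gamma_{i+j}(\mathfrak{g})$, bilinearity of the bracket, and the word-length formula \eqref{eq:length}. The only (cosmetic) difference is how the additive $1$ is absorbed: the paper observes that $u,v\neq 0$ forces $|u|^i|v|^j\gtrsim 1$, whereas you route through $(1+\|u\|)(1+\|v\|)\geq 1+\|u\|\|v\|$; both handle the constant cleanly.
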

\begin{proof} 
Note that for every $u\in A_i$ and $v\in A_j$, $[u,v]\in \gamma_{i+j}(\mathfrak{g})$. Hence we obtain
\begin{align*}
      |[u,v]|^{i+j} & \lesssim 1+\|[u,v]\|\\
      & \lesssim 1+\|u\|\|v\|\\
      & \lesssim 1+|u|^{i}|v|^{j}
    \end{align*}
    Since $u$ and $v$ can be assumed to be non-zero and thus have non-trivial word length $|u|,|v|\in \mathbb{N}$, we can ignore the additive term $1$, and the lemma is proved. 
\end{proof}
An immediate induction yields the following generalization. 
\begin{lemma}\label{lem:multicomm}
Let $i_1,i_2,\ldots, i_q\geq 1$. For every $u_j\in A_{i_j},$ 
we have $|[u_1,u_2,\ldots, u_{q}]|^{i_1+\ldots+i_q}\lesssim|u_1|^{i_1}\ldots|u_q|^{i_q}$
\end{lemma}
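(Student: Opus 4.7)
The plan is to follow the same recipe as for Lemma \ref{lem:commij}, turning the estimate on the norm $\|\cdot\|$ into an estimate on the word length $|\cdot|$ via the distortion bound $|x|\lesssim (1+\|x\|)^{1/i}$ available for $x\in\gamma_i(\mathfrak{g})$. Since the paper labels this as an ``immediate induction,'' I expect no substantial obstacle, but I would be careful about how the additive constant $1$ in the length estimate (\ref{eq:length}) interacts with the multiplicative product on the right-hand side.

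First I would argue by induction on $q$, with the base case $q=2$ being exactly Lemma \ref{lem:commij}. For the inductive step, set $v=[u_2,\ldots,u_q]\in\gamma_{i_2+\cdots+i_q}(\mathfrak{g})$ by definition of the lower central series, and therefore $[u_1,v]\in\gamma_{i_1+\cdots+i_q}(\mathfrak{g})$. Applying the submultiplicativity of the bracket with respect to $\|\cdot\|$ iteratively yields
\[
    \|[u_1,u_2,\ldots,u_q]\|\;\lesssim\;\|u_1\|\,\|u_2\|\cdots\|u_q\|.
\]
Since each $u_j\in A_{i_j}$, the length formula (\ref{eq:length}) gives $\|u_j\|\lesssim |u_j|^{i_j}$ (we can assume all $u_j\neq 0$, as otherwise the inequality is trivial), so the previous line becomes
\[
    \|[u_1,\ldots,u_q]\|\;\lesssim\; |u_1|^{i_1}\cdots|u_q|^{i_q}.
\]

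To pass from $\|\cdot\|$ back to $|\cdot|$ I would invoke the distortion bound recalled above the statement of Lemma \ref{lem:commij}: for $x\in\gamma_i(\mathfrak{g})$ one has $|x|^i\lesssim 1+\|x\|$. Applied to $x=[u_1,\ldots,u_q]\in\gamma_{i_1+\cdots+i_q}(\mathfrak{g})$, this gives
\[
    |[u_1,\ldots,u_q]|^{i_1+\cdots+i_q}\;\lesssim\; 1+|u_1|^{i_1}\cdots|u_q|^{i_q}.
\]
Finally, since $|u_j|\gtrsim 1$ for each non-zero $u_j$ (as visible from (\ref{eq:length}) with a uniform positive lower bound on word length of non-zero elements), the additive $1$ is dominated by the product, and the lemma follows. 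In fact, the whole argument could be stated in one shot (without induction) by iterating the bilinearity estimate $q-1$ times, but it is slightly cleaner and matches the style of the paper to phrase it as an induction whose engine is precisely the proof of Lemma \ref{lem:commij}.
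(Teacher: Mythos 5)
Your proof is correct and matches the approach the paper has in mind: the paper merely writes ``an immediate induction yields the following generalization,'' and you have spelled out that induction by rerunning the proof of \Cref{lem:commij} with $q$ factors (iterate the bilinearity bound on $\|\cdot\|$, convert $\|u_j\|\lesssim|u_j|^{i_j}$ for $u_j\in A_{i_j}$, then invert the distortion bound $|x|\lesssim(1+\|x\|)^{1/i}$ on $\gamma_{i_1+\cdots+i_q}(\mathfrak g)$, and absorb the additive $1$). One small stylistic remark: as you yourself observe, your ``inductive step'' never actually invokes the inductive hypothesis on word lengths --- the real engine is the one-shot iteration of the norm estimate, and a naive induction on the word-length statement would in fact be awkward because $[u_2,\ldots,u_q]$ lies in $\gamma_{i_2+\cdots+i_q}(\mathfrak g)$ but not in $A_{i_2+\cdots+i_q}$, so \Cref{lem:commij} does not apply directly; your direct argument cleanly sidesteps this.
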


 \begin{lemma}\label{lem:commLie}
Let $x,y\in \mathfrak{g}$, 
we have \[|[x,y]|\lesssim \max_{i,j\geq 1; i+j\leq s_G}|x|^{i/(i+j)}|y|^{j/(i+j)}.\]
More precisely, if $x\in \gamma_p(\mathfrak{g})$ and $y\in\gamma_q(\mathfrak{g})$,
then \[|[x,y]|\lesssim \max_{i\geq p,j\geq q; i+j\leq s_G}|x|^{i/(i+j)}|y|^{j/(i+j)}.\]
\end{lemma}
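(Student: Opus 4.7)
The plan is to reduce to \Cref{lem:commij} by exploiting bilinearity of the Lie bracket on $\mathfrak{g}$. First, I would decompose $x = \sum_{i \geq p} x_i$ and $y = \sum_{j \geq q} y_j$ with $x_i \in A_i$ and $y_j \in A_j$ (taking $p = q = 1$ for the first inequality of the lemma, which is a special case of the refined one). By the formula \eqref{eq:length} for the word length, each component satisfies $|x_i| \lesssim |x|$ and $|y_j| \lesssim |y|$, since $\|x_i\|^{1/i}$ (resp.\ $\|y_j\|^{1/j}$) appears as one of the non-negative terms in the sum defining $|x|$ (resp.\ $|y|$).

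Next, bilinearity of the bracket gives $[x,y] = \sum_{i,j} [x_i, y_j]$, where all summands with $i+j > s_G$ vanish because $\gamma_{i+j}(\mathfrak{g}) = 0$ by the nilpotency assumption. Combining the subadditivity $|u+v| \lesssim |u| + |v|$ of the word length (noted right after \eqref{eq:length}) with \Cref{lem:commij} applied to each surviving summand $[x_i,y_j]$, and finally with the bounds $|x_i|\lesssim |x|$, $|y_j|\lesssim |y|$, I obtain
\[
|[x,y]| \;\lesssim\; \sum_{\substack{i \geq p,\, j \geq q \\ i+j \leq s_G}} |[x_i,y_j]| \;\lesssim\; \sum_{\substack{i \geq p,\, j \geq q \\ i+j \leq s_G}} |x_i|^{i/(i+j)}\, |y_j|^{j/(i+j)} \;\lesssim\; \sum_{\substack{i \geq p,\, j \geq q \\ i+j \leq s_G}} |x|^{i/(i+j)}\, |y|^{j/(i+j)}.
\]

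Since the number of pairs $(i,j)$ appearing in this sum is bounded by a constant depending only on $s_G$, the sum is controlled by its largest term up to a multiplicative constant, which yields the refined estimate of the lemma. The unrefined first inequality is obtained by taking $p=q=1$. The argument is essentially a triangle inequality combined with a bilinear expansion, so there is no real obstacle; the only conceptual point is that the lower bounds $i\geq p$ and $j\geq q$ on the indices are preserved precisely because the decompositions of $x$ and $y$ start at levels $p$ and $q$ when $x\in\gamma_p(\mathfrak{g})$ and $y\in\gamma_q(\mathfrak{g})$.
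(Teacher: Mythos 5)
Your proposal is correct and follows essentially the same route as the paper: decompose $x$ and $y$ into their graded components, use bilinearity of the bracket together with subadditivity of the word length to reduce to Lemma \ref{lem:commij}, then pass from the sum to the maximum via the bounds $|x_i|\lesssim|x|$, $|y_j|\lesssim|y|$ and the fact that only boundedly many pairs $(i,j)$ occur. The only cosmetic difference is that you invoke $|x_i|\lesssim|x|$ before applying Lemma \ref{lem:commij}, whereas the paper does so afterwards.
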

\begin{proof}
Applying Lemma \ref{lem:commij}, we obtain
\begin{align*}
      |[x,y]| & \lesssim \sum_{i\geq p,j\geq q}|[x_i,y_j]|\\
      & \lesssim \sum_{i\geq p,j\geq q; i+j\leq s_G}|x_i|^{i/(i+j)}|y_j|^{j/(i+j)} \\
      & \lesssim \max_{i\geq p,j\geq q; i+j\leq s_G}|x|^{i/(i+j)}|y|^{j/(i+j)}
    \end{align*}
    Where for the last inequality, we have used that $|x_i|\lesssim |x|$, which follows once again from (\ref{eq:length}).
\end{proof}
Once again we have a generalization to iterated commutators.
 \begin{lemma}\label{lem:multicommutatorsLie}
Let $x\in \gamma_p(\mathfrak{g})$ and $y\in \gamma_q(\mathfrak{g})$. Then for every iterated commutator $z$ of $x$ and $y$
we have \[|z|\lesssim \max_{i\geq p,j\geq q; i+j\leq s_G}|x|^{i/(i+j)}|y|^{j/(i+j)}.\]
\end{lemma}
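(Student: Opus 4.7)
The plan is to reduce to a multilinear estimate via the graded decomposition of $x$ and $y$. Since $z$ is a (nontrivial) iterated commutator and $[x,x]=[y,y]=0$, both $x$ and $y$ must appear as leaves of the commutator tree defining $z$; say $x$ appears $a\geq 1$ times and $y$ appears $b\geq 1$ times. I would then decompose $x=\sum_{i\geq p} x_i$ with $x_i\in A_i$ (nonzero only for $i\geq p$ since $x\in\gamma_p(\mathfrak{g})$), and similarly $y=\sum_{j\geq q} y_j$, and expand
\[
z \;=\; \sum_{\substack{i_1,\ldots,i_a\geq p\\ j_1,\ldots,j_b\geq q}} z\bigl(x_{i_1},\ldots,x_{i_a};y_{j_1},\ldots,y_{j_b}\bigr),
\]
where each summand is the iterated commutator with the same tree shape as $z$ but with the leaves replaced by the indicated graded components (in their order of occurrence). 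This reduces the problem to bounding each summand.

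The first technical step is a tree-shape generalization of \Cref{lem:multicomm}: for any iterated commutator $w$ of elements $u_k\in A_{i_k}$, one has $|w|^{\sum i_k}\lesssim \prod |u_k|^{i_k}$. This is proved by exactly the same two ingredients already used for \Cref{lem:multicomm}, namely iterating $\|[u,v]\|\lesssim \|u\|\,\|v\|$ along the commutator tree to get $\|w\|\lesssim \prod \|u_k\|$, and then applying \eqref{eq:length} together with $w\in \gamma_{\sum i_k}(\mathfrak{g})$ to pass back from norm to word length. No new idea is required: the tree structure plays no role in this bookkeeping. Applied to each summand above, this gives
\[
\bigl|z(x_{i_1},\ldots,x_{i_a};y_{j_1},\ldots,y_{j_b})\bigr|\;\lesssim\; \Bigl(\prod_k |x_{i_k}|^{i_k}\prod_\ell |y_{j_\ell}|^{j_\ell}\Bigr)^{1/(I+J)},
\]
with $I:=\sum_k i_k\geq ap\geq p$ and $J:=\sum_\ell j_\ell\geq bq\geq q$, and the summand vanishes unless $I+J\leq s_G$.

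Next I would use \eqref{eq:length} in the reverse direction to bound each $|x_{i_k}|\lesssim |x|$ and $|y_{j_\ell}|\lesssim |y|$ (both components of $x$ and $y$ have word length controlled by the whole element). This collapses the previous product to $|x|^{I/(I+J)}|y|^{J/(I+J)}$. Finally, the multilinear expansion has at most a bounded number of nonzero summands, depending only on $s_G$, $a$ and $b$ (which are fixed by the choice of $z$), so summing and taking a maximum differ only by a multiplicative constant absorbed into $\lesssim$, giving
\[
|z|\;\lesssim\; \max_{\substack{I\geq p,\,J\geq q\\ I+J\leq s_G}} |x|^{I/(I+J)}\,|y|^{J/(I+J)},
\]
which is the claim. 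The only mildly delicate step is the tree-shape version of \Cref{lem:multicomm}, but since the bilinear norm estimate $\|[u,v]\|\lesssim \|u\|\,\|v\|$ iterates over any binary tree in the same way as over a left-combed one, this is routine rather than a genuine obstacle.
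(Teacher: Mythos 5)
Your proof is correct and takes essentially the same route as the paper: graded multilinear decomposition of $x$ and $y$, a multilinear word-length estimate on each summand, and a maximum over the admissible degree vectors. You are somewhat more explicit about one point the paper handles tacitly—namely that \Cref{lem:multicomm} is stated for left-combed commutators $[u_1,\ldots,u_q]$ and must be applied to arbitrary tree shapes—and you correctly observe that this extension is routine since the bilinear norm bound $\|[u,v]\|\lesssim \|u\|\|v\|$ iterates over any binary tree.
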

\begin{proof}
 The proof is very similar to the baby case treated above. The only difference lies in the heavier notation. Let $q$ and $p$ be respectively the numbers of occurrences of $x$ and $y$ in the iterated commutator $z$. We let $\Sigma$ be the set of pairs  $(I,J)$  of sequences $I=(i_1,\ldots,i_q)$ of indices $\geq p$ , and  $J=(j_1,\ldots,j_p)$ of indices $\geq q$ such that the total sum $N:=i_1+\ldots i_q+j_1+\ldots j_p\leq s_G$. 
 Reasoning as in the proof of Lemma \ref{lem:commLie}, but using Lemma \ref{lem:multicomm} instead of Lemma \ref{lem:commij}  we indeed obtain,  
 \[|z|\lesssim \max_{(I,J)\in \Sigma}|x|^{(i_1+\ldots i_q)/N}|y|^{(j_1+\ldots j_p)/N},\]
 which immediately implies the lemma.
\end{proof}

We convert this inequality to a similar one, but for commutators in the group. 
 \begin{proposition}\label{prop:commutatorsGroup}
Let $x\in \gamma_p(\mathfrak{g})$ and $y\in \gamma_q(\mathfrak{g})$. 
We have \[|x\ast y\ast x^{-1}\ast y^{-1}|\lesssim \max_{i\geq p,j\geq q; i+j\leq s_G}|x|^{i/(i+j)}|y|^{j/(i+j)}.\]
\end{proposition}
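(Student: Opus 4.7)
My plan is to reduce the proposition to Lemma \ref{lem:multicommutatorsLie} via the Baker-Campbell-Hausdorff (BCH) formula. Under the identification $G = \mathfrak{g}$ via the exponential map, the group product $\ast$ is given by the BCH series, which terminates after finitely many terms because $\mathfrak{g}$ is $s_G$-nilpotent. Iterating BCH three times to compose $x \ast y \ast x^{-1} \ast y^{-1}$, we obtain a finite expansion
\[
    x \ast y \ast x^{-1} \ast y^{-1} = \sum_{w \in W} c_w \, w(x,y),
\]
where $W$ is a finite set of iterated Lie brackets (multicommutators) in the letters $x$ and $y$, and $c_w \in \mathbb{Q}$.

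The key observation is that every $w \in W$ involves at least one occurrence of $x$ and at least one occurrence of $y$. Indeed, the left-hand side vanishes whenever $x = 0$ or $y = 0$; since the BCH expansion is a (universal) polynomial expression in iterated brackets of $x$ and $y$ and since monomials of different multidegrees are linearly independent in the free Lie algebra on two generators (up to the nilpotency relations), the terms in which $x$ (respectively $y$) does not appear must be absent. Moreover, if $w$ contains at least one $x \in \gamma_p(\mathfrak{g})$ and one $y \in \gamma_q(\mathfrak{g})$, then $w(x,y) \in \gamma_{p+q}(\mathfrak{g})$, so only finitely many such $w$ of total degree $\leq s_G$ survive in $\mathfrak{g}$.

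Next I invoke the subadditivity $|u+v| \lesssim |u|+|v|$ observed just after (\ref{eq:length}), together with the fact that for any fixed scalar $c$ and any $z \in \gamma_k(\mathfrak{g})$ with $k\geq 1$, the estimate (\ref{eq:length}) yields $|c\,z| \lesssim_{c,k} |z|$. Since $|W|$ is bounded in terms of $s_G$ alone and the coefficients $c_w$ come from BCH (hence depend only on $\mathfrak{g}$), we get
\[
    |x \ast y \ast x^{-1} \ast y^{-1}| \;\lesssim\; \sum_{w \in W} |w(x,y)|.
\]
Finally, since each $w(x,y)$ is an iterated commutator of $x \in \gamma_p(\mathfrak{g})$ and $y \in \gamma_q(\mathfrak{g})$ containing at least one occurrence of each, Lemma \ref{lem:multicommutatorsLie} applies and gives
\[
    |w(x,y)| \;\lesssim\; \max_{i \geq p,\, j \geq q;\, i+j \leq s_G} |x|^{i/(i+j)} |y|^{j/(i+j)}.
\]
Summing over the finitely many $w \in W$ yields the claimed bound.

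The only delicate point is the structural claim that every BCH term in the expansion of $x \ast y \ast x^{-1} \ast y^{-1}$ contains both letters; I would justify this either through the vanishing argument above or, if one prefers an explicit form, by writing out the first iteration as $x \ast y = x + y + \frac{1}{2}[x,y] + \cdots$ and then inductively combining with $x^{-1} = -x$ and $y^{-1}=-y$ to see that the linear terms in $x$ alone and in $y$ alone cancel at every order, leaving only brackets that mix both variables. Everything else is a routine assembly of Lemma \ref{lem:multicommutatorsLie}, subadditivity, and finiteness from nilpotency.
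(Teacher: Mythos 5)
Your proof is correct and takes essentially the same route as the paper: expand $x\ast y\ast x^{-1}\ast y^{-1}$ via BCH into a finite linear combination of iterated commutators and apply \Cref{lem:multicommutatorsLie} to each term. The paper's proof is only two sentences and implicitly relies on the fact that every BCH term for the group commutator involves both $x$ and $y$ (needed for \Cref{lem:multicommutatorsLie} to give the stated mixed bound); you make this point explicit, which is a genuine detail worth recording.
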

\begin{proof}
We apply the Baker-Campbell-Hausdorff formula, which enables us to write $x\ast y\ast x^{-1}\ast y^{-1}$ as a finite linear combination of iterated commutators of $x$ and $y$. Hence the lemma follows from Lemma \ref{lem:multicommutatorsLie}.
\end{proof}
\begin{corollary}\label{cor:commutatorEstimate}
  Let $x,y\in \mathfrak{g}$ such that $y\in \gamma_q(\mathfrak{g})$, and let $\varepsilon>0$. If $|y|\lesssim |x|^{1-\varepsilon}$, then  
    \[|x\ast y\ast x^{-1}\ast y^{-1}|\lesssim  |x|^{1-\frac{q\cdot \varepsilon}{s_G}}.\] 
\end{corollary}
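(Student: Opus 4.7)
The plan is to derive the corollary directly from Proposition \ref{prop:commutatorsGroup} by substituting the hypothesis $|y|\lesssim |x|^{1-\varepsilon}$ and then uniformly bounding the exponent that appears.

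First I would apply Proposition \ref{prop:commutatorsGroup} with $p=1$ (since $x\in\mathfrak{g}=\gamma_1(\mathfrak{g})$) to obtain
\[
|x\ast y\ast x^{-1}\ast y^{-1}|\lesssim \max_{i\geq 1,\,j\geq q,\,i+j\leq s_G}|x|^{i/(i+j)}|y|^{j/(i+j)}.
\]
Substituting $|y|\lesssim |x|^{1-\varepsilon}$, each term on the right is controlled (up to a multiplicative constant) by
\[
|x|^{\frac{i}{i+j}+\frac{j(1-\varepsilon)}{i+j}}=|x|^{1-\frac{j\varepsilon}{i+j}}.
\]
The remaining point is to bound the exponent $1-\tfrac{j\varepsilon}{i+j}$ uniformly over admissible $(i,j)$. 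Since $j\geq q$ and $i+j\leq s_G$, we have $\tfrac{j}{i+j}\geq \tfrac{q}{s_G}$, hence $1-\tfrac{j\varepsilon}{i+j}\leq 1-\tfrac{q\varepsilon}{s_G}$. Provided $|x|\geq 1$, this yields the desired bound on each term and therefore on the maximum.

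The only remaining subtlety is the regime where $|x|$ is bounded (say $|x|\leq 1$): in that range the assumption $|y|\lesssim |x|^{1-\varepsilon}$ forces $|y|$ to be bounded as well, so the commutator word length is uniformly bounded, and the inequality holds after adjusting the implicit constant. I do not expect any genuine obstacle here; the argument is a direct specialization of Proposition \ref{prop:commutatorsGroup} combined with the elementary observation that $j/(i+j)\geq q/s_G$ under the constraints $j\geq q$ and $i+j\leq s_G$.
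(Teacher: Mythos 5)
Your proof is correct and follows essentially the same route as the paper: apply Proposition \ref{prop:commutatorsGroup} with $p=1$, substitute $|y|\lesssim|x|^{1-\varepsilon}$, and observe that $j/(i+j)\geq q/s_G$ over the constraint set $j\geq q$, $i+j\leq s_G$, so the worst exponent is $1-q\varepsilon/s_G$. The paper phrases this as the maximum being attained at $i+j=s_G$, $j=q$, while you give the equivalent uniform bound on the exponent; your remark about the regime of bounded $|x|$ is a harmless extra precaution that the paper absorbs into the implicit constants.
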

\begin{proof}
 By Proposition \ref{prop:commutatorsGroup}, we have
\begin{align*}
 |x\ast y\ast x^{-1}\ast y^{-1}| &\lesssim \max_{i\geq 1, j\geq q; i+j\leq s_G}|x|^{i/(i+j)}|y|^{j/(i+j)}\\
 & \lesssim \max_{i\geq 1, j\geq q; i+j\leq s_G}|x|^{1-j\varepsilon/(i+j)}.
 \end{align*}
 We conclude since the max is attained for $i+j=s_G,$ and $j=q$.
\end{proof}

\section{Integrable orbit equivalence between a group and its Carnot}\label{sec:OE-same-Carnot}

In this section we construct explicit $L^p$-integrable orbit equivalences between a simply connected Lie group $G$ and its associated Carnot group $\gr(G)$ for some $p>1$ that depends on $G$. As before, denote by $\mathfrak{g}$ the Lie algebra of $G$. Recall that the associated Carnot graded group $\gr(G)$ is the simply connected nilpotent Lie group whose associated Lie algebra is isomorphic to $\gr(\mathfrak{g}) :=\bigoplus_{i=1}^{s_G} \gamma_i(\mathfrak{g})/\gamma_{i+1}(\mathfrak{g})$ equipped with the graded Lie algebra structure $[,]_{\gr}$, obtained by reducing the Lie bracket $[,] : \gamma_i(\mathfrak{g})\times \gamma_j(\mathfrak{g})\to \gamma_{i+j}(\mathfrak{g})$ modulo $\gamma_{i+j+1}(\mathfrak{g})$. In particular, a canonical basis $\left\{X_{ij}\mid 1\leq i\leq s_G,~1\leq j\leq m_i\right\}_{i,j}$ for $\mathfrak{g}$ defines a choice of canonical basis for $\gr(\mathfrak{g}) $ and thus an isomorphism of $\mathbb{R}$-vector spaces $\gr(\mathfrak{g}) \to \mathfrak{g}$ which is compatible with the filtration defined by the lower central series. By combining this isomorphism with the F\o lner tiling sequences with respect to our chosen canonical basis from Section \ref{sec:explicit-tilings-for-nilpotent-groups} we will produce an explicit orbit equivalence between $G$ and $\gr(G)$, which satisfies the conditions in Corollary \ref{cor:existence-of-orbit-equivalence} and which is $L^p$-integrable for some $p>1$. As before we identify the Lie groups with their Lie algebras via the exponential map and then define the maps on tiles via an $\mathbb{R}$-vector space isomorphism

As in Section \ref{sec:length-of-commutators} fix a norm $||\cdot ||$ on $\mathfrak{g}$ and equip $\gr(\mathfrak{g}) $ with the pull-back of this norm under our chosen identification.  We recall that the Guivarc'h norm is $\simeq$-equivalent to the word metric on $G$ and $\gr(G)$. Thus, by slight abuse of notation, we denote $|x|:= 1+ \sum_{i=1}^{s_G} ||x_i||^{\frac{1}{i}}$, where as before $x=\sum_{i=1}^{s_G} x_i$ with $x_i\in A_i:={\mathrm{span}}_{\mathbb{R}}\left\{X_{ij}\mid 1\leq j\leq m_i\right\}$.

We will now identify $\gr(\mathfrak{g}) $ with $\gr(G)$ and $\mathfrak{g}$ with $G$ via their exponential functions.  Then the multiplication in $G$ induces a multiplication $\ast$ in $\mathfrak{g}$ and the multiplication in $\gr(G)$ induces a multiplication $\ast_{\gr}$ in $\gr(\mathfrak{g})$. The difference between the multiplication on the associated groups $G$ and $\gr(G)$ is described by the Baker--Campbell--Hausdorff formula. It depends on the choice of the isomorphism $\gr(\mathfrak{g}) \to \mathfrak{g}$ of $\mathbb{R}$-vector spaces and thus on the choice of canonical basis. More precisely, it is bounded in terms of a rational constant $e_D\in \left[0,1\right)$ associated with a so-called grading operator $D$, which is in turn determined by our choice of canonical basis. We refer to \cite[Section 6]{Cor-19} for the precise definition and properties of a grading operator $D$ and the constant $e_D$. Here we will only need that for all grading operators $D$ we have $e_D\leq 1-\frac{1}{s_G}$ and that there is a choice of canonical basis whose associated grading operator realises the infimum $e_{G}:=\inf\left\{e_D\mid D \mbox{ grading operator}\right\}$.

For the remainder of this section we fix a canonical basis and denote $D$ its associated grading operator. We can now state a slight generalisation of a result of Cornulier that bounds above the difference in word length between products of $n$ elements in $G$ and $\gr(G)$ in terms of $e_D$. 
\begin{lemma}\label{lem:cornulier-multiple-factors}
    Let $x_1,\cdots,x_n\in \mathfrak{g}$. Then
    \begin{equation}\label{eq:mult-error}
        |x_1\ast x_2 \ast \ldots \ast x_n - x_1\ast_{\gr} x_2 \ast_{\gr} \ldots \ast_{\gr} x_n| \lesssim  \max (1,|x_1|^{e_D}, |x_2|^{e_D}, \ldots, |x_n|^{e_D}).
    \end{equation}
\end{lemma}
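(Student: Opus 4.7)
My plan is to establish the inequality by induction on $n$, taking Cornulier's two-factor estimate from \cite[Section 6]{Cor-19} as the base case $n = 2$. Write $P_k := x_1 \ast \cdots \ast x_k$ and $Q_k := x_1 \ast_{\gr} \cdots \ast_{\gr} x_k$, so that the desired inequality is $|P_n - Q_n| \lesssim \max(1, M^{e_D})$ where $M := \max_i |x_i|$ (using $\max_i |x_i|^{e_D} = M^{e_D}$).

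For the inductive step, I would use the decomposition
\[
P_n - Q_n = \bigl(P_{n-1} \ast x_n - P_{n-1} \ast_{\gr} x_n\bigr) + \bigl(P_{n-1} \ast_{\gr} x_n - Q_{n-1} \ast_{\gr} x_n\bigr),
\]
combined with the subadditivity $|u + v| \lesssim |u| + |v|$ established earlier in Section~\ref{sec:length-of-commutators}. The first summand is directly bounded by $\max(1, |P_{n-1}|^{e_D}, |x_n|^{e_D}) \lesssim \max(1, M^{e_D})$ via the two-factor case, after invoking subadditivity of word length under $\ast$ to get $|P_{n-1}| \lesssim M$ (with an $n$-dependent constant). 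For the second summand, I would expand both $P_{n-1} \ast_{\gr} x_n$ and $Q_{n-1} \ast_{\gr} x_n$ using the Baker--Campbell--Hausdorff formula in $\gr(G)$; the resulting vector-space difference is a finite sum of iterated brackets in $\gr(\mathfrak{g})$, each carrying at least one factor of $\Delta := P_{n-1} - Q_{n-1}$. The inductive hypothesis provides $|\Delta| \lesssim M^{e_D}$, and Lemmas~\ref{lem:commLie} and~\ref{lem:multicommutatorsLie} bound each such bracket in terms of $|\Delta|$ and $|x_n|$.

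The main obstacle will be to verify that every bracket term from this BCH expansion fits within the $M^{e_D}$ envelope. A naive application of Lemma~\ref{lem:commLie} to a term of the form $[\Delta, x_n]_{\gr}$ produces bounds of the form $M^{(i e_D + j)/(i+j)}$, which can exceed $M^{e_D}$ for $e_D < 1$. Overcoming this requires using that $\Delta$, as a BCH correction, actually lies in the refined filtration $\gamma_3(\mathfrak{g}) \subset \gamma_2(\mathfrak{g})$, and more generally that iterated BCH corrections inherit progressively sharper filtrations as one passes deeper into the bracket tree. Combined with the defining property of $e_D$ from \cite[Section 6]{Cor-19}, which controls the deviation between $[\cdot, \cdot]$ and $[\cdot, \cdot]_{\gr}$ exactly so that the $e_D$-gain propagates through such corrections, the bracket contributions can be absorbed into the $M^{e_D}$ bound. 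Summing the finitely many BCH terms and absorbing $n$-dependent constants into $\lesssim$ then closes the induction and yields \eqref{eq:mult-error}.
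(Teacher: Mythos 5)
Your decomposition
\[
P_n - Q_n = \bigl(P_{n-1}\ast x_n - P_{n-1} \ast_\gr x_n\bigr) + \bigl(P_{n-1}\ast_\gr x_n - Q_{n-1} \ast_\gr x_n\bigr)
\]
is an exact identity, but it is different from the paper's telescoping, which writes $P_n-Q_n$ as a sum of terms each of the form $A_i\ast B_i - A_i\ast_\gr B_i$ with $A_i=x_1\ast\cdots\ast x_i$ and $B_i=x_{i+1}\ast_\gr\cdots\ast_\gr x_n$. In the paper's version every summand is already a two-factor $\ast$-vs.-$\ast_\gr$ comparison, so Cornulier's \cite[Lemma 6.17]{Cor-19} applies directly to each term; there is never a need to control how $\ast_\gr$-multiplication reacts to a perturbation of one of its factors. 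Your first summand is fine (it is the two-factor case applied to $P_{n-1}$ and $x_n$, and $|P_{n-1}|\lesssim \max_i|x_i|$ by subadditivity). The difficulty is entirely in your second summand, and there is a genuine gap there.

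You treat $\Delta:=P_{n-1}-Q_{n-1}$ as a black box controlled only by the word-length estimate $|\Delta|\lesssim M^{e_D}$ supplied by the inductive hypothesis, and then try to bound $(Q_{n-1}+\Delta)\ast_\gr x_n - Q_{n-1}\ast_\gr x_n$ by expanding BCH and invoking \Cref{lem:commLie,lem:multicommutatorsLie}. As you yourself compute, a term $[\Delta,x_n]_\gr$ with $\Delta\in\gamma_i$, $x_n$-component in $\gamma_j$ is bounded by $M^{(ie_D+j)/(i+j)}$, and this exponent strictly exceeds $e_D$ whenever $j\geq 1$ and $e_D<1$. Your proposed fix — that $\Delta$ lies in a sharper piece $\gamma_3\subset\gamma_2$ of the lower central series and that this "propagates" — cannot close the gap: $(ie_D+j)/(i+j)>e_D$ for \emph{every} $i\geq 1$ and $j\geq 1$, independently of how deep in the filtration $\Delta$ sits, since the inequality $ie_D+j\leq (i+j)e_D$ reduces to $e_D\geq 1$. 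The information that actually makes Cornulier's estimate work is not the size $|\Delta|$ or the depth of $\Delta$ in the lower central series, but the \emph{precise weight profile} of $\Delta$ relative to the grading operator $D$ (i.e.\ which components $\Delta_k\in m_k$ can appear and what bound each $\|\Delta_k\|$ satisfies), and the cancellations this structure produces in the BCH expansion. Discarding this and keeping only $|\Delta|\lesssim M^{e_D}$ loses exactly the information you would need; recovering it would amount to re-running Cornulier's proof from scratch rather than reducing to the two-factor case. If you want an inductive argument, you should adopt the paper's telescoping, in which each increment is directly a $\ast$-vs.-$\ast_\gr$ difference of the \emph{same} two factors and the two-factor lemma applies as a black box.
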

\begin{proof}
    The proof is completely analogous to the proof for $n=2$ given by Cornulier \cite[Lemma 6.17]{Cor-19}. Indeed, the key steps in Cornulier's proof are: 
    \begin{itemize}
        \item[(i)]\cite[page 35, Equation (1)]{Cor-19}, which gives an upper bound on the Guivarc'h norm of the difference between Lie brackets in $\mathfrak{g}$ and $\gr(\mathfrak{g}) $ (and is independent of $n$); and
        \item[(ii)] an application of the Baker--Campbell--Hausdorff-formulas using the difference between Lie brackets from (i) to bound above the Guivarc'h norm of the difference $x_1\ast x_2 - x_1\ast_{\gr} x_2$.
    \end{itemize}
    It is easy to see that an analogous application of the Baker--Campbell--Hausdorff formula for $n>2$ yields the desired upper bound, using the precisely same arguments as Cornulier used in step (ii) of his proof. We thus omit the details of this estimate here. 
    
    Alternatively, Gabriel Pallier pointed out to us that one can also reduce to \cite[Lemma 6.17]{Cor-19} by induction on $n$. Indeed, by the triangle inequality we have
    \begin{align*}
        &|x_1\ast x_2 \ast \ldots \ast x_n - x_1\ast_{\gr} x_2 \ast_{\gr} \ldots \ast_{\gr} x_n|\\
        &\leq \sum_{i=1}^{n-1} |(x_1\ast \cdots \ast x_i) \ast (x_{i+1}\ast_{\gr} \cdots \ast_{\gr} x_n)- (x_1\ast \cdots \ast x_i) \ast_{\gr} (x_{i+1}\ast_{\gr} \cdots \ast_{\gr} x_n)|.
    \end{align*}
    We conclude by applying \cite[Lemma 6.17]{Cor-19} to every summand and then using the following estimate (and the analogous estimate for $\ast_{\gr}$)d
    \[
         |x_1 \ast \cdots \ast x_{k}|^{e_D}
\leq (|x_1| + \cdots + |x_{k}|)^{e_D}
\lesssim \max (1, | x_1 |^{e_D}, \cdots, |x_k|^{e_D}).
    \]
\end{proof}

We will apply \Cref{lem:cornulier-multiple-factors} to prove the existence of $L^p$-orbit equivalence couplings for some $p>1$. We start by observing that our identification of the Lie algebras of $G$ and $\gr(G)$ provides us with a canonical identification $\phi=\prod_k \phi_k \colon X_{\gr(G)}=\prod_k F_{k,gr(G)}\to X_G=\prod_k F_{k,G}$ of their associated $(\epsilon_k,R_k)$-F\o lner tiling sequences from \Cref{lem:nilpotent-explicit-Folner-tiling}. More precisely, using the notation of \Cref{sec:explicit-tilings-for-nilpotent-groups}, $\phi_{k}$ maps the element $\prod_{i=1}^{s_G}\prod_{j=1}^{m_i} x_{i,j}^{\delta_{i,j,k}2^{k\cdot i}}$ of $\gr(G)$, where the product is taken with respect to $\ast_{\gr}$, to the same element of $G$, where now the product is taken with respect to $\ast$.

By slight abuse of notation we will also denote by $\phi$ the restriction $\phi|_{T_{k,gr(G)}} \colon T_{k,gr(G)} \to T_{k,G}$. We further denote by $\hatphi=\prod_k \hatphi_k \colon (X_{\gr(G)},\mu_{\gr(G)})\to (X_G,\mu_G)$ an isomorphism of measure spaces; it exists by \Cref{cor:existence-of-orbit-equivalence} and since the sets $F_k$ for $k\geq 1$ are finite we may assume that $\hatphi_k=\phi_k$ for $k\geq 1$, explaining the notation. However, we may not a priori assume that $\hatphi_0=\phi_0$. 

Note that $\phi_0$ also provides us with an identification of compact generating sets $S_{\gr(G)}\subseteq F_{0,\gr(G)}$ for $\gr(G)$ and $S_{G}\subseteq F_{0,G}$ for $G$. 

Where this will not lead to confusion we will subsequently sometimes omit the isomorphism $\phi$ in our notation and indicate which group we are in just by the group operation $\ast_{\gr}$, resp. $\ast$, to simplify notation. 

Before proceeding, we observe that the maps $\hatphi$ and $\phi$ have finite distance.
\begin{lemma}\label{lem:identity-at-finite-distance}
    For almost every $x\in X$ we have $d_{S_G}(\hatphi(x),\phi(x))\leq R_0.$
\end{lemma}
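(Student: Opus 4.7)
My plan is to observe that $\hatphi(x)$ and $\phi(x)$ differ only in the $0$-th coordinate, so they lie in the same $G$-orbit on $X_G$, with the translating group element contained in $T_0 \cdot T_0^{-1}$; the diameter bound $R_0$ on $T_0$ in $d_{S_G}$ will then yield the inequality directly.

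More concretely, the assumption $\hatphi_k = \phi_k$ for $k \geq 1$ means that, for almost every $x = (x_k) \in X_{\gr(G)}$ and every $n \geq 1$, the products $g_n(\hatphi(x))$ and $g_n(\phi(x))$ differ only in their leftmost factor. Telescoping the common tails, one finds
\[
\gamma := g_n(\hatphi(x)) \ast g_n(\phi(x))^{-1} = \hatphi_0(x_0) \ast \phi_0(x_0)^{-1},
\]
independently of $n$. By the very definition of the $G$-action on $X_G$ recalled in \Cref{sec:orbit-equivalence}, this $\gamma$ satisfies $\gamma \cdot_G \phi(x) = \hatphi(x)$, and is therefore the (almost surely) unique group element translating one sequence to the other.

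Since both $\hatphi_0(x_0)$ and $\phi_0(x_0)$ lie in $F_{0,G} = T_0$, which has $d_{S_G}$-diameter at most $R_0$ by \Cref{lem:nilpotent-explicit-Folner-tiling} (condition (1) of \Cref{def:quant-Folner}), one immediately obtains $d_{S_G}(\hatphi_0(x_0), \phi_0(x_0)) \leq R_0$. Interpreting the left-hand side of the lemma as the $d_{S_G}$-distance between the two coordinates in which the sequences differ — equivalently, as the word length of the translating element $\gamma$ (up to the usual factor of two coming from the triangle inequality, which can be absorbed into the constant implicit in the choice of $R_0$) — this yields the claimed bound. No real obstacle arises in the argument; the only delicate point is fixing the correct reading of $d_{S_G}$ on pairs of elements of $X_G$, which is not a priori defined.
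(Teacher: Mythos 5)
Your argument matches the paper's: both exploit that $\hatphi_k=\phi_k$ for $k\geq 1$, so the unique translating element is $\gamma=\hatphi_0(x_0)\cdot\phi_0(x_0)^{-1}$, and the bound follows because $\hatphi_0(x_0),\phi_0(x_0)$ lie in $T_0$, which has $d_{S_G}$-diameter at most $R_0$. One small correction: the parenthetical about a ``factor of two'' is unnecessary -- the word metric here is right-invariant (built from left multiplication by generators), so $d_{S_G}(\gamma,1)=d_{S_G}(\hatphi_0(x_0),\phi_0(x_0))\leq R_0$ exactly, with no loss of constant.
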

\begin{proof}
    By definition we have $\hatphi_k=\phi_k$ for $k\geq 1$. Thus,
    \[
        d_{S_G}(\hatphi(x),\phi(x))\leq d_{S_G}(\hatphi_0(x),\phi_0(x)) \leq R_0.
    \]
\end{proof}

\begin{lemma}\label{lem:tiled-error-terms}
    Let $s\in S_{\gr(G)}$ and $x\in X_{\gr(G)}$ with $\rho(s\ast_{\gr} x, x)=k$ and let $g:=f_0(x)\ast_{\gr} \cdots \ast_{\gr} f_k(x)\in T_{k,\gr(G)}$. Then $s\ast_{\gr} g= f_0(s\ast_{\gr} x)\ast_{\gr} \cdots \ast_{\gr} f_k(s\ast_{\gr} x)\in T_{k,\gr(G)}$
    and there are elements $v_0,\cdots, v_k\in G$ of word length $|v_\ell|\lesssim 2^{\ell\cdot e_D}$ such that $s\ast \phi(g)= v_0\ast \phi_0(f_0(s\ast_{\gr} x))\ast \cdots \ast v_k\ast \phi_k(f_k(s\ast_{\gr} x))$
\end{lemma}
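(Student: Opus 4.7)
The first assertion is immediate from the definition of the $\gr(G)$-action on $X_{\gr(G)}$. Since $\rho(s\ast_{\gr}x,x)=k$, by construction the action modifies only the first $k+1$ tile entries of $x$, so that $s\ast_{\gr}g$ lies in $T_{k,\gr(G)}$ and the uniqueness of the tile decomposition forces $s\ast_{\gr}g = f_0(s\ast_{\gr}x)\ast_{\gr}\cdots\ast_{\gr}f_k(s\ast_{\gr}x)$.

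For the second assertion, the plan is to construct the $v_\ell$ inductively. Writing $a_\ell := \phi_\ell(f_\ell(x))$ and $b_\ell := \phi_\ell(f_\ell(s\ast_{\gr}x))$ and identifying the underlying Lie algebras, I would introduce auxiliary remainders $w_{-1} := s$, $w_0, \ldots, w_k \in G$ defined by the recursion
\[
w_\ell := b_\ell^{-1} \ast v_\ell^{-1} \ast w_{\ell-1} \ast a_\ell,
\]
where the $v_\ell$ are to be chosen. A direct check shows that this recursion is equivalent to the telescoping identity
\[
s \ast a_0 \ast \cdots \ast a_\ell = v_0 \ast b_0 \ast \cdots \ast v_\ell \ast b_\ell \ast w_\ell,
\]
so the target formula will follow provided one can choose the $v_\ell$ with $|v_\ell|\lesssim 2^{\ell e_D}$ and arrange $w_k = 1_G$.

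The natural guidance for the choice of $v_\ell$ comes from the parallel computation in $\gr(G)$. Running the analogous recursion with $\ast$ replaced by $\ast_{\gr}$ and all $v_\ell$ equal to $1_G$ produces elements $w_\ell^{\gr} = (b_0\ast_{\gr}\cdots\ast_{\gr}b_\ell)^{-1}\ast_{\gr} s\ast_{\gr} a_0\ast_{\gr}\cdots\ast_{\gr} a_\ell$, and the $\gr(G)$-action identity $s\ast_{\gr}g = b_0\ast_{\gr}\cdots\ast_{\gr}b_k$ forces $w_k^{\gr}=1_G$. At each step of the induction in $G$, the discrepancy between the actual and the model recursion is controlled by Cornulier's estimate \Cref{lem:cornulier-multiple-factors}: for a product whose factors have word length $\lesssim R_\ell \simeq 2^\ell$, the difference between the $\ast$- and $\ast_{\gr}$-products has word length $\lesssim 2^{\ell e_D}$, and this is precisely what each $v_\ell$ is designed to absorb.

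The main obstacle will be the careful bookkeeping needed to ensure that the accumulated discrepancy between $w_\ell$ and $w_\ell^{\gr}$ stays controlled throughout the induction. Although $w_\ell^{\gr}$ may itself have word length as large as $\simeq 2^k$ for $\ell<k$, the \emph{difference} between $w_\ell$ and $w_\ell^{\gr}$ lives in progressively deeper layers of the lower central series of $G$, whose conjugation and commutation behavior is controlled by the estimates of \Cref{sec:length-of-commutators}---notably \Cref{cor:commutatorEstimate}. This should allow the accumulated discrepancy after all $k$ steps to be absorbed into a final correction $v_k$ of word length $\lesssim 2^{k e_D}$, enforcing $w_k = 1_G$ and yielding the required bound $|v_\ell|\lesssim 2^{\ell e_D}$ uniformly in $\ell$.
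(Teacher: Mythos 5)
Your telescoping setup ($w_{-1}=s$, $w_\ell = b_\ell^{-1}\ast v_\ell^{-1}\ast w_{\ell-1}\ast a_\ell$, and the parallel $\gr$-recursion $w_\ell^{\gr}$) is exactly the paper's, with $w_\ell^{\gr}$ matching the paper's $r_\ell$. But the ``main obstacle'' you flag --- tracking an accumulating discrepancy between $w_\ell$ and $w_\ell^{\gr}$ in deeper layers of the lower central series and absorbing it via the commutator estimates of \Cref{sec:length-of-commutators} --- is not present in the correct argument, and your plan for it is a detour that you never carry out.

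The point you are missing is an \emph{explicit} choice of $v_\ell$ that eliminates the accumulation entirely. Set
\[
v_\ell := w_{\ell-1}^{\gr}\ast a_\ell\ast (w_\ell^{\gr})^{-1}\ast b_\ell^{-1},
\]
a four-fold $\ast$-product in $G$. The same expression with $\ast$ replaced by $\ast_\gr$ equals the identity by construction of $w_\ell^{\gr}$, so \Cref{lem:cornulier-multiple-factors} \emph{alone} gives
\[
|v_\ell|\lesssim \max\bigl(1,|w_{\ell-1}^{\gr}|^{e_D},|a_\ell|^{e_D},|w_\ell^{\gr}|^{e_D},|b_\ell|^{e_D}\bigr)\lesssim 2^{\ell e_D},
\]
using that $|w_\ell^{\gr}|\lesssim 2^{\ell}$ (not $2^k$ as you write --- it is a ratio of two elements within uniformly bounded distance of $T_{\ell,\gr(G)}$). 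And with this choice, a one-line induction shows $w_\ell = w_\ell^{\gr}$ \emph{exactly} for every $\ell$: substituting $v_\ell^{-1} = b_\ell\ast w_\ell^{\gr}\ast a_\ell^{-1}\ast (w_{\ell-1}^{\gr})^{-1}$ into the recursion and using $w_{\ell-1}=w_{\ell-1}^{\gr}$ gives $w_\ell = w_\ell^{\gr}$. Hence $w_k=w_k^{\gr}=1$ automatically, with no final correction. No appeal to \Cref{cor:commutatorEstimate} or \Cref{prop:commutatorsGroup} is made in this lemma --- those enter only in the \emph{next} step, \Cref{prop:bound-on-word-length}, to convert the factorization produced here into the distance estimate. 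So the proposal has the right skeleton, but as written leaves the actual proof unfinished behind a mis-diagnosed difficulty.
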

\begin{proof}
    The first assertion is an immediate consequence of the definition of $\rho$. So we only need to prove the second assertion. To simplify notation we will write $f_i:=f_i(x)$ and $f_i^{(s)}:=f_i(s\ast_{\gr} x)$.
    
    Denote $r_{-1}:=s$ and for $0\leq \ell \leq k$ 
    \[
        r_\ell:= \left(f_0^{(s)} \ast_{\gr} \cdots \ast_{\gr} f_\ell^{(s)}\right)^{-1} \ast_{\gr}\left(s \ast_{\gr} f_0 \ast_{\gr} \cdots \ast_{\gr} f_\ell\right).
    \]
    We view $r_\ell$ as the $\ell$-th residue term in the construction of the tiled form for $s\ast_{\gr}g$ in $\gr(G)$. In particular $r_k=1$, and $r_\ell$ has word length $\lesssim 2^\ell$, since $f_{\ell}$ and $f_{\ell}^{(s)}$ have word length $\lesssim 2^{\ell}$ for $0\leq \ell \leq k$.
    
    Then
    \[
    r_{\ell-1}\ast_{\gr} f_\ell \ast_{\gr} r_\ell^{-1} \ast_{\gr} (f_\ell^{(s)})^{-1} = 1 \mbox{ in } \gr(G).
    \]
    For $0\leq \ell \leq k$, define
    \begin{equation} \label{eq:def-vi}
        v_\ell:= r_{\ell-1}\ast \phi_\ell(f_\ell) \ast r_\ell^{-1} \ast (\phi_\ell(f_\ell^{(s)}))^{-1}\in G.
    \end{equation}
    Recall that $\phi_\ell(f_\ell)= \prod_{i=1}^{s_G}\prod_{j=1}^{m_i} x_{i,j}^{\delta_{i,j,\ell}2^{\ell\cdot i}}$, where comparing to $f_\ell$ the only change is that we are taking products with respect to $\ast$ instead of $\ast_{\gr}$. 
    
    By Lemma \ref{lem:cornulier-multiple-factors},
    \begin{align*}
        |v_\ell|&=|v_\ell - 0|\\ 
        & = |r_{\ell-1}\ast \phi_{\ell}(f_\ell) \ast r_\ell^{-1} \ast (\phi_{\ell}(f_\ell^{(s)}))^{-1} -  r_{\ell-1}\ast_{\gr} f_\ell \ast_{\gr} r_\ell^{-1} \ast_{\gr} (f_\ell^{(s)})^{-1}|\\
        &\lesssim \max \left(1, |r_{\ell-1}|^{e_D}, |f_\ell|^{e_D}, |r_\ell|^{e_D}, |f_\ell^{(s)}|^{e_D}\right)\\
        &\lesssim \max (1, 2^{\ell\cdot e_D}),
    \end{align*}
    where for the second equality we recall that we are identifying the neutral element $1$ in $\gr(G)$ and $G$ with the neutral element $0$ in their Lie algebras, and for the first inequality we apply \Cref{lem:cornulier-multiple-factors} to a product of $2+2\cdot \sum_{i=1}^{s_G} m_i$ terms and use that $\left| x_{i,j}^{\delta_{i,j,\ell} 2^{\ell\cdot i}}\right|\lesssim |f_\ell|^{e_D}$ for all $i,j$.

    Finally, by Equation \eqref{eq:def-vi}, we have
    \begin{align*}
        s\ast \phi(g)&= s\ast \phi_1(f_0)\ast \cdots \ast \phi_k(f_k)\\
        &= v_1\ast \phi_1(f_0^{(s)})\ast \cdots \ast v_k\ast \phi_k(f_k^{(s)}).
    \end{align*}
\end{proof}

As a consequence we can estimate the difference in word length between $s\ast \phi(g)$ and $\phi(s\ast_{\gr} g)$ for all elements $s\in S_{\gr(G)}$ and $x\in X_{\gr(G)}$.

\begin{proposition}\label{prop:bound-on-word-length}
    Let $s\in S_{\gr(G)}$ and let $x\in X_{\gr(G)}$ with $\rho(s\ast_{\gr} x,x)=k$. Let $g:=f_0(x)\ast_{\gr} \cdots \ast_{\gr} f_k(x)\in T_{k,\gr(G)}$. Then $s\ast_{\gr} g = f_0(s\ast_{\gr} x) \ast_{\gr} \cdots \ast_{\gr} f_k(s\ast_{\gr} x)\in T_{k,\gr(G)}$ and there is a constant $C>0$ that only depends on $\gr(G)$, $G$ and the identification of their Lie algebras such that 
    \[
        d_{S_G}\left(\phi(s\ast_{\gr} x), s\ast \phi(x)\right) \leq C \cdot 2^{\left(1-\frac{1-e_D}{s_G}\right)k},
    \]
    where $s_G$ is the nilpotency class of $G$ and $e_D\in \left[0,1-\frac{1}{s_G}\right]$ is Cornulier's constant.
\end{proposition}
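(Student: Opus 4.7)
The plan is to estimate the word length of $u := \phi(s\ast_{\gr}g)^{-1}\ast s\ast\phi(g)\in G$, which by left-invariance equals $d_{S_G}(s\ast\phi(g),\phi(s\ast_{\gr}g))$. Setting $w_i := \phi_i(f_i(s\ast_{\gr}x))$, Lemma \ref{lem:tiled-error-terms} gives
\[
    s\ast\phi(g) = v_0\ast w_0\ast v_1\ast w_1\ast\cdots\ast v_k\ast w_k,\qquad \phi(s\ast_{\gr}g) = w_0\ast w_1\ast\cdots\ast w_k,
\]
with $|v_\ell|\lesssim 2^{\ell e_D}$ and $|w_\ell|\lesssim 2^\ell$. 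Using the identity $v\ast w = w\ast v^w$ (where $v^w := w^{-1}\ast v\ast w$) iteratively to push each $v_\ell$ rightward past the subsequent $w_j$'s, a routine induction yields
\[
    u = v_0^{W_0}\ast v_1^{W_1}\ast\cdots\ast v_k^{W_k},\qquad W_\ell := w_\ell\ast w_{\ell+1}\ast\cdots\ast w_k,
\]
so that by subadditivity of the word length it is enough to bound $\sum_{\ell=0}^k|v_\ell^{W_\ell}|$.

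Next, using $v^W = v\ast[v^{-1},W^{-1}]$ (with the convention $[a,b]=aba^{-1}b^{-1}$), I would write $|v_\ell^{W_\ell}|\leq|v_\ell|+|[v_\ell^{-1},W_\ell^{-1}]|$. Since $|W_\ell|\leq\sum_{j\geq\ell}|w_j|\lesssim 2^k$ and $|v_\ell|\lesssim 2^{\ell e_D}\leq|W_\ell|$, Proposition \ref{prop:commutatorsGroup} (with $p=q=1$) bounds the commutator by
\[
    |[v_\ell^{-1},W_\ell^{-1}]|\lesssim\max_{\substack{i,j\geq 1\\ i+j\leq s_G}}|v_\ell|^{i/(i+j)}\,|W_\ell|^{j/(i+j)},
\]
and, since $|v_\ell|\leq|W_\ell|$, the maximum is realized at the extremal indices $(i,j)=(1,s_G-1)$, producing the bound $2^{\ell e_D/s_G+k(s_G-1)/s_G}$. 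Summing the two contributions as geometric series in $\ell$ (which converge whenever $e_D>0$; the case $e_D=0$ means the chosen basis is Carnot-compatible, so each $v_\ell$ vanishes and $u$ is trivial), the dominant terms come from $\ell$ near $k$, giving
\[
    |u|\lesssim 2^{ke_D}+2^{k(s_G-1+e_D)/s_G}\lesssim 2^{k(1-(1-e_D)/s_G)},
\]
as required.

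The main subtlety combines the push-through algebra, which trades nested products into conjugates of controllable length, with the commutator estimate of Section \ref{sec:length-of-commutators}: the extremal-index choice $(1,s_G-1)$ is precisely what produces the exponent $1-(1-e_D)/s_G$ in the final bound, reflecting the fact that commuting a length-$2^{\ell e_D}$ element past a length-$2^k$ element in an $s_G$-step nilpotent group produces a commutator of length $\lesssim 2^{(\ell e_D+k(s_G-1))/s_G}$. A secondary subtlety is that the polynomial factor $k+1$ one might naively fear from the sum over $\ell$ is absorbed by geometric-series convergence whenever $e_D>0$, avoiding any logarithmic loss in the final estimate.
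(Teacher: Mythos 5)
Your proposal is essentially correct and is the same push-through strategy as the paper's proof, but with one organizational difference that is worth flagging. You conjugate each error term $v_\ell$ by the \emph{suffix} product $W_\ell = w_\ell \ast \cdots \ast w_k$, which satisfies only $|W_\ell|\lesssim 2^k$ (flat in $\ell$), so your per-term commutator bound $2^{\ell e_D/s_G + k(s_G-1)/s_G}$ decays in $\ell$ only through the factor $2^{\ell e_D/s_G}$, forcing the caveat at $e_D=0$. The paper instead conjugates each $v_\ell$ by the \emph{prefix} product $P_\ell = w_0\ast\cdots\ast w_{\ell-1}$, whose word length is $\lesssim 2^\ell$; applying \Cref{cor:commutatorEstimate} with $|v_\ell|\lesssim |P_\ell|^{e_D}$ (so $\varepsilon = 1-e_D$, $q=1$) then gives $|[v_\ell^{-1},P_\ell]|\lesssim 2^{\ell(1-(1-e_D)/s_G)}$, which is geometrically decaying in $\ell$ for every $e_D\in[0,1)$ since $1-(1-e_D)/s_G>0$. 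That choice makes the series converge without any case split, whereas your argument genuinely fails to beat the $(k+1)$ factor at $e_D=0$ unless you invoke the additional fact that $e_D=0$ forces the $v_\ell$ to vanish (true, but an extra input). If you swap to the prefix conjugation you recover the paper's proof verbatim and lose the edge case; otherwise the rest of your argument — the push-through identity $u=v_0^{W_0}\ast\cdots\ast v_k^{W_k}$, the reduction of each term to $|v_\ell|+|[v_\ell^{-1},W_\ell^{-1}]|$, the use of \Cref{prop:commutatorsGroup} with the extremal indices $(1,s_G-1)$, and the final summation — matches the paper's structure and yields the stated exponent $1-\tfrac{1-e_D}{s_G}$.
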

\begin{proof}
    By Lemma \ref{lem:tiled-error-terms}, $s\ast \phi(g)= v_0\ast \phi_0(f_0(s\ast_{\gr} x))\ast \cdots \ast v_k\ast \phi_k(f_k(s\ast_{\gr} x))$ with $|v_\ell|\lesssim 2^{\ell \cdot e}$, while $\phi(s\ast_{\gr} g)= \phi_0(f_0(s\ast_{\gr} x))\ast \cdots \ast \phi_k(f_k(s\ast_{\gr} x))$. 
    
    Choosing $x_\ell:= (\phi_0(f_0(s\ast_{\gr} x))\ast \cdots \ast \phi_{\ell}(f_{\ell}(s\ast_{\gr} x)))^{-1}$ and $y_\ell:=v_\ell$ for $0\leq \ell \leq k$ (where $x_0=1$), we obtain
    \[
        s\ast \phi(g)= \left(\prod_{\ell=0}^k y_\ell\ast\left[y_\ell^{-1},x_\ell^{-1}\right]\right) \ast \phi_0(f_0(s\ast_{\gr} x))\ast \cdots \ast \phi_k(f_k(s\ast_{\gr} x)). 
    \]

    Since $|x_\ell|\lesssim 2^\ell$ and $|y_\ell|\lesssim 2^{\ell \cdot e_D}$, we argue as in the proof of Corollary \ref{cor:commutatorEstimate} to see that Proposition \ref{prop:commutatorsGroup} implies
    \[
        |\left[y_\ell^{-1},x_\ell^{-1}\right]|\lesssim 2^{\left(1-\frac{1-e_D}{s_G}\right)\ell}.
    \]
    Using again that $|y_\ell|\lesssim 2^{\ell \cdot e_D}\leq 2^{\left(1-\frac{1-e_D}{s_G}\right)\ell}$, as $s_G\geq 1$, and recalling that in contrast to usual convention we defined the word metric via left multiplication by generators, the assertion is an immediate consequence.
\end{proof}

\begin{corollary}\label{cor:bound-on-word-length}
    There exists a constant $C>0$ that only depends on $\gr(G)$, $G$ and the identification of their Lie algebras such that for all $s\in S_{\gr(G)}$ and almost every $x\in X_{\gr(G)}$ with $\rho(s\ast_\gr x,x)\leq k$ we have
    \[
        d_{S_G}(\hatphi(s\ast_{\gr} x),\hatphi(x))\leq C \cdot 2^{\left(1-\frac{1-e_D}{s_G}\right)\cdot k},
    \]
    where $s_G$ denotes the nilpotency class of $G$ and $e_D\in \left[0,1-\frac{1}{s_G}\right]$ is Cornulier's constant.
\end{corollary}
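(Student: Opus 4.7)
The corollary is essentially a bridge between Proposition \ref{prop:bound-on-word-length} (which controls how $\phi$ commutes with left multiplication by $s$) and the actual orbit equivalence $\hatphi$ (which was forced on us by the need for a measure-isomorphism $\phi_0$ on $F_0$). I would prove it in three short steps by the triangle inequality.

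First, given $s \in S_{\gr(G)}$ and $x \in X_{\gr(G)}$ with $\rho(s\ast_{\gr} x, x) \leq k$, set $k_0 := \rho(s\ast_{\gr} x, x)$ and $g := f_0(x)\ast_{\gr} \cdots \ast_{\gr} f_{k_0}(x) \in T_{k_0,\gr(G)}$. Then $s\ast_{\gr} g \in T_{k_0,\gr(G)}$ as well. Applying Proposition \ref{prop:bound-on-word-length} with the integer $k_0$ yields a constant $C_1$ (depending only on $G$, $\gr(G)$ and the identification of their Lie algebras) such that
\[
d_{S_G}\bigl(\phi(s\ast_{\gr} x),\, s\ast \phi(x)\bigr) \leq C_1 \cdot 2^{\left(1 - \frac{1-e_D}{s_G}\right)k_0} \leq C_1 \cdot 2^{\left(1 - \frac{1-e_D}{s_G}\right)k},
\]
where the last inequality uses that the exponent is non-negative and $k_0 \leq k$.

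Second, since $s \in S_{G}$ has word length $1$ in $G$, the triangle inequality gives
\[
d_{S_G}\bigl(\phi(s\ast_{\gr} x),\, \phi(x)\bigr) \leq d_{S_G}\bigl(\phi(s\ast_{\gr} x),\, s\ast \phi(x)\bigr) + 1 \leq C_1 \cdot 2^{\left(1 - \frac{1-e_D}{s_G}\right)k} + 1.
\]

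Finally, I would apply Lemma \ref{lem:identity-at-finite-distance} twice to replace $\phi$ by $\hatphi$: for almost every $y \in X_{\gr(G)}$ we have $d_{S_G}(\hatphi(y), \phi(y)) \leq R_0$. Using this for $y = x$ and $y = s\ast_{\gr} x$, another application of the triangle inequality yields
\[
d_{S_G}\bigl(\hatphi(s\ast_{\gr} x),\, \hatphi(x)\bigr) \leq 2R_0 + 1 + C_1 \cdot 2^{\left(1 - \frac{1-e_D}{s_G}\right)k} \leq C \cdot 2^{\left(1 - \frac{1-e_D}{s_G}\right)k}
\]
for a sufficiently large constant $C$, absorbing the additive constants $2R_0 + 1$ (which depend only on $G$, $\gr(G)$ and the chosen identification). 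There is no real obstacle here; the only point that deserves a word of care is that $R_0$ is finite and independent of $k$, so it can indeed be absorbed into $C$ without affecting the exponential rate.
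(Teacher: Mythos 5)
Your proof follows exactly the same route as the paper's: Proposition~\ref{prop:bound-on-word-length} applied to $\phi$, the triangle inequality, and Lemma~\ref{lem:identity-at-finite-distance} to pass from $\phi$ to $\hatphi$. The only (minor) thing you do more explicitly than the paper is set $k_0 = \rho(s\ast_{\gr} x,x)$ and note the monotonicity of the exponential bound in $k$ (since the exponent is non-negative), which the paper leaves implicit; this is correct and slightly more careful.
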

\begin{proof}
    This is an immediate consequence of \Cref{prop:bound-on-word-length} and the following estimates for all $s\in S_{\gr(G)}$ and almost every $x\in X$:
    \begin{align*}
        &d_{S_G}\left(\hatphi(s\ast_{\gr} x),\hatphi(x)\right)\\
        \leq & d_{S_G}\left(\hatphi(s\ast_{\gr} x),\phi(s\ast_{\gr} x)\right) + d_{S_G}\left(\phi(s\ast_{ \gr} x\right), s\ast \phi(x))
        \\ & + d_{S_G}\left(s\ast \phi(x),\phi(x)\right)+ d_{S_G}\left(\phi(x),\hatphi(x)\right)\\
        \leq & 2\cdot R_0 + 1 + d_{S_G}\left(\phi(s\ast_{\gr} x),s\ast \phi(x)\right).
    \end{align*}
    where the second inequality follows from \Cref{lem:identity-at-finite-distance} and the right invariance of $d_{S_G}$.
\end{proof}

If $G$ is not a Carnot group, then Cornulier's constant $e_D$ takes values in $\left(0,1-\frac{1}{s_G}\right]$. \Cref{cor:bound-on-word-length} shows that in this case  $d_{S_G}\left(\hatphi(s\ast_{\gr} x), \hatphi(x)\right)$ is bounded above by a function that is $O(2^{\alpha\cdot k})$ for some $\alpha\in (0,1)$.

\begin{proof}[Proof of the first statement of \Cref{thmintro:quantitativeMain}]
    Let $G$ be a simply connected nilpotent Lie group. By choosing a suitable canonical basis we may assume that $e_G=e_D$.  We will show that there is an $(L^p,L^0)$-integrable orbit equivalence coupling between $\gr(G)$ and $G$. To see that this coupling is $(L^p,L^p)$-integrable it suffices to observe that we did not use at any point in this section that our coupling goes from $\gr(G)$ to $G$ and thus all results also hold for the inverse coupling from $G$ to $\gr(G)$.
    
    The actions $\gr(G)\curvearrowright (X_{\gr(G)},\mu_{\gr(G)})$ and $G\curvearrowright (X_{G},\mu_G)$ are essentially free. Thus, it suffices to prove that they define an $L^p$-integrable orbit equivalence coupling for some $p>1$.
    
    By \Cref{lem:nilpotent-explicit-Folner-tiling} and \Cref{cor:bound-on-word-length} there is a constant $C>0$ such that $(F_{k,\gr(G)})$ and $(F_{k,G})$ are $(\epsilon_k,R_k)$-F\o lner tiling sequences for $\epsilon_k\leq C\cdot 2^{-k}$ and $d_{S_G}\left(\hatphi(s\ast_{\gr} x),\hatphi(x)\right)\leq C\cdot 2^{\left(1-\frac{1-e_G}{s_G}\right)\cdot k}$ for $\rho(s\ast_{\gr} x,x)\leq k$. 

    Thus, for all $s\in S_{\gr(G)}$ and all $k\in \mathbb{N}$ we have
    \[
        \mu\left(\left\{x\in X_{\gr(G)}\mid d_{S_G}\left(\hatphi(s\ast_{\gr} x), \hatphi(x)\right)> C\cdot 2^{\left(1-\frac{1-e_G}{s_G}\right)\cdot k}\right\}\right)\leq \mu\left(\left\{x\in X\mid \rho\left(s\ast_{\gr} x,x\right)>k\right)\right\}\leq \epsilon_k.
    \]
    We deduce that for $\psi_{p,\epsilon}(x)=\frac{x^p}{\log(x)^{1+\epsilon}}$ and all $s\in S_{\gr(G)}$ we have
    \begin{align*}
        &\int_{X}\psi_{p,\epsilon}\left(d_{S_G}(\hatphi(s\ast_{\gr} x),\hatphi(x))\right) d\mu(x)\\
        &\leq \psi_{p,\epsilon}(C) + \sum_{k=1} ^{\infty}\left[ \psi_{p,\epsilon}\left(C\cdot 2^{\left(1-\frac{1-e_G}{s_G}\right)\cdot k}\right)\right.\\
        &\hspace{2.5cm}\left.\cdot \mu\left(\left\{x\in X_{\gr(G)}\mid C\cdot 2^{\left(1-\frac{1-e_G}{s_G}\right)\cdot (k-1)}<d_{S_G}\left(\hatphi(s\ast_{\gr} x), \hatphi(x)\right)\leq C\cdot 2^{\left(1-\frac{1-e_G}{s_G}\right)\cdot k}\right\}\right)\right]\\
        &\leq \psi_{p,\epsilon}(C)+ \sum_{k=1}^{\infty} (\epsilon_{k-1}-\epsilon_k)\cdot \psi_{p,\epsilon}\left(C\cdot 2^{\left(1-\frac{1-e_G}{s_G}\right)\cdot k}\right)\\
        &\leq \psi_{p,\epsilon}(C) + C^{1+p}\cdot \sum_{k=1}^{\infty}2^{-k}\cdot \frac{2^{\left(1-\frac{1-e_G}{s_G}\right)\cdot k\cdot p}}{\left(\log(C)+\left(1-\frac{1-e_G}{s_G}\right)\cdot k\right)^{1+\epsilon}}\\
        &= \psi_{p,\epsilon}(C) + C^{1+p}\cdot \sum_{k=1}^{\infty}\cdot \frac{2^{k\cdot \left(\left(1-\frac{1-e_G}{s_G}\right)\cdot p-1\right)}}{\left(\log(C)+\left(1-\frac{1-e_G}{s_G}\right)\cdot k\right)^{1+\epsilon}},
    \end{align*}
    which converges for every $\left(1-\frac{1-e_G}{s_G}\right)\cdot p-1\leq 0$ and $\epsilon>0$. The latter is the case if and only if $\epsilon>0$ and $p\leq\frac{s_G}{s_G-(1-e_G)}$. Since for $p<\frac{s_G}{s_G-(1-e_G)}$ we have that $x^p=o(\psi_{\frac{s_G}{s_G-(1-e_G)},\epsilon})$ and the argument is symmetric in $\gr(G)$ and $G$, it follows that $G$ and $\gr(G)$ are $L^p$-orbit equivalent for all $p<\frac{s_G}{s_G-(1-e_G)}$. 
\end{proof}

\begin{remark}\label{rmk:p-bounds-for-general-D}
    If in the first statement of the proof of \Cref{thmintro:quantitativeMain} we choose another canonical basis, then the same proof shows that the induced orbit equivalence coupling is $L^p$-integrable for all $p< \frac{s_G}{s_G-(1-e_D)}\leq \frac{s_G^2}{s_G^2-s_G+1}$, where the second inequality follows from the fact that $e_D\leq 1-\frac{1}{s_G}$ for all grading operators $D$. We will require this to give explicit bounds on the $p$ for which two lattices with isomorphic associated Carnot graded groups are $L^p$ OE, as in this case we can not choose the canonical bases freely.
\end{remark}

\section{Better integrability when the difference is in the centre}\label{sec:OE-same-Carnot-central}
In this section we show that the upper bound of $\frac{s_G}{s_G-(1-e_G)}$ on the $p$ for which there is an $L^p$-orbit equivalence between a simply connected Lie group $G$ and its associated Carnot graded group $\gr(G)$ is not optimal in general. To see this we consider the case when the difference between the group structures on $G$ and $\gr(G)$ is in the last term of their lower central series. More precisely, we will now assume that there is an isomorphism $\psi \colon  \gr(G)/\gamma_{s_G}(\gr(G))\to G/\gamma_{s_G}(G)$. 

\begin{remark}
To obtain the bounds in this section we choose a canonical basis whose associated grading operator $D$ satisfies $e_D=e_G$. All bounds in this section also hold for a general canonical basis for $\mathfrak{g}$ and its associated grading operator $D$ if we replace $e_G$ by $e_D$ everywhere.
\end{remark}

We will use the same notation as in \Cref{sec:OE-same-Carnot}. In particular, we denote by $(F_{k,\gr(G)})$ and $(F_{k,G})$ the $(\epsilon_k,R_k)$-F\o lner tiling sequences for $\gr(G)$ and $G$, and by $\phi=(\phi_k)$ the map between these tiling sequences. We further denote by $(\overline{F_{k,\gr(G)}})$, $(\overline{F_{k,G}})$ and $\overline{\phi}=(\overline{\phi}_k)$ the induced tiling sequences and map on $\overline{\gr(G)}:=\gr(G)/\gamma_{s_G}(\gr(G))$ and $\overline{G}:=G/\gamma_{s_G}(G)$. We denote by $\pi_{\gr(G)} \colon \gr(G)\to \overline{\gr(G)}$ and $\pi_G \colon G\to \overline{G}$ the projections. 

Note that we can choose the isomorphism $\gr(\mathfrak{g})\to \mathfrak{g}$ used to construct the map $\phi$ so that it is compatible with $\psi$, meaning that for all $k\geq 0$ we have $ \overline{\phi}_k=\pi_G \circ \phi_k = \psi \circ \pi_{\gr(G)}|_{F_{k,\gr(G)}}$. This implies in particular that $\pi_G(v_{\ell})=1\in G$ for the projection of the error terms $v_{\ell}$ in \Cref{eq:def-vi}, yielding the following strengthened version of \Cref{lem:tiled-error-terms}.

\begin{lemma}\label{lem:tiled-error-terms-central}
    Let $s\in S_{\gr(G)}$ and $x\in X_{\gr(G)}$ with $\rho(s\ast_{\gr} x,x)=k$ and let $g:= f_0(x)\ast_{\gr} \cdots \ast_{\gr} f_k(x) \in T_{k,\gr(G)}$. Then there are elements $v_0,\cdots,v_k\in \gamma_{s_G}(G)$ of word length $|v_\ell|\lesssim 2^{\ell\cdot e_G}$ such that $s\ast \phi(g)= v_0\ast \phi_0(f_0(s\ast_{\gr} x))\ast \cdots \ast v_k\ast \phi_k(f_k(s\ast_{\gr} x))$.
\end{lemma}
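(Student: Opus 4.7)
The plan is to reuse the construction from the proof of Lemma~\ref{lem:tiled-error-terms} almost verbatim, defining $v_\ell$ by exactly the same formula~\eqref{eq:def-vi} and keeping the same auxiliary residues $r_\ell$. Having chosen the canonical basis of $\mathfrak{g}$ so that its associated grading operator $D$ satisfies $e_D = e_G$ (as permitted by the remark at the start of this section), the five-factor application of Lemma~\ref{lem:cornulier-multiple-factors} yields the bound $|v_\ell| \lesssim \max(1, 2^{\ell \cdot e_G})$ without change, and the telescoping argument producing the decomposition
\[
    s \ast \phi(g) = v_0 \ast \phi_0(f_0(s \ast_{\gr} x)) \ast \cdots \ast v_k \ast \phi_k(f_k(s \ast_{\gr} x))
\]
is likewise identical. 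The only thing to add is the membership $v_\ell \in \gamma_{s_G}(G)$.

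To verify this, I would project the defining identity for $v_\ell$ via the group homomorphism $\pi_G \colon G \to \overline{G}$ and show the result is trivial. The factors $\pi_G(\phi_\ell(f_\ell))$ and $\pi_G(\phi_\ell(f_\ell^{(s)}))$ are computed directly from the compatibility condition $\pi_G \circ \phi_\ell = \psi \circ \pi_{\gr(G)}$ on $F_{\ell, \gr(G)}$. For the residues $r_\ell$, one uses that projection modulo $\gamma_{s_G}(\mathfrak{g})$ is a linear map on the common underlying vector space $\mathfrak{g} = \gr(\mathfrak{g})$, so, after transport by $\psi$, projecting $r_\ell$ through $\pi_G$ agrees with projecting it through $\pi_{\gr(G)}$. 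The projected product then becomes $\psi$ applied to
\[
    \pi_{\gr(G)}\bigl(r_{\ell-1} \ast_{\gr} f_\ell \ast_{\gr} r_\ell^{-1} \ast_{\gr} (f_\ell^{(s)})^{-1}\bigr) = \pi_{\gr(G)}(1) = 1,
\]
where the inner identity is precisely the relation in $\gr(G)$ that was used in the proof of Lemma~\ref{lem:tiled-error-terms} to define $r_\ell$. Hence $\pi_G(v_\ell) = 1$, which is what is required.

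The main obstacle is a bookkeeping one rather than a quantitative one: one must make sure that the compatibility of the chosen basis with $\psi$ is strong enough to transport the defining relation for $r_\ell$ from $\gr(G)$ across to $\overline{G}$, despite the two distinct group operations $\ast$ and $\ast_{\gr}$ on the shared vector space $\overline{\mathfrak{g}}$. Once this transport is set up carefully, the word length estimates and the factorization are inherited without modification from Lemma~\ref{lem:tiled-error-terms}, and the new membership $v_\ell \in \gamma_{s_G}(G)$ drops out of a single application of $\pi_G$.
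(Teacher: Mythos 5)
Your proof is correct and follows the same approach as the paper: the paper merely remarks that the compatibility of the canonical basis with $\psi$ forces $\pi_G(v_\ell)=1$, and your projection argument --- using that the linear quotient modulo $\gamma_{s_G}(\mathfrak{g})$ on the common vector space intertwines $\pi_G$ with $\psi\circ\pi_{\gr(G)}$, so the defining relation $r_{\ell-1}\ast_{\gr}f_\ell\ast_{\gr}r_\ell^{-1}\ast_{\gr}(f_\ell^{(s)})^{-1}=1$ transports across the two group laws --- is exactly the argument behind that remark. One cosmetic slip: the application of Lemma~\ref{lem:cornulier-multiple-factors} in the proof of Lemma~\ref{lem:tiled-error-terms} is to a product of $2+2\sum_i m_i$ factors (the $\phi_\ell(f_\ell)$ and $\phi_\ell(f_\ell^{(s)})$ must be expanded into their generator coordinates so that the two products compare the same list of elements), not five; this does not affect the conclusion.
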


Since $\gamma_{s_G}(G)\leq Z(G)$, we deduce the following strengthened version of \Cref{prop:bound-on-word-length}.

\begin{proposition}\label{prop:bound-on-word-length-central}
    Let $s\in S_{\gr(G)}$ and let $x\in X_{\gr(G)}$ with $\rho(s\ast_{\gr} x,x)=k$. Let $g:=f_0(x)\ast_{\gr} \cdots \ast_{\gr} f_k(x)\in T_{k,\gr(G)}$. Then $s\ast_{\gr} g = f_0(s\ast_{\gr} x) \ast_{\gr} \cdots \ast_{\gr} f_k(s\ast_{\gr} x)\in T_{k,\gr(G)}$ and there is a constant $C>0$ that only depends on $\gr(G)$, $G$ and the chosen identification of their Lie algebras such that 
    \[
        d_{S_G}\left(\phi(s\ast_{\gr} g), s\ast \phi(g)\right) \leq C \cdot 2^{k \cdot e_G}.
    \]
\end{proposition}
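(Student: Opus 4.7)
The plan is to exploit the refinement supplied by \Cref{lem:tiled-error-terms-central}: the error terms $v_0,\ldots,v_k$ now all lie in $\gamma_{s_G}(G)\leq Z(G)$. Since each $v_\ell$ is central, it commutes with every element of $G$, so in the factorisation
\[
s\ast\phi(g)\;=\;v_0\ast\phi_0(f_0(s\ast_{\gr}x))\ast\cdots\ast v_k\ast\phi_k(f_k(s\ast_{\gr}x))
\]
from \Cref{lem:tiled-error-terms-central} we can slide every $v_\ell$ past the intervening factors to the front without picking up any commutator corrections. Combined with $\phi(s\ast_{\gr}g)=\phi_0(f_0(s\ast_{\gr}x))\ast\cdots\ast\phi_k(f_k(s\ast_{\gr}x))$, this yields
\[
s\ast\phi(g)\;=\;(v_0\ast v_1\ast\cdots\ast v_k)\ast\phi(s\ast_{\gr}g),
\]
so by right-invariance of $d_{S_G}$ the quantity to be bounded is simply the word length $|v_0\ast\cdots\ast v_k|$ of a single element of $\gamma_{s_G}(G)$.

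The second step is to estimate this length. Because $\gamma_{s_G}(G)$ is abelian, the group product of the $v_\ell$'s agrees with their sum $v_0+\cdots+v_k$ in the Lie algebra. The Guivarc'h formula \eqref{eq:length}, specialised to elements of $\gamma_{s_G}(\mathfrak{g})$, gives $|v|\simeq 1+\|v\|^{1/s_G}$. Combined with the triangle inequality for $\|\cdot\|$ and the bound $|v_\ell|\lesssim 2^{\ell\cdot e_G}$ from \Cref{lem:tiled-error-terms-central} (which forces $\|v_\ell\|\lesssim 2^{\ell\cdot e_G\cdot s_G}$), we obtain
\[
\Big\|\sum_{\ell=0}^k v_\ell\Big\|\;\leq\;\sum_{\ell=0}^k \|v_\ell\|\;\lesssim\;\sum_{\ell=0}^k 2^{\ell\cdot e_G\cdot s_G}\;\lesssim\;2^{k\cdot e_G\cdot s_G},
\]
the last inequality being the geometric-series bound when $e_G>0$; the degenerate case $e_G=0$ corresponds to $G=\gr(G)$, where every $v_\ell$ vanishes and the statement is trivial. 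Taking $s_G$-th roots yields $|v_0\ast\cdots\ast v_k|\lesssim 2^{k\cdot e_G}$, as required.

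No iterated-commutator machinery from \Cref{sec:length-of-commutators} enters this argument: centrality of the error terms entirely removes the commutator corrections that were responsible in \Cref{prop:bound-on-word-length} for the extra factor $2^{k(1-e_G)/s_G}$. The only real input is the stronger form of the decomposition provided by \Cref{lem:tiled-error-terms-central}, itself resting on the assumption that $\gr(G)$ and $G$ have isomorphic quotients modulo their respective $s_G$-th terms of the lower central series; everything else is a one-line application of centrality followed by the Guivarc'h length estimate inside the abelian subgroup $\gamma_{s_G}(G)$.
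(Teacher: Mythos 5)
Your proof is correct and follows essentially the same path as the paper: both use \Cref{lem:tiled-error-terms-central} to observe that the error terms $v_\ell$ are central, so the commutator corrections that appeared in \Cref{prop:bound-on-word-length} vanish and $s\ast\phi(g)=(v_0\ast\cdots\ast v_k)\ast\phi(s\ast_{\gr}g)$. The only cosmetic difference is in the final length estimate: the paper simply applies subadditivity of the word length, $|v_0\ast\cdots\ast v_k|\leq\sum_\ell|v_\ell|\lesssim\sum_\ell 2^{\ell e_G}\lesssim 2^{k e_G}$, whereas you pass through the Lie algebra norm (identifying the product in the abelian subgroup $\gamma_{s_G}(G)$ with the sum, applying the Guivarc'h formula to get $\|v_\ell\|\lesssim 2^{\ell e_G s_G}$, summing, and taking $s_G$-th roots). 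These routes are equivalent and give the same bound. Your explicit handling of the $e_G=0$ case is a welcome clarification that the paper leaves implicit.
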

\begin{proof}
    By the same line of argument as in the proof of \Cref{prop:bound-on-word-length} we deduce that
    \[
        s\ast \phi(g)= \left(\prod_{\ell=0}^k y_\ell\ast\left[y_\ell^{-1},x_\ell^{-1}\right]\right) \ast \phi_0(f_0(s\ast_{\gr} x))\ast \cdots \ast \phi_k(f_k(s\ast_{\gr} x)). 
    \]
    However, by definition, $y_{\ell}=v_{\ell}\in \gamma_m(G)\leq Z(G)$ and thus the $[y_{\ell},x_{\ell}]$ vanish in $G$. Hence,
    \[
        d_{S_G}(\phi(s\ast_{\gr} g),s\ast \phi(g))= d_{S_G}\left(\prod_{\ell=0}^k y_{\ell}, 1 \right)\lesssim \sum_{\ell=0}^k 2^{\ell \cdot e_G }\lesssim 2^{k \cdot e_G}.
    \]
    This completes the proof.
\end{proof}

\begin{proof}[Proof of the second statement of \Cref{thmintro:quantitativeMain}]
    Using the strengthened bounds from \Cref{prop:bound-on-word-length-central}, the same arguments as in the proof of \Cref{thmintro:quantitativeMain} at the end of \Cref{sec:OE-same-Carnot} imply $p_{OE}(G,\gr(g))\geq \frac{1}{e_G}$ as desired.
\end{proof}

\begin{remark}
    Our proof of the second statement of \Cref{thmintro:quantitativeMain} only uses that the difference betweeen the Lie algebra structures of $\gr(\mathfrak{g})$ and $\mathfrak{g}$ is in the center of their Lie algebras. It is possible that some elements of the center of the two Lie algebras are not contained in the last term of the lower central series. Our proof yields the same results in this more general setting.
\end{remark}

\section{\texorpdfstring{$L^p$}{Lp} OE between finitely generated nilpotent groups with isomorphic Carnot}\label{sec:finitely-generated-same-Carnot}
The goal of this section is to prove \Cref{mainthm:Carnot} for virtually nilpotent finitely generated groups. In Section \ref{sec:OE-same-Carnot}, we proved that a simply connected nilpotent Lie group $G$ is $L^p$ OE to its Carnot $\gr(G) $ for some $p>1$. By transitivity of $L^p$ OE for $p\geq 1$, it follows that any two simply connected nilpotent Lie groups with isomorphic Carnots are $L^p$ OE for some $p>1$ (depending on the groups), implying \Cref{mainthm:Carnot} in the simply connected nilpotent case. We want to establish this for virtually finitely generated nilpotent groups with isomorphic Carnot. Since any virtually finitely generated nilpotent group is virtually torsion-free, it is bilipschitz equivalent to a product $\Lambda\times \Z/n\Z$ where $\Lambda$ is torsion-free. Hence we are reduced to treating the case of groups which are direct products of a torsion free nilpotent (and therefore a lattice in a simply connected nilpotent Lie group) with a finite cyclic group.

Our first step will be to consider a case where we can directly exploit the construction from Section \ref{sec:OE-same-Carnot}. In order to make it precise, we use the fact that if $G$ admits a lattice, then one can find a quite specific one, defined as follows.
Let $\Gamma$ be a lattice in $G$. By \cite[Theorem 4.2]{EasHut-23}, there exists a bracket closed lattice $A$ of $\mathfrak{g}$ such that $A\subset \log \Lambda$.
A well-known induction on the dimension of $\gamma_{s_G}(\mathfrak{g})$ shows that $A\cap \gamma_{s_G}(\mathfrak{g})$ is a lattice in $\gamma_{s_G}(\mathfrak{g})$. By induction on $s_G$, one can thus choose a basis $\left\{X_{ij}\right\}$ of $A$, such that for each $i$, the $X_{ij}$'s freely generate a complement $\mathfrak{g}_i$ of $\gamma_{i+1}(\mathfrak{g})$ in $\gamma_{i}(\mathfrak{g})$. Let us denote $A_i$ the additive subgroup spanned by the $X_{ij}$'s. By construction, we have $[A_i,A_j]\subset \bigoplus_{k\geq i+j}A_k$. By definition of the modified bracket in $\gr(\mathfrak{g})$, we have $[A_i,A_j]_\gr\subset A_{i+j}$. In particular, $A$ is bracket-closed both for the Lie bracket coming from $\mathfrak{g}$ and the Lie bracket coming from $\gr(\mathfrak{g})$. By \cite[Lemma 4.3]{TointonTessera}, after multiplying $A$ by a positive integer $N$ (which only depends on $s_G$) we can further assume that $\Lambda:=\exp(A)$ and $\Lambda_\gr:=\exp_\gr(A)$ are respectively subgroups of $G$ and $\gr(G)$. Note that multiplication by $N$ does not affect the aforementioned properties of the basis $\left\{X_{ij}\right\}$, which therefore yields a canonical basis for $\mathfrak{g}$ (and $\gr(\mathfrak{g})$), which is compatible with $\Lambda$ and $\Lambda_{\gr}$. As before we denote by $D$ the grading operator associated with this choice of canonical basis. We emphasize that here our choice of canonical basis is constraint by the lattice $\Lambda$ and we cannot thus choose it such that $e_G=e_D$. Recall that nevertheless we have that $e_D\in \left[0,1-\frac{1}{s_G}\right]$ is bounded above in terms of the nilpotency class of $G$. 

Restricting the normal forms defined with respect to the basis $\left\{X_{ij}\right\}$ for elements of $G$ and $\gr(G)$ in Section \ref{sec:OE-same-Carnot} to elements of $\Lambda$ and $\Lambda_\gr$ yields generators and normal forms for $\Lambda$ and $\Lambda_{\gr}$ so that the maps $\phi,~ \hat{\phi}:X_{\gr(G)}\to X_G$ restrict to induce OE-couplings between the standard Borel probability measure spaces $X_{\Lambda_{\gr}}$ and $X_\Lambda$ associated to the F\o lner tilings for $\Lambda_{\gr}$ and $\Lambda$ given by \Cref{lem:normal-form}. Observe that \Cref{lem:tiled-error-terms} applies to this coupling, where by construction the $v_\ell$ are now in $\Lambda$. We have thus maneuvered ourselves into a situation, where the arguments in \Cref{sec:OE-same-Carnot} apply verbatim to the OE-coupling $X_{\Lambda_{\gr}}\to X_{\Lambda}$. As a consequence, we deduce the following result.
\begin{proposition}
    Let $G$ be a simply connected nilpotent Lie group of nilpotency class $s_G$ and let $\Gamma < G$ be a lattice. Then there is a lattice $\Lambda < G$ which has finite index in $\Gamma$ and a lattice $\Lambda_{\gr}<\gr(G)$, such that $\Lambda$ and $\Lambda_{\gr}$ are $L^p$-orbit equivalent for every $p<\frac{s_G}{s_G-(1-e_D)}\leq \frac{s_G^2}{s_G^2-s_G+1}$.
\end{proposition}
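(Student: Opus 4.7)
The plan is to transcribe the machinery of Section \ref{sec:OE-same-Carnot} verbatim in the discrete setting afforded by the construction preceding the statement. That construction produces a bracket-closed lattice $A \subset \mathfrak{g}$ with a basis $\{X_{ij}\}$ respecting the lower central series, such that after multiplication by a positive integer $N = N(s_G)$ the sets $\Lambda := \exp(A) < G$ and $\Lambda_\gr := \exp_\gr(A) < \gr(G)$ are both subgroups. Since $A$ is a lattice in $\mathfrak{g}$ contained in $\log \Gamma$, $\Lambda$ has finite covolume in $G$ and finite index in $\Gamma$; moreover $\{X_{ij}\}$ is simultaneously a compatible canonical basis for $\Lambda$ in $G$ and for $\Lambda_\gr$ in $\gr(G)$ in the sense of \Cref{prop:normal-formdiscrete}. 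In particular, the F\o lner tiling sequences of \Cref{lem:nilpotent-explicit-Folner-tiling} for $\Lambda$ and $\Lambda_\gr$ share the same cardinalities at each level.

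I would next apply \Cref{cor:existence-of-orbit-equivalence} to obtain an orbit equivalence $\hatphi \colon X_{\Lambda_\gr} \to X_\Lambda$ between the associated ergodic, essentially free, measure-preserving actions. The tile-level bijection $\phi$ extending the vector space identification $\gr(\mathfrak{g}) \cong \mathfrak{g}$ restricts to the lattice tiles, and since \Cref{lem:tiled-error-terms} applies with the error terms $v_\ell$ now automatically lying in $\Lambda$, the key word-length estimates \Cref{prop:bound-on-word-length} and \Cref{cor:bound-on-word-length} carry over unchanged and yield
\[
    d_{S_\Lambda}\bigl(\hatphi(s \ast_\gr x), \hatphi(x)\bigr) \lesssim 2^{(1 - (1-e_D)/s_G) \cdot k}
\]
whenever $\rho(s \ast_\gr x, x) \leq k$, where $e_D$ is the constant attached to the canonical basis $\{X_{ij}\}$. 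The summability computation from the end of Section \ref{sec:OE-same-Carnot}, combining $\epsilon_k = O(2^{-k})$ with the above bound and the test function $\psi_{p, \varepsilon}(x) = x^p/\log(x)^{1+\varepsilon}$, then delivers $L^p$-integrability of $\hatphi$ for every $p < \frac{s_G}{s_G - (1-e_D)}$; the second inequality in the statement is the general estimate recorded in \Cref{rmk:p-bounds-for-general-D}, deduced from $e_D \leq 1 - 1/s_G$.

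The only conceptual subtlety, already absorbed into the lattice-compatible choice of basis, is that in the discrete setting we cannot freely optimize the canonical basis to realise Cornulier's infimum $e_G$: the basis is constrained by the requirement that $A$ be a lattice simultaneously bracket-closed for the laws of $\mathfrak{g}$ and of $\gr(\mathfrak{g})$, which is what forces the scaling by $N$ and fixes the constant $e_D$ of the induced grading operator. Once this compromise is accepted, the proof is a mechanical transcription of the arguments of Section \ref{sec:OE-same-Carnot} to the discrete setting.
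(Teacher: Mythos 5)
Your proof is correct and follows the paper's own argument essentially verbatim: you pass to the bracket-closed lattice $A$ (scaled by $N$) furnished by the preceding discussion, observe that the resulting compatible canonical basis makes the F\o lner tilings of $\Lambda$ and $\Lambda_\gr$ match cardinalities, note that the error terms $v_\ell$ in \Cref{lem:tiled-error-terms} now lie in $\Lambda$, and conclude that \Cref{cor:bound-on-word-length} and the summability computation of Section \ref{sec:OE-same-Carnot} transfer unchanged, with the bound $e_D \le 1 - 1/s_G$ supplying the second inequality. This is exactly the route the paper takes.
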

 
To deduce the general result from this specific case, we start with a general observation.
\begin{proposition}\label{prop:BE-lattices-in-Carnot}
Let $\Gamma$ and $\Lambda$ be lattices in an almost connected Lie group $G$ whose connected component of the identity $G^0$ admits a one-parameter group of automorphisms not preserving the Haar measure (e.g.\ $G^0$ is Carnot). Then, $\Gamma$ and $\Lambda$ are bilipschitz equivalent (hence $L^{\infty}$ OE).
\end{proposition}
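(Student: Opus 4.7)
My plan is to rescale one lattice by an automorphism of $G^0$ to match the covolume of the other, and then conclude via Whyte's bilipschitz classification of uniform lattices of equal covolume in an amenable lcsc group.

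First, I would reduce to comparing lattices inside $G^0$. Since $G$ is almost connected, $[G:G^0]<\infty$, so $\Gamma^0:=\Gamma\cap G^0$ and $\Lambda^0:=\Lambda\cap G^0$ are finite-index subgroups of $\Gamma$ and $\Lambda$ that are themselves lattices in $G^0$. The existence of a measure-distorting automorphism forces $G^0$ to be non-compact, and in the targeted examples (e.g.\ $G^0$ Carnot) $G^0$ is amenable, so $\Gamma^0$ and $\Lambda^0$ are automatically uniform lattices. By \cite{Whyte}, a finitely generated group is bilipschitz equivalent to any of its finite-index subgroups, giving $\Gamma\sim_{BL}\Gamma^0$ and $\Lambda\sim_{BL}\Lambda^0$; it therefore suffices to prove $\Gamma^0\sim_{BL}\Lambda^0$.

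Second, I would use the one-parameter group $(\phi_t)_{t\in\R}$ of automorphisms of $G^0$ to match covolumes. The map $\lambda(t):=|\det\mathrm{d}\phi_t|$ is a continuous homomorphism $\R\to\R_{>0}$ which by hypothesis is non-trivial, hence surjective. Since
\[
\mathrm{covol}(\phi_t(\Gamma^0))=\lambda(t)\cdot\mathrm{covol}(\Gamma^0),
\]
one can choose $t_0\in\R$ with $\mathrm{covol}(\phi_{t_0}(\Gamma^0))=\mathrm{covol}(\Lambda^0)$. The lattice $\phi_{t_0}(\Gamma^0)$ is isomorphic to $\Gamma^0$ as an abstract group via $\phi_{t_0}$ itself, so their word metrics are bilipschitz equivalent; in particular $\Gamma^0\sim_{BL}\phi_{t_0}(\Gamma^0)$.

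Third, I would invoke the extension of Whyte's theorem to uniform lattices in amenable lcsc groups: two uniform lattices of the same covolume in an amenable lcsc group are bilipschitz equivalent. This applies to $\phi_{t_0}(\Gamma^0)$ and $\Lambda^0$ inside $G^0$, yielding $\phi_{t_0}(\Gamma^0)\sim_{BL}\Lambda^0$. Chaining the four bilipschitz equivalences
\[
\Gamma\sim_{BL}\Gamma^0\sim_{BL}\phi_{t_0}(\Gamma^0)\sim_{BL}\Lambda^0\sim_{BL}\Lambda
\]
completes the proof, and $L^\infty$ OE is then automatic.

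The main obstacle is the third step: promoting Whyte's bilipschitz classification from the finitely generated setting to uniform lattices in an amenable lcsc group. The natural strategy is to choose relatively compact Borel transversals of equal measure for the two lattice actions on $G^0$, use the amenability of $G^0$ to build a common F\o lner tiling whose cells contain the same number of points from each lattice, and then apply Hall's marriage theorem along the cells to produce a bijection with uniformly bounded displacement, exactly as in Whyte's original argument.
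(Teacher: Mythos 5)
Your overall strategy — reduce to $G^0$, use the one‑parameter automorphism group to match covolumes, then conclude from the fact that equal‑covolume uniform lattices in an amenable lcsc group are bilipschitz — is indeed the strategy of the paper, which after rescaling appeals to \cite[Proposition 1.5]{GenTe} (this is exactly the ``extension of Whyte's theorem'' you describe in your third step, so there is no obstacle there: it is a citable result, not something you need to re-derive from F\o lner tilings and Hall's marriage theorem).

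However, there is a genuine gap in your first step. The statement ``by \cite{Whyte}, a finitely generated group is bilipschitz equivalent to any of its finite‑index subgroups'' is not a theorem of Whyte, and for amenable groups it is not automatic: the obstruction to promoting a quasi-isometry to a bijection lives in uniformly finite homology and is sensitive precisely to the density (covolume) discrepancy between $\Gamma$ and $\Gamma^0$ — that is, to the very issue your second step is designed to fix. In the nilpotent setting the conclusion $\Gamma\sim_{BL}\Gamma^0$ does turn out to be true, but only because one can use a measure-distorting automorphism to rescale one lattice to equal covolume before applying the Whyte/GenTe criterion; so your step one secretly presupposes the machinery of steps two and three, making the argument circular as written. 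The paper sidesteps this cleanly: instead of $\Gamma\sim_{BL}\Gamma^0$ it uses the elementary fact that $\Gamma\sim_{BL}\Gamma^0\times\Z/n\Z$ (a bijective quasi-isometry coming from any choice of coset transversal, which has no density obstruction), notes that $\Gamma^0\times\Z/n\Z$ and $\Lambda^0\times\Z/m\Z$ are both lattices in $G^0\times(\Z/n\Z)\times(\Z/m\Z)$, extends the one-parameter automorphism group to this product, matches covolumes there, and only then invokes \cite[Proposition 1.5]{GenTe}. Replacing your step one with this coset-transversal reduction would fix the gap.
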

\begin{proof}
Let $\Lambda^0=\Lambda\cap G^0$ and $\Gamma^0=\Gamma\cap G^0$, and let $m=[\Lambda : \Lambda^0]$ and $n=[\Gamma : \Gamma^0]$. Note that $\Lambda$ and $\Gamma$ are respectively bilipschitz equivalent to $\Lambda^0\times \Z/m\Z$ and $\Gamma^0\times \Z/n\Z$, which are both lattices in $G^0\times (\Z/m\Z)\times (\Z/n\Z) $. Hence we can assume without loss of generality that $G$ itself admits a one-parameter group $T$ of automorphisms not preserving the Haar measure.
On replacing $\Lambda$ by $t(\Lambda)$ for some $t\in T$, we can assume that $\Lambda$ and $\Gamma$ have the same covolume in $G$. Then the statement follows from \cite[Proposition 1.5]{GenTe}.
\end{proof}
\begin{corollary}
    Let $G$ and $G'$ be two simply connected nilpotent Lie groups with isomorphic Carnot, and let respectively $\Gamma$ and $\Gamma'$ be lattices in $G$ and $G'.$
    Then for all $j,k\geq 1$,  $\Gamma\times \Z/j\Z$ and $\Gamma'\times \Z/k\Z$ are $L^p$ OE for all $p<\frac{s_G^2}{s_G^{2}-s_G +1}$.
\end{corollary}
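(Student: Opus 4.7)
The plan is to reduce the statement to the preceding proposition by combining it with Whyte's theorem \cite{Whyte} and \Cref{prop:BE-lattices-in-Carnot}. Let $\Lambda \leq \Gamma$ and $\Lambda' \leq \Gamma'$ be the finite-index sublattices, of indices $m$ and $n$ respectively, produced by the preceding proposition, and let $\Lambda_{\gr} \leq \gr(G)$ and $\Lambda'_{\gr} \leq \gr(G')$ be the associated lattices, which are $L^p$-orbit equivalent to $\Lambda$, respectively $\Lambda'$, for every $p<\frac{s_G^2}{s_G^2-s_G+1}$.

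First, since $\Lambda$ is a torsion-free finite-index subgroup of $\Gamma \times \Z/j\Z$ of index $jm$, Whyte's theorem yields that $\Gamma \times \Z/j\Z$ is bilipschitz equivalent to $\Lambda \times \Z/(jm)\Z$, and analogously $\Gamma' \times \Z/k\Z$ is bilipschitz equivalent to $\Lambda' \times \Z/(kn)\Z$. Taking the product of the OE couplings from the preceding proposition with the trivial OE on the finite cyclic factor, one obtains $L^p$-OE couplings between $\Lambda \times \Z/(jm)\Z$ and $\Lambda_{\gr} \times \Z/(jm)\Z$, and likewise between $\Lambda' \times \Z/(kn)\Z$ and $\Lambda'_{\gr} \times \Z/(kn)\Z$, for every $p<\frac{s_G^2}{s_G^2-s_G+1}$.

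The crux is to show that $\Lambda_{\gr} \times \Z/(jm)\Z$ and $\Lambda'_{\gr} \times \Z/(kn)\Z$ are bilipschitz equivalent. Since the abelianization of $\Lambda_{\gr}$ is a finitely generated abelian group of positive free rank, $\Lambda_{\gr}$ admits a sublattice $\Lambda_{\gr}''$ of any prescribed finite index, in particular of index $jm$. Whyte's theorem then yields that $\Lambda_{\gr}$ is bilipschitz equivalent to $\Lambda_{\gr}'' \times \Z/(jm)\Z$, while \Cref{prop:BE-lattices-in-Carnot} applied to the lattices $\Lambda_{\gr}$ and $\Lambda_{\gr}''$ in the Carnot group $\gr(G)$ gives that $\Lambda_{\gr}$ is bilipschitz equivalent to $\Lambda_{\gr}''$. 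Combining these shows that $\Lambda_{\gr}$ is bilipschitz equivalent to $\Lambda_{\gr} \times \Z/(jm)\Z$, and the same reasoning yields that $\Lambda'_{\gr}$ is bilipschitz equivalent to $\Lambda'_{\gr} \times \Z/(kn)\Z$. Finally, since $\Lambda_{\gr}$ and $\Lambda'_{\gr}$ are both lattices in the Carnot group $\gr(G) \cong \gr(G')$, \Cref{prop:BE-lattices-in-Carnot} yields that they are bilipschitz equivalent.

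Composing all the couplings above via transitivity \cite[Proposition 2.26]{DKLMT-22}, and using that the composition of an $L^\infty$-coupling with an $L^p$-coupling remains $L^p$-integrable, we conclude that $\Gamma \times \Z/j\Z$ and $\Gamma' \times \Z/k\Z$ are $L^p$ OE for every $p<\frac{s_G^2}{s_G^2-s_G+1}$. The main technical point of the argument is the absorption of arbitrary finite cyclic factors by a lattice $\Lambda_{\gr}$ in the Carnot group, achieved by combining Whyte's theorem with the dilation-induced bilipschitz equivalence of lattices of different covolumes of \Cref{prop:BE-lattices-in-Carnot}.
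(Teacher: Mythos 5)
Your proof is correct, and the overall skeleton (pass to finite-index torsion-free sublattices $\Lambda,\Lambda'$, absorb the finite cyclic factor via Whyte, transfer to the Carnot side via the $L^p$ OE of the preceding proposition, then identify the Carnot-side lattices up to bilipschitz equivalence, and finally compose using \cite[Proposition~2.26]{DKLMT-22}) agrees with the paper's argument. The one place you deviate is in the final bilipschitz step. The paper observes, in one stroke, that $\Lambda_{\gr}\times\Z/jn\Z$ and $\Lambda'_{\gr}\times\Z/km\Z$ are both lattices in the same almost connected Lie group $\gr(G)\times\Z/jknm\Z$, and then applies \Cref{prop:BE-lattices-in-Carnot} directly to these two lattices. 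You instead show separately that each of $\Lambda_{\gr}\times\Z/(jm)\Z$ and $\Lambda'_{\gr}\times\Z/(kn)\Z$ is bilipschitz to the corresponding uncorrected lattice $\Lambda_{\gr}$ or $\Lambda'_{\gr}$ (by producing an auxiliary sublattice of prescribed index, then invoking Whyte plus \Cref{prop:BE-lattices-in-Carnot}), and only afterwards compare $\Lambda_{\gr}$ with $\Lambda'_{\gr}$ via \Cref{prop:BE-lattices-in-Carnot}. Both routes are sound; the paper's is marginally slicker because it treats the finite cyclic factor as part of the ambient Lie group from the outset and applies \Cref{prop:BE-lattices-in-Carnot} only once, whereas your approach reinvents part of the proof of \Cref{prop:BE-lattices-in-Carnot} (which already handles almost connected groups with finite component group) by hand. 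Your variant has the small pedagogical advantage of making explicit the finite-cycle absorption mechanism for lattices in Carnot groups.
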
 
\begin{proof}
Let $\Lambda,\Lambda_\gr, \Lambda',\Lambda'_\gr$ be lattices in respectively $G$, $\gr(G)$, $G'$ and $\gr(G')$ such that $\Lambda$ has index $n\geq 1$ in $\Gamma$, $\Lambda'$ has index $m\geq 1$ in $\Gamma'$ and such that $\Lambda$ and $\Lambda_\gr$, and respectively $\Lambda'$ and $\Lambda'_\gr$ are $L^p$ OE for some $p>1$. Note that $\Gamma\times \Z/j\Z$ is $L^{\infty}$ OE to $\Lambda\times \Z/jn\Z$ and similarly that $\Gamma'\times \Z/k\Z$ is $L^{\infty}$ OE to $\Lambda'\times \Z/km\Z$.
By transitivity of $L^p$ OE for $p\geq 1$, $\Gamma\times \Z/j\Z$ is $L^p$ OE to $\Lambda_\gr\times \Z/jn\Z$ and that $\Gamma'\times \Z/k\Z$ is $L^p$ OE to $\Lambda'_\gr\times \Z/km\Z$. Note that $\Lambda_\gr\times \Z/jn\Z$ and $\Lambda'_\gr\times \Z/km\Z$ are both lattices in $\gr(G)\times \Z/jknm\Z$, and therefore are $L^{\infty}$ OE by Proposition \ref{prop:BE-lattices-in-Carnot}. Composing these couplings yields the desired $L^p$ OE between $\Gamma$ and $\Gamma'$, where the upper bound on $p$ is an immediate consequence of the fact that for every grading operator $D$ on a Lie algebra of nilpotency class $s_G$ we have $e_D\leq 1-\frac{1}{s_G}$.
\end{proof}

\section{Obstructions from central extensions}\label{sec:obstruction}
In this section we prove constraints on the existence of an $L^p$ ME between a pair of simply connected nilpotent Lie groups with isomorphic associated Carnot group in terms of the existence of maximally distorted central extensions. As a consequence we will be able to prove that $G_{m,3}$ and $L_m\times \mathbb{R}^2$ are not $L^p$ ME for any $p>m$.

\subsection{Second cohomology group and central extensions}

Before stating and proving our main results, we recall how the cohomology group $H^2(G,A)$ parametrizes the central extensions of a group $G$ by the commutative ring $A$. The group $H^2(G,A)$ is defined as the quotient of $Z^2(G,A)$, the set of functions $\varphi \colon G^2 \to A$ such that $\varphi(1,1)=0$ and $\varphi(g_1,g_2g_3)+\varphi(g_2,g_3) = \varphi(g_1g_2,g_3)+\varphi(g_1,g_2)$, viewed as an abelian group with pointwise operation, by its subgroup $B^2(G,A)$ of coboundaries, i.e. maps of the form $(g_1,g_2)\mapsto \psi(g_1)+\psi(g_2)-\psi(g_1 g_2)$ for some function $\psi\colon G \to A$ such that $\psi(1)=0$. Every $\varphi \in Z^2(G,A)$ gives rise to a central extension 
\[ 1 \to A \to E \to G \to 1\]
together with a (set-theoretical) section $s\colon G \to E$ by setting $E = A \times G$ with the group operation $(a,g_1)(b,g_2) = (a+b+\varphi(g_1,g_2),g_1 g_2)$, and $s(h)=(0,h)$. Conversely, every central extension $E$ of $G$ by $A$ and section $s$ of $G$ in $E$ give rise to an element of $Z^2(G,A)$, by setting $\varphi(g_1,g_2) = s(g_1) s(g_2) s(g_1 g_2)^{-1}$. Two elements in $Z^2(G,A)$ define isomorphic extensions if and only if they differ by an element in $B^2(G,A)$.

We end this section with an auxiliary result that we will require later.

\begin{lemma}\label{lem:commute}
	Let $G$ be a group, let $A$ be an abelian group, and let $\widetilde{G}$ be a central extension with associated cocycle $\psi\colon G^2\to A$. If $g, h \in G$ are commuting elements and $\tilde{g},\tilde{h}\in \widetilde{G}$ are lifts of these elements into $\widetilde{G}$, then $[\tilde{g},\tilde{h}]=\psi(g,h)-\psi(h,g)$.
\end{lemma}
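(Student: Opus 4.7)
The plan is to argue directly from the explicit presentation of the central extension recalled just before the lemma. Write $\widetilde{G} = A \times G$ with multiplication $(a,g_1)(b,g_2) = (a+b+\psi(g_1,g_2), g_1 g_2)$. First, using the cocycle identity together with the normalization $\psi(1,1)=0$, one checks (by setting $g_2=g_3=1$, resp.\ $g_1=g_2=1$, in the cocycle relation) that $\psi(g,1) = \psi(1,g) = 0$ for every $g \in G$. This implies that $(0,1)$ is the neutral element of $\widetilde{G}$, that $A$ is identified with $A\times\{1\}\subset\widetilde{G}$, and, together with a short computation, that $(a,g)^{-1} = (-a-\psi(g,g^{-1}),g^{-1})$.

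Next, since $A$ is central in $\widetilde{G}$, the commutator $[\tilde g,\tilde h]$ is independent of the choice of lifts: replacing $\tilde g$ by $a\tilde g$ with $a\in A$ leaves $[\tilde g,\tilde h]$ unchanged because the conjugate of a central element is itself. It is therefore enough to compute $[(0,g),(0,h)]$ for the canonical lifts.

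For the final step, I will exploit the observation that $(0,g)(0,h)$ and $(0,h)(0,g)$ both project onto $gh=hg\in G$, so they differ by an element of $A$. Multiplying out gives $(0,g)(0,h) = (\psi(g,h),gh)$ and $(0,h)(0,g)=(\psi(h,g),gh)$, hence
\[
(0,g)(0,h) \;=\; \bigl(\psi(g,h)-\psi(h,g),\,1\bigr)\cdot (0,h)(0,g).
\]
Multiplying on the right by $(0,h)^{-1}(0,g)^{-1}$ and using that $A$ is central (so $[\tilde g,\tilde h]$ can be written in the form $\tilde g\tilde h\tilde g^{-1}\tilde h^{-1}$ and the central prefactor can be pulled out) yields $[\tilde g,\tilde h] = \psi(g,h)-\psi(h,g)$, as claimed. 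There is no real obstacle: the lemma is a direct tautological check from the definitions, and the only mild subtlety is to note that the expression $\psi(g,h)-\psi(h,g)$ is unchanged if one modifies $\psi$ by a coboundary (which is automatic since $g$ and $h$ commute), ensuring consistency with the fact that both sides depend only on the isomorphism class of $\widetilde{G}$ together with the choice of lifts.
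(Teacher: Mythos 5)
Your proof is correct and lands on the same object as the paper: a direct computation from the explicit model $\widetilde{G}=A\times G$, $(a,g_1)(b,g_2)=(a+b+\psi(g_1,g_2),g_1g_2)$. The routes differ mildly. The paper expands $\tilde g\tilde h\tilde g^{-1}\tilde h^{-1}$ factor-by-factor and at the last step silently applies the cocycle relation (with $g_1=h,g_2=g,g_3=g^{-1}$) to turn $\psi(hg,g^{-1})-\psi(g,g^{-1})$ into $-\psi(h,g)$. You instead compare $\tilde g\tilde h=(\psi(g,h),gh)$ with $\tilde h\tilde g=(\psi(h,g),gh)$ directly, so the only use of the cocycle relation is to establish the normalization $\psi(g,1)=\psi(1,g)=0$; after that the commutator falls out without any further cocycle bookkeeping. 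That is a slightly cleaner organization, and it makes the independence of the choice of lifts transparent (which the paper leaves implicit). Two small cosmetic points: in the last step you should multiply on the right by $(0,g)^{-1}(0,h)^{-1}=\bigl((0,h)(0,g)\bigr)^{-1}$, not $(0,h)^{-1}(0,g)^{-1}$ as written, to produce $[\tilde g,\tilde h]=\tilde g\tilde h\tilde g^{-1}\tilde h^{-1}$ on the left-hand side; and the closing remark about coboundary-invariance of $\psi(g,h)-\psi(h,g)$ is correct but not needed for the lemma, since the statement fixes a cocycle, not just its class.
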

\begin{proof}
	We can consider the group extension as $A\times G$ with $(a,g)(b,h)=(a+b+\psi(g,h),gh)$. Then we have that
	\begin{align*}
		[\tilde{g},\tilde{h}] & = (\psi(g,h),gh) \; (-\psi(g,g^{-1}),g^{-1}) \; (-\psi(h,h^{-1}),h^{-1})\\
		& = (\psi(g,h)+ \psi(gh,g^{-1}) - \psi(g,g^{-1}),h) \; (-\psi(h,h^{-1}),h^{-1})\\
		& = (\psi(g,h)+ \psi(hg,g^{-1}) - \psi(g,g^{-1}),e_G)\\
		& = (\psi(g,h)- \psi(h,g),e_G),
	\end{align*}
	which concludes this lemma.
\end{proof}

\subsection{Obstructions to the existence of an $L^p$ ME}

In what follows we will use the notation of Section \ref{sec:OE-same-Carnot}: we identify $G$ with its Lie algebra $\mathfrak{g}$, denoting the group law by $\ast$; we fix vector complements $\mathfrak{m}_i$ of $\gamma_{i+1}(\mathfrak{g})$ in $\gamma_{i}(\mathfrak{g})$, and use them to identify $\mathfrak{g}$ with $\gr(\mathfrak{g})$, denoting the group law in $\gr(G)$ by $\ast_\gr$. We also fix a word metric $d_{\gr(G)}$ on $\gr(G)$.
Given $u=u_1+\ldots u_{s_G}\in \gr(\mathfrak{g})$, decomposed according to the decomposition $\gr(\mathfrak{g})=\mathfrak{m}_1\oplus \ldots \oplus \mathfrak{m}_s$, and $t>0$, we denote $t\bullet u := tu_1+t^2u_2\ldots +t^{s_G}u_{s_G}$. This defines a one-parameter family of Lie algebra automorphisms of  $\gr(\mathfrak{g})$ that preserves the grading. As before we will also denote for $u\in \mathfrak{g}$ and $t>0$ by $t\bullet u$ the element obtained from the $t$-action on the corresponding element in $\gr(\mathfrak{g})$. This induces a one-parameter family of vector space automorphisms of $\mathfrak{g}$; they are, however, not Lie algebra automorphisms. Finally, we equip $\gr(G)$ with a Carnot-Caratheodory metric $d_{\gr(G)}$ compatible with this grading preserving isomorphism: i.e.\ such that $d_{\gr(G)}(t\bullet g,t\bullet g')=td_{\gr(G)}(g,g')$.

\begin{proposition}[{Cantrell \cite[Proposition 5.5]{Cantrell}}]\label{prop:cantrell}
    Let $G$ and $H$ be simply connected nilpotent Lie groups which are $L^1$-measure equivalent and let $\alpha\colon G\times X_H\to H$, $\beta\colon H\times X_G \to G$ be the associated cocycles. Then there exists a bilipschitz group isomorphism $\phi\colon \gr(G) \to \gr(H)$ such that the following holds:
Fix $\bar{g}\in \gr(G)$. For all $\eps>0$ there exists $\delta>0$ and $N\in \N$ such that whenever $g\in G$, $n\geq N$ are such that \[d_{\gr(G)} \left(\frac{1}{n}\bullet g,\bar{g}\right)\leq \delta,\] then with probability at least $1-\eps$, we have 
\[d_{\gr(H)} \left(\frac{1}{n}\bullet \alpha(g,x),\phi(\bar{g})\right)\leq \eps.\]
In other words, if $g_n\in G$ is such that $\frac{1}{n}\bullet g_n$ converges to $\bar{g}$
then $\frac{1}{n}\bullet \alpha(g_n,x)$ converges in probability to $\phi(\bar{g})$.
\end{proposition}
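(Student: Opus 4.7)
The plan is to construct $\phi$ from the cocycle $\alpha$ via an ergodic-theoretic rescaling limit, verify its algebraic properties using the cocycle identity, and invoke Austin's rigidity theorem to conclude that $\phi$ is bilipschitz.

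First I would construct $\phi$ pointwise on $G$. For fixed $g \in G$, iteration of the cocycle identity gives
\[
    \alpha(g^n, x) = \alpha(g, T^{g^{n-1}}(x)) \ast_H \cdots \ast_H \alpha(g, T^g(x)) \ast_H \alpha(g, x).
\]
Combining \Cref{lem:cornulier-multiple-factors} with the $L^1$-integrability of $\alpha(g, \cdot)$ converts the right-hand side into a product in $\gr(H)$ up to an additive error that is sublinear in $n$ and thus vanishes after $\frac{1}{n}\bullet$-rescaling. The Birkhoff pointwise ergodic theorem for the measure-preserving $G$-action $T$ on $(X_H, \mu_H)$, applied to the vector-valued $L^1$ observable $x \mapsto \alpha(g, x)$ viewed in $\gr(H)$, then shows that $\frac{1}{n}\bullet\alpha(g^n, x)$ converges almost surely to a deterministic element $\phi(g) \in \gr(H)$.

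Next I would check that $\phi\colon G \to \gr(H)$ descends to a continuous group homomorphism $\gr(G) \to \gr(H)$. This follows by passing the cocycle identity $\alpha(gh, x) = \alpha(g, T^h(x)) \ast_H \alpha(h, x)$ to the limit, using \Cref{lem:cornulier-multiple-factors} once more to replace $\ast_H$ by $\ast_\gr$ at the cost of a rescaled-sublinear error; compatibility with the Carnot dilations $t\bullet$ is immediate from the construction. To upgrade $\phi$ to a bilipschitz isomorphism (noting that a homomorphism between Carnot groups of the same dimension that is bilipschitz is automatically an isomorphism) I would invoke \Cref{thm:Austin}: it provides a bilipschitz equivalence between the asymptotic cones $\gr(G)$ and $\gr(H)$, and a careful bookkeeping identifies Austin's map with $\phi$.

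For the convergence statement, for a general sequence $g_n$ with $\frac{1}{n}\bullet g_n \to \bar g$ I would approximate $g_n$ by a bounded-length product of scaled powers $h_i^{k_i(n)}$ of a fixed finite set of elements $h_i \in G$, apply the cocycle identity to decompose $\alpha(g_n, x)$ into a corresponding product in $H$, and use the almost-sure convergence from the first step for each factor together with the Cornulier bound on the switching errors between $\ast_H$ and $\ast_\gr$. The main obstacle is that the statement demands \emph{uniform} convergence in probability over all $g$ in a whole neighborhood $\{h \in G : d_{\gr(G)}(\frac{1}{n}\bullet h, \bar g) \leq \delta\}$, not merely pointwise convergence for individual sequences. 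Overcoming this requires a Kingman-style subadditive ergodic argument applied to the Guivarc'h norm $|\alpha(\cdot, x)|_H$, ensuring the suprema of the error terms over such neighborhoods remain integrable; the bilipschitz control on $\phi$ then furnishes the needed equicontinuity in the $\gr(G)$-variable. This is the step where the $L^1$-integrability of the coupling plays its most essential role.
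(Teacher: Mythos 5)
Your skeleton — construct $\phi$ as a rescaled limit of $\alpha$ along power sequences, upgrade to a homomorphism via the cocycle identity, invoke Austin's rigidity for bilipschitzness — does capture the outline of Cantrell's argument, which the paper adapts to the locally compact case in \Cref{app:Cantrell}. Your route to bilipschitzness (invoke \Cref{thm:Austin} and identify the resulting map with $\phi$) is essentially the argument from Cantrell's first arXiv version; the appendix instead goes through Austin's \cite[Proposition 4.4]{Aus-16}, proving directly that the morphism $\psi$ built from $\beta$ satisfies $\psi\circ\phi=\mathrm{id}$, but both routes are viable.

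The central step of your proposal, however, has a genuine gap. You want to convert the iterated cocycle product $\alpha(g,T^{g^{n-1}}x)\ast_H\cdots\ast_H\alpha(g,x)$ into a $\gr(H)$-product via \Cref{lem:cornulier-multiple-factors} and then apply Birkhoff to ``$x\mapsto\alpha(g,x)$ viewed in $\gr(H)$''. But even after switching $\ast_H$ for $\ast_\gr$, an $n$-fold product in the Carnot group $\gr(H)$ is still a noncommutative group product, not a sum in the Lie algebra. Birkhoff's theorem controls ergodic \emph{averages} of a Banach-space-valued $L^1$ function; it says nothing about the rescaled asymptotics of an $n$-fold group product. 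The actual argument (Cantrell's Proposition 3.1, stated as \Cref{prop:convergence-to-abelian-average} in the appendix) splits $\alpha(g,x)$ into its abelianization $\alpha_{ab}(g,x)\in A_1\cong\R^{m_1}$ and its commutator part $\pi_{com}(\alpha(g,x))\in\gamma_2(\mathfrak{h})$. Birkhoff applies cleanly only to the $\R^{m_1}$-valued function $\alpha_{ab}(g,\cdot)$, yielding the degree-one component $\overline{\alpha_{ab}}(g)$ of the limit; the hard content is proving that $|\pi_{com}(\alpha(g^n,x))|_H=o(n)$ in probability, which requires the BCH-combinatorial estimates of Lemma~\ref{lem:commutator-contributions-are-small} and Proposition~\ref{prop:iterated-BCH}. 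Your proposal has no mechanism for controlling those higher-degree terms; a single application of Cornulier's bound followed by Birkhoff does not close that gap, since the residual $\gr(H)$-product is exactly as nonabelian as the original one. The final uniformity step you sketch via a Kingman-type subadditive argument likewise does not match the proof, which instead extends \Cref{prop:convergence-to-abelian-average} to limits along products of scaled powers following Cantrell's Theorem~4.1 and Proposition~5.4.
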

\begin{proof}
    The proof is a straight-forward adaptation of Cantrell's proof in the finitely generated case. We refer to \Cref{app:Cantrell} for further details.
\end{proof}

We have $d_{\gr(G)} \left(\frac{1}{n}\bullet g,\bar{g}\right)=\frac{1}{n}d_{\gr(G)} \left(g,n\bullet \bar{g}\right).$
Moreover, since the identification map $G\to \gr(G)$ is a SBE \cite{Cor-19}, the condition that 
$d_{\gr(G)} \left(g_n,n\bullet \bar{g}\right)=o(n)$ is equivalent to 
$d_{G} \left(g_n,n\bullet \bar{g}\right)=o(n)$. Therefore one has the following equivalent formulation of Cantrell's theorem, where $d_G$ and $d_H$ are word metrics on $G$ and $H$ for arbitrary compact generating sets.

\begin{proposition}\label{prop:cantrellBis}
    Let $G$ and $H$ be simply connected nilpotent Lie groups which are $L^1$-measure equivalent and let $\alpha\colon G\times X_H\to H$, $\beta\colon H\times X_G \to G$ be the associated cocycles. Then there exists a bilipschitz group isomorphism $\phi\colon \gr(G) \to \gr(H)$ such that the following holds:
Fix $\bar{g}\in \gr(G)$. For all $\eps>0$ there exists $\delta>0$ and $N\in \N$ such that whenever $g\in G$, $n\geq N$ are such that \[d_{G} \left(g,n\bullet \bar{g}\right)\leq \delta n,\] then with probability at least $1-\eps$, we have 
\[d_{H} \left(\alpha(g,x),\phi(g)\right)\leq \eps n.\]
In other words, if $g_n\in G$ is such that $\frac{1}{n}d_G( g_n,n\bullet\bar{g})$ tends to zero,
then $\frac{1}{n}d_H(\alpha(g_n,x),\phi(g_n))$ tends to zero in probability.
\end{proposition}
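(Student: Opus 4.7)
The strategy is to convert the Carnot--Carath\'eodory-metric statement in \Cref{prop:cantrell} into a word-metric statement by using the SBE between $(G,d_G)$ and $(\gr(G),d_{\gr(G)})$ provided by \cite{Cor-19}, together with the scaling property $d_{\gr(G)}(t\bullet g,t\bullet g')=t\,d_{\gr(G)}(g,g')$ and the fact that the Carnot isomorphism $\phi$ commutes with the dilations.

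First, I would rephrase \Cref{prop:cantrell} in rescaled form: multiplying the inequalities by $n$ and using the dilation equivariance of $d_{\gr(G)}$ and $d_{\gr(H)}$, the proposition asserts that for every $\eps_0>0$ there are $\delta_0>0$ and $N_0\in\N$ such that if $n\geq N_0$ and $d_{\gr(G)}(g,n\bullet\bar{g})\leq \delta_0 n$, then $d_{\gr(H)}(\alpha(g,x),n\bullet \phi(\bar{g}))\leq \eps_0 n$ with probability at least $1-\eps_0$.

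Second, I would translate the $d_G$-hypothesis into a $d_{\gr(G)}$-hypothesis. Let $M\geq 1$ and $v$ be the multiplicative constant and sublinear error of the SBE $(G,d_G)\to(\gr(G),d_{\gr(G)})$. Given $g$ with $d_G(g,n\bullet\bar{g})\leq \delta n$, both $g$ and $n\bullet\bar{g}$ lie in the $d_G$-ball of radius at most $Cn$ around the origin for a constant $C$ depending on $\bar{g}$ (since $|n\bullet\bar{g}|_G\simeq n$), and hence
\[
d_{\gr(G)}(g,n\bullet\bar{g})\leq M\,d_G(g,n\bullet\bar{g})+v(Cn)\leq \left(M\delta+\frac{v(Cn)}{n}\right)n.
\]
Choosing $\delta$ small and $n$ large makes this bound at most $\delta_0 n$, so the rescaled \Cref{prop:cantrell} applies and produces a probability-$(1-\eps_0)$ event on which $d_{\gr(H)}(\alpha(g,x),n\bullet \phi(\bar{g}))\leq \eps_0 n$.

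Third, I would replace $n\bullet \phi(\bar{g})$ by $\phi(g)$. Since $\phi$ is a bilipschitz Carnot group isomorphism, it intertwines the dilations, so $\phi(n\bullet\bar{g})=n\bullet \phi(\bar{g})$, and if $L$ denotes its bilipschitz constant,
\[
d_{\gr(H)}(\phi(g),n\bullet\phi(\bar{g}))\leq L\,d_{\gr(G)}(g,n\bullet\bar{g})\leq L\left(M\delta+\frac{v(Cn)}{n}\right)n.
\]
Combined with the previous step by the triangle inequality, this yields $d_{\gr(H)}(\alpha(g,x),\phi(g))\leq \eps_1 n$ on the same event, for any $\eps_1$ we prescribe, at the cost of shrinking $\delta,\eps_0$ and enlarging $n$. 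Both $\phi(g)$ and $\alpha(g,x)$ then have $d_{\gr(H)}$-distance $O(n)$ from the origin, and hence also $d_H$-distance $O(n)$ from it by one more application of the SBE. Applying the SBE on the $H$-side then converts the $d_{\gr(H)}$-estimate into $d_H(\alpha(g,x),\phi(g))\leq M\eps_1 n+v'(C'n)\leq \eps n$, which is the conclusion of \Cref{prop:cantrellBis}.

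The proof contains no real obstacle beyond bookkeeping: the constants $\delta,\delta_0,\eps_0,\eps_1$ must be chosen in the correct order, and the sublinear errors $v(Cn)$ and their $H$-analogue, which are $o(n)$, must be carefully absorbed into the prescribed $\eps n$.
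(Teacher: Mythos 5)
Your proof is correct and follows essentially the same approach as the paper's, which records the equivalence in a single short paragraph before the statement. Both proceed by the scaling identity $d_{\gr(G)}(\tfrac{1}{n}\bullet g,\bar g)=\tfrac{1}{n}d_{\gr(G)}(g,n\bullet\bar g)$ and by using the Cornulier SBE to transfer between $d_G$ (resp.\ $d_H$) and the Carnot metric; you merely carry out the bookkeeping in full.

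One step deserves a word of caution: your third step invokes that $\phi$ intertwines the dilations, which you attribute to $\phi$ being ``a bilipschitz Carnot group isomorphism.'' This is a correct but non-trivial fact (a Lie group automorphism between Carnot groups that is bilipschitz for the CC metrics must be graded --- a general group automorphism need not preserve the grading, and those that don't fail to be bilipschitz near the identity). Since the paper's phrasing of \Cref{prop:cantrell} does not make the gradedness of $\phi$ explicit, a cleaner route is to derive homogeneity directly from the limit formulation: applying \Cref{prop:cantrell} to the constant sequence $g_n=mn\bullet\bar g$ both with $\bar g$ and with $m\bullet\bar g$, and comparing the two limits, yields $\phi(m\bullet\bar g)=m\bullet\phi(\bar g)$ for all positive integers $m$, hence for all $t>0$ by continuity. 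Either justification closes the gap, but it is worth being explicit because it is precisely the ingredient needed to pass from $n\bullet\phi(\bar g)$ to $\phi(g)$ in the conclusion.
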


The following lemmas give us some tools to compare the $2$-cocycles of $G$ and $H$ and how they can be extended to the asymptotic cones.
First we show how a cocycle of $H$ induces a cocycle of $G$.

\begin{proposition}\label{prop:cocyles}
	Let $G$ and $H$ be simply connected nilpotent Lie groups that are $L^p$-measure equivalent for some $p>1$ and let $\alpha\colon G\times X_H\to H$ be an $L^p$-integrable, measure equivalence cocycle. Let  $r\le p$ be a natural number and let $C>1$. If $c\colon H^2\to \R$ is a $2$-cocycle of $H$ such that for   all $h_1,h_2\in H$ we have $|c(h_1,h_2)|\le C(|h_1|^r+|h_2|^r)$, then the map $c'\colon G^2\to \R$, defined by
	\[c'(g_1,g_2)=\int_{X_H}c(\alpha(g_1,x),\alpha(g_2,g_2^{-1}\cdot x))d\mu(x), \mbox{ for all } g_1,g_2\in G,\]
     is a $2$-cocycle of $G$.
\end{proposition}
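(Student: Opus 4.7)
The plan is to verify three things: that $c'$ is well-defined (the integral converges absolutely), that $c'$ satisfies the $2$-cocycle identity $c'(g_1,g_2g_3)+c'(g_2,g_3)=c'(g_1g_2,g_3)+c'(g_1,g_2)$, and that $c'(1,1)=0$.

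Well-definedness reduces to the polynomial growth hypothesis on $c$ together with the $L^p$-integrability of $\alpha$. The integrand satisfies $|c(\alpha(g_1,x),\alpha(g_2,g_2^{-1}\cdot x))|\le C(|\alpha(g_1,x)|^r+|\alpha(g_2,g_2^{-1}\cdot x)|^r)$, and since $\mu$ is a finite measure and $r\le p$, we have $L^p\subset L^r$, so the first term has finite integral by hypothesis. For the second, measure-preservation of the $G$-action on $X_H$ combined with the substitution $x=g_2\cdot y$ reduces $\int |\alpha(g_2,g_2^{-1}\cdot x)|^r d\mu$ to $\int |\alpha(g_2,y)|^r d\mu$, which is again finite. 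Normalization is immediate: $c'(1,1)=\int c(\alpha(1,x),\alpha(1,x))\,d\mu=\int c(1,1)\,d\mu=0$, since $\alpha(1,\cdot)\equiv 1$ and $c$ is normalized by the assumption $c\in Z^2(H,\R)$.

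The cocycle identity is established by a pointwise verification after a suitable change of variables. The key is to rewrite each of the four terms $c'(g_1,g_2g_3)$, $c'(g_2,g_3)$, $c'(g_1g_2,g_3)$, $c'(g_1,g_2)$ as an integral over a common variable $y$, with integrand expressed through the three quantities $A:=\alpha(g_1,(g_2g_3)\cdot y)$, $B:=\alpha(g_2,g_3\cdot y)$, $C:=\alpha(g_3,y)$. Performing the substitution $x=g_2g_3\cdot y$ in $c'(g_1,g_2g_3)$ and in $c'(g_1,g_2)$, and $x=g_3\cdot y$ in $c'(g_1g_2,g_3)$ and $c'(g_2,g_3)$, then applying the cocycle identity $\alpha(hh',z)=\alpha(h,h'\cdot z)\alpha(h',z)$ to simplify $\alpha(g_1g_2,g_3\cdot y)=AB$ and $\alpha(g_2g_3,y)=BC$, the four terms become $\int c(A,BC)\,d\mu$, $\int c(B,C)\,d\mu$, $\int c(AB,C)\,d\mu$, and $\int c(A,B)\,d\mu$ respectively. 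The $2$-cocycle identity for $c$ applied pointwise yields $c(A,BC)+c(B,C)=c(AB,C)+c(A,B)$, and integration gives the desired identity.

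The proof is essentially a bookkeeping exercise. The only place requiring genuine care is the verification of absolute convergence, which is precisely what the polynomial growth bound on $c$ together with the hypothesis $r\le p$ guarantee: the construction simply would not make sense without these assumptions. Once convergence is established, the cocycle identity and normalization follow from routine manipulations of the cocycle identity for $\alpha$ and measure-preserving changes of variables.
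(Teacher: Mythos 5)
Your proposal is correct and is essentially the same argument as the paper's. The paper organizes the computation in the opposite direction — it integrates the pointwise $2$-cocycle identity for $c$ applied to $h_1=\alpha(g_1,g_2\cdot x)$, $h_2=\alpha(g_2,x)$, $h_3=\alpha(g_3,g_3^{-1}\cdot x)$, then uses the $\alpha$-cocycle identity and measure-preserving substitutions to recognize the four resulting integrals as $c'(g_2,g_3)$, $c'(g_1g_2,g_3)$, $c'(g_1,g_2g_3)$, $c'(g_1,g_2)$ — while you transform each $c'$-term first to a common variable $y$ and apply the pointwise identity at the end; both rest on exactly the same three ingredients (the $\alpha$-cocycle identity, invariance of $\mu$ under the $G$-action on $X_H$, and the $Z^2$-identity for $c$), and your well-definedness check matches the paper's.
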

\begin{proof}
	The map $c'$ is well defined, since
	\begin{align*}
		|c'(g_1,g_2)|&\le \int_{X_H}|c(\alpha(g_1,x),\alpha(g_2,g_2^{-1}\cdot x))|d\mu(x)\\ &\le \int_{X_H} C\left( |\alpha(g_1,x)|^r+|\alpha(g_2,g_2^{-1}\cdot x)|^r\right)d\mu(x)<\infty.
	\end{align*}
	To show that $c'$ is a cocycle we use that $\alpha$ is a measure equivalence cocycle (that is $\alpha(g_1g_2,x)=\alpha(g_1,g_2\cdot x)\alpha(g_2,x)$ for every $g_1,g_2\in G$ and $x\in X_H$) and $c$ is a $2$-cocycle (that is $c(h_2,h_3) - c(h_1h_2,h_3) + c(h_1,h_2h_3) - c(h_1,h_2)=0$ for every $h_1,h_2,h_3\in G$).
	This implies that
	\begin{align*}
	0 & = \int_{X_H} c(\alpha(g_2,x),\alpha(g_3,g_3^{-1}\cdot x)) - c(\alpha(g_1,g_2\cdot x)\alpha(g_2,x),\alpha(g_3,g_3^{-1}\cdot x))\\
	& \AlignRight{+ c(\alpha(g_1,g_2\cdot x),\alpha(g_2,x)\alpha(g_3,g_3^{-1}\cdot x)) - c(\alpha(g_1,g_2\cdot x),\alpha(g_2,x)) d\mu(x)}\\
	& = c'(g_2,g_3) - \int_{X_H} c(\alpha(g_1g_2,x),\alpha(g_3,g_3^{-1}\cdot x)) d\mu(x)\\
	& \AlignRight{+ \int_{X_H} c(\alpha(g_1,g_2\cdot x),\alpha(g_2g_3,g_3^{-1}\cdot x))d\mu(x) - \int_{X_H} c(\alpha(g_1,x),\alpha(g_2,g_2^{-1}\cdot x)) d\mu(x)}\\
	& = c'(g_2,g_3) - c'(g_1g_2,g_3)+ \int_{X_H} c(\alpha(g_1,x),\alpha(g_2g_3,g_3^{-1}g_2^{-1}\cdot x))d\mu(x) - c'(g_1,g_2)\\
	& = c'(g_2,g_3) - c'(g_1g_2,g_3)+ c'(g_1,g_2g_3)- c'(g_1,g_2)\\
	\end{align*}
	for any $g_1,g_2,g_3\in G$, which concludes this proof.
\end{proof}

\begin{proposition}\label{prop:cocycleBound}
	Let $G$ be a simply connected $(s-1)$-nilpotent Lie group for some $s$ and let $\tilde{G}$ be a central extension of $G$ by $\R$, which we assume to be $s$-nilpotent. Then $\tilde{G}$ has an associated $2$-cocycle $\psi\colon G^2\to \R$ with $|\psi(g_1,g_2)|_N\le C(|g_1|^{s-1}|g_2|+|g_1||g_2|^{s-1})$ for some $C>0$ independent of $g_1,g_2\in G$.
\end{proposition}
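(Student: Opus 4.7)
The plan is to construct $\psi$ via a filtration-preserving exponential-coordinate section of $\tilde{G}\to G$, then expand the failure of multiplicativity using the Baker--Campbell--Hausdorff (BCH) formula. I first fix a canonical basis $\{X_{ij}\}$ of $\mathfrak{g}$ compatible with the lower central series (so $X_{ij}\in\gamma_i(\mathfrak{g})\setminus\gamma_{i+1}(\mathfrak{g})$). Because $\tilde{\mathfrak{g}}$ is $s$-step while $\mathfrak{g}$ is $(s-1)$-step, $\gamma_s(\tilde{\mathfrak{g}})$ is a one-dimensional subspace of the central kernel $\R$, so I may lift each $X_{ij}$ to some $\tilde{X}_{ij}\in\gamma_i(\tilde{\mathfrak{g}})$ and complete to a basis of $\tilde{\mathfrak{g}}$ by adding a generator $Z$ of $\gamma_s(\tilde{\mathfrak{g}})$. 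This yields a filtration-preserving linear splitting $\sigma\colon\mathfrak{g}\to\tilde{\mathfrak{g}}$, $X_{ij}\mapsto\tilde{X}_{ij}$. Setting $s(g):=\exp_{\tilde{G}}(\sigma(\log_G g))$ then produces a set-theoretic section, and the associated cocycle satisfies
\[
\psi(g_1,g_2)\cdot Z \;=\; \sigma(\log g_1)\ast_{\tilde{\mathfrak{g}}}\sigma(\log g_2)\;-\;\sigma(\log g_1\ast_{\mathfrak{g}}\log g_2),
\]
which lies in $\R Z$ since both terms project to the same element of $\mathfrak{g}$.

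I would next expand both group laws via BCH, writing $X_k:=\log g_k$. Each BCH term of order $k\geq 2$ is a linear combination of iterated brackets of $X_1,X_2$ involving at least one occurrence of each (a standard feature of Dynkin's formula). Introducing the Lie algebra $2$-cocycle $\omega\colon\mathfrak{g}\times\mathfrak{g}\to\R$ defined by $[\sigma(U),\sigma(V)]_{\tilde{\mathfrak{g}}}=\sigma([U,V]_\mathfrak{g})+\omega(U,V)Z$, a short induction on the depth of a bracket tree, exploiting that $Z$ is central (so $[Z,\cdot]_{\tilde{\mathfrak{g}}}=0$ and any $\R Z$-contributions from interior nodes are annihilated at any outer bracketing), shows that the $\R Z$-component of a length-$k$ iterated bracket of $\sigma(X_1),\sigma(X_2)$ in $\tilde{\mathfrak{g}}$ reduces to a single evaluation $\omega(L,R)$, where $L,R$ are the iterated brackets in $\mathfrak{g}$ corresponding to the left and right subtrees of the root. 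Combined with the matching cancellation on the $\sigma(\mathfrak{g})$-side, this exhibits $\psi(g_1,g_2)$ as a finite linear combination of terms $\omega(B_1,B_2)$, with $B_1,B_2$ iterated brackets of $X_1,X_2$ in $\mathfrak{g}$ of combined length at most $s$ and containing together at least one $X_1$ and at least one $X_2$.

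The final step is a size estimate using the filtration. Because $\sigma(\mathfrak{m}_P)\subset\gamma_P(\tilde{\mathfrak{g}})$, one has $[\sigma(\mathfrak{m}_P),\sigma(\mathfrak{m}_Q)]_{\tilde{\mathfrak{g}}}\subset\gamma_{P+Q}(\tilde{\mathfrak{g}})$, which vanishes for $P+Q>s$; in particular, $\omega(\mathfrak{m}_P,\mathfrak{m}_Q)=0$ whenever $P+Q>s$. Decomposing $X_i=\sum_p X_i^{(p)}$ with $X_i^{(p)}\in\mathfrak{m}_p$ and applying the Guivarc'h-type bound $\|X_i^{(p)}\|\lesssim|g_i|^p$, multilinearity of the bracket together with $|\omega(U,V)|\lesssim\|U\|\|V\|$ yields
\[
|\omega(B_1,B_2)| \;\lesssim\; \sum_{\substack{A+B\le s\\ A\ge a,\;B\ge b}} |g_1|^A\,|g_2|^B,
\]
where $a,b\ge 1$ denote the total numbers of $X_1$'s and $X_2$'s appearing across $B_1$ and $B_2$. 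A Young-type inequality $x^{A-1}y^{B-1}\le x^{s-2}+y^{s-2}$ for $A+B=s$ and $A,B\ge 1$ converts every such monomial into the form $|g_1|^{s-1}|g_2|+|g_1||g_2|^{s-1}$, while monomials of lower total degree are dominated by these since $|g_i|\ge 1$ in the Guivarc'h convention, and $\psi$ vanishes trivially when $g_1$ or $g_2$ is the identity. Summing the finitely many BCH contributions gives the claimed bound.

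The main obstacle I foresee is the inductive identification in the middle paragraph: justifying that the $\R Z$-contribution of an arbitrarily deeply nested bracket of $\sigma(X_1),\sigma(X_2)$ collapses to a single $\omega$-evaluation on two iterated brackets in $\mathfrak{g}$, which requires tracking how central contributions from inner nodes are absorbed at each further bracketing. Once this structural collapse is in place, the estimate reduces to the combinatorics of the grading together with Young's inequality.
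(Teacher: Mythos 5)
Your proof is correct and follows the same route as the paper's: expand the group law through a linear section of the central extension via BCH, observe that the $\R$-component is carried by commutators of degree $\geq 2$ involving both arguments, then bound by Guivarc'h and a Young-type inequality. Two small remarks on your extra scaffolding: the ``filtration-preserving'' choice of splitting $\sigma$ is automatic, since the kernel $\R Z$ equals $\gamma_s(\tilde{\mathfrak{g}})$ and is therefore contained in $\gamma_p(\tilde{\mathfrak{g}})$ for every $p\leq s$, so \emph{any} linear lift of $\gamma_p(\mathfrak{g})$ lands in $\gamma_p(\tilde{\mathfrak{g}})$; and the identification of the $\R Z$-part of a nested bracket with a single evaluation $\omega(L,R)$ of the Lie-algebra cocycle (via the centrality of $Z$) is a pleasant observation that the paper bypasses, simply bounding the $\R Z$-component of each graded commutator by its norm and using multilinearity directly.
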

\begin{proof} 
Identify the central extension $\tilde{G}$ with the Lie algebra $\tilde{\mathfrak{g}}=\R\oplus \mathfrak{g}$ and fix a norm on $\tilde{\mathfrak{g}}$. We express the product $(0,g_1)(0,g_2)=(\psi(g_1,g_2),g_1 g_2)$ in $\tilde{\mathfrak{g}}$ using the BCH formula and similarly write the product $g_1 g_2$ as an element of $\mathfrak{g}$ using the BCH formula. Since $\widetilde{G}$ and $G$ are both nilpotent of class $\leq s$, we deduce from the precise form of the BCH formula that 
\[
    (\psi(g_1,g_2),0)=(0,g_1)(0,g_2) - (0,g_1g_2)
\]
is a sum of commutators of length at most $s$ in $g_1$ and $g_2$, where in each commutator each of $g_1$ and $g_2$ appears at least once. Exploiting multilinearity of the Lie bracket and the fact that for all integers $a,~b>0$ with $a+b\leq s$ we have $|g_1|^a |g_2|^b\leq O(|g_1|^{s-1}|g_2|+|g_1||g_2|^{s-1})$, we deduce the assertion.
\end{proof}

\begin{lemma}\label{lem:cocyleToCone}
	Let $G$ be a simply connected nilpotent Lie group and let $c\colon G^2\to \R$ be the $2$-cocycle from \Cref{prop:cocycleBound} such that the corresponding extension is $s$-nilpotent. Then there is a constant $C>0$ such that the following holds. Let $R>0$, $\varepsilon>0$, and let $u$, $v$, $\bar{u}$, $\bar{v}$ be elements of $G$ with $d_G(u,\bar{u})<\eps R$, $d_G(v,\bar{v})<\eps R$ and $|u|,|v|,|\bar{u}|,|\bar{v}|<R$. Then $|c(u,v^{-1})-c(\bar{u},\bar{v}^{-1})|<\eps CR^s$.
\end{lemma}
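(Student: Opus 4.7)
The plan is to use the cocycle identity $c(g_1,g_2g_3)+c(g_2,g_3)=c(g_1g_2,g_3)+c(g_1,g_2)$ together with the polynomial bound of Proposition~\ref{prop:cocycleBound} to decompose $c(u,v^{-1})-c(\bar u,\bar v^{-1})$ into a finite sum of cocycle values, each of which is $\lesssim \varepsilon R^s$. The decisive observation is that whenever a cocycle evaluation $c(x,y)$ has one argument of word length at most $O(\varepsilon R)$ and the other of length at most $O(R)$, Proposition~\ref{prop:cocycleBound} yields $|c(x,y)|\leq C(|x|^{s-1}|y|+|x||y|^{s-1})\lesssim \varepsilon R^s$, since $\varepsilon^{s-1}\leq\varepsilon$ whenever $\varepsilon\leq 1$ and $s\geq 2$.

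To implement this, first I set $a:=\bar u^{-1}u$ and $b:=v^{-1}\bar v$ so that $u=\bar u a$, $v^{-1}=b\bar v^{-1}$, $|a|_G,|b|_G<\varepsilon R$, and write $w:=ab$ (so $|w|<2\varepsilon R$). Two applications of the cocycle identity, first with splitting $(\bar u,a,b\bar v^{-1})$ and then with $(\bar u,ab,\bar v^{-1})$, yield
\[
c(u,v^{-1})-c(\bar u,\bar v^{-1})=\bigl[c(\bar u w,\bar v^{-1})-c(\bar u,\bar v^{-1})\bigr]+E,
\]
where $E=c(\bar u,w)-c(w,\bar v^{-1})+c(a,b\bar v^{-1})-c(\bar u,a)$ is a sum of four ``small-argument'' cocycle values, each bounded by $\lesssim \varepsilon R^s$ via the decisive observation.

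It then remains to control the residual $c(\bar u w,\bar v^{-1})-c(\bar u,\bar v^{-1})$. Applying the cocycle identity with the splittings $(\bar u w,\bar v^{-1},\bar v)$ and $(\bar u,\bar v^{-1},\bar v)$, and using $\bar v^{-1}\bar v=1$ together with the normalization $c(g,1)=0$, simplifies the residual to $c(\bar u\bar v^{-1},\bar v)-c(\bar u w\bar v^{-1},\bar v)$. Writing $\bar u w\bar v^{-1}=\bar u\bar v^{-1}\cdot\tilde w$ with $\tilde w:=\bar v w\bar v^{-1}=[\bar v,w]\cdot w$, one further application of the cocycle identity reduces the estimate to cocycle values involving $\tilde w$ and its decomposition into the small piece $w$ and the commutator $[\bar v,w]\in\gamma_2(\mathfrak g)$; these are controlled using Proposition~\ref{prop:commutatorsGroup} to bound $|[\bar v,w]|$ together with Proposition~\ref{prop:cocycleBound}.

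The hard part will be the residual estimate. Conjugation in the nilpotent group $G$ can significantly inflate word length, so $|\tilde w|$ may be of order $R\cdot\varepsilon^{1/s_G}$ rather than $\varepsilon R$, and a direct application of Proposition~\ref{prop:cocycleBound} to cocycle values involving $\tilde w$ would yield only a suboptimal bound. The resolution exploits the fact that the inflated portion $[\bar v,w]$ lies in the deeper term $\gamma_2(\mathfrak g)$ of the lower central series (indeed in even deeper terms after iterating the decomposition), where the finer commutator estimates of Section~\ref{sec:length-of-commutators} apply. A finite iteration, which terminates because $G$ is $(s-1)$-nilpotent, should then close the estimate and give the desired $\varepsilon C R^s$ bound.
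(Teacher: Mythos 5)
Your identification of the mechanism (Proposition~\ref{prop:cocycleBound} forces $|c(x,y)|\lesssim\eps R^s$ once one argument is $O(\eps R)$ and the other $O(R)$) is exactly right, and your initial cocycle manipulations producing the term $E$ are correct. The genuine gap is in the residual $c(\bar u w,\bar v^{-1})-c(\bar u,\bar v^{-1})$, and your own sketch of how to close it does not work as stated. The problem is that the decomposition $u=\bar u a$, $v^{-1}=b\bar v^{-1}$ places both perturbations in the \emph{interior} of $uv^{-1}=\bar u\,a\,b\,\bar v^{-1}$, so to compare with $\bar u\bar v^{-1}$ you must push $w=ab$ across $\bar v^{-1}$, producing the conjugate $\tilde w=\bar v w\bar v^{-1}=[\bar v,w]\cdot w$. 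Now Proposition~\ref{prop:commutatorsGroup} gives only $|[\bar v,w]|\lesssim\max_{i+j\leq s-1}R^{i/(i+j)}(\eps R)^{j/(i+j)}\lesssim\eps^{1/(s-1)}R$, and feeding that into Proposition~\ref{prop:cocycleBound} yields $|c([\bar v,w],g_2)|\lesssim\eps^{1/(s-1)}R^s$, which is strictly worse than $\eps R^s$ once $s\geq 3$. Iterating the commutator decomposition does not help: each further commutation with $\bar v$ produces an element of $\gamma_{k+1}$ with word length bounded only by $\eps^{j/(k(s-1))}R$-type quantities, so the $\eps$-exponents \emph{decrease} rather than improve. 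To salvage this route you would need a refined version of Proposition~\ref{prop:cocycleBound} that tracks the graded components $\|([\bar v,w])_k\|\lesssim\eps R^k$ rather than the word length of $[\bar v,w]$; you neither state nor prove such a refinement, and the Lemma as recorded in the paper must be proved only from the stated word-length bound.

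The fix is to peel the perturbations to the \emph{outside} rather than the inside, which removes the need for any conjugation. Replace $v^{-1}$ by $\bar v^{-1}$ first, writing $v^{-1}=\bar v^{-1}(\bar v v^{-1})$ so the small factor $\bar v v^{-1}$ sits at the rightmost slot; the cocycle identity with the triple $(u,\bar v^{-1},\bar v v^{-1})$ gives
\[
c(u,v^{-1})-c(u,\bar v^{-1})=c(u\bar v^{-1},\bar v v^{-1})-c(\bar v^{-1},\bar v v^{-1}),
\]
both terms of type ``big, small''. Then replace $u$ by $\bar u$, writing $u=(u\bar u^{-1})\bar u$ so the small factor is at the leftmost slot; the cocycle identity with $(u\bar u^{-1},\bar u,\bar v^{-1})$ gives
\[
c(u,\bar v^{-1})-c(\bar u,\bar v^{-1})=c(u\bar u^{-1},\bar u\bar v^{-1})-c(u\bar u^{-1},\bar u),
\]
both of type ``small, big''. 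All four cocycle values now have one argument of word length $<\eps R$ and the other $<2R$, so Proposition~\ref{prop:cocycleBound} directly gives each $\lesssim(\eps^{s-1}+\eps)R^s\lesssim\eps R^s$ (one may assume $\eps\leq 1$, since otherwise the bound is trivial), with no residual and no distortion estimate. This is the paper's proof.
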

\begin{proof}
Using the triangle inequality and the cocycle relation, we obtain:	
	\begin{align*}
	\left|c(u,v^{-1})\!-\!c(\bar{u},\bar{v}^{-1})\right|\! & \le \left|c(u,v^{-1})-c(u,\bar{v}^{-1})\right| + \left|c(u,\bar{v}^{-1})-c(\bar{u},\bar{v}^{-1})\right|\\
	& \le \left|c(u\bar{v}^{-1},\bar{v}v^{-1})-c(\bar{v}^{-1},\bar{v}v^{-1})\right| + \left|c(u\bar{u}^{-1},\bar{u}\bar{v}^{-1})-c(u\bar{u}^{-1},\bar{u})\right|\\
	& \le \left|c(u\bar{v}^{-1},\bar{v}v^{-1})\right| \!+\! \left|c(\bar{v}^{-1},\bar{v}v^{-1})\right| \!+\! \left|c(u\bar{u}^{-1},\bar{u}\bar{v}^{-1})\right| \!+\! \left|c(u\bar{u}^{-1},\bar{u})\right|.
	\end{align*}
	We then conclude thanks to  \Cref{prop:cocycleBound}. 
\end{proof}
\Cref{lem:cocyleToCone} implies that the $2$-cocycle $c$ extends to the asymptotic cone. We will make this precise and use it in the following result, in which we compare the cocycles $c$ and $c'$. Note that in its statement, we implicitly identify $G$ and $H$ with $\gr(G)$ and $\gr(H)$ respectively.

\begin{lemma}\label{lem:convcocycle}
	Let $s\in\N$, and let $G$ and $H$ simply connected, $(s-1)$-nilpotent Lie groups. Assume that there exists a $(L^p,L^1)$-ME from $G$ to $H$ such that $p>s$, and let $\alpha:G\times X_H\to H$ be the cocycle from $G$ to $H$. 
Let $\tilde{H}$ be a central extension of $H$ by $\R$ and let $c\colon H^2\to \R$ be the associated $2$-cocycle.
Let $\phi\colon \gr(G)\to \gr(H)$ be the isomorphism given by Proposition \ref{prop:cantrell}.
Then, every pair of sequences $u_n,v_n\in G$  such that $|u_n|_G$ and $|v_n|_G$ are in $O(n)$ satisfies
\[\lim_{n\to \infty}\frac{1}{n^s}\left(c'(u_n,v_n))-c( \phi(u_n),\phi(v_n))\right)=0.\]
\end{lemma}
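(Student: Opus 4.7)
The strategy is to split the integral $c'(u_n,v_n)=\int_{X_H}c(\alpha(u_n,x),\alpha(v_n,v_n^{-1}\cdot x))\,d\mu(x)$ into a ``good'' part, where Cantrell's theorem forces $\alpha$ to be close to $\phi$, and a ``bad'' part of small measure, which the $L^p$-integrability of $\alpha$ controls. Since $|u_n|_G,|v_n|_G=O(n)$, the sequences $(1/n)\bullet u_n$ and $(1/n)\bullet v_n$ lie in a bounded region of $\gr(G)$. Covering this region by finitely many $\delta$-balls and applying \Cref{prop:cantrellBis} at each centre yields: for every $\eps>0$ and every $n$ large enough,
\[\mu\{x\in X_H : d_H(\alpha(u_n,x),\phi(u_n))>\eps n\}\leq \eps,\]
and the analogous estimate for $v_n$. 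By $G$-invariance of $\mu$, the change of variables $x\mapsto v_n\cdot x$ translates the $v_n$-estimate into the same estimate for $\alpha(v_n,v_n^{-1}\cdot x)$. We let $A_n$ denote the intersection of the two good events and $B_n=X_H\setminus A_n$, so that $\mu(B_n)\leq 2\eps$ eventually.

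On $A_n$, the four elements $\alpha(u_n,x)$, $\alpha(v_n,v_n^{-1}\cdot x)$, $\phi(u_n)$, $\phi(v_n)$ all have $H$-word length $\lesssim n$, and the perturbed pair lies within $\eps n$ of the $\phi$-pair. A direct application (or a variant without the inversion, proved by the same cocycle and polynomial-growth manipulations) of \Cref{lem:cocyleToCone} with $R\simeq n$ gives the pointwise bound
\[|c(\alpha(u_n,x),\alpha(v_n,v_n^{-1}\cdot x))-c(\phi(u_n),\phi(v_n))|\lesssim \eps\,n^s,\]
so the contribution from $A_n$ is $O(\eps n^s)$. On $B_n$, the polynomial estimate $|c(h_1,h_2)|\lesssim |h_1|^{s-1}|h_2|+|h_1||h_2|^{s-1}$ from \Cref{prop:cocycleBound} handles the two pieces separately: the deterministic $c(\phi(u_n),\phi(v_n))$-term is $O(n^s)\cdot \mu(B_n)=O(\eps n^s)$, while for the $\alpha$-term H\"older's inequality with exponents $\tfrac{p}{s-1},\,p,\,\tfrac{p}{p-s}$ yields
\[\int_{B_n}|\alpha(u_n,x)|^{s-1}|\alpha(v_n,v_n^{-1}\cdot x)|\,d\mu\leq \|\alpha(u_n,\cdot)\|_{L^p}^{s-1}\|\alpha(v_n,\cdot)\|_{L^p}\mu(B_n)^{(p-s)/p}\lesssim n^s\,\eps^{(p-s)/p},\]
where the hypothesis $p>s$ makes $(p-s)/p>0$. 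The symmetric term is bounded identically.

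Combining the two regimes gives $n^{-s}|c'(u_n,v_n)-c(\phi(u_n),\phi(v_n))|\lesssim \eps+\eps^{(p-s)/p}$, and sending first $n\to\infty$ and then $\eps\to 0$ yields the conclusion. The main technical obstacle is the uniform estimate $\|\alpha(u_n,\cdot)\|_{L^p}\lesssim |u_n|_G=O(n)$; we obtain it by decomposing $u_n$ as a product of $\lesssim n$ elements of a compact generating set $S_G$ of $G$, iterating the cocycle identity $\alpha(g_1g_2,x)=\alpha(g_1,g_2\cdot x)\alpha(g_2,x)$, and applying Minkowski's inequality together with the $G$-invariance of $\mu$ to reduce each factor to $\|\alpha(s,\cdot)\|_{L^p}$ for $s\in S_G$, which is uniformly bounded on $S_G$ by the $L^p$-integrability hypothesis. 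This is precisely the place where the strict inequality $p>s$ is used.
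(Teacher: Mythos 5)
Your proof follows essentially the same strategy as the paper's: split $X_H$ into a ``good'' set where Cantrell's convergence (\Cref{prop:cantrellBis}) controls $\alpha$, bound the good part by the Lipschitz-type estimate for $c$ on the cone (\Cref{lem:cocyleToCone}), and bound the complement via H\"older with exponents exploiting $p>s$. Two cosmetic deviations: the paper extracts a convergent subsequence of $\frac{1}{n}\bullet u_n$ and $\frac{1}{n}\bullet v_n$ rather than using your covering argument, and it applies Cantrell to $u_n$ and $v_n^{-1}$ so that \Cref{lem:cocyleToCone} can be applied exactly as stated (via $\alpha(v_n,v_n^{-1}\cdot x)=\alpha(v_n^{-1},x)^{-1}$), whereas you apply Cantrell to $v_n$ directly together with a change of variables and therefore need, as you note, the straightforward variant of \Cref{lem:cocyleToCone} without the inversion. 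One small correction to your closing sentence: the uniform estimate $\|\alpha(g,\cdot)\|_{L^p}\lesssim |g|_G$ --- which both proofs invoke, the paper citing \cite[Appendix A.2]{BadFurSau-13} and you via Minkowski plus the cocycle identity --- requires only $p\geq 1$; the strict inequality $p>s$ is used precisely in the H\"older step you correctly set up earlier, to make $\mu(B_n)^{(p-s)/p}\to 0$ as $\eps\to 0$.
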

\begin{proof}
By compactness of the sequences $\frac{1}{n}\bullet u_n$ and $\frac{1}{n}\bullet v_n$ we can assume without loss of generality that they converge in $\gr(G)$.
Fix $\eps>0$. In the sequel, we denote $\bar{u}_n=\phi(u_n)$ and $\bar{v}_n=\phi(v_n)$.
    We apply \Cref{prop:cantrellBis} to elements $(u_n)_n$, $(v_n^{-1})_n$, $\phi((u_n)_n)=(\bar{u}_n)_n$ and $\phi((v_n^{-1})_n)=(\bar{v}_n^{-1})_n$,
    to obtain that, for $n$ large enough, there exists a subset $A\subset X_H$ such that $\mu(X_H\setminus A)\leq \eps$ and such that for all $x\in A$, the quantities  
   $\frac{1}{n}d_{H}(\alpha(u_n,x),\bar{u}_n)$ and $\frac{1}{n}d_{H}(\alpha(v_n^{-1},x),\bar{v}_n^{-1})$ 
   are at most $\eps$. By \Cref{prop:cocycleBound} there exists a constant $C>0$ such that $|c(h_1,h_2)|\le C(|h_1|_H^s+|h_2|_H^s)$ for every $h_1,h_2\in H$.
	Then for all $g_1,g_2\in G$ and every measurable subset $Z\subset X_H$ we have that
	\begin{align*}
        &\left|\int_{Z}c(\alpha(g_1,x),\alpha(g_2,g_2^{-1}\cdot x))d\mu(x)\right|\\
		& \leq  \int_{Z}C(|\alpha(g_1,x)|_H^s+|\alpha(g_2,g_2^{-1}\cdot x)|_H^s)d\mu(x)\\
		& \leq  C\mu\left(Z\right)^{(p-s)/p}\left(\left(\int_{Z}|\alpha(g_1,x)|_H^{p}d\mu(x)\right)^{s/p}+\left(\int_{g_2^{-1}\cdot Z}|\alpha(g_2,x)|_H^{p}d\mu(x)\right)^{s/p}\right)\\
		& \leq C'\mu\left(Z\right)^{(p-s)/p} (|g_1|_G^s+|g_2|_G^s),
	\end{align*}
	where $C'=C\sup_{s\in S_G}\|\alpha(s,\cdot)\|_{p}<\infty$.\footnote{To see that this supremum is finite, we apply \cite[A.2]{BadFurSau-13} to the compact generating subset $S_G$ of $G$.}
	Since the identities $G\to \gr(G)$ and $H\to \gr(H)$ are SBEs, the isomorphism $\phi$ induces a SBE $G\to H$. Hence, there exists a $\tilde{C}>0$ such that $|u_n|_G,|v_n|_G,|\bar{u}_n|_H,|\bar{v}_n|_H \le \tilde{C}n$.
	The above discussion and \Cref{lem:cocyleToCone} thus imply
	\begin{align*} 
	& \left|\frac{1}{n^s}\left(c'(u_n,v_n)-c(\bar{u}_n,\bar{v}_n)\right)\right|\\
	 \leq& \frac{1}{n^s}\int_{X_H}\left|c(\alpha(u_n,x),\alpha(v_n,v_n^{-1}\cdot x))-c(\bar{u}_n,\bar{v}_n)\right|d\mu(x)\\
	 \leq&  \frac{1}{n^s}\int_{A}\left|c(\alpha(u_n, x),\alpha(v_n^{-1}, x)^{-1})-c(\bar{u}_n,\bar{v}_n)\right|d\mu(x)\\
	& +\frac{1}{n^s}\int_{X_H\setminus A}\left|c(\alpha(u_n,x),\alpha(v_n,v_n^{-1}\cdot x))\right|+\left|c(\bar{u}_n,\bar{v}_n)\right|d\mu(x)\\
	 \leq & \frac{1}{n^s}\int_{A}\eps C(\tilde{C}n)^sd\mu(x)\\
	  &+ \frac{1}{n^s}C'\mu\left(X_H\setminus A\right)^{(p-s)/p}\left(|u_n|_G^s + |v_n|_G^s\right)\\
	&+\frac{1}{n^s}\int_{X_H\setminus A}C\left(|\bar{u}_n|_H^s+|\bar{v}_n|_H^s\right)d\mu(x)\\
	 \leq & \frac{1}{n^s}\eps C(\tilde{C}n)^s+ \frac{1}{n^s}C'\eps^{(p-s)/p}2\left(\tilde{C}n\right)^s+\frac{1}{n^s}\eps C2\left(\tilde{C}n\right)^s\\
	 \leq & \eps C \tilde{C}^s+ C'\eps^{(p-s)/s}2\tilde{C}^s + \eps C2\tilde{C}^s,
	\end{align*}
	for all sufficiently large $n$. Since $\eps>0$ was arbitrary and $p>s$, this proves that the limit as $n\to \infty$ is equal to $0$.
\end{proof}

By combining the previous results, we can now prove \Cref{thm:extension}.

\begin{proof}[{Proof of \Cref{thm:extension}}]
	Let $c\colon H^2\to \R$ be the $2$-cocycle of $H$ associated to $\tilde{H}$ as in \Cref{prop:cocycleBound}.
	By \Cref{prop:cocyles} there is a $2$-cocycle $c'\colon G^2\to \R$ with \[c'(g_1,g_2)=\int_{X_H}c(\alpha(g_1,x),\alpha(g_2,g_2^{-1}\cdot x))d\mu(x)\]
	for every $g_1,g_2\in G$, where $\alpha\colon G\times X_H\to H$ is an $L^p$-integrable, measure equivalence cocycle associated with the coupling.

Assume for a contradiction that the nilpotency class of the extension $\tilde{G}$ associated to $c'$ is  $s-1$. As $\tilde{H}$ has nilpotency class $s$, there are $h_1,h_2,\ldots,h_{s}\in\tilde{H}$ such that $[h_1,h_2,\ldots,h_s]\neq e_{\tilde{H}}$. Note that $$[h_1^n,h_2^n,\ldots,h_s^n] = [h_1,h_2,\ldots,h_s]^{n^{s}}$$
	for every $n\in\N$.
	For $n\in \mathbb{N}$ and $1\leq i \leq s$, let $u_{i,n}\in G$ be such that $(u_{i,n})_n=\phi^{-1}((h_i^n)_n)$ and let $\tilde{u}_{i,n}\in\tilde{G}$ be any lift of $u_{i,n}$.
	If $\tilde{G}$ is of class  $s-1$, then $[\tilde{u}_{1,n},\ldots,\tilde{u}_{s,n}]=e_G$, which, by \Cref{lem:commute}, implies that

\[ c'(u_{1,n},[u_{2,n},\ldots,u_{s,n}]) - c'([u_{2,n},\ldots,u_{s,n}],u_{1,n})=0.\]
	However,
	\begin{align*}
		|c(h_1^n,[h_2^n,\ldots, h_{s-1}^n,h_s^n]) - c([h_2^n,\ldots, h_{s-1}^n,h_s^n],h_1^n)| = |[h_1^n,h_2^n,\ldots,h_{s-1}^n,h_s^n]|=O(n^s).
	\end{align*}
	This contradicts \Cref{lem:convcocycle}, which implies that \[c(h_1^n,[h_2^n,\ldots ,h_s^n]) - c'(u_{1,n},[u_{2,n},\ldots,u_{s,n}])\]
 as well as \[c([h_2^n,\ldots,h_s^n],h_1^n) - c'([u_{2,n},\ldots,u_{s,n}],u_{1,n})\]
 are in $o(n^s)$.
\end{proof}

\section{Questions}\label{sec:questions}
We conclude by raising some open problems about compactly generated locally compact groups of polynomial growth, which in this section we will simply call groups of polynomial growth. Some of them have already appeared earlier in the text. We restate them here to have them in one place.

One main result of our work is the classification of groups of polynomial growth up to $L^p$ ME for $p\leq 1$ in \Cref{thm:classification-Lp}. For $p>1$ we only obtain partial results.
\begin{problem}
    Classify  groups of polynomial growth up to $L^p$ ME and $L^p$ OE for all $p\in [1,\infty]$. In particular, one can ask:
    \begin{itemize}
        \item Is $p_{ME}(G,H)=p_{OE}(G,H)$ for all such groups $G$ and $H$?
        \item Is $p_{ME}(G,\gr(G))=\frac{1}{e_G}$ for every simply connected nilpotent Lie group, where $e_G$ is Cornulier's constant?
    \end{itemize} 
\end{problem}

\Cref{thm:classification-Lp} shows that for groups $G$ and $H$ with non-isomorphic Carnot the interval $I$ consisting of all $p$ for which $G$ and $H$ are $L^p$ ME is half-open. This raises the question if this is always the case when the groups are not $L^{\infty}$ ME:
\begin{question}
    Let $G$ and $H$ be groups of polynomial growth with isomorphic Carnot which are not $L^{\infty}$ ME. Is the interval of $p$ such that $G$ and $H$ are $L^p$ ME of the form $[0,p_{ME}(G,H))$?
\end{question}

As we mentioned, an observation of Shalom shows that for amenable groups QI implies $L^{\infty}$ ME. This raises the question if there is a converse, at least in the case of groups of polynomial growth:
\begin{question}
    Are $L^{\infty}$ ME groups of polynomial growth necessarily quasi-isometric? 
\end{question}
Note that the authors are not aware of a single pair of groups which are $L^\infty$-ME but not quasi-isometric.

At the other end of the spectrum, the only obstructions we currently have for groups with isomorphic Carnot being $L^p$ ME is the one provided by \Cref{thm:extension}. In particular, it is given by an integer, raising the question if this is merely a relic of our proof:
\begin{question}
    Are there groups of polynomial growth $G$, $H$ with isomorphic Carnot such that $p_{ME}(G,H)<1+\epsilon$ for every $\epsilon>0$?
\end{question}

\appendix

\section{Locally compact version of results of Bowen, Austin and Cantrell}\label{sec:AustinBowenLC}

In this appendix we explain how one can extend the main results of Austin, Bowen and Cantrell in \cite{Aus-16,Cantrell} to the locally compact case. Their proofs rely on certain lower bounds on the cardinality of the intersections of balls and their (pre-)images under the cocycles. Our main task in order to extend them to locally compact groups is to be able to replace ``cardinality'' by ``Haar measure''.
Note that if $G$ is discrete and $H$ is not, then the image of a ball of $G$ is likely to be negligible with respect to the Haar measure of $H$. This indicates that we should assume that both groups are either discrete or non-discrete.

Another crucial ingredient is to choose the fundamental domains so that both measures coincide (up to some scaling factor) and are positive on their intersection. This is essential in order to express the fact that the two cocycles are mutual ``inverses''. This is straightforward when the groups are both discrete, due to the fact that both measures on the fundamental domains coincide with the restrictions of the measure $\mu$ on $\Omega$. However, this argument no longer works when the groups are non-discrete, since the fundamental domains are negligible subsets of $(\Omega,\mu)$. 
Once again, it is quite obvious that we need to assume that both groups are either discrete or non-discrete.
Hence, from now on, we will assume that {\bf both groups are non-discrete}. 

The main tool to treat both is the notion of discrete cross-section, which roughly consists of decomposing the equivalence relation associated to any measure preserving action of a locally compact Polish group into a direct product of a ``compact smooth factor" and a ``discrete factor'' (with countable orbits).

After establishing these two ingredients (gathered in Proposition \ref{prop:intersecting-fundamental-domains}), the arguments of Bowen, Austin and Cantrell adapt readily to the non-discrete setting. We do not see much added value in repeating all of their arguments almost verbatim here and instead opt for only explaining the adaptations that are required. 

 Note that Cantrell's result subsumes Austin's theorem, providing a more direct proof which does not rely on Pansu's theorem. 
Moreover, a large portion of his proof is self-contained. Actually, in the published version, Cantrell claims that his proof is entirely self-contained. If so, it would have been unnecessary to prove that Austin's arguments extend to locally compact groups.  But one specific part of Cantrell's proof did not convince us, and we suspect that it might contain a gap: this concerns the demonstration that the limit morphism is bijective (see \cite[Proposition 5.7]{Cantrell}). Fortunately,
Cantrell gave a correct argument in an earlier arXiv version of his paper based on the material of Austin's paper. Here, we give a shorter alternative proof for this step, also based on Austin's paper, but on an easier preliminary result of his.

We also noted that a lemma in Cantrell's paper implicitly relies on a  probably well-known fact about the behaviour of the iterated Baker--Campbell--Hausdorff formula which we could not find in the literature. We include the proof and along the way also slightly simplify another argument in \cite{Cantrell} (avoiding an induction). 

We expect that our approach, and especially the material of Section \ref{section:crosssections} will also make it possible to generalise other results from the finitely generated to the locally compact case, such as the results of \cite{DKLMT-22}. For instance, Correia and Paucar will use this approach in a forthcoming work \cite{CorPau-25} to prove a version of \cite[Theorem 3.2]{DKLMT-22} for locally compact groups (in the context of measured subgroups).

\subsection{Using cross sections to create intersecting fundamental domains}\label{section:crosssections}

Let $G$ and $H$ be measure equivalent Polish locally compact groups and let $G\times H \curvearrowright (\Omega, \eta)$ be an action on a measure space inducing the measure equivalence.  Denote $X=X_H\subset \Omega$ (resp. $Y=Y_G\subset \Omega$) fundamental domains for the $H$- (resp. $G$-) action on $\Omega$. We choose Haar measures $\lambda_G$ and $\lambda_H$ on $G$ and $H$, and finite measures $\mu_X$ and $\mu_Y$ on $X$ and $Y$ such that we can identify both $(G\times Y,\lambda_G\otimes \mu_Y)$ and $(H\times X,\lambda_H\otimes \mu_X)$ with $(\Omega, \eta)$ via the maps
$(g,y)\mapsto g\cdot y$ and $(h,x)\mapsto h\cdot x$. As explained above, we intentionally chose both groups to be non-discrete.

Denote $T: G\times X \to X$, $S : H\times Y\to Y$ the induced actions of $G$ on $X$ and $H$ on $Y$ and $\alpha: G\times X \to H$, $\beta: H\times Y \to G$ the associated cocycles. By definition 
\[
\alpha(g,x)\cdot g\cdot x = T^g(x) \mbox{ and } \beta(h,y)\cdot h \cdot y = S^h(y),
\]
where $\alpha(g,x)\in G$ (resp. $\beta(h,y)\in H$) are the unique elements that map $g\cdot x$ (resp. $h\cdot y$) into $X$ (resp. $Y$).

We recall that for a locally compact Polish group $\Gamma$ a \emph{cross section} of a $\Gamma$-action on $(X,\mu_X)$ is a Borel subset $X_0\subseteq X$ such that there is an open neighbourhood $U\subset \Gamma$ of the identity with the following properties:
\begin{enumerate}
    \item the restricted action map $\theta: U\times X_0 \to X$, $(u,x_0)\mapsto T^ux_0$ is injective;
    \item the subset $G\cdot X_0$ is Borel and conull in $X$.
\end{enumerate}
Note that after possibly replacing $U$ by a subset $V$ with $V^2 \subset U$, we may assume that $\theta : U^2 \times X_0 \to X$ is injective. We will now always assume that $U$ is chosen in this way. 
Such a cross section exists for every non-singular essentially free $\Gamma$-action \cite{KyePetVae-15,KoiKyeRau-21}. In particular, we can choose cross sections $X_0\subset X$ and $Y_0\subset Y$ for the $G$-action on $(X,\mu_X)$ and the $H$-action on $(Y,\mu_Y)$, which we now fix. Denote by $U\subset G$ (resp. $V\subset H$) the corresponding open neighbourhoods of the identity. 

To adapt the arguments of Bowen, Austin and Cantrell to our context, we will now explain that we can use the cross section to modify our choice of $X$ and $Y$ such that the intersection $X\cap Y$ has positive measure, $\mu_X|_{X\cap Y} = \mu_Y|_{X\cap Y}$, and the restriction of the cocycles to this intersection exchanges the Haar measures of the groups. 

\begin{proposition}\label{prop:intersecting-fundamental-domains}
    We can choose new fundamental domains $X'$ and $Y'$ so that for the intersection $X'\cap Y'$ we have $\mu_{X'}|_{X'\cap Y'}=C\cdot \mu_{Y'}|_{X'\cap Y'}$ for some constant $C>0$ and $\mu_{X'}(X'\cap Y') = C\cdot \mu_{Y'}(X'\cap Y')>0$. If, moreover, the original cocycles $\alpha$ and $\beta$ are $L^1$-integrable, then so are the cocycles $\alpha'$ and $\beta'$ associated with the fundamental domains $X'$ and $Y'$. Finally, the restrictions of $\alpha'$ and $\beta'$ to $X'\cap Y'$ are measure preserving, in the sense that for a.e.\ $x\in X'\cap Y'$, the restriction of $\alpha'(\cdot,x)$ to $\{g\in G\mid T^gx\in X'\cap Y' \}$ equipped with $\lambda_G$ is an isomorphism of measure spaces onto its image equipped with $\lambda_H$ (and similarly for $\beta'$).
\end{proposition}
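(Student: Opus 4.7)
The plan is to construct $X'$ and $Y'$ by a local modification of $X$ and $Y$ inside a ``tube'' built from a Borel cross section for the joint $G \times H$-action on $\Omega$. On this tube, $X'$ and $Y'$ are designed to coincide with a common ``diagonal'' subset $P$, producing the required positive-measure intersection with the correct proportionality of measures; outside the tube, $X'$ and $Y'$ are chosen close to $X$ and $Y$ so that $L^1$-integrability is preserved. By the cross section theorem for essentially free actions of lcsc Polish groups applied to $G \times H \curvearrowright \Omega$ (see \cite{KyePetVae-15, KoiKyeRau-21}), there exist a Borel set $Z_0 \subset \Omega$, a compact symmetric product neighborhood $W = G_0 \times H_0 \subset G \times H$ of the identity, and a finite positive Borel measure $\mu_{Z_0}$ on $Z_0$ such that $\theta \colon W \times Z_0 \to \Omega$, $((g,h), z) \mapsto (g,h) \cdot z$, is injective and measure-preserving for $\lambda_G \otimes \lambda_H \otimes \mu_{Z_0}$; after passing to a Borel selector for the residual countable equivalence on $Z_0$, we may assume distinct points of $Z_0$ lie in distinct $G \times H$-orbits.

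By further shrinking, arrange $\lambda_G(G_0) = \lambda_H(H_0) > 0$ and fix a Borel measure-preserving bijection $\psi \colon (G_0, \lambda_G|_{G_0}) \to (H_0, \lambda_H|_{H_0})$ (which exists since both are standard atomless Borel spaces of equal finite measure). Using Lusin--Egorov-type arguments applied to the local graph functions $\phi_X \colon G_0 \times Z_0 \to H$ and $\phi_Y \colon H_0 \times Z_0 \to G$ characterising $X$ and $Y$ in tube coordinates, further shrink $G_0, H_0$ if necessary and pick a Borel subset $A \subset Z_0$ of finite positive $\mu_{Z_0}$-measure on which both $\phi_X$ and $\phi_Y$ are uniformly bounded by some constant $M$. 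Define the \emph{patch} $P := \{(g, \psi(g)) \cdot z : g \in G_0, \, z \in A\} \subset \Omega$ and set $X' := P \cup (X \setminus H \cdot P)$ and $Y' := P \cup (Y \setminus G \cdot P)$. The injectivity of $\theta$ combined with the bijectivity of $\psi$ implies that distinct $H$-translates of $P$ are disjoint, so $X'$ is an $H$-fundamental domain and symmetrically $Y'$ is a $G$-fundamental domain; a Borel selector on $\Omega \setminus (H \cdot P \cup G \cdot P)$ lets us arrange $(X' \cap Y') \setminus P$ to be $\mu_{X'}$-null. A direct calculation in tube coordinates, using the measure-preserving properties of $\theta$ and $\psi$, shows that under the parametrisation $(g, z) \mapsto (g, \psi(g)) \cdot z$ of $P$, both $\mu_{X'}|_P$ and $\mu_{Y'}|_P$ agree with $\lambda_G \otimes \mu_{Z_0}$, establishing the measure equality (with $C = 1$) and $\mu_{X'}(X' \cap Y') = \lambda_G(G_0) \mu_{Z_0}(A) > 0$.

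For $L^1$-integrability, the $X \to X'$ modification only affects points $x \in X \cap H \cdot P$, where each such $x$ is related to the corresponding $p \in P$ in the same $H$-orbit by an $H$-element of norm bounded by $M + \mathrm{diam}(H_0)$, thanks to the choice of $A$; under the natural measure-preserving bijection $X \leftrightarrow X'$ (both parametrise $\Omega / H$), this gives $|\alpha'(g, x')|_H \leq |\alpha(g, x)|_H + 2(M + \mathrm{diam}(H_0))$, preserving $L^1$-integrability. The argument for $\beta'$ is symmetric. For the measure-preserving restriction: given $x' = (g_0, \psi(g_0)) \cdot z_0 \in P$, direct computation yields $\{g \in G \mid T^g x' \in P\} = G_0 g_0^{-1}$ and $\alpha'(g, x') = \psi(g g_0) \psi(g_0)^{-1}$, which is the composition of two right translations (Haar-preserving) with the Haar-preserving $\psi$, giving the claimed measure isomorphism onto its image in $(H, \lambda_H)$, and symmetrically for $\beta'$. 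The main technical obstacle is the cross section construction and checking its measure-theoretic compatibility with the ME coupling, in particular arranging that the bounded-$\phi_X, \phi_Y$ condition is achievable without destroying the positive-measure structure; the remaining arguments reduce to direct computations in the local product coordinates.
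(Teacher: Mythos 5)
Your proposal takes a genuinely different route from the paper. The paper works with cross sections $X_0 \subset X$ and $Y_0 \subset Y$ of the \emph{induced} $G$-action on $X$ and $H$-action on $Y$, pairs points of $X_{0,u}$ and $Y_{0,f(u)}$ lying in a common $G\times H$-orbit, uses Lusin--Novikov on the countable-to-one projections to extract a measurable partial bijection, and only then patches the fundamental domains after truncating by a radius $R$ (the sets $M_{R,u}$). You instead take a \emph{single} cross section $Z_0$ for $G\times H\curvearrowright\Omega$, go to tube coordinates $\theta\colon G_0\times H_0\times Z_0\to\Omega$, and build a ``diagonal'' patch $P=\{(g,\psi(g))\cdot z\}$ directly, replacing both $X$- and $Y$-representatives inside $H\cdot P$ (resp. $G\cdot P$) by the $P$-representatives. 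This is conceptually cleaner: the comparison $\mu_{X'}|_P = \mu_{Y'}|_P$ and the measure-preservation of $\alpha'$ on $P$ (a composition of right translations and the Haar-preserving $\psi$) become elementary Fubini-in-tube-coordinates computations, and you avoid the Lusin--Novikov step entirely because the injectivity of $\theta$ together with the bijectivity of $\psi$ hand you the partial bijection for free. The price is that you need to set up a cross section on the infinite-measure space $\Omega$ rather than on the finite-measure fundamental domains, and you need to shrink $G_0, H_0$ so that $\theta$ remains injective on $G_0^2\times H_0^2\times Z_0$ (the analogue of the paper's passage from $U$ to $V$ with $V^2\subset U$), neither of which is an obstacle.

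There is, however, a real gap in the step where you ensure boundedness of the correction. You claim that a ``Lusin--Egorov-type argument'' allows you to choose a positive-measure $A\subset Z_0$ on which both $\phi_X$ and $\phi_Y$ are \emph{uniformly} bounded — but the implicit domain is $G_0\times A$, and Lusin/Egorov only produce sets of near-full measure that are \emph{not} product sets. In general you cannot extract a positive-measure product set from a positive-measure measurable set, so the uniform bound $|\phi_X(g,z)|_H < M$ for all $(g,z)\in G_0\times A$ need not be achievable. Without it, the correction element $h_x = \psi(g)\phi_X(g,z)^{-1}$ is only a.e.\ finite, not bounded, and your estimate $|\alpha'(g,x')|_H\leq |\alpha(g,x)|_H + 2(M+\mathrm{diam}(H_0))$ does not follow. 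The fix is exactly the paper's $R$-truncation in your coordinates: replace the product index set $G_0\times A$ by the non-product Borel set
\[
S_M := \{(g,z)\in G_0\times A \mid |\phi_X(g,z)|_H < M,\ |\phi_Y(\psi(g),z)|_G < M\},
\]
which has positive measure for $M$ large enough since $\phi_X,\phi_Y$ are a.e.\ finite, and define $P$ over $S_M$. Your Fubini computation of $\mu_{X'}|_P$ and $\mu_{Y'}|_P$ survives this change unharmed, because it only uses that $\lambda_H(H_0\psi(g))=\lambda_H(H_0)$ for each fixed $g$, not that the index set is a product; the measure-preservation of $\alpha'(\cdot,x')$ on $P$ also goes through, now restricted to $\{g : (gg_0,z_0)\in S_M\}$. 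With this modification your argument is correct.
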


\begin{proof}
We start by observing that the injectivity of $\theta$ on $U^2\times X_0$ implies that for $u\in U$ the image $X_{0,u}=T^u(X_0)$ of $X_0$ is a cross section with corresponding set $U$ and $U\cdot X_0$ decomposes as disjoint union $U\cdot X_0 = \bigsqcup_u\in X_{0,u}$. Similarly, we obtain a decomposition $V\cdot Y_0= \bigsqcup_{u\in U} Y_{0,u}$ with $Y_{0,u}=S^u(Y_0)$ into cross sections. We further fix an isomorphism of the normalised restrictions of the Haar measures to $U$ and $V$\footnote{Note that this is where we use that both $G$ and $H$ are non-discrete.}
\[
    f: \left(U,\frac{1}{\lambda_G(U)}\lambda_G|_U\right)\to \left(V,\frac{1}{\lambda_H(V)}\lambda_H|_V\right).
\]

We will now explain how one can adapt the arguments in the proof of \cite[Lemma 3.10]{KoiKyeRau-21} to replace the fundamental domains $X$ and $Y$ by fundamental domains $X'$ and $Y'$ with the desired properties. For this we will for every $u\in U$ replace the cross sections $X_{0,u}$ and $Y_{0,f(u)}$ by isomorphic cross sections $X'_{0,u}$ and $Y'_{0,f(u)}$ whose intersection $X'_{0,u}\cap Y'_{0,f(u)}$ is non-negligible.

Let 
\[
    \mathcal{S}_u= \left\{(x,y)\in X_{0,u}\times Y_{0,f(u)}\mid y\in (G\times H)\cdot X_{0,u}\right\}.
\]
One argues as in the proof of \cite[Lemma 3.10]{KoiKyeRau-21} that the projection maps $\pi_{l,u}: \mathcal{S}_u\to X_{0,u}$ and $\pi_{r,u}: \mathcal{S}_u\to Y_{0,f(u)}$ are countable-to-one and that for every $s_u=(x_u,y_u)\in \mathcal{S}_u$ there exists a unique element $(g_{s_u},h_{s_u})\in (G\times H)$ with $y_u=(g_{s_u},h_{s_u})\cdot x_u$; the latter follows directly from the freeness of the $(G\times H)$-action on $\Omega$. This uniquely defines a map $(\phi_u,\psi_u): \mathcal{S}_u \to G\times H$, $s_u \mapsto (g_{s_u},h_{s_u})$.

Arguing again as in the proof of \cite[Lemma 3.10]{KoiKyeRau-21}, by the Novikov--Lusin Theorem there are disjoint partitions $\mathcal{S}_u = \bigsqcup_{n\in \mathbb{N}}E_{n,u}$ and $\mathcal{S}_u=\bigsqcup_{m\in \mathbb{N}} F_{m,u}$ such that the restrictions $\pi_{l,u}|_{E_{n,u}}$ and $\pi_{r,u}|_{F_{m,u}}$ are injective. In particular, there is some $M_u:= E_{n,u}\cap F_{m,u}$ such that $\mu_{X_{0,u}}(\pi_{l,u}(M_u))>0$ and $\mu_{Y_{0,u}}(\pi_{r,u}(M_u))>0$. The arguments in \cite{KoiKyeRau-21} and \cite{KyePetVae-15} further show that there is a constant $C=\frac{{\rm covol(Y_{0,f(u)})}}{{\rm covol(X_{0,u})}}=\frac{{\rm covol(Y_{0})}}{{\rm covol(X_{0})}}>0$ (independent of $u\in U$), such that the map 
\begin{equation}\label{eqn:equivalence-on-cross-sections}
\pi_{r,u}\circ \pi_{l,u}^{-1}: (\pi_{l,u}(M_u),\mu_{X_{0,u}}) \to (\pi_{r,u}(M_u),C\cdot \mu_{Y_{0,f(u)}})
\end{equation}
is an isomorphism of measure spaces, which preserves the orbit equivalence relation. Denote $M_X:= \bigsqcup_{u\in U} \pi_{l,u}(M_u)\subset U\cdot X_0$ and $M_Y:=\bigsqcup_{u\in U} \pi_{r,u}(M_u)\subset V\cdot Y_0$. By Kuratowski-Ryll-Nardzewski's measurable selection theorem, we can choose the sets $M_u$ such that the induced map $\Theta: M_X\to M_Y$ which restricts to $\pi_{r,u}\circ \pi_{l,u}^{-1}$  on $\pi_{l,u}(M_u)$  is measurable. By \eqref{eqn:equivalence-on-cross-sections}, $\Theta$ induces an isomorphism between the restriction of the product measures $(\frac{1}{\lambda_G(U)}\lambda_G|_U)\otimes \mu_{X_0}$ and $(\frac{1}{\lambda_H}(V)\lambda_H|_V)\otimes (C\cdot \mu_{Y_0})$. Moreover, it follows from \cite{KyePetVae-15} (see also \cite[Theorem 3.6(2)]{KoiKyeRau-21}) that these product measures coincide with $\mu_X|_{M_X}$, respectively $\mu_Y|_{M_Y}$, up to a constant $C_1>0$. In particular, $\mu_X(M_X)=C_1\mu_Y(M_Y)>0$.

For $R>0$ we define
\[
    M_{R,u}:= \left\{s\in M_u \mid (\phi_u(s),\psi_u(s))\in B_R^G(1)\times B_R^H(1)\right\}.
\]
The restriction of $\Theta$ to $M_{X,R}:=\bigsqcup_{u\in U} \pi_{l,u}(M_{R,u})$ is still an isomorphism of measure spaces with image $M_{Y,R}:=\bigsqcup_{u\in U} \pi_{r,u}(M_{R,u})$ with respect to the restrictions of the probability measures $\mu_{X}$ and $\mu_Y$ up to a multiplicative constant, which preserves the orbit equivalence relation. Since $M_X=\cup_{R\geq 0} M_{X,R}$ and $M_Y=\cup_{R\geq 0} M_{Y,R}$, there is an $R_0>0$ such that for all $R\geq R_0$ we have $\mu_X(M_{X,R})=C_1\cdot \mu_Y(M_{Y,R})>0$.

We have now maneuvered ourself into the position where we can define new fundamental domains $X'_R$ and $Y'_R$ for every $R\in (0,\infty]$ as follows. We define measurable maps $\phi_R: Y \to G$ and $\psi_R: X\to H$ by setting $\phi_R(x):=\phi_u(x,y)$ (resp. $\psi_R(y)=\psi_u(x,y)$) if $(x,y)\in M_{R,u}$ for $u\in U$, and $\phi_R(x)=1_G$ (resp. $\psi_R(y)=1_H$) otherwise. We then define
\[
    X'_R:= \left\{\psi_R(x)\cdot x \mid x\in X\right\} \mbox{ and } Y'_R:=\left\{(\phi_R(y))^{-1}\cdot y \mid y\in Y\right\}.
\]
By construction of $X'_R$ and $Y'_R$ the associated probability measures $\mu_{X'_R}$ and $\mu_{Y'_R}$ are isomorphic to  $\mu_X$ and $\mu_Y$ under the isomorphisms $X\to X'_R$, $x\mapsto \psi_R(x)\cdot x$ and $Y\to Y'_R$, $y\mapsto (\phi_R(y))^{-1}\cdot y$ and, moreover,
\begin{equation}\label{eqn:intersection}
    X'_R\cap Y'_R \supseteq \left\{\psi_u(x,y)\cdot x\mid (x,y)\in M_{R,u}, u \in U \right\}= \left\{ \phi_u(x,y)^{-1}\cdot y \mid (x,y)\in M_{R,u}, u \in U\right\}.
\end{equation}
It thus follows from the above discussion that for $R\geq R_0$ we have $\mu_{X'_R}|_{X'_R\cap Y'_R}=C_1\cdot \mu_{Y'_R}|_{X'_R\cap Y'_R}$ and $\mu_{X'_R}(X'_R\cap Y'_R)>0$.

A priori the inclusion in \Cref{eqn:intersection} could be strict. The important point for the remainder of our argument is that the right hand side has positive measure and is contained in $X_R'\cap Y_R'$. To simplify notation we will thus now assume that equality holds in \Cref{eqn:intersection}.

The cocycles $\alpha'_R: G\times X \to H$ and $\beta'_R: H\times Y \to G$ associated with the measure equivalence coupling with respect to the fundamental domains $X'_R$ and $Y'_R$ are then defined by
\[
    \alpha'_R(g,x)=\psi_R(T^g(x))\cdot \alpha(g,x)\cdot \psi_R(x)^{-1} \mbox{ and } \beta'_R(h,y)=\phi_R(S^h(y))^{-1}\cdot \beta(h,y)\cdot \phi_R(y).
\]

Since by definition $|\psi_R(x)|_H, |\phi_R(y)|_G\leq R$ for every $x\in X$ and $y\in Y$, we have
\[
    |\alpha'_R(g,x)|_H \leq 2 R + |\alpha(g,x)|_H \mbox{ and } |\beta'_R(h,y)|_G\leq 2 R + |\beta(h,y)|_H 
\]
for all $(g,x)\in G\times X$ and $(h,y)\in H\times Y$. We deduce that $\alpha'_R(g,\cdot)$ and $\beta'_R(h,\cdot)$ are $L^p$ integrable for $p\geq 1$ if and only if $\alpha(g,\cdot)$ and $\beta(h,\cdot)$ are.

We will conclude by proving that for almost every $x\in X'_R\cap Y'_R$ the map $\alpha'_R(\cdot, x)$ is measure preserving onto its image in $H$ when restricted to $\left\{g\in G \mid T^g x \in X'_R\cap Y'_R\right\}$. A straight-forward argument shows that for such $g$ we have $\beta'_R(\alpha'_R(g,x),x)=g$ and $T^g x = S^{\alpha'_R(g,x)}x$ (see \cite[p. 121]{Aus-16} for details). Thus, $\alpha'_R(\cdot, x)$ is injective.

Consider $A=X'_R\cap Y'_R$, and let $\mathcal{R}_A$ be the common equivalence relation on $A$ induced by the actions $T$ of $G$ and $S$ of $H$. Note that elements of $\mathcal{R}_A$ can (uniquely) be written in two different ways: $(a,T^ga)$, and $(a,S^ha)$, where $a\in A$, $g\in G$ and $h=\alpha'_R(g,a)\in H$. This gives two measures $\nu_A$ and $\nu'_A$ on $\mathcal{R}_A$, defined as the restrictions of $\mu_A\otimes \lambda_G$ and $\mu_A\otimes \lambda_H$. 
We claim that these two measures coincide, which implies that the image of $\nu_A$ under the map $(a,g)\to (a,\alpha'_R(g,a))$ equals $\nu'_A$, and therefore that $\alpha'_R(\cdot, a)$ induces an isomorphism of measure spaces from $\{g\in G\mid T^ga\in A\}$ to $\{h\in H\mid S^ha\in A\}$, respectively equipped with $\lambda_G$ and $\lambda_H$.
We turn to the proof of the claim. For this, consider the two maps $(a,T^ga)\mapsto g^{-1}\cdot T^{g}a$, and $(a,S^ha)\mapsto h\cdot a$ from $\mathcal{R}_A$ to $\Omega$. Note that both maps induce isomorphisms of measure spaces respectively from $(\mathcal{R}_A,\nu_A)$ and $(\mathcal{R}_A,\nu'_A)$ to their images in $\Omega$. But these two maps actually coincide: indeed, since $T^{g}a\in X'_R$ we have 
\[
g^{-1}\cdot T^{g} a=g^{-1} \cdot \alpha_R'(g,a)\cdot g\cdot a=\alpha_R'(g,a)\cdot a = h\cdot a.
\]
So the claim is proved. 

The same statement holds by symmetry for $\beta_R'$.
Setting $X'=X'_R$ and $Y'=Y'_R$ completes the proof of the proposition.
\end{proof}

By \Cref{prop:intersecting-fundamental-domains} we may now assume that the fundamental domains $X$ and $Y$ are chosen such that the intersection $A:=X\cap Y$ has positive measure, the cocycles $\alpha$ and $\beta$ are $L^1$-integrable and their restrictions to $A$ are measure preserving in the sense stated in the proposition. We will use this as standing assumption for the remainder of this appendix.

\subsection{Bowen's result for locally compact groups}

For a locally compact Polish group $G$ equipped with the Haar measure, we denote by ${\rm gr}_G$ its growth function. With the tailored choices of fundamental domains $X$ and $Y$ provided by \Cref{prop:intersecting-fundamental-domains} at hand, there is a straight-forward adaptation of Bowen's proof of \cite[Theorem B.10]{Aus-16} to locally compact Polish groups, yielding the following result. 

Given two increasing functions $f,g:\R_+\to \R_+$, we write $f\preceq g$ if there exists a constant $C\geq 1$ such that $f(t)\leq Cg(Ct)+C$, and $f\asymp g$ if $f\preceq g\preceq f$. Recall that for a locally compact compactly generated group $G$, the growth function which associates to $r\in \R_+$ the volume of the ball of radius $r$ is invariant under a change of compact generating set up to $\asymp$-equivalence of functions. The volume growth $V_G$ of a locally compact compactly generated group $G$ is the $\asymp$-equivalence class of its growth function with respect to any compact generating set.
\begin{theorem}\label{thm:Bowen-lc}
    Let $G$ and $H$ be $L^1$-measure equivalent compactly generated locally compact Polish groups. Then $V_G(n) \asymp V_H(n)$.
\end{theorem}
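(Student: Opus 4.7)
By symmetry it suffices to prove $V_G \preceq V_H$. The plan is to compute the $\mu$-measure of the set $B_n^G \cdot A \subset \Omega$ in two ways, where $A := X \cap Y$, using the two identifications of $\Omega$ with $G \times Y$ and $H \times X$ provided (with compatible measures on $A$) by Proposition~\ref{prop:intersecting-fundamental-domains}. Via the first identification $(g,y) \mapsto g \cdot y$, the set $B_n^G \cdot A$ corresponds to $B_n^G \times A$, so $\mu(B_n^G \cdot A) = \lambda_G(B_n^G) \cdot \mu_Y(A) = V_G(n) \cdot \mu_Y(A)$.

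Via the second identification $(h,x) \mapsto h \cdot x$, each point $g \cdot x \in \Omega$ with $x \in X$ is represented by $(\alpha(g,x)^{-1}, T^g x) \in H \times X$. I will introduce the subset
\[
    E_n := \{g \cdot x \in \Omega : g \in B_n^G,\ x \in A,\ |\alpha(g,x)|_H \leq Kn\}
\]
for a constant $K$ to be chosen later. By construction $E_n$ sits inside $B_{Kn}^H \cdot X$ under the second identification, hence
\[
    \mu(E_n) \leq \lambda_H(B_{Kn}^H) \cdot \mu_X(X) = V_H(Kn) \cdot \mu_X(X).
\]

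The crucial step is showing that $E_n$ occupies a definite fraction of $B_n^G \cdot A$. Here I will combine the $L^1$-integrability of $\alpha$ on a compact generating set $S_G \subset G$ with the cocycle identity and the $\mu_X$-invariance of the $G$-action to extract a uniform bound $\int_X |\alpha(g,x)|_H \, d\mu_X \leq C_0 \cdot |g|_G$ for every $g \in G$. Markov's inequality then yields $\mu_X(\{x : |\alpha(g,x)|_H > Kn\}) \leq C_0/K$ uniformly for $g \in B_n^G$. Integrating over $g \in B_n^G$ bounds the measure of the complement $(B_n^G \cdot A) \setminus E_n$ by a constant times $V_G(n)/K$, and choosing $K$ sufficiently large (depending only on $C_0$, $\mu_X(X)$ and $\mu_Y(A)$) forces $\mu(E_n) \geq \tfrac{1}{2} V_G(n) \mu_Y(A)$. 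Combining with the upper bound yields $V_G(n) \lesssim V_H(Kn)$; the symmetric argument applied to $\beta$ gives $V_H \preceq V_G$.

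The main obstacle is the measure-theoretic setup: the two identifications of $\Omega$ must carry the same measure on $A$, and the cocycle $\alpha$ must be measure-preserving when restricted to the orbits that stay in $A$, so that the switch between the two descriptions of $B_n^G \cdot A$ is legitimate. Both features are precisely what Proposition~\ref{prop:intersecting-fundamental-domains} is designed to provide, so once it is invoked the remainder of the argument is an unravelling of definitions plus a Markov inequality. The derivation of the uniform $L^1$ bound from the pointwise integrability condition follows the standard telescoping argument using the cocycle identity together with the fact (see \cite[A.2]{BadFurSau-13}) that on a compact generating set the essential supremum of $|\alpha(\cdot, x)|_H$ is integrable.
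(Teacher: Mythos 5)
Your argument is correct, but it proceeds by a noticeably more direct route than the one the paper takes. The paper adapts Bowen's original proof of \cite[Theorem~B.10]{Aus-16}, whose central device is a measurable $C$-to-one map $\psi\colon X\to Y$ of the form $\psi(x)=\phi(x)\cdot x$; the material around \Cref{prop:Austin-B6} (together with the ergodicity-of-dense-subgroups lemma) exists precisely to construct this map in the locally compact setting. You avoid the $C$-to-one map entirely: the double count of $\mu(B_n^G\cdot A)$ with $A=X\cap Y$, combined with Markov's inequality and the uniform bound $\int_X|\alpha(g,\cdot)|_H\,d\mu_X\lesssim|g|_G$ from \cite[A.2]{BadFurSau-13}, gives $V_G(n)\lesssim V_H(Kn)$ directly, and symmetry finishes the proof. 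This is in effect the streamlined variant that the paper alludes to when discussing the asymmetric \Cref{thm:BowenAsym} (the route through \cite[Theorem~3.2]{DKLMT-22}), specialised to $p=1$. Both approaches lean equally on \Cref{prop:intersecting-fundamental-domains}---getting $\mu_X|_A\simeq\mu_Y|_A$ on a set of positive measure is still the essential locally compact input---but yours trades the $C$-to-one machinery for an explicit quantitative comparison. Two small points worth adding to make the proof watertight: first, like the paper, you should begin by reducing to the case where both $G$ and $H$ are non-discrete (e.g.\ by crossing each with $S^1$), since \Cref{prop:intersecting-fundamental-domains} is stated under that standing hypothesis; second, you need $S_H$ symmetric so that $|\alpha(g,x)^{-1}|_H=|\alpha(g,x)|_H$ in the containment $\iota_H^{-1}(E_n)\subseteq B_{Kn}^H\times X$, which is standard but should be said. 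Neither affects the substance of the argument.
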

By replacing each group by a direct product with $S^1$, we can reduce to the case when both groups are non-discrete.
Most of Bowen's arguments generalize immediately to the non-discrete case with the following modifications:
\begin{enumerate}
    \item we replace the counting measures on $G$ and $H$ by the Haar measures on $G$ and $H$ everywhere;
    \item we replace finite generating sets by compact generating sets $S_G$ and $S_H$ and choices of finite subsets of $G$ (resp. $H$) by choices of subsets of uniformly bounded word metric (e.g. for the set $W$ in the proof of \cite[Theorem B.9]{Aus-16});
    \item we use that by \cite[Appendix A.2]{BadFurSau-13} there are constants $a, A > 0$ such that for every $g\in G$
    \[
        \int_X |\alpha(g,x)|_H d\mu_X(x) \leq A\cdot |g|_G. 
    \]
\end{enumerate}

Indeed, one readily verifies that the only place where an additional argument is required to pass from the discrete to the non-discrete case, is the first paragraph of the proof of \cite[Theorem B.6]{Aus-16}. Here Bowen argues that there is a measurable $C$-to-one map $\psi : X\to Y$ of the form $\psi(x)=\phi(x)\cdot x$ for some measurable map $\phi: X\to G\times H$. The argument given in Bowen uses implicitly that the groups under consideration are countable. However, one can extend his argument to locally compact Polish groups by reducing to a countable dense subgroup, as we shall now explain.
    
\begin{proposition}\label{prop:Austin-B6}
    Let $G \acts (X,\mu)$ be an ergodic pmp Standard Borel action of a locally compact Polish group and let $A\subset X$ be a set of positive measure. Then there is a measurable map $\chi: X \to G$ and a positive integer $C\geq 1$ such that the map $\psi : X \to A$, $x\mapsto T^{\chi(x)}\cdot x$ is well-defined, measurable and at most $C$-to-one. 
\end{proposition}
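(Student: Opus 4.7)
The plan is to reduce the statement to the already-established countable-group version (essentially Bowen's argument, used by Austin in \cite[Theorem B.6]{Aus-16}) by restricting the $G$-action to a countable dense subgroup. Concretely, since $G$ is Polish, I will pick a countable dense subgroup $\Gamma \leq G$ and work with the restricted action $\Gamma \curvearrowright (X,\mu)$, which is automatically pmp because $\mu$ is $G$-invariant.

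The key step will be to show that this restricted action is still ergodic. For this I will use that the measure-preserving Borel action of the Polish group $G$ on $(X,\mu)$ induces a strongly continuous unitary representation on $L^2(X,\mu)$. Consequently, for every Borel set $E \subseteq X$, the map $g \mapsto \mathbf{1}_{g\cdot E} \in L^2(X,\mu)$ is continuous, so the stabiliser $\{g \in G : \mu(g\cdot E \,\triangle\, E) = 0\}$ is a closed subgroup of $G$. If $E$ is $\Gamma$-invariant, this closed subgroup contains $\Gamma$, hence all of $\overline{\Gamma} = G$, so $E$ is essentially $G$-invariant; ergodicity of the $G$-action then forces $\mu(E) \in \{0,1\}$, as desired.

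With ergodicity of the $\Gamma$-action in hand, the countable-group version of the statement (obtained directly from Bowen's argument, since $\Gamma$ is countable, the action is ergodic pmp, and $A$ has positive measure) supplies a positive integer $C \geq 1$ and a measurable map $\chi_0 \colon X \to \Gamma$ such that $\psi(x) := \chi_0(x) \cdot x$ lies in $A$ for almost every $x$ and is at most $C$-to-one. Composing $\chi_0$ with the inclusion $\Gamma \hookrightarrow G$ produces the required $\chi \colon X \to G$; because the $\Gamma$-action is literally the restriction of the $G$-action, the map $x \mapsto T^{\chi(x)}\cdot x$ coincides with $\psi$, and in particular inherits the $C$-to-one property.

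The hardest point is really the ergodicity transfer from $G$ to $\Gamma$: it relies on the strong continuity of the associated unitary representation, which is a standard but nontrivial fact for measure-preserving Borel actions of Polish groups. Everything else—the choice of $\Gamma$ (using separability of $G$), the invariance of the measure, and the preservation of the $C$-to-one property under restriction—is straightforward.
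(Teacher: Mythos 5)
Your proof is correct and follows essentially the same route as the paper: pick a countable dense subgroup $\Gamma\leq G$, show the restricted $\Gamma$-action is still ergodic via continuity of the Koopman-type representation (you use the unitary representation on $L^2$; the paper invokes the norm-continuous action on $L^1$, citing Ornstein--Weiss -- the two are interchangeable here), and then apply the known countable-group statement. The only cosmetic difference is the choice of $L^2$ versus $L^1$, which changes nothing substantive.
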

\begin{proof}
    The proposition is well-known if $G$ is discrete and countable. Moreover, being Polish, $G$ admits a dense countable subgroup, which by the following lemma acts ergodically on $X$. This proves the proposition. 
\end{proof}
 
\begin{lemma}
    Let $G\acts (X,\mu)$ be an ergodic action of a locally compact Polish group on a finite Standard Borel space. If $\Gamma$ is a dense subgroup of $G$, then the action of $\Gamma$ is also ergodic.
\end{lemma}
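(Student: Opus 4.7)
The plan is to argue by contraposition on invariant sets. Suppose $A\subseteq X$ is a $\Gamma$-invariant measurable subset, i.e.\ $\mu(\gamma\cdot A\triangle A)=0$ for every $\gamma\in\Gamma$. The goal is to deduce that $A$ is in fact $G$-invariant; ergodicity of $G\acts(X,\mu)$ will then force $\mu(A)\in\{0,1\}$ and the lemma follows.

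The key ingredient is the continuity of the orbit map $\Phi_A\colon G\to\mathrm{MALG}(X,\mu)$, $g\mapsto g\cdot A$, where the measure algebra is equipped with the metric $d(B,C)=\mu(B\triangle C)$. This is a classical fact for measurable, measure-preserving actions of Polish groups on standard probability spaces: approximate $\mathds{1}_A$ in $L^2(X,\mu)$ by a finite linear combination of indicators coming from a countable generating algebra (or, after realising $X$ as a compact Polish space via Kuratowski's theorem, by continuous bounded functions), for which continuity of $g\mapsto f\circ g^{-1}$ is immediate; then use that the $G$-action is isometric on $L^2(X,\mu)$ to pass to the limit and obtain continuity of $g\mapsto \mathds{1}_A\circ g^{-1}$ in $L^2$, hence of $\Phi_A$.

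Granting this continuity, the map
\[
\psi\colon G\to [0,1],\qquad \psi(g):=\mu(g\cdot A\triangle A)
\]
is continuous. By assumption $\psi$ vanishes on $\Gamma$, and $\Gamma$ is dense in $G$, so $\psi\equiv 0$. Therefore $A$ is $G$-invariant modulo null sets, and the ergodicity hypothesis on $G\acts(X,\mu)$ gives $\mu(A)\in\{0,1\}$, proving ergodicity of $\Gamma\acts(X,\mu)$.

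I do not anticipate any real obstacle: the only non-trivial point is the continuity of the orbit map on the measure algebra, which is a standard piece of the ergodic-theoretic toolkit for measurable actions of Polish groups (and which does not even require local compactness of $G$). Everything else is a direct application of density together with ergodicity of the ambient group.
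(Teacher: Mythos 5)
Your proof is correct and takes essentially the same route as the paper: the paper simply notes that the $G$-action on $L^1(X,\mu)$ is norm-continuous (citing Ornstein--Weiss), and then the density argument on a $\Gamma$-invariant set is implicit. You spell out exactly this continuity argument (working with the orbit map into the measure algebra, i.e.\ in $L^2$ rather than $L^1$, which is equivalent here) and make the density step explicit.
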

\begin{proof}
It is enough to see that the action of $G$ induces a norm-continuous action $G\acts L^1(X,\mu)$, which is well-known (see for instance \cite[Corollary II.1.2]{Ornstein-Weiss}).   
\end{proof}

To construct the $C$-to-one map $\psi$ required in \cite[Theorem B.6]{Aus-16}, we choose $\phi(x)=(\chi(x),\alpha(\chi(x),x))$, where $\chi$ is as in \Cref{prop:Austin-B6}. Indeed, by the definition of the cocycle $\alpha$ we have $T^{\chi(x)}\cdot x=\phi(x)\cdot x=\psi(x)$, implying that $\psi$ is $C$-to-one by \Cref{prop:Austin-B6}. The constant $C$ obtained from our proof is bounded above by $\left\lceil \frac{\mu_X(X)}{\mu_X(A)}\right\rceil$. By choosing $\phi$ differently, we could also recover Bowen's upper bound of $\left\lceil \frac{\mu_X(X)}{\mu_Y(Y)}\right\rceil$.

As we already mentioned, the remainder of Bowen's proof now adapts readily to the locally compact case following the aforementioned modifications. We thus obtain \Cref{thm:Bowen-lc}.

\subsection{Asymmetric quantitative version of Bowen's theorem}
In \cite[Theorem 3.2]{DKLMT-22}, a slight variant of Bowen's proof is exploited to yield a very general monotonicity result regarding the volume growth. For our classification of groups of polynomial growth up to $L^p$-ME for $p\leq 1$, we need the following more specific version of that statement.

\begin{theorem}\label{thm:BowenAsym}
		Let  $0<p\leq 1$ and let $G$ and $H$ be compactly generated locally compact Polish groups. Suppose there exists a ME coupling between $G$ and $H$ such that the cocycle $\alpha:G\times X_H\to H$ is $L^p$-integrable, then 
		\[V_G(n)\preceq V_H(n^{1/p}).\]
	\end{theorem}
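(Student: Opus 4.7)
The plan is to encode the cocycle as a single measure-preserving map that packages $G$-translation on $A := X \cap Y$ into $H$-translation on $X$, then combine this with an $L^p$ cocycle estimate and Markov's inequality to compare volumes directly. The crucial technical input that replaces the triangle-inequality bookkeeping of Bowen's original $L^1$ argument is the subadditivity of $t \mapsto t^p$ on $[0,\infty)$, valid for $p \leq 1$.

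The first step is a subadditive $L^p$ cocycle bound. Fix a compact generating set $S_G$ for $G$; for $g = s_1 \cdots s_n \in G$ with $|g|_G = n$ and $s_i \in S_G$, iterating the cocycle identity $\alpha(g_1 g_2, x) = \alpha(g_1, T^{g_2}x)\cdot \alpha(g_2, x)$ and applying the triangle inequality in $H$ gives $|\alpha(g,x)|_H \leq \sum_{i=1}^n |\alpha(s_i, T^{s_{i+1}\cdots s_n} x)|_H$. Raising to the $p$-th power (subadditivity since $p \leq 1$), integrating over $X$, and using the $\mu_X$-invariance of the $T$-action, one obtains
\[
\int_X |\alpha(g,x)|_H^p \, d\mu_X(x) \leq C_0 \, |g|_G,
\]
where $C_0 = \sup_{s \in S_G} \int_X |\alpha(s,\cdot)|_H^p \, d\mu_X < \infty$ by \cite[A.2]{BadFurSau-13}.

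Next, use \Cref{prop:intersecting-fundamental-domains} to choose fundamental domains $X, Y$ with $A := X \cap Y$ of positive measure and $\mu_X|_A = C\mu_Y|_A$. Define
\[
\Phi : G \times A \to H \times X, \qquad (g,x) \mapsto (\alpha(g,x),\, T^g x).
\]
Composing $\Phi$ with the measure isomorphism $(h,y) \mapsto h^{-1}\cdot y$ from $(H \times X, \lambda_H \otimes \mu_X)$ onto $(\Omega,\eta)$ recovers $(g,x) \mapsto g\cdot x$, which is itself measure-preserving from $(G \times A, \lambda_G \otimes \mu_Y|_A) \hookrightarrow (G \times Y, \lambda_G \otimes \mu_Y)$ onto its image in $\Omega$. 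Hence, for every measurable $E \subseteq G \times A$,
\[
(\lambda_G \otimes \mu_X|_A)(E) = C \cdot (\lambda_H \otimes \mu_X)(\Phi(E)).
\]

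Applying this with $E_R := \{(g,x) \in B_G(n) \times A : |\alpha(g,x)|_H \leq R\}$, since $\Phi(E_R) \subseteq B_H(R) \times X$ we obtain $(\lambda_G \otimes \mu_X|_A)(E_R) \leq C \, V_H(R) \, \mu_X(X)$. Combining this with the Fubini-Markov bound $(\lambda_G \otimes \mu_X|_A)(B_G(n) \times A \setminus E_R) \leq C_0 \, n \, V_G(n) / R^p$ coming from Step 1 yields $V_G(n) \mu_X(A) - C_0 n V_G(n)/R^p \leq C \, V_H(R) \, \mu_X(X)$, and choosing $R = (2C_0 n / \mu_X(A))^{1/p}$ absorbs the error term into the left hand side to give the desired $V_G(n) \preceq V_H(n^{1/p})$. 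The main obstacle is establishing the measure-preservation of $\Phi$ (up to the constant $C$) in Step~2: it rests on the careful compatibility between the $G$- and $H$-parameterizations of $\Omega$ engineered in \Cref{prop:intersecting-fundamental-domains}; once this coding map is in place, the volume comparison reduces to routine Markov-Fubini manipulations.
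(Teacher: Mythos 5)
Your proof is correct, and it follows the same essential route as Bowen's original argument and its quantitative refinement in \cite[Theorem 3.2]{DKLMT-22}, to which the paper delegates the details; the paper itself only indicates the modifications needed for the locally compact setting (replace cardinality by Haar measure, use \Cref{prop:intersecting-fundamental-domains} to arrange intersecting fundamental domains, and invoke \cite[A.2]{BadFurSau-13} for the uniform bound over $S_G$). Your packaging of the covering step as a single measure-preserving (a.e.) injection $\Phi\colon G\times A\to H\times X$, followed by Fubini--Markov, is a clean equivalent of the pointwise $\alpha(\cdot,x)\colon D_x\to E_x$ formulation used by Austin and Bowen; the key insight that subadditivity of $t\mapsto t^p$ for $p\leq 1$ replaces the $L^1$ triangle-inequality bookkeeping is exactly right, and the one-sided use of only the $L^p$-integrability of $\alpha$ (not $\beta$) correctly respects the asymmetry of the statement.
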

The result in \cite{DKLMT-22} is again only stated for the finitely generated case. Its proof generalises directly to the above version for compactly generated locally compact Polish groups with the same modifications as for the proof of Bowen's theorem. Thus, we don't give further details here. Moreover, as we already mentioned, Correia and Paucar \cite{CorPau-25} will give a detailed proof of an even more general version of \cite[Theorem 3.2]{DKLMT-22} for locally compact groups in a forthcoming paper.

\subsection{Austin's Theorem for locally compact groups}\label{app:Austin-lc}

As in the case of Bowen's proof of integrable measure invariance of growth, the proof of Austin's main theorem in \cite{Aus-16} adapts readily to the locally compact case after choosing the fundamental domains and cocycles as in \Cref{prop:intersecting-fundamental-domains}. This yields the following version of \cite[Theorem 1.1]{Aus-16}.

\begin{theorem}\label{thm:Austins-lc}
    Let $G$ and $H$ be simply connected locally compact or finitely generated nilpotent groups which are integrably measure equivalent. Then there is a bilipschitz bijection between their asymptotic cones, or, equivalently, an isomorphism $\gr(G)\cong \gr(H)$ between their associated Carnot graded groups.
\end{theorem}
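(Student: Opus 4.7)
The plan is to reduce to the case where both groups are non-discrete, and then reprise Austin's original proof of \cite[Theorem 1.1]{Aus-16} with the modifications already developed for Bowen's theorem above. For the reduction, if one of the groups is discrete we replace it by its direct product with a non-discrete compact group; this modification is $L^\infty$-ME (hence integrably ME) and does not affect the associated Carnot graded group. So we may assume $G$ and $H$ are non-discrete, placing ourselves in the setting of \Cref{prop:intersecting-fundamental-domains}.

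We then invoke \Cref{prop:intersecting-fundamental-domains} to select fundamental domains $X$ and $Y$ whose intersection $A=X\cap Y$ has positive measure, on which the Haar measures agree up to a positive constant, such that the cocycles $\alpha$ and $\beta$ remain $L^1$-integrable, and such that for a.e.\ $x\in A$ the restriction of $\alpha(\cdot,x)$ to $\{g\in G\mid T^g x\in A\}$ is an isomorphism of measure spaces (equipped with the Haar measures of $G$ and $H$) onto its image in $H$, with the symmetric property for $\beta$. This measure-preserving property on $A$ is the locally compact analogue of the equality of counting measures on $X\cap Y$ that Austin uses implicitly throughout \cite{Aus-16}; it is precisely what is needed in order to exchange statements symmetrically between $G$ and $H$ without losing measure under the cocycles.

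With this framework in place, we run through Austin's argument making the standard substitutions: (i) replace every occurrence of a counting measure by the appropriate Haar measure; (ii) replace finite subsets of $G$ and $H$ by Borel subsets of uniformly bounded word length; (iii) replace Austin's bounds on the average word length of cocycles by the general integrability bound $\int_X |\alpha(g,x)|_H\, d\mu_X(x)\leq A\cdot |g|_G$ from \cite[Appendix~A.2]{BadFurSau-13}, and symmetrically for $\beta$. Austin's strategy then produces, from a rescaled limit of the cocycle, an approximate bilipschitz bijection between balls in $G$ and $H$ of matching radii, which via an ultralimit argument yields a bilipschitz bijection between the asymptotic cones. The isomorphism $\gr(G)\cong \gr(H)$ follows by combining this with Pansu's theorem (\Cref{thm:Pansu}).

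The main obstacle is bookkeeping rather than a conceptual barrier: each combinatorial counting estimate in Austin's paper must be rewritten as a Haar-measure estimate, and one must verify that his measurable selections (e.g.\ the analogues of the at-most-$C$-to-one map $\psi$ in \cite[Theorem B.6]{Aus-16}) still exist in the locally compact setting. The latter is handled by restricting to a countable dense subgroup, exactly as in \Cref{prop:Austin-B6}, while the former adaptations are entirely mechanical once \Cref{prop:intersecting-fundamental-domains} is in hand.
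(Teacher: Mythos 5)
Your proposal follows the same route as the paper: reduce to the case where both groups are non-discrete, invoke \Cref{prop:intersecting-fundamental-domains} to obtain fundamental domains with positive-measure intersection on which the cocycles act as measure-space isomorphisms with respect to Haar measure, and then run Austin's argument with the standard counting-to-Haar and finite-to-compact substitutions, together with the integrability bound from \cite[Appendix~A.2]{BadFurSau-13} and Breuillard's locally compact version of Pansu's theorem. The only point you make more explicit than the paper is the reduction to the non-discrete case by taking a product with a compact group (the paper sketches this $S^1$ trick in the discussion of the locally compact Bowen theorem and takes it for granted here); this is correct, though one should note that one may equally well pass to the Losert--Malcev completion of the discrete factor to stay within the class of simply connected nilpotent Lie groups, and either route preserves the associated Carnot graded group up to isomorphism.
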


The starting point of Austin's proof of \Cref{thm:Austins-lc} for the finitely generated case is that one can choose the fundamental domains $X$ and $Y$ so that they intersect in a set of positive measure. In the non-discrete case a choice of fundamental domains with this property is provided by \Cref{prop:intersecting-fundamental-domains}. In \cite[Section 2.1]{Aus-16}, Austin uses this observation to extend the definition of the sets
\[
    D_x:= \left\{g\in G \mid T^gx\in X\cap Y\right\} \mbox{ for } x\in X
\]
and
\[
    E_y:= \left\{h\in H\mid S^hy \in X\cap Y\right\} \mbox{ for } y\in Y
\]
to arbitrary elements $x\in X\cup Y$. To do so for $D_x$ he uses that by ergodicity for almost every $y\in Y$ there is some $k\in H$ with $S^ky\in X\cap Y$ and then defines $D_y:= D_{S^ky} \cdot \beta(k,y)$. Similarly for $x\in X$ and $\ell\in G$ with $T^\ell x\in X\cap Y$ he defines $E_x:= E_{T^\ell x}\cdot \alpha(\ell,x)$. He then proceeds to extend the cocycles $\alpha$ and $\beta$ to $X\cup Y$ by using the above choices of $k$ and $\ell$ to define\footnote{There seems to be a typo in the definition of the extensions $\alpha(g,y)$ and $\beta(h,x)$ on p.122 of \cite{Aus-16}, which we corrected in the formulas given here.}
\[
    \alpha(g,y):= \alpha(g\beta(k,y)^{-1},S^ky)\cdot k \mbox{ and } \beta(h,x):= \beta(h\alpha(\ell,x)^{-1},T^\ell x) \cdot \ell.
\]
Austin then asserts that for every $x\in X\cup Y$ the maps $\alpha_x: g\mapsto \alpha(g,x)$ and $\beta_x : h\mapsto \beta(h,x)$ define inverse bijections $\alpha_x : D_x \to E_x$ and $\beta_x : E_x \to D_x$. Checking the well-definedness of all these definitions and their asserted properties is straight-forward, but a bit lengthy. The properties do not rely on discreteness and thus also remain true in the locally compact case. Moreover, by \Cref{prop:intersecting-fundamental-domains} for $x\in X\cap Y$ the map $\alpha_x : D_ x \to E_x$ is an isomorphism of measure spaces with respect to the respective restrictions of the Haar measures. Since left and right multiplication by a fixed element of $G$ (resp. $H$) is also an isomorphism of measure spaces, one deduces that in our setting for all $x\in X\cup Y$ the maps $\alpha_x: D_x \to E_x$ and $\beta_x : E_x\to D_x$ are inverse isomorphisms of measure spaces with respect to the respective restrictions of Haar measures.

With this at hand and the following modifications, the remainder of Austin's arguments in his proof of \Cref{thm:Austins-lc} for finitely generated groups generalise directly to the non-discrete case:
\begin{enumerate}
    \item we replace the counting measures on $G$ and $H$ by the Haar measures on $G$ and $H$ everywhere;
    \item we replace finite generating sets by compact generating sets $S_G$ and $S_H$ and choices of finite subsets of $G$ (resp. $H$) by choices of subsets of uniformly bounded word metric (e.g. for the set $W$ in the proof of \cite[Theorem B.9]{Aus-16});
    \item we use that by \cite[Appendix A.2]{BadFurSau-13} there is a constant $A > 0$ such that for every $g\in G$
    \[
        \int_X |\alpha(g,x)|_H d\mu_X(x) \leq A\cdot |g|_G. 
    \]
    \item We use that Pansu's Theorem holds in the locally compact case by work of Breuillard \cite{Breu-14}.
    \item We use that analogous versions of all results for finitely generated nilpotent groups used in Austin's proof hold for simply connected nilpotent Lie groups.
\end{enumerate}

\begin{remark}
    A substantial part (\cite[Section 3]{Aus-16}) of Austin's proof adapts directly to simply connected nilpotent Lie groups without requiring the material of Section \ref{section:crosssections} (nor Bowen's theorem, which also uses this material). The main ingredient in Austin's proof which really relies on that material is \cite[Proposition 4.4]{Aus-16}.
\end{remark}

\subsection{Cantrell's result for locally compact groups}\label{app:Cantrell}

Similar to Bowen's and Austin's results, the proof of \cite[Theorem B]{Cantrell} adapts readily to the locally compact case, yielding the following result.
\begin{theorem}\label{thm:Cantrell-lc}
    Let $G$ and $H$ be integrably measure equivalent simply connected nilpotent Lie groups with associated cocycles $\alpha: G\times X \to H$ and $\beta : H\times Y \to G$. Then there is a bilipschitz group isomorphism $\Phi : \gr(G)\to \gr(H)$ so that for all $g\in \gr(G)$ we have that
$\frac{1}{n}\bullet g_n \to g$ implies $\frac{1}{n}\bullet \alpha(g_n,x) \to \Phi(g)$ with high probability in $x\in X$. Similarly, $\beta$ induces an isomorphism $\Psi : \gr(H)\to \gr(G)$. Moreover, $\Psi=\Phi^{-1}$.
\end{theorem}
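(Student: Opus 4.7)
The plan is to follow Cantrell's scaling-limit argument from \cite{Cantrell}, but with the setup provided by \Cref{prop:intersecting-fundamental-domains} so that fundamental domains $X$ and $Y$ can be chosen to overlap on a set of positive measure, and so that the restricted cocycles $\alpha_x : D_x \to E_x$, $\beta_y : E_y \to D_y$ are measure-preserving bijections with respect to the Haar measures of $G$ and $H$ (just as in \Cref{app:Austin-lc}). After this setup, one extends $\alpha$ and $\beta$ to $X \cup Y$ exactly as Austin does, and the measure-preserving property is what replaces Cantrell's implicit reliance on counting measures on fundamental domains.

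With this setup in hand, I would run Cantrell's main construction of $\Phi$: given $g \in \gr(G)$, pick any sequence $g_n \in G$ with $\frac{1}{n}\bullet g_n \to g$, and analyse the limit in probability of $\frac{1}{n}\bullet\alpha(g_n,x)$. The proof of convergence to a well-defined group homomorphism $\Phi : \gr(G)\to \gr(H)$ relies on Pansu's theorem (available in the locally compact case by Breuillard \cite{Breu-14}), the $L^1$-control
\[
    \int_X |\alpha(g,x)|_H\, d\mu_X(x) \lesssim |g|_G
\]
from \cite[Appendix A.2]{BadFurSau-13}, and averaging estimates over sets of the form $\{x : \frac{1}{n}\bullet\alpha(g_n,x)\approx \Phi(g)\}$ using the Haar measures rather than counting measures. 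The cocycle identity, together with the homogeneity $\frac{1}{n}\bullet(g\ast g')$ of the rescaled product law, forces $\Phi$ to be a Lie group homomorphism in the limit; here one needs a clean statement about the asymptotic behaviour of iterated BCH products under $\frac{1}{n}\bullet$, which I would state and prove as a preparatory lemma (this is the slightly-hidden ingredient in Cantrell's paper that we make explicit, and along the way avoid the inductive argument he uses).

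For bilipschitzness of $\Phi$, the upper bound follows from the SBE between $G$ and $\gr(G)$ combined with the $L^1$ control of $\alpha$. The subtle part is injectivity/surjectivity of $\Phi$. Rather than follow the argument in the published version of \cite{Cantrell} (which we find unconvincing), I would build $\Psi : \gr(H)\to \gr(G)$ from $\beta$ in the same way, and then invoke a preliminary result from Austin \cite{Aus-16} — the existence of bilipschitz pairs of Lipschitz maps between asymptotic cones induced by the two cocycles — to deduce that $\Phi$ and $\Psi$ are mutually inverse. Concretely, the Lipschitz bound for $\beta$ gives that $\Psi \circ \Phi$ is a Lipschitz endomorphism of $\gr(G)$, and the cocycle identity $\beta(\alpha(g,x),\alpha(g,x)^{-1}\cdot_H x) = g$ together with the measure-preserving property on $X\cap Y$ forces $\Psi\circ\Phi = \mathrm{id}_{\gr(G)}$ in the scaling limit.

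The main obstacle I anticipate is twofold. First, the measurability bookkeeping once one extends $\alpha$, $\beta$ to $X \cup Y$ using the cross-section setup from \Cref{section:crosssections}: one must verify that all the push-forwards and convolutions that appear in the probabilistic estimates remain absolutely continuous with respect to Haar measure, which is where the measure-preserving restrictions to $X\cap Y$ from \Cref{prop:intersecting-fundamental-domains} are essential. Second, identifying the limit of $\frac{1}{n}\bullet(g_n\ast_G g'_n)$ with $\Phi(g)\ast_{\gr(H)}\Phi(g')$ in probability requires controlling the error from BCH terms of the form $\frac{1}{n}\bullet[g_n^{(i)},g_n^{(j)}]$ with $i+j \leq s_G$; this is the iterated-BCH statement alluded to above, and once stated explicitly it reduces to bilinearity of the bracket and the homogeneity of $\bullet$, but it needs to be carefully formulated before being plugged into the rest of the argument.
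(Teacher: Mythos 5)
Your overall roadmap matches the paper's: use \Cref{prop:intersecting-fundamental-domains} to get overlapping fundamental domains with measure-preserving restricted cocycles, run Cantrell's convergence argument with an explicit iterated-BCH lemma, and repair the bijectivity step via Austin. The iterated-BCH instinct is right: the paper isolates this into \Cref{prop:iterated-BCH} (a uniform bound on the number of $k$-fold commutators appearing in the $n$-fold BCH product, with coefficients of absolute value at most $1$) and then proves \Cref{lem:commutator-contributions-are-small}, a stronger version of Cantrell's Lemma~3.8 that avoids his induction. So far so good.

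However, there is a gap in the \emph{concrete} mechanism you propose for proving $\Psi\circ\Phi=\mathrm{id}_{\gr(G)}$. You write that the cocycle identity $\beta(\alpha(g,x),\alpha(g,x)^{-1}\cdot_H x)=g$, together with the measure-preserving property on $X\cap Y$, forces the result in the scaling limit. That is precisely the step the paper flags as problematic in the published version of Cantrell's \cite[Proposition 5.7]{Cantrell}: the positive-measure set of points at which the cocycle identity connects $\alpha$ and $\beta$ depends on $g$ (equivalently on $g_n$), and as $n\to\infty$ one has no control that allows fixing an $x$ simultaneously suitable for the scaling limits of $\alpha$ at $(g_n,x)$ and of $\beta$ at $(\alpha(g_n,x),\alpha(g_n,x)^{-1}\cdot_H x)$. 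Your preceding sentence, invoking a ``bilipschitz pairs of Lipschitz maps'' result of Austin, points to \cite[Theorem 4.1]{Aus-16}, which is the fix Cantrell himself gave in the first arXiv version; this is correct but heavier than necessary. The paper's argument is more direct: it only needs the easier \cite[Proposition 4.4]{Aus-16}, which is a probabilistic stability statement. One takes \emph{any} sequence $h_n\in H$ with $\tfrac1n\bullet h_n\to\Phi(\bar g)$; convergence of the rescaled $\alpha$ gives $d_H(h_n,\alpha(g_n,x))=o(n)$ with high probability, and \cite[Proposition 4.4]{Aus-16} upgrades this to $d_G(\beta(h_n,x),g_n)=o(n)$ with high probability. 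This shows $\tfrac1n\bullet\beta(h_n,x)$ converges (in probability) to the same limit as $\tfrac1n\bullet g_n$, i.e.\ to $\bar g$, hence $\Psi(\Phi(\bar g))=\bar g$. Crucially, this argument never evaluates $\beta$ at the $\alpha$-dependent points $\alpha(g_n,x)^{-1}\cdot_H x$, so it sidesteps the measurability/dependence issue entirely. You should either use this Proposition~4.4 route, or carefully justify the cocycle-identity route by explicitly quantifying the dependence of the good set on $g_n$ — but the latter is nontrivial and is exactly what the original published proof skipped.
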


Cantrell's proof of \Cref{thm:Cantrell-lc} in the finitely generated case has two steps. 

The first step is \cite[Proposition 5.4]{Cantrell}, which says that the rescaled cocycle  $\alpha$ converges in probability to an actual morphism $\phi$ between the Carnot-graded associated groups. As for \cite[Section 3]{Aus-16}, this part only exploits the integrability of the cocycle $\alpha$, and can be extended to simply connected nilpotent Lie groups, without the need of Section \ref{section:crosssections}. A key ingredient in this step is \cite[Proposition 3.1]{Cantrell}. Some parts of the proof of the latter result in \cite{Cantrell} can be simplified, as we will explain now. We denote $\alpha_{ab}(g,x):=\pi_{ab}(\alpha(g,x))$, where we identify $G$ with $\mathfrak{g}$ as before, fix a decomposition $\mathfrak{g}=\oplus_{i=1}^s A_i$ with $A_i\subset \gamma_{i}(\mathfrak{g})\setminus \gamma_{i+1}(\mathfrak{g})$, and define $\pi_{ab}: \mathfrak{g}\to A_1$ as the projection. We further define the projection $\pi_{com}: \mathfrak{g}\to \oplus _{i=2}^s A_i = \gamma_2(\mathfrak{g})$, the (abelian) average $\overline{\alpha_{ab}}(g):= \int_{X} \alpha_{ab}(g,x)d\mu_X(x)$ and denote by $d_1$ the metric on $\mathfrak{g}$ induced by the 1-norm $|\cdot |_1$ with respect to a fixed identification with $\mathfrak{g}=\oplus_{i=1}^s A_i\cong\R ^d$ that respects the decomposition.

\begin{proposition}[{\cite[Proposition 3.1]{Cantrell}}]\label{prop:convergence-to-abelian-average}
    Let $G$ be a simply connected nilpotent Lie group and let $g\in G$. Then $\frac{1}{n}\bullet \alpha(g^n,x)\to \overline{\alpha_{ab}} (g)$ in probability as $n\to \infty$. 
\end{proposition}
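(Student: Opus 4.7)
The plan is to apply the cocycle identity $\alpha(g^n,x)=\alpha(g,T^{g^{n-1}}x)*\cdots*\alpha(g,x)$ together with the iterated Baker--Campbell--Hausdorff (BCH) formula in $\mathfrak{h}$, analyzing each graded component $A_i$ of $\alpha(g^n,x)$ separately: the $A_1$-component should be asymptotic to $n\cdot \overline{\alpha_{ab}}(g)$, while the $A_i$-components for $i\geq 2$ should be $o(n^i)$ in probability.

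For the $A_1$-part, the projection $\pi_{ab}\colon H\to A_1$ is a group homomorphism, since $\gamma_2(H)$ is a normal subgroup and $H/\gamma_2(H)$ is abelian. The cocycle identity then linearizes to
\[
\pi_{ab}(\alpha(g^n,x))=\sum_{k=0}^{n-1}\alpha_{ab}(g,T^{g^k}x).
\]
As $\alpha_{ab}(g,\cdot)\in L^1(X,\mu_X)$ by $L^1$-integrability of the coupling, the mean ergodic theorem applied to the pmp action $T^g\acts (X,\mu_X)$ yields $L^1$-convergence of $\tfrac{1}{n}\sum_{k}\alpha_{ab}(g,T^{g^k}x)$ to the conditional expectation $\mathbb{E}[\alpha_{ab}(g,\cdot)\mid\mathcal{I}_g]$ onto the $T^g$-invariant $\sigma$-algebra. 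To identify this limit with the constant $\overline{\alpha_{ab}}(g)$, I would use the commuting $H$-action on $\Omega$ (along with \Cref{prop:intersecting-fundamental-domains}): $T^g$-invariance combined with $H$-equivariance forces the limit to be $(G\times H)$-invariant, and hence a.s.\ constant by ergodicity of the coupling.

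For the higher layers $A_i$ with $i\geq 2$, I would show $\tfrac{1}{n^i}\pi_i(\alpha(g^n,x))\to 0$ in probability. Expanding the product $X_1*\cdots*X_n$ with $X_k=\alpha(g,T^{g^k}x)$ via iterated BCH, $\pi_i(\alpha(g^n,x))$ decomposes as (a) a linear sum $\sum_k\pi_i(X_k)$, which is $O(n)$ in $L^1$ by integrability and hence $o(n^i)$ after rescaling; plus (b) finitely many iterated bracket terms of the form $\sum_{k_1<\cdots<k_j}[\pi_{i_1}(X_{k_1}),\ldots,\pi_{i_j}(X_{k_j})]$ with $j\geq 2$ and $i_1+\cdots+i_j=i$. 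Writing $\pi_1(X_k)=\overline{\alpha_{ab}}(g)+w_k(x)$ with $w_k$ of zero conditional mean, multilinearity of the bracket splits each such term into ``mean'' contributions---which vanish by alternation of the Lie bracket on repeated entries $\overline{\alpha_{ab}}(g)$---and ``fluctuation'' contributions expressible in terms of the partial sums $W_\ell=\sum_{k<\ell}w_k$, which satisfy $W_\ell=o(\ell)$ in $L^1$ by the ergodic theorem.

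The technical heart of the argument, and main obstacle, is making the $o(n^i)$ estimate on these fluctuation bracket-sums rigorous under only $L^1$-integrability. Summation-by-parts rewrites, for instance, $\sum_{k<\ell}[w_k,w_\ell]=\sum_\ell[W_\ell,w_\ell]$ via bilinearity; then, after a standard truncation of the cocycle (replacing $\alpha_{ab}$ by $\alpha_{ab}\cdot\mathbf{1}_{|\alpha_{ab}|\leq M}$ to pass from $L^1$ to $L^2$ bounds, with error going to $0$ as $M\to \infty$), a Cauchy--Schwarz argument combined with the $L^2$ mean ergodic bound $\|W_\ell\|_{L^2}=o(\ell^{1/2})$ on the truncated part yields $\|\sum_\ell[W_\ell,w_\ell]\|_{L^1}=o(n^2)$. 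An induction on the depth $j$ of bracket nesting extends this to all $i\geq 2$. The bookkeeping required to organize the many BCH terms simultaneously, and to switch between $L^1$-convergence of means and convergence in probability in the final statement, is the most delicate point.
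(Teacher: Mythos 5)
Your sketch takes a genuinely different route from the paper. The paper does not actually reprove Cantrell's Proposition 3.1 from scratch; it points out that his proof reduces to showing $|\pi_{com}(\alpha(g^n,x))|_H=o(n)$ in probability, and then supplies two tools to streamline and repair that argument: a uniform-in-$\eta$ deterministic estimate (\Cref{lem:commutator-contributions-are-small}), which asserts that a product of $n$ elements all close to $\eta v$ in abelianization and all with commutator part $\lesssim\eta M$ has commutator part $\lesssim \eta n\delta$, and an explicit iterated-BCH count (\Cref{prop:iterated-BCH}) used in its proof. The probabilistic content is then confined to a truncation step that puts the random cocycle values into the hypotheses of the deterministic lemma. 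You instead work directly with the random product, splitting $\pi_1(X_k)=\overline{\alpha_{ab}}(g)+w_k$ and controlling the fluctuation brackets via Abel summation and ergodic averages.

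There are, however, genuine gaps. First, the claim $\|W_\ell\|_{L^2}=o(\ell^{1/2})$ is not what the $L^2$ mean ergodic theorem gives: it gives only $\|W_\ell\|_{L^2}=o(\ell)$, and there is no reason for a $\sqrt{\ell}$ rate without additional mixing or spectral structure. This particular overstatement is fixable for $j=2$ by replacing Cauchy--Schwarz with a Hölder $L^1\times L^\infty$ split after truncation (so $\|[W_{\ell},w_\ell]\|_{L^1}\lesssim M\,o(\ell)$, which still sums to $o(n^2)$), but as written the step is wrong. Second, and more seriously, your decomposition only treats the factors $\pi_1(X_{k_l})$; the bracket terms in the BCH expansion also involve $\pi_{i_l}(X_{k_l})$ with $i_l\geq 2$, i.e.\ the commutator parts $\pi_{com}(X_k)$, whose magnitudes are a priori of order $|X_k|_H^{i_l}$ rather than $|X_k|_H$. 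It is exactly these terms that Case 2 of the paper's \Cref{lem:commutator-contributions-are-small} is designed for: a $\pi_{com}$ factor pushes the commutator further down the filtration, making its Guivarc'h contribution $O(n^{k/(k+s)})=o(n)$, and nothing in your outline plays this role. Third, the ``induction on the depth $j$'' is left as a slogan; because the Lie bracket of $\mathfrak g$ is not graded when $G$ is not Carnot, a bracket $[a_1,\dots,a_j]$ of elements $a_l\in A_{i_l}$ has components in every $A_i$ with $i\geq i_1+\dots+i_j$, and you would need to organize the estimate across all these layers and across the non-left-normed bracket patterns that occur in the BCH expansion before the Abel-summation argument closes. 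Until these three points are addressed, your alternative does not yet constitute a proof, whereas the paper's \Cref{lem:commutator-contributions-are-small} handles the alternation, the $\pi_{com}$ factors, and the uniformity in the commutator depth in a single quantitative statement.
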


To prove \Cref{prop:convergence-to-abelian-average}, Cantrell shows that $|\pi_{com}(\alpha(g^n,x))|_H=o(n)$ in probability (see \cite[Proposition 3.4]{Cantrell}). Cantrell's proof of this is by induction and relies on several preliminary results and the cocycle identity. One can avoid the induction in his proof by replacing his \cite[Lemma 3.8]{Cantrell} by the following stronger result.
\begin{lemma}\label{lem:commutator-contributions-are-small}
    Let $G$ be a simply connected nilpotent Lie group, let $v\in A_1$ and let $M>0$. Then for all $\delta>0$ there exists some $\delta'>0$, $K\in N$ and $C>0$ so that for all $n\geq K$, $\eta \geq 1$ the following holds:

    If $g_1, \ldots, g_n \in G$ are elements so that, for all $i$,
    \begin{enumerate}
        \item $d_1(\pi_{ab}(g_i),\eta v) < \eta \delta' |v|_1$, and
        \item $|\pi_{com} (g_i)|_G < \eta M$,
    \end{enumerate}
    then $|\pi_{com}(g_1\cdot  \ldots \cdot g_n)|_G < \eta n \delta + C$.
\end{lemma}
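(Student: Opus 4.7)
The plan is to expand $g_1 \cdots g_n$ via the Baker--Campbell--Hausdorff (BCH) formula and to bound the projection of the result onto each graded piece $A_k$ separately, for $2 \leq k \leq s_G$. I would write $g_i = \eta v + w_i + z_i$ with $w_i := \pi_1(g_i) - \eta v \in A_1$ and $z_i := \pi_{com}(g_i) \in \gamma_2(\mathfrak{g})$; the hypotheses then give $\|w_i\| \lesssim \eta \delta' |v|_1$ and, via the Guivarc'h description of the word metric, $\|\pi_k(z_i)\| \lesssim (\eta M)^k$ for $2 \leq k \leq s_G$.

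Iterating BCH yields
\[
\pi_k(g_1 \cdots g_n) = \sum_{i=1}^n \pi_k(z_i) + \sum_{r=2}^{s_G} \sum_{B,\,(j_1,\ldots,j_r)} c_B\, \pi_k\bigl(B(g_{j_1}, \ldots, g_{j_r})\bigr),
\]
where the double inner sum runs over a finite set of bracket shapes $B$ (depending only on $s_G$) with uniformly bounded coefficients $c_B$, together with index tuples $(j_1, \ldots, j_r) \in \{1, \ldots, n\}^r$. The diagonal term will be controlled by subadditivity: from $\|\sum_i \pi_k(z_i)\| \leq n(\eta M)^k$ one gets $|\sum_i \pi_k(z_i)|_G \lesssim n^{1/k} \eta M$, which is $\leq n\eta\delta$ as soon as $n \geq K_1 := C_1 (M/\delta)^{s_G}$.

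For each bracket contribution $B(g_{j_1}, \ldots, g_{j_r})$, I would substitute $g_{j_l} = \eta v + w_{j_l} + z_{j_l}$ and expand multilinearly into brackets $B(u_1, \ldots, u_r)$ with $u_l \in \{\eta v, w_{j_l}\} \cup \{\pi_i(z_{j_l}) : i \geq 2\}$. The key observation is that the term with $u_l = \eta v$ for \emph{every} $l$ vanishes, because $v$ commutes with itself; every other term therefore carries a \emph{smallness factor}, either from a $w_{j_l}$ (of $\|\cdot\|$-size $\leq \eta \delta' |v|_1$) or from some $\pi_i(z_{j_l})$ (sitting in a higher graded piece). Applying \Cref{lem:multicomm}: for brackets whose inputs all lie in $A_1$ with at least one being a $w_{j_l}$, one has $\|B\| \lesssim \eta^r |v|_1^r \delta'$, and summing over $n^r$ indices and taking the $1/k$-th root yields $|\pi_k(\sum B)|_G \lesssim n \eta |v|_1 (\delta')^{1/k}$, which is $\leq n\eta\delta$ once $\delta' \leq (\delta/(C_2 |v|_1))^{s_G}$. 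For brackets with at least one $z$-component input, necessarily $r \leq k-1$, and the summed contribution is bounded by $n^{(k-1)/k}\eta(|v|_1 M)^{O(1)}$, which is $\leq n\eta \delta$ for $n \geq K_2 := C_3(\max(|v|_1, M)/\delta)^{s_G}$. Summing over $k \in \{2, \ldots, s_G\}$ and over the finitely many bracket shapes, and absorbing the additive constants from $|x|_G \simeq 1 + \sum_k \|\pi_k(x)\|^{1/k}$ into the constant $C$ of the statement, should produce the desired bound.

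The hardest step will be making the cancellation argument precise, which forces the choice $\delta' \sim (\delta/|v|_1)^{s_G}$. Without exploiting $[v,v]=0$, the all-$A_1$-inputs contribution would comprise $n^r$ brackets of $\|\cdot\|$-size $\eta^r |v|_1^r$ (without the factor $\delta'$), yielding a bound of order $n\eta|v|_1$ rather than $n\eta\delta$. The cancellation is most transparent in the case $k = r = 2$: a direct computation gives $\pi_2\bigl(\sum_{i<j}[a_i,a_j]\bigr) = \eta\,[v,\,\sum_i (2i - n - 1)\,w_i]_{\gr}$, whose Guivarc'h norm is $O(n\eta|v|_1 \sqrt{\delta'})$; the analogous reorganisation in higher $A_k$ should combine multilinearity of the iterated bracket with the vanishing $[v,v]=0$.
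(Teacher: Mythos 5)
Your proof is correct and follows essentially the same route as the paper: expand $g_1 \cdots g_n$ via the iterated Baker--Campbell--Hausdorff formula (the content of \Cref{prop:iterated-BCH}), expand each bracket multilinearly, and split into the case where all inputs are abelian (controlled by the vanishing of iterated brackets of $v$ with itself) versus the case where some input is a $\pi_{com}$-term (controlled by its higher filtration degree and the corresponding drop from $n^k$ to $n^{k-1}$ index tuples). The only substantive difference is that you spell out the cancellation explicitly by further decomposing $\pi_{ab}(g_i) = \eta v + w_i$, whereas the paper delegates precisely that step to Case~3 of Cantrell's \cite[Lemma~3.8]{Cantrell}.
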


In his proof of \cite[Lemma 3.8]{Cantrell}, Cantrell implicitly relies on a statement like the following one, which can be seen as a generalization of the Baker--Campbell--Hausdorff (short: BCH) formula for a product of a large number of group elements. Here we identify $G$ with $\mathfrak{g}$ to define a multiplicative group structure on $\mathfrak{g}$, as before.

\begin{proposition}\label{prop:iterated-BCH}
    There exists a sequence $(\alpha_k)$ of positive integers such that the following holds. For all nilpotent simply connected Lie groups $G$, and all sequences $g_1,\ldots,g_n\in \mathfrak{g}$, we have
    \[
        g_1\cdot g_2\ldots \cdot g_n=\sum_{k\geq 1} A_{k,n}(g_1,\ldots, g_n), 
    \]
where $A_{k,n}(g_1,\ldots, g_n)$ is a linear combination of $\leq \alpha_k n^k$ commutators of the form $[g_{i_1},\ldots,g_{i_k}]$, whose coefficients are rational numbers of absolute value $\leq 1$ (by convention $[g_j]=g_j$).
\end{proposition}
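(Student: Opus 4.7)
The plan is to work in the free associative algebra $T$ over $\mathbb{Q}$ on indeterminates $g_1,\ldots,g_n$ (completed to formal power series) and expand
\[
L := \log(\exp(g_1)\exp(g_2)\cdots\exp(g_n))
\]
explicitly as a sum of words, then convert this back into an expansion in iterated commutators via the Dynkin--Specht--Wever idempotent. Since the resulting identity lives in the free nilpotent Lie algebra of any class $\geq s_G$, it specializes to the desired identity in any nilpotent Lie algebra $\mathfrak{g}$ (with $A_{k,n}=0$ once $k$ exceeds the nilpotency class of $\mathfrak{g}$).

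Setting $\Pi := \exp(g_1)\cdots\exp(g_n)$ and expanding $\log(1+(\Pi-1))=\sum_{k\geq 1}\frac{(-1)^{k-1}}{k}(\Pi-1)^k$ in $T$ yields $L=\sum_w c_w\, w$, summed over words $w$ on $\{g_1,\ldots,g_n\}$, with $c_w$ admitting the following combinatorial description: it is a signed sum, indexed by decompositions of $w$ into $k$ consecutive non-empty subwords each of which is weakly increasing with respect to the order $1<2<\cdots<n$, where each such decomposition contributes $\frac{(-1)^{k-1}}{k}$ times the product of the factorial weights $\prod \frac{1}{m_i!}$ coming from $\exp(g_i)=\sum_m g_i^m/m!$. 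The key quantitative input is then the uniform bound $|c_w|\leq 2^{\ell-1}$ for every word $w$ of length $\ell$: the number of decompositions of $w$ into $k$ consecutive non-empty parts is at most $\binom{\ell-1}{k-1}$, each factorial weight is at most $1$, and $\sum_{k=1}^{\ell}\frac{1}{k}\binom{\ell-1}{k-1}\leq 2^{\ell-1}$.

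Since $L$ lies in the free Lie subalgebra of $T$ by the Baker--Campbell--Hausdorff theorem, the Dynkin--Specht--Wever identity yields
\[
A_{\ell,n} = L\big|_{\deg=\ell} = \frac{1}{\ell}\sum_{|w|=\ell}c_w\, [w]
\]
in the free Lie algebra, where $[w]=[g_{i_1},[g_{i_2},\ldots,[g_{i_{\ell-1}},g_{i_\ell}]\ldots]]$ is the right-normed iterated bracket of $w=g_{i_1}\cdots g_{i_\ell}$. This is a linear combination of at most $n^\ell$ commutators whose coefficients $c_w/\ell$ satisfy $|c_w/\ell|\leq 2^{\ell-1}/\ell$. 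To finish, I replicate each term with coefficient $c$ as a sum of $\lceil |c|\rceil$ copies of the same commutator with rational coefficients of absolute value $\leq 1$; setting $\alpha_\ell:=\lceil 2^{\ell-1}/\ell\rceil$, this produces at most $\alpha_\ell\, n^\ell$ commutators with coefficients in $[-1,1]\cap\mathbb{Q}$. The main obstacle is really the quantitative control on the $c_w$: naive iteration of the two-variable BCH formula $x\mapsto x\ast g_n$ produces coefficients whose bound a priori grows with $n$, and it is precisely the passage to the closed-form expansion $\log(1+(\Pi-1))$ in the free associative algebra that makes the $n$-independence manifest.
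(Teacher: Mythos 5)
Your proof is correct and takes a genuinely different route from the paper's. The paper argues by induction on $n$: it repeatedly applies the two-variable BCH formula to $(g_1\cdots g_{n-1})\cdot g_n$, substitutes the inductive expansion of $g_1\cdots g_{n-1}$ into each BCH term, and counts the resulting $k$-fold commutators, defining $\alpha_k$ by a recursion involving the numbers $\beta_{q,l}$ of degree-$q$ BCH terms in which the second letter appears $l$ times. You instead work once and for all in the completed free associative algebra on $g_1,\ldots,g_n$: you expand $\log(\exp(g_1)\cdots\exp(g_n))=\log(1+(\Pi-1))$, bound the coefficient $c_w$ of each word $w$ of length $\ell$ by $2^{\ell-1}$ through a count of decompositions of $w$ into weakly increasing consecutive blocks, and then convert to right-normed iterated brackets via the Dynkin--Specht--Wever idempotent $\frac{1}{\ell}D$. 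This avoids the induction, produces an explicit closed form $\alpha_k=\lceil 2^{k-1}/k\rceil$, and makes the $n$-independence of the coefficient bound manifest from the start; the replication trick also makes the requirement that the coefficients have absolute value at most $1$ transparent, which the paper's inductive argument leaves somewhat implicit (it needs the two-variable BCH coefficients themselves, in a suitable form, to be bounded by $1$, so that the bound propagates through the induction). The trade-off is that you invoke the Dynkin--Specht--Wever theorem, whereas the paper uses only iterated two-variable BCH together with elementary counting.
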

\begin{proof}

    Observe that for $n=2$ this is the BCH formula. We could not locate a proof of this result for general $n$ in \cite{Cantrell} or other literature and hence include it here. Roughly speaking, the proof follows by iteratively applying the BCH formula. 

    For all $k$, and $l\leq q$, we let $\beta_{q,l}$ be the number of $q$-iterated commutators appearing in the BCH formula for the product $x\cdot y$, where $y$ appears $l$ times. 
    
    For $k=1$, we simply define $A_{1,n}=g_1+\ldots g_n$, so $\alpha_1=1$. We now claim that for each $k\geq 2$,  
    \[
        \alpha_k=\sum_{q=1}^{k}\sum_{l=1}^{q-1} \beta_{q,l} \sum_{j_1+\ldots +j_l= k-q+l}\prod_{i=1}^l \alpha_{j_i}
    \] 
    satisfies the condition of the theorem. The origin of this rather complicated formula will become clear in the sequel.

    To prove the claim, we argue by induction on $n$, the case $n=1$ being obvious. We will make use of the following elementary inequality: for all $x\in \mathbb{R}$, and $k\geq 1$,
    \[
        (x-1)^k\leq x^k-k(x-1)^{k-1}\leq x^k-(x-1)^{k-1}.
    \]
    Indeed, by convexity of $f(x)=x^k$, we have $f(x)-f(x-1)\geq f'(x-1)$, which rewritten as $f(x-1)\leq f(x)-f'(x-1)$ is exactly the first inequality above.

    By our induction assumption, $g_1\ldots g_{n-1}=\sum_{j\geq 1} A_{j,n-1}(g_1,\ldots, g_{n-1})$. We now apply the BCH formula to the product $(g_1\ldots g_{n-1})\cdot g_n$. This gives us a sum of iterated commutators in the two ``letters'' $g_1\ldots g_{n-1}$ and $g_n$. After replacing $g_1\ldots g_{n-1}$ by its expression $\sum_{j\geq 1} A_{j,n-1}(g_1,\ldots, g_{n-1})$, we are left with a linear combination of a sum of iterated commutators in the $g_i$ (whose coefficients are rationals of absolute value $\leq 1$). To estimate the number of $k$-iterated commutators in the $g_i$, we observe that those come from two contributions.

    The first one is $A_{k,n-1}(g_1,\ldots, g_{n-1})$ (coming from the non-commutator term in the BCH formula), which contributes for at most $\alpha_k(n-1)^k\leq \alpha_k(n^k-(n-1)^{k-1})$ $k$-iterated commutators. 
    
    The other contribution consists of all the iterated commutators from the BCH formula (by which we mean in the letters $g_1\ldots g_{n-1}$ and $g_n$). Note that those involve the letter $g_n$ at least once. Moreover, among those, the ones which contribute to $k$-iterated commutators in the $g_i$'s must  have length $q\leq k$. If we fix the number of occurrences of $g_n$ to be $q-l\geq 1$, then in order to obtain a $k$-iterated commutator in the $g_i$, this forces the other $l$ letters $(a_1,\ldots, a_l)$ to be an element of $\prod_{i=1}^l A_{k_i}$ for a choice of $k_1,\ldots, k_l$ such that $\sum_i k_i=k-q+l$ (which is $\leq k-1$). In order to count these terms, we first fix the positions of the letter $g_n$ (giving rise to a factor $\beta_{q,l}$), then we pick a sequence $k_1,\ldots, k_l$. The number of possibilities after such a choice is at most $\prod_{i=1}^l (\alpha_{j_i}(n-1)^{j_i})=(n-1)^{k-q+l}\prod_{i=1}^l \alpha_{j_i}\leq (n-1)^{k-1}\prod_{i=1}^l \alpha_{j_i}$.
    
    Hence the number of $k$-iterated commutators in the $g_i$ arising from the second contribution is at most $\alpha_k (n-1)^{k-1}$.
    We deduce that the total number of $k$-iterated commutators is at most 
    \[
        \alpha_k(n^k-(n-1)^{k-1})+\alpha_k (n-1)^{k-1}=\alpha_k n^k.
    \]
    Hence the claim is proved and the proposition follows.
\end{proof}

\begin{proof}[Proof of \Cref{lem:commutator-contributions-are-small}]
    By \Cref{prop:iterated-BCH} there is a decomposition of the form
    \[
        g:= g_1\cdot g_2\ldots \cdot g_n=\sum_{k\geq 1} A_{k,n}(g_1,\ldots, g_n)
    \]
    with $A_{k,n}(g_1,\ldots,g_n)$ a linear combination (with coefficients of absolute value at most $1$) of at most $\alpha_k n^k$ commutators of the form 
    \[ 
        [g_{i_1},\ldots,g_{i_k}]=[\pi_{ab}(g_{i_1})+ \pi_{com}(g_{i_1}), \ldots, \pi_{ab}(g_{i_k})+\pi_{com}(g_{i_k})]
    \]
    By multilinearity, the right side further decomposes into a sum of $2^k$ commutators of the form $[v_1,\ldots, v_k]$ with $v_j\in\left\{\pi_{ab}(g_{i_j}), \pi_{com}(g_{i_j})\right\}$. We determine their contributions to $\pi_{com}(g_1\ldots g_n)$, distinguishing two cases.
    
    First, if $v_j=\pi_{ab}(g_{i_j})$ for all $j$, then we observe that $[v_1,\ldots, v_k]\in \gamma_k(\mathfrak{g})$ only contributes to $\pi_{com}(g)$ if $k\geq 2$. We can then use assumption (1) to argue as in Case 3 of the proof of \cite[Lemma 3.8]{Cantrell} that if $k\geq 2$, then the Guivarc'h norm of $[v_1,\ldots, v_k]$ is bounded above by $C \eta \delta' |v|_1$ for a suitable constant $C>0$. Since $[v_1,\ldots,v_k]\in \gamma_k(\mathfrak{g})$, the contribution of such terms coming from all summands of $A_{k,n}(g_1, \ldots, g_n)$ to the Guivarc'h norm of $\pi_{com}(g)$ is $\leq C \eta n \delta' |v|_1$. Choosing $\delta'\in (0,1)$ sufficiently small as a function of $C$, $\delta$ and $|v|_1$ completes the proof for this case.

    Second, if there are $s\geq 1$ terms $v_j$ with $v_j=\pi_{com}(g_{i_j})$, then $[v_1,\ldots, v_k]\in \gamma_{k+s}(\mathfrak{g})$ and the Guivarc'h norm of $[v_1,\ldots, v_k]$ is thus $\leq C (\delta'\eta |v|_1)^{\frac{k-s}{k+s}} (M\eta)^{\frac{s}{k+s}}\leq C |v|_1^{\frac{k-s}{k+s}} M^{\frac{s}{k+s}}\eta^{\frac{k}{k+s}}$. Since for every $s$ there are $\leq 2^k \alpha_k n^k$ such summands in $A_{k,n}(g_1, \ldots, g_n)$, their total contribution to the Guivarc'h norm of $\pi_{com}(g)$ is $\leq C \alpha_k^{\frac{1}{k+s}} 2^{\frac{k}{k+s}} |v|_1^{\frac{k-s}{k+s}} M^{\frac{s}{k+s}} \eta^{\frac{k}{k+s}} n^{\frac{k}{k+s}}$. For $K$ sufficiently large, the latter is $\leq \eta \delta n$ for all $n\geq K$.

    Using the equivalence of the Guivarc'h norm and the word metric $|\cdot |_G$ on $G$, we conclude that there is a $\delta'>0$, $K>0$ and $C>0$ such that $|\pi_{com}(g_1\ldots g_n)|_G<\eta n \delta + C$ for all $n\geq K$, $\eta \geq 1$ and all elements $g_1,\ldots, g_n\in G$ satisfying conditions (1) and (2).
\end{proof}

To prove \cite[Proposition 5.4]{Cantrell}, Cantrell extends \Cref{prop:convergence-to-abelian-average} to limits of the form $\frac{1}{n}\bullet \alpha(g_n,x)$ with $g_n=s_1^{a_{n,1}}\ldots s_k^{a_{n,k}}$, where the $a_{n,i}\to \infty$ (see \cite[Theorem 4.1]{Cantrell}) and uses these limits to define the morphism $\phi$ (see \cite[Section 5]{Cantrell}). His proofs of these results adapt readily to the case of simply connected nilpotent Lie groups.

The second step in Cantrell's argument is \cite[Proposition 5.7.]{Cantrell}. It consists of proving that the limiting morphism $\phi$ is, in fact, an isomorphism whose inverse is the rescaled limit of $\beta$. 
It seems to us that his proof of this contains a gap in the published version of his article.\footnote{The set of positive measure of $y\in X\cap Y$ such that Equation (5.3) holds in the first paragraph of Cantrell's proof of \cite[Proposition 5.7]{Cantrell} depends on $x$. In particular, there is no a priori reason why for a given choice of $x$ one can choose $y=x$. This subtlety is explained on \cite[p. 134]{Aus-16}. Cantrell's proof seems to require that such a choice is possible with positive probability in $X\cap Y$. However, he did not explain if and how this subtlety can be resolved in his situation.}
However, he gives a correct argument in an earlier version (the first arXiv version), which is based on Austin's \cite[Theorem 4.1]{Aus-16}. Roughly the idea goes as follows: once he has established that after rescaling, $\alpha$ converges to a morphism $\varphi$, Austin's \cite[Theorem 4.1]{Aus-16} implies that the limit morphism must be Lipschitz, co-Lipschitz and have dense image, hence surjective (and therefore injective as well).

It turns out though that the proof of Cantrell's \cite[Proposition 5.7.]{Cantrell} can also be corrected in a more direct way by using Austin's \cite[Proposition 4.4]{Aus-16} as follows (whose proof is a bit easier than that of the full \cite[Theorem 4.1]{Aus-16}). 
Having established \cite[Proposition 5.4]{Cantrell}, we know that there exist continuous morphisms $\phi:\gr{G}\to \gr{H}$ and $\psi:\gr{H}\to \gr{G}$ such that for all sequences $g_n\in G$ such that $1/n\bullet g_n\to \bar{g}\in \gr{G}$, the sequence $1/n\bullet \alpha(g_n,x)$ converges in probability to $\phi(\Bar{g})\in \gr{H}$.
Similarly, for all sequence $h_n\in H$ such that  $1/n\bullet h_n\to \bar{h}\in \gr{H}$, the sequence $1/n\bullet \beta(h_n,x)$ converges in probability to $\psi(\Bar{h})\in \gr{G}$.
We claim that $\psi\circ \phi=id$ (by symmetry, this will prove that they are mutual inverses).

To see this, choose $h_n$ such that $1/n\bullet h_n\to \phi(\bar{g})$ (which we denote by $\bar{h}$). In particular, we have that $d_H(h_n,\alpha(g_n,x))=o(n)$ with high probability. But now, \cite[Proposition 4.4]{Aus-16} implies that $d_G(\beta(h_n,x),g_n)=o(n)$ with high probability. Therefore, $1/n\bullet \beta(h_n,x)$ converges with high probability to the same limit as $1/n\bullet g_n$. In other words, we have $\psi ( \phi(\bar{g}))=\psi(\bar{h})=\bar{g}$, and the claim is proved.

\frenchspacing
\bibliography{References}

\bibliographystyle{alpha}

\end{document}